\documentclass [11pt]{amsart}
\usepackage{amscd,amsmath,amsthm,amssymb,amsfonts,epsfig}
\usepackage{bbm}
\usepackage{graphicx}
\usepackage[utf8]{inputenc}
\usepackage{xcolor}
\usepackage{mathrsfs}  
\usepackage{esvect}
\usepackage{enumitem}
\usepackage{indentfirst}
\usepackage{caption}
\usepackage{subcaption}
\usepackage[export]{adjustbox}
\usepackage{tkz-euclide}
\usepackage{tcolorbox}
\usepackage{pgfplots}
\pgfplotsset{compat=newest}
\usepgfplotslibrary{fillbetween}
\usepackage{comment}
\usepackage{url}
\usepackage{hyperref}

\usepackage[a4paper,top=3cm,bottom=2cm,left=3cm,right=3cm,marginparwidth=1.75cm]{geometry}

\usetikzlibrary{intersections,positioning, calc}

\renewcommand{\bar}{\overline}
\newcommand{\R}{\mathbb{R}}

\newcommand{\D}{\nabla}
\newcommand{\pa}{\partial}
\newcommand{\Div}{\mathrm{div}}

\newcommand{\bn}{\mathbf{n}}
\newcommand{\bo}{\mathbf{0}}
\newcommand{\bp}{\mathbf{p}}

\newcommand{\bq}{\mathbf{q}}

\newcommand{\bv}{\mathbf{v}}

\newcommand{\bt}{\mathbf{t}}

\newcommand{\cK}{\mathcal{K}}
\newcommand{\cM}{\mathcal{M}}

\newcommand{\cD}{\mathcal{D}}
\newcommand{\cH}{\mathcal{H}}
\newcommand{\cG}{\mathcal{G}}

\newcommand{\ent}{\mathrm{Ent}}

\newcommand{\al}{\alpha}

\newcommand{\ga}{\gamma}
\newcommand{\Ga}{\Gamma}
\newcommand{\la}{\lambda}

\newcommand{\si}{\sigma}

\newcommand{\ka}{\kappa}
\newcommand{\vare}{\varepsilon}

\newcommand{\Om}{\Omega}

\newcommand{\Th}{\Theta}

\newtheorem{theorem}{Theorem}[section]
\newtheorem{corollary}[theorem]{Corollary}
\newtheorem{lemma}[theorem]{Lemma}
\newtheorem{proposition}[theorem]{Proposition}

\theoremstyle{remark}
\newtheorem{remark}[theorem]{Remark}

\theoremstyle{definition}
\newtheorem{definition}[theorem]{Definition}

\title{Uniqueness of tangent flows at infinity for finite-entropy shortening curves}

\author{Kyeongsu Choi}
\address{KC: School of Mathematics, Korea Institute for Advanced Study, 85 Hoegiro, Dongdaemun-gu, Seoul 02455, Republic of Korea}
\email{choiks@kias.re.kr}

\author{Dong-Hwi Seo}
\address{DS:Research Institute of Mathematics, Seoul National University, 1 Gwanak-ro, Gwanak-gu, Seoul 08826, Republic of Korea}
\email{donghwi.seo@snu.ac.kr}

\author{Wei-Bo Su}
\address{WS: Mathematics Division, National Center for Theoretical Sciences, Cosmology Building, No. 1, Sec. 4, Roosevelt Rd., Taipei City 106, Taiwan}
\email{weibo.su@ncts.ntu.edu.tw}

\author{Kai-Wei Zhao}
\address{KZ: Department of Mathematics, University of Notre Dame, Notre Dame, 46556, Indiana, USA}
\email{kzhao4@nd.edu }

\begin{document}

\begin{abstract}
In this paper, we prove that an ancient smooth curve-shortening flow with finite entropy embedded in $\mathbb{R}^2$ has a unique tangent flow at infinity. To this end, we show that its rescaled flows backwardly converge to a line with multiplicity $m\geq 3$ exponentially fast in any compact region, unless the flow is a shrinking circle, a static line, a paper clip, or a translating grim reaper. In addition, we figure out the exact numbers of tips, vertices, and inflection points of the curves at negative enough time. Moreover,  the exponential growth rate of graphical radius and the convergence of vertex regions to grim reaper curves will be shown. 
\end{abstract}

\maketitle 

\section{Introduction}

A family of planar curves $M_t$ is a solution to the \textit{curve-shortening flow} if there exists a smooth family of immersions $\ga: \mathbb{L}\times I \to \R^2$, where $I\subset \R$ is an interval and $\mathbb{L}  = \mathbb{R}$ or $\mathbb{S}^{1}$, such that $M_t = \ga(\mathbb{L},t)$ and satisfies the evolution equation
\begin{equation}\label{eq: CSF}
    \ga_t = \ga_{ss}.
\end{equation}
By the work of Gage-Hamilton \cite{GH86} and Grayson \cite{G87}, a closed flow embedded in $\mathbb{R}^2$ converges to a circle after normalization. Also, by Angenent \cite{An91}, a closed, immersed flow converges to an Abresch-Langer curve or a grim reaper curve after normalization. These curves of limiting shape are self-similar solutions. To be specific, shrinking circles, shrinking Abresch-Langer curves, and translating grim reapers are curve-shortening flows. Note that we call a flow $\mathcal{M}=\cup_{t\in I}(M_t,t)$ is \textit{ancient} if $(-\infty,t_0)\subset I$ for some $t_0$. Hence, the shrinking curves and translating curves are ancient flows.

\bigskip

There are non-self-similar ancient flows such as paper clips, ancient trombones \cite{AY21}, and truncated Yin-Yang spirals \cite{Ch22,AIOZ23}. However, the parabolic Liouville theorem allows us to classify ancient flows under natural conditions. For example, the shrinking circle, the translating grim reaper curves, and the paper clips are the only ancient convex flows, according to \cite{BLT20,DHS10, Wan11CSM}. In addition, there has been a recent outburst of research \cite{ADS19,ADS20,BC19,BC21,CDDHS22,CHH21,CHH22,DH24} on convex ancient solutions to its higher-dimensional analogy, the mean curvature flow. A key idea of recent research is that the rescaled convex flow 
\begin{align}
    \bar{M}_\tau = (-t)^{-\frac{1}{2}}M_t, \qquad \text{where}\quad \tau=-\log(-t),
\end{align}
converges to a sphere or a cylinder of multiplicity one as $\tau\to -\infty$. Hence, one can investigate its asymptotic behavior near the limit sphere or cylinder. 
In fact, if an ancient mean curvature flow backwardly converges to a shrinking flow, then the study of its asymptotic behavior near the limit shrinker yields classification results \cite{CCMS24,CCS23,CHH22,CHHW22,CM22}. However, previous research for ancient mean curvature flows was restricted to the case where the limit shrinker has a multiplicity one. 
In this paper, we will work on ancient curve-shortening flows embedded in $\mathbb{R}^2$ such that their rescaled flows backwardly converge to shrinkers with multiplicity, such as paper clips and ancient trombones.

\bigskip

To study ancient flows whose limits are shrinkers with multiplicity, we recall Colding-Minicozzi's \textit{Entropy} \cite{CM12}. See \eqref{eq:Entropy} for the definition. If $\cM = \cup_{t\in I}(M_t,t)$ moves by the curve-shortening flow, then $t \mapsto \ent[M_t]$ is non-increasing by Huisken's monotonicity formula \cite{Hu90}. See \cite[Lemma 1.11]{CM12}. Hence, we can also define \emph{Entropy} of the flow $\cM$ by
\begin{align}
    \ent[\cM] := \sup_{t\in I} \ent[M_t],
\end{align}
and we say that an ancient flow $\cM$ has \textit{finite-entropy} if $\ent[\cM]<+\infty$. For example, the entropy of a static line $\mathcal{L}$ and a shrinking circle $\mathcal{C}$ are, respectively,
\begin{align}
    &\ent[\mathcal{L}] = 1 \quad \text{and} \quad \ent[\mathcal{C}] = \sqrt{2\pi/e} \approx 1.47.
\end{align}
Since the line's entropy is $1$, the entropies of paper clips and translating grim reaper curves are $2$. Also, the entropy of an ancient trombone asymptotic to $m$-lines is $m \in \mathbb{N}$. However, the truncated Yin-Yang spirals \cite{Ch22, AIOZ23} have infinite entropy. 
\bigskip

Note that if the entropy of a shrinker is bounded and a rescaled ancient flow backwardly converges to the shrinker $\Gamma$ with multiplicity $m\in \mathbb{N}$, then the entropy of the flow is $m\cdot \text{Ent}(\Gamma)<+\infty$. 
On the other hand, we let $\mathcal{D}_{1/\lambda}(\mathcal{M}-X)$ denote the curve-shortening flow obtained by translating $X\in \mathbb{R}^2\times I$ to the spacetime origin and parabolically rescaling by $\lambda^{-1}>0$. 
See (\ref{eq: parabolic rescaling}) for more details. 
Then, for any fixed $X_0$, a sequence $\lambda_i\to +\infty$, and an ancient flow $\cM$ with finite entropy, the sequence $\mathcal{D}_{\lambda_i}(\mathcal{M}-X_0)$ has a subsequential limit $\overline{\cM}$, which is a self-shrinking flow of multiplicity $m\in \mathbb{N}$. The subsequential limit $\overline{\cM}$ is called a \textit{tangent flow at infinity}. Now, we provide the main results of this paper.

\begin{theorem}[unique tangent flow at infinity]\label{thm:main.unique}
An ancient finite-entropy smooth curve-shortening flow embedded in $\mathbb{R}^2$ has a unique tangent flow at infinity. Moreover, the tangent flow at infinity is a straight line with multiplicity $m\geq 2$, unless the flow is a shrinking circle or a static line.
\end{theorem}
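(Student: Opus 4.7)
My plan is to combine the classification of planar self-shrinkers with the exponential backward-convergence result announced in the abstract. Existence of tangent flows at infinity is standard: by Huisken's monotonicity and finite entropy, any sequence $\mathcal{D}_{\lambda_i}(\mathcal{M} - X_0)$ with $\lambda_i \to \infty$ subconverges, as integral Brakke flows, to a self-shrinker $\overline{\mathcal{M}}$ with some integer multiplicity. In $\mathbb{R}^2$ the smooth self-shrinking curves are only circles, straight lines, and Abresch-Langer curves; Angenent's intersection principle together with embeddedness of $\mathcal{M}$ excludes any Abresch-Langer limit. Hence every tangent flow at infinity is either a shrinking circle or a straight line, each with some integer multiplicity.

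Next I would dispose of the circle case. If a tangent flow at infinity is a multiplicity-one shrinking circle, Brakke regularity writes $M_t$ as a smooth normal graph over the shrinker with arbitrarily small error at sufficiently negative times, and the classification of convex embedded ancient flows by Daskalopoulos-Hamilton-Sesum and Bourni-Langford-Tinaglia then forces $\mathcal{M}$ to be a shrinking circle, for which uniqueness of the tangent flow at infinity is immediate. For multiplicity $m \geq 2$ I would argue as follows: if $\mathcal{M}$ consists of non-closed curves then its parabolic rescalings are unbounded in $\mathbb{R}^2$ and cannot limit to a compact shrinker, while if $\mathcal{M}$ consists of closed embedded curves then Gauss-Bonnet and the preservation of total turning under Brakke convergence force $m = 1$. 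Multiplicity-$m \geq 2$ circle limits are therefore excluded.

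We are then reduced to the line case, and the remaining task is to show that the limiting line and its multiplicity are independent of the subsequence $\lambda_i$. For $m = 1$ Brakke regularity forces $\mathcal{M}$ to be a static line; for $m = 2$ the classification produces exactly the paper clip and the translating grim reaper, and in both cases the unique tangent flow at infinity can be read off directly. For $m \geq 3$ I would invoke the main convergence result announced in the abstract: exponential convergence of $\bar{M}_\tau$ to a fixed multiplicity-$m$ line in every compact region immediately makes the subsequential limit independent of $\lambda_i$, proving uniqueness of the tangent flow at infinity.

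The hard part is the $m \geq 3$ exponential convergence itself. In the multiplicity-one setting of mean curvature flow, Lojasiewicz-Simon inequalities on a smooth shrinker suffice to yield uniqueness with a polynomial rate. Here the limit is a line covered $m \geq 3$ times, the linearized shrinker operator acquires new kernel directions encoding the relative positions and pairwise interactions of the $m$ sheets near the vertex and tip regions, and a purely functional Lojasiewicz approach does not apply at a multiple shrinker. Upgrading the analysis to an \emph{exponential} rate requires the fine structural description of tips, vertices, inflection points, and the grim-reaper asymptotics of vertex regions that the paper establishes; with that rate in hand, uniqueness of the tangent flow at infinity is immediate.
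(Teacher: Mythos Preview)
Your outline is essentially the paper's strategy: reduce to a line with integer multiplicity via the Abresch--Langer classification and embeddedness, then prove uniqueness of the line by an exponential-rate argument that replaces the unavailable \L ojasiewicz machinery. Two points of divergence are worth flagging. First, your exclusion of a higher-multiplicity circle via ``preservation of total turning under Brakke convergence'' is not justified as stated: Brakke convergence is only measure-theoretic, and total curvature is not a priori continuous under it. The paper instead upgrades to $C^{1,\alpha}_{\mathrm{loc}}$ convergence (Lemma~\ref{lem:compactness of time-slices}) and then argues directly that $m$ disjoint graphs over $\mathbb{S}^1_{\sqrt{2}}$ cannot form a single connected embedded curve unless $m=1$; your Gauss--Bonnet idea works only after that regularity upgrade. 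Second, you split off $m=2$ and invoke the low-entropy classification for uniqueness there, whereas the paper treats all $m\geq 2$ uniformly: the low-entropy classification enters only indirectly, as an input to the vertex asymptotics and graphical-radius estimates (Sections~5--7), which feed the Allard--Almgren iteration (Lemma~\ref{lem:iteration}) that halves the deviation from linearity over each fixed time-step and thereby produces a Cauchy sequence of rotations $S_{\tau_k}$ converging to a unique $S_\infty$. You correctly identify this iteration as the crux and the reason a direct \L ojasiewicz approach fails, but the actual mechanism---improvement of flatness in the Gaussian $L^2$ norm via spectral analysis of the Ornstein--Uhlenbeck operator, coupled to the graphical-radius theorem to control the cutoff errors---is what your last paragraph gestures at without naming.
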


\bigskip

The tangent flow (at infinity) is the analogy to the tangent cone (at infinity) of minimal surfaces and solutions to elliptic equations. The uniqueness of the tangent cone has been widely studied in various contexts of geometry and analysis, after the seminal uniqueness results of Allard--Almgren \cite{AA81} and Simon \cite{Sim83} for minimal surfaces. For the mean curvature flow, the uniqueness of the tangent flow has been proven at compact \cite{Sch14}, cylindrical \cite{CM15}, and asymptotically conical \cite{CS21} singularities. See also singular tangent flows \cite{LSS22, Sto23}, higher codimension \cite{LZ24}, Lagrangian mean curvature flow with multiplicity \cite{Nev07}. Note that the previous works, except for \cite{Nev07}, considered tangent flows of multiplicity one, while we deal with higher multiplicity limits. Neves' result \cite{Nev07} of the $1$-dimensional case gives the uniqueness of tangent flow with multiplicity under the assumption of a priori uniform bound on the Lagrangian angle, while we assume finite entropy instead of bounded angle.

 \bigskip

As stated in the uniqueness theorem \ref{thm:main.unique}, static lines, and shrinking circles are the only ancient finite-entropy flows whose backward limits are of multiplicity one. So, in this paper, we say that an ancient flow is \textit{nontrivial} if it is neither a static line nor a shrinking circle. Also, from now on, we let $\mathcal{M}=\cup_{t\in I}(M_t,t)$ denote a nontrivial smooth curve-shortening flow embedded in $\mathbb{R}^2$ for convenience, unless otherwise stated.

\bigskip
Now, we provide the graphical radius theorem, which is a quantitative version of the main theorem \ref{thm:main.unique}. Note that, due to the higher multiplicity issue, we can employ neither pseudo-locality \cite{INS19} nor White's local regularity \cite{Whi05}, which played crucial roles for extension and improvement arguments for the graphical radius estimates in \cite{CM15} and \cite{CS21}, respectively.

\begin{theorem}[graphical radius and convergence rate]\label{thm:main.conv.rate}
Let $\mathcal{M}$ be an ancient flow whose tangent flow at infinity is the line $\{x_2=0\}$ with multiplicity $m\geq 2$. Then, given $\varepsilon>0$, there exist some constants $\tau_0<0,\delta>0$ and functions 
\begin{equation}\label{eq:main.graph.radius}
  u^1,\cdots, u^m \in C^\infty(\cup_{\tau\leq \tau_0}(-e^{-\delta\tau},e^{-\delta\tau})\times \{\tau\})  
\end{equation}
such that $u^i(y,\tau)>u^{i+1}(y,\tau)$, $(y,u^i(y,\tau))\in \overline{M}_\tau:=e^{\frac{\tau}{2}}M_{-e^{-\tau}}$, and $|u^i_y(y,\tau)| \leq \varepsilon$ hold for all $i\geq 1$, $\tau \leq \tau_0$, and $y \in (-e^{-\delta\tau},e^{-\delta\tau})$. 

Moreover, given $R>0$, there exists some $\tau_R<0$ such that
\begin{equation}\label{eq:main.fast.decay}
   \max_{1\leq i\leq m} \|u^i(\cdot,\tau)\|_{C^2(B_R(0))}\leq e^{4\delta \tau}
\end{equation}
holds for $\tau \leq \tau_R$ and $i \in \{1,\cdots,m\}$.
\end{theorem}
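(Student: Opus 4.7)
The plan is to first extract $m$ graphical representations of $\bar M_\tau$ on compact sets from the tangent-flow hypothesis, then obtain the exponential decay rate via spectral analysis of the Ornstein--Uhlenbeck operator, and finally propagate graphicality outward to exponentially large radii by a bootstrap iteration. Since the tangent flow at infinity is the line $\{x_2=0\}$ with multiplicity $m$, $\bar M_\tau$ converges smoothly on every $B_R(0)$ to $m\cdot\{x_2=0\}$ as $\tau\to-\infty$. Hence for every $R>0$, $\eta>0$, and sufficiently negative $\tau$, the intersection $\bar M_\tau\cap(B_R\times\R)$ decomposes into $m$ ordered $C^\infty$ graphs $u^1>\cdots>u^m$ on $(-R,R)$ with $\|u^i\|_{C^k((-R,R))}<\eta$ for any prescribed $k$. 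Each $u^i$ satisfies the rescaled graphical equation
\begin{equation*}
u^i_\tau=\frac{u^i_{yy}}{1+(u^i_y)^2}-\frac{y}{2}u^i_y+\frac{1}{2}u^i,
\end{equation*}
whose linearization about zero is the Ornstein--Uhlenbeck operator $Lu=u_{yy}-(y/2)u_y+(1/2)u$.

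For the $C^2$ decay rate \eqref{eq:main.fast.decay}, I would diagonalize $L$ in the Gaussian-weighted space $L^2(e^{-y^2/4}dy)$, whose eigenvalues are $\lambda_k=(1-k)/2$ ($k\geq 0$) with Hermite-type eigenfunctions $\phi_k$. Writing $u^i=\sum_k a^i_k(\tau)\phi_k(y)$, the linear dynamics is $\dot a^i_k=\lambda_k a^i_k$. The pointwise backward convergence $u^i\to 0$ forces the neutral linear coefficient $a^i_1$ (associated with $\phi_1=y$) to vanish, while any nonzero stable coefficient ($k\geq 2$, $\lambda_k<0$) would grow backward and is likewise forced to vanish. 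Only the unstable constant mode $a^i_0$ survives, obeying $\dot a^i_0=(1/2)a^i_0$ and hence $a^i_0\sim e^{\tau/2}$. A Merle--Zaag type ODE argument carrying the quadratic nonlinearities would then yield $\|u^i(\cdot,\tau)\|_{C^2(B_R)}\leq C_{R,\eta}\,e^{(1/2-\eta)\tau}$ for arbitrary $\eta>0$, giving \eqref{eq:main.fast.decay} for any chosen $\delta<1/8$.

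The graphical radius statement \eqref{eq:main.graph.radius} is the main obstacle, because, as the authors stress, neither White's $\varepsilon$-regularity nor Ilmanen--Neves--Schulze pseudolocality applies at a multiplicity-$m$ limit. I plan an outward bootstrap: once $\bar M_\tau$ is $m$-graphical on $(-\rho(\tau),\rho(\tau))$ with interior $C^2$ norm as small as $e^{(1/2-\eta)\tau}$, the slopes $|u^i_y|$ are much smaller than $\varepsilon$ over most of this interval, leaving a slope budget of order $\varepsilon$ available at the frontier $|y|\sim\rho(\tau)$. Combining interior Schauder estimates for the graph equation with Angenent's intersection-count lemma applied to tilted lines through the origin (which are static solutions of the rescaled flow), this budget should allow us to enlarge $\rho(\tau)$ by a multiplicative factor $1+c$ per unit of $-\tau$. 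Iterating from a fixed seed $\rho(\tau_0)=R_0$ then yields $\rho(\tau)\geq R_0(1+c)^{\tau_0-\tau}=e^{-\delta\tau}$ with $\delta:=\log(1+c)$ determined by the chosen $\varepsilon$. The delicate point will be making the intersection-counting step quantitative near the frontier, so as to rule out the appearance of a new sheet or the collapse of two adjacent sheets just outside the current graphical region.
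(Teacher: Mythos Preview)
Your spectral outline for the decay estimate is broadly in the same spirit as the paper's Section~8, but it glosses over the two steps that actually carry the weight. First, the claim that the neutral mode $a_1^i$ ``is forced to vanish'' by $u^i\to 0$ is precisely the uniqueness-of-tangent-flow statement you are supposed to be proving; the paper does not assume this but instead tracks the neutral mode as a small rotation $\theta$ (Theorem~\ref{thm:improve.flat.L2}) and shows the rotations form a Cauchy sequence via the iteration in Lemma~\ref{lem:iteration}. Second, the $u^i$ are only defined on a finite interval, so the Ornstein--Uhlenbeck spectral analysis requires a cutoff at some radius $r$, and the cutoff error terms (Lemma~\ref{lem:error.proj}) are controlled only if $r$ is large---which is exactly the graphical-radius statement. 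The paper closes this loop by an iteration: $\delta$-linear at all times $\le\tau_1$ implies $(\delta/2)$-linear at all times $\le\tau_1-L$ after a small rotation, and the graphical-radius Theorem~\ref{thm:grph.radius} is invoked at each step.

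The real gap is in your graphical-radius argument. The paper's Theorem~\ref{thm:grph.radius} is a purely geometric, \emph{single-time} statement: if $\|u^i\|_{C^2([-1,1])}=\rho^{-4}$ at a fixed $\tau$, then $\bar M_\tau$ is $m$-graphical on $B_\rho$. Its proof rests on the structural results of Sections~5--6 (each finger has a unique sharp vertex, curvature is monotone on edges, the vertex region is an $\varepsilon$-grim-reaper, the finger region has area $\pi+o(1)$), which in turn rely on the localized Gaussian density bounds and the classification of low-entropy weak flows (Theorem~\ref{thm: low entropy flow}). Your proposed bootstrap via Angenent intersection-counting with tilted lines is not a substitute for this. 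For a multiplicity-$m$ configuration, a tilted line through the origin already meets the $m$ sheets in $m$ points, and the intersection number with $\bar M_\tau$ is $m$ (or $m-1$) for all $\tau$; this gives no quantitative control on where the sheets cease to be graphical or on the slope at the frontier. The ``delicate point'' you flag---ruling out a folding sheet just beyond the current graphical region---is exactly what the finger/vertex analysis in Sections~5--7 accomplishes, and there is no shortcut around it with intersection counts alone.
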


\bigskip

To prove the graphical radius theorem \ref{thm:main.conv.rate}, we will count the exact numbers of several geometrically important types of points, such as knuckles, tips, vertices, and inflection points. See Section \ref{sec:terms} for terminology.

\begin{theorem}\label{thm:main.geometry}
Let $\mathcal{M}$ be an ancient non-compact \rm{[}\em resp. compact\,\rm{]}\em \,  flow with $\text{Ent}(\mathcal{M})=m\in \mathbb{N}$. Let $\mathcal{S}^{\text{tip}}_t$, $\mathcal{S}^{\text{knuc}}_t$, $\mathcal{S}^{\text{shp}}_t$, $\mathcal{S}^{\text{flat}}_t$, and $\mathcal{S}^{\text{infl}}_t$ denote the sets of tips, knuckles, sharp vertices, flat vertices, and inflection points of $M_t$, respectively. Then, there is negative enough time $T<0$ such that 
\begin{align}
    &     |\mathcal{S}^{\text{tip}}_t|=|\mathcal{S}^{\text{shp}}_t|=m-1\; [ resp.\, m]    , && |\mathcal{S}^{\text{flat}}_t|+|\mathcal{S}^{\text{infl}}_t|=m-2\; [ resp.\, m],
\end{align}
and  $|\mathcal{S}^{\text{knuc}}_t|=m$ hold for all $t\leq T$. 
\end{theorem}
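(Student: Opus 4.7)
The plan is to bootstrap from Theorem \ref{thm:main.conv.rate}: for $\tau \leq \tau_0$ the rescaled curve $\bar M_\tau$ is a disjoint union of $m$ nearly-horizontal graphs $u^1>\dots>u^m$ on the exponentially large interval $|y|\leq e^{-\delta\tau}$, with each $u^i$ having $C^2$-norm decaying exponentially on every fixed ball. Inside this \emph{graphical region} the curvature $|k|$ is exponentially small and the sheets are essentially parallel horizontal lines, so no tip, sharp vertex, inflection point, flat vertex, or knuckle can lie there for $t$ sufficiently negative. All counted features are thus concentrated in a finite collection of \emph{cap arcs} joining adjacent sheets, together with the two unbounded ends in the noncompact case.

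The combinatorics of assembling $m$ sheets into a single embedded curve (embeddedness being preserved in time by the maximum principle) forces exactly $m-1$ caps in the noncompact case and $m$ caps in the compact case. To show each cap contributes exactly one tip and exactly one sharp vertex I would perform a blow-up around the extreme point of the cap; finite entropy is inherited from $\mathcal M$, the cap is a convex piece bounded away from the other sheets, and the classification of convex ancient planar flows \cite{DHS10,BLT20} forces the blow-up limit to be a translating grim reaper. A grim reaper carries precisely one tip and one sharp vertex, giving the claimed counts $|\mathcal{S}^{\text{tip}}_t|=|\mathcal{S}^{\text{shp}}_t|=m-1$ (resp.\ $m$). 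As a quantitative alternative I would apply Angenent's intersection principle, intersecting $M_t$ with vertical lines and with generic horizontal lines, and use that the intersection number is nonincreasing in $t$ to pin down the tip count.

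To count inflection points, flat vertices, and knuckles I would exploit that the signed curvature $k$ satisfies $\partial_t k = k_{ss}+k^3$, so Angenent/Sturm-type theorems \cite{An91} imply the numbers of zeros and of critical points of $k(\cdot,t)$ along $M_t$ are nonincreasing in $t$. Inside each cap, the grim-reaper model has strictly monotone curvature on each side of its apex, so a cap contributes no inflection and no flat vertex beyond its sharp vertex. The $m-2$ (resp.\ $m$) interfaces between two consecutive caps are of two combinatorial types depending on whether the curve continues to turn in the same rotational sense or reverses: these two types match respectively to the knuckles (local extremum of $|k|$ on an otherwise straight arc) and to the flat vertices/inflection points (a forced sign change of $k$), and counting by type yields the identities $|\mathcal{S}^{\text{knuc}}_t|=m$ and $|\mathcal{S}^{\text{flat}}_t|+|\mathcal{S}^{\text{infl}}_t|=m-2$ (resp.\ $m$).

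The main obstacle is upgrading the qualitative cap-to-grim-reaper convergence into a quantitative statement strong enough to rule out \emph{spurious} critical points of $k$ inside a cap for all sufficiently negative $t$. This is where the exponential decay rate of Theorem \ref{thm:main.conv.rate} and the monotone zero count for the curvature equation combine: once the cap is $C^2$-close to its grim-reaper limit and the sheets are exponentially flat, no extra critical point of $k$ can be created anywhere, so the nonincreasing counts are pinned to the combinatorial minima computed above.
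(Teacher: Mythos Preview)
Your bootstrap is circular. In this paper, Theorem~\ref{thm:main.conv.rate} is proved in Sections~7--8 via the $\varepsilon$-\emph{trombone time} (Definition~\ref{def:trombone}), and the very existence of that time (Remark~\ref{rmk:trb.time}) rests on Theorem~\ref{thm: vertex path epsilon grim reaper}, Theorem~\ref{thm: uniqueness of vertex}, and Corollary~\ref{cor: bumpy}---precisely the statements from which Theorem~\ref{thm:main.geometry} is read off. So you cannot assume Theorem~\ref{thm:main.conv.rate} as an input here; only the rough convergence of Theorem~\ref{thm: rough convergence} is available at this stage, and that gives a graphical radius $\rho(\tau)\to\infty$ with no quantitative rate.

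Even granting yourself the graphical region, two steps in the proposal are genuine gaps. First, your cap-to-grim-reaper blow-up assumes the cap is ``a convex piece,'' but convexity of the cap is exactly part of what must be proved; a priori a cap could carry several sharp vertices and inflection points. The paper handles this not by convexity but by a \emph{localized} Gaussian density estimate (Section~\ref{sec:5}): the sub-flow of a single finger has local density $\leq 2+o(1)$, so any blow-up at a point of large curvature on a finger has entropy $\leq 2$ and is then classified (Theorem~\ref{thm: low entropy flow}) as a grim reaper, after which a quantitative rigidity argument (Proposition~\ref{prop: curv.est.of.finger}, Proposition~\ref{prop: unique vertex}) excludes additional sharp vertices on the same finger. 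Without the density localization, your blow-up could equally well produce a multiplicity-two line, which tells you nothing. Second, your treatment of knuckles is definitionally off: a knuckle (Definition~\ref{def: crit pt of dist}) is a local minimum of the distance $|\gamma - x_0|$, not a local extremum of $|\kappa|$, and the count $|\mathcal{S}^{\text{knuc}}_t|=m$ comes from the Sturm theory applied to $\varphi^{(x_0)}_s$ (Proposition~\ref{prop: finite number of minimum points}, Corollary~\ref{cor: saturation time}), one knuckle per sheet, not from any ``interface between caps.''
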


\bigskip

Also, it is crucial to investigate the asymptotic behavior of the flow in sharp vertex regions, since the flow is close to straight lines in other regions. For example, we let $v(t)$ denote a continuous family of sharp vertices of $M_t$, and then we can say that $\mathbf{v} = \bigcup_{t \in I} \big(\bv(t), t\big)$ is a \textit{path of sharp vertices} of $\cM$. In particular, if $(-\infty,t_0)\subset I$, then we call it an ancient path. Then, the following theorem shows how the flow converges to a translating grim reaper in the parabolic neighborhood $P(\bv, (|\kappa(\bv)|\vare)^{-1})$ of $\bv(t):=(v(t),t)$. See (\ref{eq: parabolic ball}) and (\ref{eq: parabolic rescaling}) for the definitions of $P(X,r)$ and parabolic rescaling below.

\begin{theorem}[vertex asymptotics]\label{thm:main.vertex.asymp}
Let $\mathcal{M}$ be an ancient flow with an ancient path $\mathbf{v}$ of sharp vertices.
Then, given any $\varepsilon>0$, there is $T\ll -1$ such that for any $t\leq T$, the connected component of $\cD_{|\kappa(\bv(t))|}(\cM - \bv(t))\cap P(\bo, 1/\vare)$ containing the spacetime origin is $\vare$-close in $C^{\lfloor 1/\vare \rfloor}$ to a unit-speed-translating grim reaper curve whose zero-time-slice has its tip at the origin.
\end{theorem}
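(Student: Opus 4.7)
The plan is to fix $\vare>0$, consider the blow-ups $\cM^{(t)} := \cD_{\la(t)}(\cM - \bv(t))$ as $t \to -\infty$, extract smooth subsequential limits on the parabolic ball $P(\bo, 1/\vare)$ of the connected component through the origin, classify each such limit as a unit-speed translating grim reaper with tip at the origin, and upgrade to full convergence. By construction $|\ka|(\bo,0) = 1$ on every $\cM^{(t)}$, and since $\bv(t)$ is a sharp vertex of $M_t$, $|\ka| \leq 1 + o(1)$ in a spatial neighborhood of the origin at time zero. To propagate this to the large parabolic ball $P(\bo, 1/\vare)$, I combine the graphical radius and exponential decay estimate in Theorem \ref{thm:main.conv.rate}, the exact counts of tips and vertices in Theorem \ref{thm:main.geometry}, and Angenent's Sturmian theorem controlling the zeros of $\ka$ and $\ka_s$: under the further rescaling by $\la(t) \to \infty$, all other tip regions and all non-graphical portions of $\cM$ are pushed off to spatial infinity, leaving the tip at $\bv(t)$ isolated in $P(\bo, 1/\vare)$. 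Together with Huisken's monotonicity and the finite-entropy hypothesis, this yields uniform $C^k$ bounds for every $k$ on the connected component of $\cM^{(t)} \cap P(\bo,1/\vare)$ containing the origin, once $t$ is sufficiently negative.

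Standard parabolic regularity and Arzel\`a-Ascoli then extract, along any sequence $t_k \to -\infty$, a smooth ancient limit flow $\cM^\infty$, and a diagonal argument in $\vare$ gives a limit defined on all of $\R^2 \times (-\infty,0]$. The limit is connected, embedded, passes through the origin with $|\ka|(\bo,0)=1$, and has entropy at most $\ent[\cM] < \infty$. As a limit of sharp vertices, the origin is a local maximum of $|\ka|$ at which $\ka$ has definite sign, so the component of $\cM^\infty$ through the origin is locally convex there, and the Sturmian bound on inflection points (Theorem \ref{thm:main.geometry}) promotes this to global convexity of the component. The separation of the two adjacent sheets of $M_t$ in the blow-up forces $\cM^\infty$ to be noncompact. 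The classification of convex embedded ancient curve shortening flows in $\R^2$ from \cite{BLT20,DHS10} then leaves only the translating grim reaper, and the normalization $|\ka|(\bo,0) = 1$ pins down the translation speed to $1$ and the tip location to the origin. The orientation is determined by the sign of $\ka$ along the continuous path $\bv$, which is eventually constant, so every subsequential limit is the same grim reaper, and the convergence is in fact valid along the full family $t \to -\infty$ in $C^{\lfloor 1/\vare \rfloor}$.

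The main obstacle is expected to be the uniform curvature control in the first paragraph on parabolic balls of arbitrary fixed radius $1/\vare$. Pseudolocality, the standard tool for the analogous tip analysis at multiplicity-one singularities of mean curvature flow as in \cite{CS21}, is unavailable here since the tangent flow at infinity has multiplicity $m \geq 2$. Its role must be played by the fine quantitative picture from Theorems \ref{thm:main.conv.rate} and \ref{thm:main.geometry}, which localize the non-graphical behavior of $\cM$ to a bounded, exactly counted collection of tip regions whose mutual separation in the $\la(t)$-blow-up diverges as $t \to -\infty$. Any failure of this isolation would permit a second tip to invade the limit and contradict the classification in the second paragraph, so the exact counts furnished by Theorem \ref{thm:main.geometry} are essential inputs.
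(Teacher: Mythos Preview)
Your argument is circular. You invoke Theorems~\ref{thm:main.conv.rate} and~\ref{thm:main.geometry} as inputs, but in the paper's logical order both are proved \emph{after} and \emph{using} Theorem~\ref{thm:main.vertex.asymp}: the vertex asymptotics (as Theorem~\ref{thm: vertex path epsilon grim reaper}) feeds into the uniqueness of the sharp vertex on each finger (Theorem~\ref{thm: uniqueness of vertex}), which in turn underlies both the counts in Theorem~\ref{thm:main.geometry} and the $\varepsilon$-trombone time needed for the graphical radius estimate in Section~7 and hence Theorem~\ref{thm:main.conv.rate}. So neither of those results is available at this point.

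Even setting aside circularity, the classification step fails. Your blow-up limit inherits only $\ent[\cM^\infty]\le \ent[\cM]=m$, not $\le 2$, so \cite{BLT20,DHS10} do not apply until you have convexity; and Theorem~\ref{thm:main.geometry} bounds the number of inflection points by $m-2$ (or $m$), not by zero, so Sturmian theory alone does not force the limit component to be convex. The paper resolves both issues simultaneously by a localization that you are missing: it passes to the \emph{sub-flow of a single finger} $\cG$ (the arc between two adjacent knuckles), and uses Brakke's localized monotonicity with the cutoff $\Psi_{\bar X,R}$ to prove that the localized Gaussian density ratio of $\cG$ near the vertex is at most $2+o(1)$ (Propositions~\ref{prop: unrescaled GD upper bound}--\ref{prop: rescaled GD upper bound}). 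Together with the curvature lower bound at a sharp vertex (Lemma~\ref{lem: curvature lower bound of vertex}), Ilmanen compactness then produces an ancient, cyclic, unit-regular, integral Brakke limit with entropy $\le 2$, and the low-entropy classification (Theorem~\ref{thm: low entropy flow}, i.e.\ Theorem~\ref{thm:main.low.entropy} in its weak form) identifies it as a grim reaper directly---no a priori convexity is needed. This is precisely the content of Theorem~\ref{thm: asymp of finger} and its corollary Theorem~\ref{thm: vertex path epsilon grim reaper}. The missing idea in your proposal is this entropy localization to a single finger; without it you cannot bring the entropy of the blow-up below $3$ and hence cannot invoke any classification.
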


\bigskip

This vertex asymptotic theorem \ref{thm:main.vertex.asymp} can be obtained by the weak-flow version (Theorem~\ref{thm: low entropy flow}) of the following classification of low-entropy flows.

\begin{theorem}[low-entropy flow]\label{thm:main.low.entropy}
Let $\mathcal{M}$ be an ancient smooth curve-shortening flow embedded in $\mathbb{R}^2$ with $ \text{Ent}(\mathcal{M}) < 3$. Then, it is a static line, a shrinking circle, a paper clip, or a translating grim reaper.
\end{theorem}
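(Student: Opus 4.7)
The plan is to reduce the classification to that of convex ancient flows and then invoke existing classifications. First, Theorem~\ref{thm:main.unique} implies that either $\cM$ is a static line or a shrinking circle (both listed in the conclusion), or the tangent flow at infinity is a straight line $\Gamma$ with multiplicity $m\geq 2$. Since $\ent[\Gamma]=1$ and the entropy of $\cM$ is at least $m\cdot\ent[\Gamma]=m$, the hypothesis $\ent[\cM]<3$ forces $m=2$, so I may assume the tangent flow at infinity is a line with multiplicity exactly two.

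Next, I apply Theorem~\ref{thm:main.geometry} with $m=2$. In the noncompact case it yields $|\mathcal{S}^{\text{flat}}_t|+|\mathcal{S}^{\text{infl}}_t|=m-2=0$ for all $t\leq T$, so $M_t$ has no inflection points at sufficiently negative times. In the compact case it only provides $|\mathcal{S}^{\text{flat}}_t|+|\mathcal{S}^{\text{infl}}_t|=m=2$, and one must still exclude inflection points. Here I would combine Theorems~\ref{thm:main.conv.rate} and \ref{thm:main.vertex.asymp}: the two sheets $u^1>u^2$ given by Theorem~\ref{thm:main.conv.rate} have $C^2$-norms decaying like $e^{4\delta\tau}$ on the exponentially long interval $(-e^{-\delta\tau},e^{-\delta\tau})$, so signed curvature cannot vanish on the graphical regions; by Theorem~\ref{thm:main.vertex.asymp} the two tip regions are $C^{\lfloor 1/\vare\rfloor}$-close to a strictly convex unit-speed translating grim reaper; and the two knuckles connect these regions. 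Embeddedness together with continuity of $\kappa$ across the knuckles then forces the signed curvature of $M_t$ to have a single sign for all $t\leq T'\ll 0$, so that $|\mathcal{S}^{\text{infl}}_t|=0$ and $|\mathcal{S}^{\text{flat}}_t|=2$.

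Once there are no inflection points at some $T\ll 0$, Angenent's Sturmian-type theorem for the parabolic equation satisfied by $\kappa$ along the flow (the number of zeros of $\kappa$ is non-increasing in time) propagates this to every later time at which $\cM$ is smooth. Hence $\cM$ is a strictly convex ancient curve shortening flow throughout its entire interval of existence. One then invokes the classification of Daskalopoulos--Hamilton--Sesum~\cite{DHS10} in the closed case (shrinking circles and paper clips are the only convex closed ancient CSF in $\mathbb{R}^2$) and of Bourni--Langford--Tinaglia~\cite{BLT20} in the noncompact case (static lines and translating grim reapers are the only convex noncompact ones). The multiplicity $m=2$ of the tangent flow at infinity excludes the shrinking circle and the static line, leaving exactly the paper clip in the compact case and the translating grim reaper in the noncompact case.

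The main obstacle I anticipate is the compact $m=2$ step: Theorem~\ref{thm:main.geometry} only controls the sum $|\mathcal{S}^{\text{flat}}|+|\mathcal{S}^{\text{infl}}|$, and isolating $|\mathcal{S}^{\text{infl}}|=0$ requires a careful patching argument combining the quantitative graphical radius estimate of Theorem~\ref{thm:main.conv.rate}, the grim reaper convergence of Theorem~\ref{thm:main.vertex.asymp}, and the embeddedness of the flow across the two knuckles.
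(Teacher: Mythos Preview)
Your proposal contains a circular dependency. In the paper's logical structure, Theorem~\ref{thm:main.low.entropy} is proved early, in Section~4, and is then used as a foundational input for the rest of the paper. Concretely, Theorem~\ref{thm: asymp of finger} (the asymptotic behavior of large-curvature regions on a finger) invokes the low-entropy classification Theorem~\ref{thm: low entropy flow}, which is the weak-flow version of Theorem~\ref{thm:main.low.entropy}. Theorem~\ref{thm: asymp of finger} in turn feeds into Theorem~\ref{thm: vertex path epsilon grim reaper} (which is Theorem~\ref{thm:main.vertex.asymp}), then into all of Section~6 (yielding Theorem~\ref{thm:main.geometry}), then into the graphical-radius machinery of Section~7, and finally into Theorems~\ref{thm:main.unique} and~\ref{thm:main.conv.rate} in Section~8. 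So when you invoke Theorems~\ref{thm:main.geometry}, \ref{thm:main.conv.rate}, and~\ref{thm:main.vertex.asymp} to establish convexity, you are using results whose proofs already rely on the very statement you are trying to prove.

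The paper's actual argument is much shorter and logically prior. It uses only the rough convergence Theorem~\ref{thm: rough convergence} (which shows the entropy is an integer, hence $\ent[\cM]\leq 2$ when $\ent[\cM]<3$) together with the direct convexity result Theorem~\ref{thm: convex}. The latter is proved by a topological finger-region argument: if the curvature were negative at some point, one produces an ancient family of fingers whose finger regions lie on the ``outside'' $K_t'$, and then deforms the reference point to the origin to reach a contradiction with the choice of inside/outside at very negative times. Once strict convexity is established, the classifications of \cite{DHS10} and \cite{BLT20} finish the proof. No vertex asymptotics, no graphical-radius estimates, and no spectral analysis are needed.

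As a secondary issue, even ignoring circularity, your compact $m=2$ step is not correct as written: the fact that the sheets have small $C^2$-norm on the graphical region means the curvature is \emph{small} there, not that it is nonzero, so you cannot conclude that $\kappa$ does not vanish on the graphical part from Theorem~\ref{thm:main.conv.rate} alone.
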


\begin{remark}
    To wit, low-entropy flows are convex.
\end{remark}

\bigskip

This paper is organized as follows. In Section 2, we introduce the weak formulation of the curve-shortening flow and the geometric terminologies that will be used in the rest of the paper.
In particular, the ancient paths of critical points of distance and curvature are defined using Sturm's theory. 
In Section 3, we study the coarse structure of the backward limit of ancient finite-entropy embedded solutions. We show that the entropy for a nontrivial ancient flow must be an integer, which in turn determines the number of sheets, knuckles, and fingers of the very ancient time-slices. 
In Section 4, we focus on the low-entropy Brakke flows and classify the ancient embedded flows with entropy no more than 2, based on the strict convexity of the flows of entropy 2 (Theorem~\ref{thm: convex}).
In Section 5, we show that the sub-flow of fingers and tails has a localized Gaussian density ratio almost bounded by 2 and 1, respectively. Then the classification of weak low-entropy flows allows us to show the asymptotic behavior of the high-curvature region on fingers and the curvature decay estimate on tails.
In Section 6, we prove that the flow looks like a grim reaper around sharp vertices. 
With a better understanding of sharp vertices, we can show several geometric properties of fingers, including the improved lower bound for curvature at the sharp vertices, the maximal number of vertices, and the angle difference between two adjacent knuckles.
In Section 7, we show that the smallness of the graph functions in a fixed compact set propagates out and gives the lower bound of the graphical radius. Finally, in Section 8, we prove the uniqueness of tangent flow at infinity based on the graphical radius lower bound and an Allard--Almgren argument.

\
 \section{Preliminaries}
\subsection{Common notation}
Let $X_0 = (x_0 , t_0 ) \in \R^2\times \R$ and $r > 0$.
Denote the open ball in $\R^2$ of radius $r$ centered at $x_0 \in \R^2$ by 
\begin{align*}
    B_r(x_0) = B(x_0 , r) :=\{y\in \R^2\::\: \lvert y-x_0\rvert < r\}.
\end{align*} 
Denote the parabolic open ball in $\R^2 \times \R$ of radius $r$ centered at $X_0 = (x_0 , t_0)$ by
\begin{align}\label{eq: parabolic ball}
    P(X_0 , r) := B(x_0 ,r) \times (t_0 - r^2, t_0].
\end{align}
Let $\mathcal{D}_{r} (x,t) = (rx, r^2 t)$ denote the parabolic rescaling of $(x,t) \in \R^2\times \R$ by $r$.
Given a curve-shortening flow $\cM = \bigcup_{t\in I} M_t \times \{t\}$, let
\begin{align}\label{eq: parabolic rescaling}
    \cM_{X_0, r} := \mathcal{D}_{1/r} (\cM - X_0) = \bigcup_{t\in I} r^{-1}(M_t - x_0) \times \{r^{-2} (t - t_0)\}
\end{align}
denote the curve-shortening flow obtained by translating $X_0$ to the spacetime origin and parabolically rescaling by $r^{-1}$ whose time-slice at $s$ is $r^{-1}(M_{t_0 + r^2 s} - x_0)$.

\bigskip
\subsection{Weak flows}\label{sec: 2.2}
To analyze singularities or the limits of solutions, we need to define solutions in a non-smooth setting with the compactness theorem.
In geometric measure theory, a very nice class of singular curves in $\R^2$ is described by one-rectifiable Radon measures; see \cite{Sim84, Ilm1994ERP}. 
Recall that a one-rectifiable Radon measure $\mu$ in $\R^2$ is a Radon measure that has a one-dimensional tangent line of positive multiplicity at $\mu$-a.e. points (cf. \cite[11.6]{Sim84}). 
As in the smooth setting, the entropy of $\mu$ is defined as 
\begin{align}\label{eq:Entropy}
    \ent[\mu] = \sup_{x_0\in \R^2, \lambda> 0 } \int \tfrac{1}{\sqrt{4\pi \lambda}}e^{-\frac{\lvert x - x_0 \rvert^2}{4\lambda}}\ d\mu(x).
\end{align}
Note that it is the supremum of the \emph{Gaussian weighted length functional} $F_{x_0, \lambda}$ defined by
\begin{align}
    F_{x_0, \lambda}[\mu] = \frac{1}{\sqrt{4 \pi \lambda}}\int e^{-\frac{\lvert x - x_0 \rvert^2}{4\lambda}}\ d\mu(x)
\end{align}
over all centers $x_0 \in \R^2$ and all scales $\lambda>0$.

As in \cite{Bra84, Ilm1994ERP}, a one-dimensional \emph{Brakke flow} in $\R^2$ is a family of Radon measures $\cM = \{\mu_t\}_{t\in I}$ in $\R^2$ that is one-rectifiable for a.e. $t\in I$ and satisfies (\ref{eq: CSF}) in the weak sense, i.e.,
\begin{align}
    \overline{D}_t \int \varphi\ d\mu_t \leq \int \left(-\varphi \lvert \boldsymbol{\kappa} \rvert^2 + (\D \varphi)^{\perp}\cdot \boldsymbol{\kappa}\right)\ d\mu_t
\end{align}
for all nonnegative test functions $\varphi \in C_{c}^1(\R^2)$. Here, $\overline{D}_t$ denotes the limit superior of difference quotients, $\perp$ denotes the normal projection, which is defined $\mu_t$-almost everywhere, and $\boldsymbol{\kappa}$ denotes the generalized curvature vector of the associated varifold $V_{\mu_t}$, which is defined by the first variation formula and exists $\mu_t$-almost everywhere at almost every $t$. 
The integral on the right-hand side is defined as $-\infty$ whenever it does not make sense.
Brakke flows enjoy a nice existence and the compactness theorem \cite{Ilm1994ERP}.

The monotonicity formula plays an important role in the study of singularities. Let $\cM = \{\mu_t\}_{t\in I}$ be a Brakke flow with finite entropy, where the entropy is defined by
\begin{align*}
    \ent[\cM] = \sup_{t\in I} \ent[\mu_t].
\end{align*}
Following Huisken's calculation \cite{Hu90} with modification in the Brakke setting (cf. \cite[Lemma 7]{Ilm95}), for any spacetime point $X_0 = (x_0, t_0) \in \R^2\times \R$, for any $t< t_0$
\begin{align}
    \overline{D}_t \int \Phi_{X_0}(x,t)\ d\mu_t \leq -\int \left\lvert \boldsymbol{\kappa} + \frac{(x - x_0)^\perp}{2(t_0 - t)}\right\rvert^2 \Phi_{X_0}(x,t)\ d\mu_t\ .
\end{align}
Here, $\Phi_{X_0}(x, t) = \left( 4\pi (t_0 - t) \right)_+^{-\frac12} \exp\big(-\frac{\lvert x - x_0\rvert^2}{4 (t_0 - t)}\big)$ denotes the backward heat kernel where $(\cdot)_+ = \max\{ \cdot, 0\}$.
Note that the integral on the left-hand side of the monotonicity formula is exactly $F_{x_0, (t_0 - t)}[\mu_t]$.
A localized version will be stated and used in Section \ref{sec:5}. 
We define the \emph{Gaussian density ratio} centered at $X_0$ at scale $r>0$ by
\begin{align*}
    \Theta(\cM, X_0, r) = \int \Phi_{X_0}\ d\mu_{t_0-r^2}
\end{align*}
which is non-decreasing in $r > 0$. 
Thus, the limit
\begin{align*}
    \Theta(\cM, X_0) = \lim_{r\to 0^+} \Theta(\cM, X_0, r)
\end{align*}
exists for all $X_0$, and is called the \emph{Gaussian density} at $X_0$. The map 
\begin{align*}
    X \to \Theta(\cM, X)
\end{align*}
is upper-semicontinuous. 
If $\cM$ is ancient, the limit
\begin{align*}
    \Theta(\cM, \infty) := \lim_{r \to \infty} \Theta(\cM, X_0, r)
\end{align*}
exists for all $X_0$ and is equal to $\ent[\cM]$. 
By the rigidity part of the monotonicity formula, if $\Th(\cM,X_0) = \Theta(\cM, \infty)$, then the backward portion $\cM_{X_0}^- := (\cM - X_0)\cap \{X = (x,t): t < 0\}$ is self-similar, namely $\cD_\lambda \cM_{X_0}^- = \cM_{X_0}^-$ for all $\lambda > 0$.

The monotonicity formula allows us to control the Euclidean density ratio $r^{-1} \mu_t(B_r(x_0))$ for any $r>0$. 
Given $X_0$, using Ilmanen's compactness theorem (\cite[Theorem 7.1]{Ilm1994ERP}), blowup sequences $\cM_{X_0, \lambda_i} := \cD_{1/\lambda_i}(\cM - X_0)$ with $\lambda_i \to 0^+$ always have a subsequential limit $\hat\cM$, called \emph{tangent flow} at $X_0$.
Similarly, if $\cM$ is ancient, blowdown sequences $\cM_{X_0, \lambda_i}$ with $\lambda_i \to \infty$ always have a subsequential limit $\check\cM$, called \emph{tangent flow at infinity} as discussed in the Introduction.
Using the monotonicity formula again (cf. \cite[Lemma 8]{Ilm95}), $\hat\cM$ and $\check\cM$ are both backward self-similar.

All Brakke flows $\cM = \{\mu_t\}_{t\in I}$ throughout the present paper have the following additional good properties:
\begin{itemize}
    \item \emph{integral} (cf. \cite{Bra84, Ilm1994ERP}): $\mu_t$ is \emph{integer} one-rectifiable for almost all $t$.
    
    \item \emph{unit-regular} (cf. \cite{Whi05}): Every spacetime point with Gaussian density equal to one is a regular point.

    \item \emph{cyclic} (cf. \cite{Whi09}): For almost all $t$ the associated $\mathbb{Z}_2$ flat chain $[V_{\mu_t}]$ of the associate varifold is cyclic, namely, $\pa [V_{\mu_t}] = 0$.
\end{itemize}

It is worth noting that being integral, unit-regular, and cyclic is preserved under the limit of Brakke flows. 
For the closure of being integral and cyclic, see \cite[Theorem 3.2, 3.3, and 4.2]{Whi09}. The closure of being unit-regular follows from White's regularity theorem \cite{Whi05}.
In particular, Brakke flows starting at any closed embedded curve $M\subset \R^2$ constructed via Ilmanen's elliptic regularization procedure \cite{Ilm1994ERP} are integral, unit-regular, and cyclic.

For any closed set $K$ in $\R^2$, its \emph{level-set flow} $\{K_t\}_{t\geq 0}$ is defined to be the maximal family of closed sets starting at $K$ that satisfies the avoidance principle (cf. \cite[Section 10]{Ilm1994ERP}), whose spacetime track is denoted by $\cK$.
For any complete embedded curve $M\subset \R^2$, $\R^2\setminus M$ has exactly two connected components whose closures are indicated by $K, K'$, respectively. 
When $M\cong \mathbb{S}^1$, $K$ is always chosen to be compact and is called \emph{inside}, in contrast, $K'$ is called \emph{outside}; when $M\cong \R$, $K$ and $K'$ will be determined later and still called inside and outside with respect to a ``finger",  respectively.
Another two weak flows of the CSF starting at $M$ are the \emph{outer flow} $\{\pa K_t\}_{t\geq 0}$ and the \emph{inner flow} $\{\pa K_t'\}_{t\geq 0}$ \cite{Ilm1994ERP, HW18}. 
They can be viewed as \emph{boundary flows} of sets of locally finite perimeters (or Caccioppoli sets) introduced by de Giorgi (cf. \cite[Section 11]{Ilm1994ERP}). 
As long as the CSF $\{M_t\}_{t\in I}$ is smooth, these formulations are identical. 
The advantage of utilizing these flows in this paper is that we can keep track of the orientation more easily with the topological arguments in Section 4.

\bigskip

\subsection{Geometric terminology for planar curves}\label{sec:terms}

Let $\gamma:\mathbb{L}\to \mathbb{R}^2$ be a smooth immersion whose image $M=\gamma(\mathbb{L})$ is a complete embedded curve assigned with the following orientation. Denote by $\bt$ and $\bn$ the unit tangent and unit normal on $\Gamma$, respectively.
If $\mathbb{L} = \mathbb{S}^{1}$, then we choose the {\it counterclockwise} orientation for $\gamma$, namely, it coincides with the natural orientation of $\mathbb{S}^{1}$. Let $J:\R^2\to \R^2$ defined by $(x, y)\mapsto (-y, x)$. We choose the unit normal $\bn$ to be $\bn := J\bt$. 
Equivalently, $\bn$ points inward inside $K$ of $M$. 
The {\it signed curvature} is defined by
\begin{align}
     \kappa =\langle \gamma_{ss}, \bn\rangle = \langle\boldsymbol{\kappa}, \bn \rangle.
\end{align}
If $\mathbb{L} = \R$, we still choose $\bt$ and $\bn = J\bt$ such that $\bn$ points from $K'$ into $K$ as discussed in the last paragraph of Section 2.1.

Now we will define several geometric critical points of $M$ and a curved segment or a ray whose end point is a critical point. By \emph{curved segment}, we mean an image $\gamma([a, b])$; by \emph{curved ray}, we mean an image $\gamma((-\infty, a])$ or $\gamma([b, \infty))$ for some $-\infty < a < b < \infty$.
\begin{definition}[critical points of distance functions]\label{def: crit pt of dist}
Let $x_0\in \R^2$, we define \emph{knuckle}, \emph{tip}, \emph{finger}, \emph{sheet}, and \emph{tail} with respect to $x_0$ as follows: 
\begin{enumerate}
    \item $\bq_0 = \gamma(q_0)$ is a \emph{tip} with respect to $x_0$ if $\lvert \gamma - x_0\rvert$ reaches its local maximum at $q_0$.
    \item $\bp_0 = \gamma(p_0)$ is a \emph{knuckle} with respect to $x_0$ if $\lvert \gamma - x_0\rvert$ reaches its local minimum at $p_0$. 
    
    \item A compact curved segment $\Gamma\subset M$ is a \emph{finger} with respect to $x_0$ if its end points are isolated knuckles with respect to $x_0$ and there is no knuckle with respect to $x_0$ in the interior of $\Gamma$. 
    
    \item A curved ray $\Gamma\subset M$ is a \emph{tail} with respect to $x_0$ if its unique end point is a knuckle with respect to $x_0$ and there is no knuckle with respect to $x_0$ in the interior of $\Gamma$. 
\end{enumerate}
\end{definition}
\

\begin{definition}[critical points of curvature $\lvert \kappa \rvert$]\label{def: vertex}
We define \emph{inflection point}, \emph{vertex}, and $\emph{edge}$ of $M$ as follows:
\begin{enumerate}
    \item An \emph{inflection point} of the curve $\Gamma$ is a zero of curvature.
    
    \item A \emph{vertex} $\bv = \gamma(v)$ of the curve $\Gamma$ is a critical point of curvature, i.e., $\kappa_s(v)=0$. We say that the vertex $p\in \Gamma$ is \emph{sharp} ( resp. \emph{flat} ) if $\lvert \kappa \rvert$ reaches its local maximum (resp. minimum) at $p$.

    \item A curved segment or ray $\Sigma$ of $M$ is an \textit{edge} of $M$ if $\partial\Sigma$ consists of sharp vertices and there is no sharp vertex in the interior of $\Sigma$.
\end{enumerate}
\end{definition}

\bigskip
Furthermore, we need the following definition for Sturm's theory.
\begin{definition}[paths of geometric critical points]
    For a curve-shortening flow $\mathcal{M}$, the spacetime track $\bp = \bigcup_{t\in I}(p(t), t)\in \mathcal{M}$ is called a \emph{path} of tips or knuckles with respect to $x_0$, or vertices if $p(t)$ is continuous and $p(t)$ is a tip or a knuckle with respect to $x_0$, or a vertex of $M_{t}$ for all $t\in I$, respectively.
\end{definition}

\bigskip

Lastly, \emph{graphical radius} and \emph{sheets} are defined in Definition \ref{def:radius.sheet} after we show the rough convergence theorem \ref{thm: rough convergence}. \emph{Sub-flow of a finger} will be defined in Definition~\ref{def: sub-flow}. In addition, the $\varepsilon$-\emph{grim reaper} flows and the \emph{bumpy} curves are defined in Definition \ref{def: esp grim reaper} and \ref{def: bumpy}, respectively. Furthermore, the $\varepsilon$-\textit{trombone time} is defined in Definition \ref{def:trombone}.

\bigskip

\subsection{Sturm's Theory}\label{sec: 2.4}
For a function $u: [A_0, A_1] \to \R$, we will call the set $u^{-1}(0)$ of zeros the \emph{nodal set} of $u$. Any connected component of $[A_0, A_1] \setminus u^{-1}(0)$ is called a \emph{nodal domain}. A point $x_0\in (A_0, A_1)$ is called a \emph{multiple zero} of $u$ if $u(x_0) = u_x (x_0) = 0$.
\begin{proposition}[Sturm's theory \cite{Ang:1988:ZSS}]\label{prop: zeroset}
Let $u: [A_0, A_1]\times [0, T] \to \mathbb{R} $ be a smooth nontrivial solution to the parabolic equation $u_t=au_{xx}+bu_x + cu$, where $a,b,c\in C^{\infty}([A_0, A_1]\times [0,T])$ and $a>0$. Suppose $u$ satisfies Dirichlet, Neumann, the periodic boundary condition, or the non-vanishing boundary condition: $u(A_i,t) \neq 0$ holds for all $t\in [0,T]$ and $i = 1,2$. Let $z(t)$ denote the number of zeros of $u(\cdot, t)$ in $[A_0, A_1]$ with $t\in [0, T]$. Then 
\begin{enumerate}
    \item[$(a)$] for all $t\in [0, T]$, $z(t)$ is finite,
    \item[$(b)$] if $(x_0, t_0)$ is a multiple zero of $u$, i.e., $u$ and $u_x$ both vanish at $(x_0, t_0)$, then for all $t_1< t_0 < t_2$ we have $z(t_1) > z(t_2)$. More precisely, there exists a neighborhood $N = [x_0 - \vare , x_0 + \vare] \times [t_0 - \delta, t_0 + \delta]$ such that 
    \begin{enumerate}
        \item[$(i)$] $u(x_0 \pm \vare, t) \neq 0$ for $\lvert t - t_0 \rvert \leq \delta$,
        \item[$(ii)$] $u(\cdot, t + \delta)$ has at most one zero in the interval $[x_0 - \vare , x_0 + \vare]$,
        \item[$(iii)$] $u(\cdot, t - \delta)$ has at least two zeros in the interval $[x_0 - \vare , x_0 + \vare]$.
    \end{enumerate}   
\end{enumerate} 
\end{proposition}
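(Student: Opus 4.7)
The plan is to follow Angenent's approach: establish a tight local description of the zero set of $u$ near each zero, then assemble global conclusions. First, by the change of spatial variable $y = \int \frac{dx}{\sqrt{a}}$ and multiplying $u$ by an appropriate positive integrating factor, I would reduce the equation to a canonical form such as $v_t = v_{yy} + \tilde c(y,t)\, v$, which preserves zero-counts and boundary conditions. This reduction removes the first-order term and lets all subsequent estimates be performed against the standard heat operator plus a zeroth-order perturbation.

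For part (a), I would argue by contradiction. If $u(\cdot, t_0)$ has infinitely many zeros in $[A_0, A_1]$, they accumulate at some $x_0$, and smoothness of $u$ forces $\partial_x^k u(x_0, t_0) = 0$ for all $k$. Iterating the PDE then shows that all mixed spacetime derivatives vanish at $(x_0, t_0)$ as well. Invoking spatial analyticity of solutions to uniformly parabolic equations with smooth coefficients (or, failing direct analyticity, approximating the coefficients by real-analytic ones and passing to a limit), I conclude $u(\cdot, t_0)$ vanishes identically near $x_0$, hence on the whole interval. The boundary hypothesis together with backward uniqueness for parabolic equations then forces $u \equiv 0$ on $[A_0, A_1] \times [0, T]$, contradicting non-triviality.

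The core of part (b) is a Weierstrass-type preparation lemma. At a multiple zero $(x_0, t_0)$ of exact order $k \geq 2$, I would establish that in a small parabolic neighborhood of $(x_0, t_0)$,
\[
u(x,t) = \bigl[(x - x_0)^k + a_{k-1}(t)(x - x_0)^{k-1} + \cdots + a_0(t)\bigr]\, Q(x,t),
\]
where $Q$ is smooth with $Q(x_0, t_0) \neq 0$ and each $a_j$ is smooth with $a_j(t_0) = 0$. Substituting this ansatz into the reduced PDE and matching coefficients of powers of $(x - x_0)$ yields a coupled evolution system for $(a_0, \dots, a_{k-1})$ together with an equation for $Q$ whose initial data ensures $Q \neq 0$ persists in a neighborhood. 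A discriminant analysis of the time-dependent polynomial factor then shows that its number of real roots strictly decreases as $t$ crosses $t_0$. Choosing $\vare, \delta$ small enough that $u$ does not vanish on the lateral boundary $\{x_0 \pm \vare\} \times [t_0 - \delta, t_0 + \delta]$ converts these root counts into the precise statements (i)--(iii).

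The global conclusion $z(t_1) > z(t_2)$ then follows by patching: away from multiple zeros, simple zeros of $u(\cdot, t)$ depend smoothly on $t$ by the implicit function theorem and can neither appear nor disappear except at the boundary---ruled out by the Dirichlet, Neumann, periodic, or nonvanishing hypothesis---or through collision into a multiple zero, handled by part (b). Finiteness at each time, established in part (a), guarantees a finite covering of $[A_0, A_1] \times [0, T]$ by such local charts. The main obstacle is the Weierstrass-type factorization: producing smooth coefficients $a_j(t)$ depending only on $t$ together with a smooth nonvanishing $Q$, and verifying that the factorization is consistent with the PDE so that counting real roots of the polynomial factor faithfully counts zeros of $u$. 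This is delicate because the induced equations for the $a_j$ are nonlinear and the smoothness of $Q$ and of each $a_j$ requires either a careful implicit-function argument on jets or a formal-series construction followed by Borel summation.
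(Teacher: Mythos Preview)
The paper does not prove this proposition: it is stated with a citation to Angenent \cite{Ang:1988:ZSS} and used as a black box thereafter. There is therefore nothing in the paper to compare your argument against.

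That said, your outline tracks Angenent's actual proof fairly closely, with one substantive gap. In part (a) you invoke ``spatial analyticity of solutions to uniformly parabolic equations with smooth coefficients,'' which is false in general---analyticity of solutions requires analytic coefficients. Your fallback of approximating by analytic coefficients and passing to a limit does not obviously control zero counts under the limit. The correct replacement, and what Angenent actually uses, is a strong unique continuation theorem for parabolic operators: if all spacetime derivatives of $u$ vanish at $(x_0,t_0)$ then $u\equiv 0$. This is a nontrivial input (Angenent attributes it to Nirenberg) and should be cited rather than derived. For part (b), your Weierstrass-preparation picture is the right target, but Angenent does not obtain it by matching coefficients in an ansatz; he passes to similarity variables $\xi = (x-x_0)/\sqrt{t_0-t}$, $s = -\log(t_0-t)$ and shows the rescaled solution converges to a Hermite polynomial $H_k(\xi)$, which encodes both the factorization and the root-count drop across $t_0$. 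Your jet-matching scheme may be workable but, as you note yourself, the smoothness and consistency of the $a_j$ and $Q$ are delicate, and the similarity-variable route avoids that difficulty entirely.
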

\

\begin{corollary}\label{cor: almost no multiple zero}
Let $u$ be assumed as in the above proposition. Then the set 
\begin{equation*}
    \{t\in [0, T]: u(\cdot, t) \mbox{ has a multiple zero}\}
\end{equation*} 
is finite.
\end{corollary}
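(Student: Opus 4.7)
The plan is to leverage the zero-counting function $z(t)$ from Proposition~\ref{prop: zeroset} and show that it is a non-increasing, $\mathbb{Z}_{\geq 0}$-valued function of $t$ that strictly drops at every multiple-zero time. Combined with the finiteness of $z(0)$, this immediately bounds the cardinality of the set of multiple-zero times by $z(0)$.

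First I would establish local constancy of $z$ away from multiple-zero times. Fix $t_* \in [0,T]$ and suppose $u(\cdot, t_*)$ has only simple zeros in $[A_0, A_1]$. By part $(a)$ there are finitely many, say $x_1 < \cdots < x_k$. At each $x_j$ the implicit function theorem applied to $u(x,t) = 0$ produces a smooth local continuation of $x_j$ as a simple zero for $t$ in a neighborhood of $t_*$, and no additional zero can appear in a small $x$-interval surrounding $x_j$ without first forming a multiple zero there. Under each of the four prescribed boundary conditions, zeros cannot enter or leave through $A_0$ or $A_1$ on a short time interval either. Hence $z$ is constant in a neighborhood of such $t_*$.

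On the other hand, at any $t_0$ at which $u(\cdot, t_0)$ admits a multiple zero $x_0$, part $(b)$ of Proposition~\ref{prop: zeroset} furnishes $\vare, \delta > 0$ such that $u(\cdot, t_0 + \delta)$ has at most one zero in $[x_0 - \vare, x_0 + \vare]$, while $u(\cdot, t_0 - \delta)$ has at least two, and $u(x_0 \pm \vare, t) \neq 0$ for $|t-t_0| \leq \delta$. Combined with the local constancy argument on the complement of $[x_0-\vare, x_0+\vare]$ (shrinking $\delta$ if necessary so that no other multiple zero of $u(\cdot,t_0)$ lies outside $[x_0-\vare, x_0+\vare]$), this yields $z(t_0 - \delta) > z(t_0 + \delta)$, i.e., a strict integer drop of $z$ across $t_0$.

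Putting these two observations together, $z:[0,T] \to \mathbb{Z}_{\geq 0}$ is non-increasing on $[0,T]$ and strictly decreases at every time at which $u(\cdot, t)$ admits a multiple zero. Since $z(0) < \infty$, the number of such strict drops — and therefore the cardinality of the set in question — is at most $z(0)$. There is no serious obstacle beyond correctly assembling parts $(a)$ and $(b)$ of Proposition~\ref{prop: zeroset} and checking case by case that the specified boundary conditions prevent zeros from leaking in or out through $A_0$ and $A_1$.
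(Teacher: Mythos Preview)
Your proof is correct, and the paper gives no explicit proof of this corollary, treating it as immediate from Proposition~\ref{prop: zeroset}. One remark: your local-constancy step is more than you need. Part $(b)$ of the proposition already asserts the \emph{global} inequality $z(t_1) > z(t_2)$ for \emph{all} $t_1 < t_0 < t_2$ whenever $t_0$ is a multiple-zero time, not just for nearby $t_1,t_2$. So you can argue directly: if there were $N+1$ multiple-zero times $s_0 < s_1 < \cdots < s_N$ in $(0,T)$, then choosing any $t_i \in (s_i, s_{i+1})$ and applying part $(b)$ at each $s_i$ gives $z(0) > z(t_0) > z(t_1) > \cdots > z(t_{N-1}) \geq 0$, hence $N \leq z(0)$. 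The implicit-function-theorem argument for local constancy, and the care about zeros at the boundary, are therefore unnecessary for this particular corollary (though they are of course true and underlie the forest structure the paper describes right after the statement).
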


As a consequence of Proposition \ref{prop: zeroset} and the implicit function theorem, the zero set $Z = \{(x,t)\in [A_0, A_1] \times [0, T]: u(x,t) = 0\}$ is a closed set with a \emph{forest structure} $R\sqcup S$, in which $R = \{(x,t): u(x,t)= 0, u_x(x,t) \neq 0\}$ is a union of smooth paths (edges) and $S = \{(x,t): u(x,t)= 0, u_x(x,t) = 0\}$ is the discrete set of multiple zeros (nodes). See Figure \ref{fig: forest} for an illustration. Additionally, each node has at least two branches, and the total number of edges increases in $-t$-direction. Since $u$ has opposite signs in two adjacent spacetime nodal domains, a path of zeros terminates at a node (multiple zero) if and only if the node has an even number of branches. 

\begin{figure}[h]
\centering
\includegraphics[width = 0.5\linewidth]{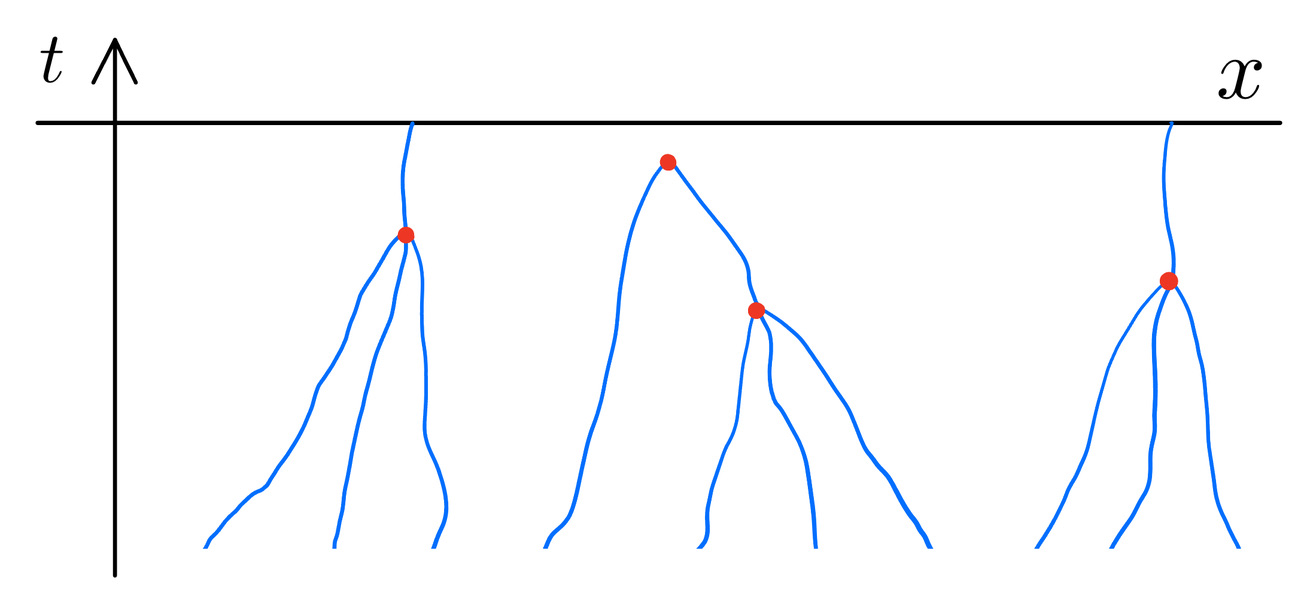}
\caption{The nodal set of a solution $u$ to a parabolic equation in which blue curves are paths of simple zeros and red dots are multiple zeros.}
\label{fig: forest}
\end{figure}

\bigskip

\subsection{Ancient paths of knuckles and tips}\label{subsec: knuckle/tip}
Let $\cM$ be a solution to the CSF parametrized by $\ga$. Given $x_0\in \R^2$, define the squared-distance function by $\varphi^{(x_0)}(s,t) = |\ga(s,t) - x_0|^2 + 2t$. Here, $s$ is the arc-length parameter. When $x_0 = 0$, simply denote $\varphi^{(x_0)}$ by $\varphi$. A direct computation gives
\begin{align}\label{eq: derivatives of distance function}
    \tfrac{\partial}{\partial s}\varphi^{(x_0)} = 2 \langle \ga(s, t) - x_0, \bt(s, t) \rangle, \quad \tfrac{\partial^2}{\partial s^2} \varphi^{(x_0)} = 2 + 2\langle \ga(s,t) - x_0, \boldsymbol{\kappa}(s,t)\rangle.
\end{align}
Thus, $\varphi^{(x_0)}$ satisfies the heat equation:
\begin{align*}
    \tfrac{\partial}{\partial t}\varphi^{(x_0)}=\tfrac{\partial^2}{\partial s^2} \varphi^{(x_0)}.
\end{align*}

\begin{lemma}\label{lem: center of distance}
   $\varphi^{(x_0)}(\cdot, t_0)$ attains a critical point at $s_0$ if and only if $x_0$ is on the normal line of $\ga$ at $\ga(s_0,t_0)$, that is, for some $\al \in \R$
\begin{equation}\label{eq: center}
    x_0 = \ga(s_0,t_0) - \al \bn(s_0,t_0).
\end{equation}
Furthermore, if $\ka(s_0,t_0) = 0$, then $\varphi^{(x_0)}(\cdot, t_0)$ always attains a local minimum at $s_0$; if $\ka(s_0,t_0) \neq 0$ and we substitute $\alpha = \beta / \ka(s_0, t_0)$ in $(\ref{eq: center})$, then
\begin{enumerate}
    \item[$(a)$] when $\beta > 1$, $\varphi^{(x_0)}(\cdot, t_0)$ attains a local maximum at $s_0$, 
    \item[$(b)$] when $\beta < 1$, $\varphi^{(x_0)}(\cdot, t_0)$ attains a local minimum at $s_0$,
    \item[$(c)$] when $\beta = 1$, $x_0$ is the center of the osculating circle tangent to $\ga$ at $\ga(s_0,t_0)$, that is,  $\varphi^{(x_0)}_s(s_0,t_0) = \varphi^{(x_0)}_{ss}(s_0,t_0) = 0$.
\end{enumerate}
\end{lemma}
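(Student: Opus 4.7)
The lemma is a second-derivative test for the squared-distance function along the curve, and my plan is to use the formulas (\ref{eq: derivatives of distance function}) directly together with the Frenet relation $\bt_s = \boldsymbol{\kappa} = \kappa\bn$.

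For the first assertion, I observe that by the first formula in (\ref{eq: derivatives of distance function}), $s_0$ is a critical point of $\varphi^{(x_0)}(\cdot, t_0)$ if and only if $\langle \gamma(s_0,t_0) - x_0, \bt(s_0,t_0)\rangle = 0$. Since $\{\bt(s_0,t_0), \bn(s_0,t_0)\}$ is an orthonormal frame of $\R^2$, this vanishing is equivalent to $\gamma(s_0, t_0) - x_0$ being parallel to $\bn(s_0, t_0)$, which is precisely the representation (\ref{eq: center}) for some $\alpha\in \R$.

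For classifying the critical point, I would plug the normal-line representation into the second formula in (\ref{eq: derivatives of distance function}). Using $\boldsymbol{\kappa}(s_0, t_0) = \kappa(s_0, t_0)\bn(s_0, t_0)$ and $|\bn|=1$, the inner product $\langle \gamma(s_0, t_0) - x_0, \boldsymbol{\kappa}(s_0, t_0)\rangle$ collapses to a scalar multiple of $\alpha\kappa(s_0, t_0)$, so that $\varphi^{(x_0)}_{ss}(s_0, t_0)$ becomes an affine function of $\alpha\kappa(s_0, t_0)$. When $\kappa(s_0, t_0) = 0$, this second derivative equals $2>0$, so $s_0$ is automatically a local minimum, which handles the degenerate-curvature case. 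When $\kappa(s_0, t_0) \neq 0$, the substitution $\alpha = \beta/\kappa(s_0, t_0)$ turns the right-hand side into an affine function of $\beta$ alone, and comparing its sign to zero distinguishes cases $(a)$, $(b)$, and $(c)$. The interpretation of $(c)$ in terms of the osculating circle is immediate, since the threshold value of $\beta$ corresponds to $|\alpha| = 1/|\kappa(s_0, t_0)|$, i.e.\ $x_0$ lying on the normal line at distance equal to the radius of curvature, which is by definition the center of the osculating circle.

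There is no real obstacle here: the whole argument is a direct application of the Frenet relation and the second-derivative test. The only point requiring care is tracking the sign convention relating $\alpha$, $\kappa$, and the inward normal $\bn$ fixed in Section 2.3 consistently, so that the threshold separating local maxima from local minima lands exactly at the stated value of $\beta$.
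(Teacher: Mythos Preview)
Your proposal is correct and follows exactly the same approach as the paper's proof, which simply reads ``The lemma follows immediately from the first and second derivative tests.'' You have merely unpacked that one line: use the first formula in (\ref{eq: derivatives of distance function}) to characterize critical points via orthogonality to $\bt$, then substitute $\gamma - x_0 = \alpha\bn$ into the second formula to reduce $\varphi^{(x_0)}_{ss}$ to $2(1+\alpha\kappa)$ and read off the cases.
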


\begin{proof}
    The lemma follows immediately from the first and second derivative tests. 
\end{proof}

\
 
\begin{proposition}[ancient paths of knuckles/tips]\label{prop: ancient local-min-path/local-max-path}
Suppose that for some $t_0\in I$ such that $p_0$ and $q_0$ are, respectively, a local minimum point, a local maximum point of $\varphi^{(x_0)}(\cdot, t_0)$. Then the following statements hold.
\begin{enumerate}
    \item[$(a)$] There exist continuous ancient paths $\bp = \bigcup_{t\leq t_0} (p(t), t), \bq = \bigcup_{t\leq t_0} (q(t), t)\subset \mathbb{L}\times (-\infty, t_0]$ such that $p(t_0) = p_0$, $q(t_0) = q_0$ and $p(t), q(t)$ are a local minimum point and a local maximum point of $\varphi^{(x_0)}(\cdot, t)$, respectively, for all $t\leq t_0$. Furthermore, for all $t_1 \leq t_2 \leq t_0$,
    \begin{equation}\label{eq: monotone of local extremum}
        \varphi^{(x_0)}(\bp(t_1)) \leq \varphi^{(x_0)}(\bp(t_2)) \quad \text{and} \quad  \varphi^{(x_0)}(\bq(t_1)) \geq  \varphi^{(x_0)}(\bq(t_2)).
    \end{equation}

    \item[$(b)$] The sign of curvature $\ka(\bq(t))$ remains the same for all $t\leq t_0$.

    \item[$(c)$] $(-t)^{-1}|\ga(\bp(t))|^2$ is increasing in $t$.
    
    \item[$(d)$] 
        \begin{equation}\label{eq: limsup of minimum}
            \limsup_{t\to -\infty} \,(-t)^{-1}|\ga(\bp(t))|^2 \leq  2
        \end{equation}
        and
        \begin{equation}\label{eq: liminf of maximum}
            \liminf_{t\to -\infty} \,(-t)^{-1}|\ga(\bq(t))|^2 \geq  2.
        \end{equation}
\end{enumerate} 
\end{proposition}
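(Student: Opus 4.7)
My plan is to prove (a) by applying Sturm's zero-set theory to a spatial derivative of $\varphi^{(x_0)}$, derive (b) from the forced non-vanishing of curvature at local maxima, deduce (d) as an immediate consequence of the monotonicity in (a), and tackle (c) by direct computation combined with the ancient-flow structure.

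For part (a), I pass to a fixed Lagrangian parameter $u \in \mathbb{L}$ and set $\tilde\varphi(u,t) = \varphi^{(x_0)}(\ga(u,t),t)$. Combining $\ga_t = \ka\bn$ with the identity $\pa_t\varphi^{(x_0)} = \pa_s^2\varphi^{(x_0)}$ shows that $\tilde\varphi$ satisfies a linear parabolic equation of the form $\tilde\varphi_t = a\tilde\varphi_{uu} + b\tilde\varphi_u$ with $a = |\ga_u|^{-2} > 0$, and its $u$-derivative $v := \tilde\varphi_u = |\ga_u|\,\varphi^{(x_0)}_s$ therefore satisfies a parabolic equation of the form covered by Proposition~\ref{prop: zeroset}. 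Since $|\ga_u| > 0$, the zeros of $v(\cdot,t)$ coincide with critical points of $\varphi^{(x_0)}(\cdot,t)$, with local minima (respectively local maxima) corresponding to sign changes of $v$ from $-$ to $+$ (respectively $+$ to $-$). The directed-forest structure of the zero set of $v$ described after Corollary~\ref{cor: almost no multiple zero} allows me to extend $p_0$ backward as a continuous path $\bp$: away from the isolated multiple zeros the implicit function theorem extends the simple zero smoothly, while at each multiple zero the local picture is a collapsing local-max/local-min pair forced by sign alternation across adjacent nodal domains, so selecting the local-min branch continues $\bp$ consistently. The construction of $\bq$ is identical. The monotonicity \eqref{eq: monotone of local extremum} then follows from the calculation $\tfrac{d}{dt}\varphi^{(x_0)}(\bp(t)) = \varphi^{(x_0)}_{ss}(\bp(t)) \geq 0$, combining the heat equation, the stationarity $\varphi^{(x_0)}_s(\bp) = 0$, and the local min condition, with continuity across bifurcation times.

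For (b), combining $\varphi^{(x_0)}_{ss}(\bq) \leq 0$ with \eqref{eq: derivatives of distance function} yields $\ka(\bq)\langle \ga(\bq) - x_0, \bn\rangle \leq -1$, so $\ka(\bq(t)) \neq 0$ on $(-\infty, t_0]$ and the sign is constant by continuity. For (d), the monotonicity in (a) gives $\varphi^{(x_0)}(\bp(t)) \leq \varphi^{(x_0)}(\bp(t_0)) =: C$, hence $|\ga(\bp(t)) - x_0|^2 \leq C - 2t$; expanding $|\ga(\bp)|^2$ around $|\ga(\bp) - x_0|^2$ and applying Cauchy--Schwarz then yields $(-t)^{-1}|\ga(\bp(t))|^2 \leq 2 + O((-t)^{-1/2})$, establishing \eqref{eq: limsup of minimum}. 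The dual argument using $\varphi^{(x_0)}(\bq(t)) \geq \varphi^{(x_0)}(\bq(t_0))$ gives \eqref{eq: liminf of maximum}.

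Part (c) is the main obstacle. Writing $\al = \langle \ga(\bp) - x_0, \bn\rangle$ and using $\langle \ga(\bp) - x_0, \bt\rangle = 0$ together with $\ga_t = \ka\bn$, a direct computation reduces the claim (modulo bounded $x_0$-corrections) to showing that $\al(\al + 2|t|\ka) \geq 0$. The local-min inequality $\ka\al \geq -1$ just falls short, yielding only the lower bound $\al^2 - 2|t|$; closing the gap requires invoking the ancient-flow structure, namely that the backward convergence of the rescaled flow to a self-shrinker pins down the correct asymptotic sign of $\al + 2|t|\ka$ at $\tau = -\infty$, and \eqref{eq: monotone of local extremum} then propagates this sign back along $\bp$. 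The other parts are routine applications of Sturm's theory and the heat equation.
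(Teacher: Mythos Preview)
Your treatment of (a), (b), and (d) matches the paper's approach: Sturm theory applied to $u=\varphi^{(x_0)}_s$ (the paper works directly in arc-length and uses the commutator identity to get $u_t=u_{ss}+\kappa^2 u$, but your Lagrangian-parameter version is equivalent), the second-derivative test forcing $\kappa\neq 0$ at tips, and the monotonicity bounds giving the $\limsup/\liminf$ in (d).

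The genuine gap is in (c). You attempt to differentiate $(-t)^{-1}|\ga(\bp(t))|^2$ directly, run into the shortfall $\al(\al+2|t|\ka)\geq \al^2-2|t|$, and then propose to close it by invoking backward convergence of the rescaled flow to a self-shrinker. That is circular: this proposition (in particular part (c)) is an input to the rough convergence theorem (Theorem~\ref{thm: rough convergence}), where it is used to produce an ancient knuckle path that stays inside $B_{\sqrt{-t}\,R}$ in order to define the angle function $\sigma(\tau)$. You cannot appeal to backward convergence here.

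The paper's argument for (c) is a two-line algebraic manipulation, no differentiation at all. Using the identity $|\ga-x_0|^2 = \varphi^{(x_0)} - 2t$, one has
\[
(-t)^{-1}|\ga(\bp(t))-x_0|^2 \;=\; (-t)^{-1}\varphi^{(x_0)}(\bp(t)) + 2,
\]
and then the monotonicity $\varphi^{(x_0)}(\bp(t_1))\leq \varphi^{(x_0)}(\bp(t_2))$ from (a), together with $0<(-t_1)^{-1}<(-t_2)^{-1}$, is chained to compare the two times. (The paper's printed proof has $\bq$ where $\bp$ is intended, and the statement writes $|\ga(\bp)|$ rather than $|\ga(\bp)-x_0|$; in the applications $x_0=0$ so these coincide.) So the missing idea on your side is simply to rewrite the quantity in terms of $\varphi^{(x_0)}$ and use (a), rather than differentiating.
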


\begin{proof}
(a) Let $u = \varphi^{(x_0)}_s$. Using the commutator identity, $\tfrac{\pa}{\pa t} \tfrac{\pa}{\pa s} = \tfrac{\pa}{\pa s} \tfrac{\pa}{\pa t} + \kappa^2 \tfrac{\pa}{\pa s}$, $u$ satisfies the parabolic equation
\begin{align*}
    u_t = u_{ss} + \kappa^2 u.
\end{align*}
Suppose that $\cM$ is a flow of closed curves. By Sturm's theory in Section 2.1 with periodic boundary condition, there exists a continuous path $\mathbf{p} = \cup_{t\leq t_0} (\bp(t)) \subset Z$ such that $p(t_0) = p_0$ and $\varphi^{(x_0)}_{ss} = u_s > 0$ in $\mathbf{p}\cap R$. This implies that $\mathbf{p}\cap R$ is a path of local spatial minimum points of $\varphi^{(x_0)}$. 
From the structure of spacetime nodal domains in Section 2.4, $\mathbf{p} \cap S$ is still a path of local spatial minimum points of $\varphi^{(x_0)}$. (\ref{eq: monotone of local extremum}) follows from the minimum principle. The statements regarding the ancient path of local maximum points $q(t), t\le t_0$ hold analogously. 
Now suppose that $\cM$ is a flow of complete non-compact curves. Note that for all $t\leq t_0$, $\displaystyle{\lim_{s\to \pm\infty}\varphi^{(x_0)}(s,t) = \infty}$. According to Sard's theorem, for any $t$ and any $A_0 > 0 $, there exists $A > A_0$ such that $\varphi_s^{(x_0)}(\pm A, t) \neq 0$. Then the Sturm theory with non-vanishing boundary condition applies to $u$ in the spacetime region $[-A, A] \times [t - \vare ,t]$ for some $\vare > 0$ for any $t$ and arbitrarily large $A$.

(b) Observe that for a.e. $t$, $\varphi_{s}^{(x_0)}(\bq(t)) = 0$ and $\varphi_{ss}^{(x_0)}(\bq(t)) < 0$. By (\ref{eq: derivatives of distance function}), this means that $1 < \lvert \ga(\bq(t))  - x_0\rvert \,\lvert \kappa(\bq(t)) \rvert$. Thus, $\kappa(\bq(t)) \neq 0$ for all $t$ and $\kappa(\bq(t))$ cannot change the sign.

(c) For $t_1 < t_2 < t_0$, by (\ref{eq: monotone of local extremum})
\begin{align*}
        (-t_1)^{-1}|\gamma(\bp(t_1))-x_0|^2 &=(-t_1)^{-1} \varphi^{(x_0)}(\bp(t_1)) + 2 
        \leq (-t_1)^{-1}\varphi^{(x_0)}(\bp(t_2)) + 2\\
        &\leq (-t_2)^{-1}\varphi^{(x_0)}(\bp(t_2)) + 2
        = (-t_2)^{-1}|\gamma(\bp(t_2))-x_0|^2.
\end{align*}

(d) Using (\ref{eq: monotone of local extremum}), we find
\begin{equation*}
 (-t)^{-1}|\gamma(\bp(t))-x_0|^2=(-t)^{-1} \varphi^{(x_0)}(\bp(t))+2 \leq (-t)^{-1}\varphi^{(x_0)}(\bp(t_0)) + 2,
\end{equation*}
and 
\begin{equation*}
    (-t)^{-1}|\gamma(\bq(t))-x_0|^2 =(-t)^{-1}\varphi^{(x_0)}(\bq(t)) + 2 \geq (-t)^{-1}\varphi^{(x_0)}(\bq(t_0)) + 2.
\end{equation*}
By taking $\limsup$ and $\liminf$ respectively, we get (\ref{eq: limsup of minimum}) and (\ref{eq: liminf of maximum}).

\end{proof}

\

\subsection{Ancient paths of vertices}
   We recall that the curvature  of  the CSF satisfies
    \begin{align}\label{eq: evolution of curvature}
        \tfrac{\partial}{\partial t}\kappa = \tfrac{\partial^2}{\partial s^2}\kappa + \kappa^3.
    \end{align}
    Equivalently, the curvature $\bar{\kappa}$ of the rescaled solution $\bar{M}_{\tau}$ satisfies 
    \begin{align}\label{eq: evolution of rescaled curvature}
        \tfrac{\partial}{\partial \tau} \bar{\kappa} = \tfrac{\partial^2}{\partial s^2}\bar{
        \kappa} -\tfrac{1}{2}\bar{\kappa}+\bar{\kappa}^3.
    \end{align}

\medskip

\begin{proposition}[ancient sharp-vertex-path]\label{prop: ancient sharp-vertex path}
    Suppose that $\cM$ is a nontrivial flow. Then, given a sharp vertex $v_0$ of $M_{t_0}$ with $t_0\leq T$, there exists an ancient sharp-vertex-path $\bv = \bigcup_{t\leq t_0} (v(t), t)$  such that $v(t_0) = v_0$.
\end{proposition}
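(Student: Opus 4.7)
The plan is to mirror Proposition~\ref{prop: ancient local-min-path/local-max-path} on ancient paths of knuckles and tips, but with the auxiliary function $u = \kappa_s$ in place of $u = \varphi^{(x_0)}_s$. A vertex of $M_t$ is a zero of $\kappa_s(\cdot,t)$, and the dichotomy sharp vs.\ flat is encoded in the sign of $\kappa\kappa_{ss}$. I would first derive a linear parabolic equation for $u$ and apply Sturm's theory to produce an ancient path of vertices, and then use the exact vertex counts provided by Theorem~\ref{thm:main.geometry} to upgrade this to a path of \emph{sharp} vertices.

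Using the commutator $[\partial_t, \partial_s] = \kappa^2 \partial_s$ along the arc-length frame of the CSF and differentiating (\ref{eq: evolution of curvature}) in $s$, one obtains
\begin{equation*}
u_t = u_{ss} + 4\kappa^2 u,
\end{equation*}
which fits the hypotheses of Proposition~\ref{prop: zeroset}. Nontriviality of $\cM$ guarantees $u \not\equiv 0$: if $\kappa_s \equiv 0$ on some $M_t$, then $\kappa$ is spatially constant there, and (\ref{eq: evolution of curvature}) reduces to $\kappa_t = \kappa^3$, forcing $\cM$ to be a static line or a shrinking circle. Following Proposition~\ref{prop: ancient local-min-path/local-max-path}(a), Sturm's theory applies directly in the closed case via periodic boundary conditions, and in the complete noncompact case via Sard's theorem to pick arbitrarily large $A$ with $\kappa_s(\pm A, t)\neq 0$ on finite time intervals. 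The spacetime zero set $Z=\{u=0\}$ then has the directed forest structure $R\sqcup S$ from Section~\ref{sec: 2.4}. Starting at $(v_0,t_0)\in Z$, I trace a continuous backward path $\bv(t)=(v(t),t)$ of zeros of $u$; whenever the trajectory reaches a node, at least one backward branch exists by Proposition~\ref{prop: zeroset}(b), and I select one. This yields an ancient path of vertices of $\cM$.

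It remains to show that $v(t)$ is a sharp vertex for every $t\leq t_0$. For this I would invoke Theorem~\ref{thm:main.geometry}, which pins down $|\mathcal{S}^{\text{shp}}_t|$ as a constant on $(-\infty,T]$. Consider the set $\mathcal{S}:=\{(v,t)\,:\, v \text{ is a sharp vertex of } M_t,\ t\leq T\}$, cut out by the closed condition $\kappa_s=0$ together with the open condition $\kappa\kappa_{ss}<0$. At each point of $\mathcal{S}$, $\kappa_{ss}\neq 0$ makes $\kappa_s$ a transverse zero, so by the implicit function theorem $\mathcal{S}$ is a smooth one-manifold. The constancy of $|\mathcal{S}^{\text{shp}}_t|$ then forces the projection $\mathcal{S}\to(-\infty,T]$ to be a proper covering of constant degree: a sharp vertex cannot appear or disappear in finite time without changing the count. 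Hence the path from $(v_0,t_0)$ lifts uniquely to $\mathcal{S}$ and remains a sharp vertex for all $t\leq t_0$.

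The principal obstacle is this last step: excluding degenerations in which a sharp vertex flattens (via $\kappa_{ss}\to 0$) or coalesces with an inflection point (via $\kappa\to 0$) as $t$ decreases, each of which would destroy the path. This is precisely what the vertex count in Theorem~\ref{thm:main.geometry} rules out for $t\leq T$, which is why the present proposition sits logically after, rather than before, that structural result.
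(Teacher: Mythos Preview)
Your derivation of $u_t = u_{ss} + 4\kappa^2 u$ for $u=\kappa_s$ and the application of Sturm's theory to produce an ancient path of zeros of $\kappa_s$ is correct and matches the paper exactly.

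The gap is in your third step. Invoking Theorem~\ref{thm:main.geometry} to force the path to remain sharp is circular: that theorem is proved only at the end of Section~6, and its proof (through Theorem~\ref{thm: uniqueness of vertex}, Proposition~\ref{prop: no vertex on tail}, Theorem~\ref{thm: vertex path epsilon grim reaper}, Corollary~\ref{cor: vertex is far}, and Lemma~\ref{lem: curvature lower bound of vertex}) repeatedly uses the existence of ancient sharp-vertex paths. Your closing remark that ``the present proposition sits logically after'' Theorem~\ref{thm:main.geometry} is therefore exactly backwards; reordering in that direction would create a dependency cycle.

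The paper's intended argument avoids this by following Proposition~\ref{prop: ancient local-min-path/local-max-path}(a) more literally. Just as there one traces back a zero of $\varphi_s$ while keeping the sign of $\varphi_{ss}$ fixed on the regular part of the Sturm tree (yielding a path of local minima or maxima of $\varphi$), here one traces back a zero of $\kappa_s$ while keeping the sign of $\kappa_{ss}$ fixed, obtaining a path of local maxima of $\kappa$ (assuming without loss of generality $\kappa(v_0)>0$). The nodal-domain structure in Section~\ref{sec: 2.4} guarantees a compatible backward branch at every node, exactly as in the knuckle/tip case. That the local maximum of $\kappa$ stays strictly positive---and hence remains a local maximum of $|\kappa|$, i.e.\ a sharp vertex---then follows from the ODE comparison in Lemma~\ref{lem: curvature lower bound of vertex}, whose proof uses only $\kappa_{ss}\le 0$ at the vertex, not the constancy of the sharp-vertex count. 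This closes the argument within Section~2 without any forward reference.
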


\begin{proof}
    Let $u = \kappa_s$. Differentiating (\ref{eq: evolution of curvature}) in $s$ and using 
    $\tfrac{\pa}{\pa t} \tfrac{\pa}{\pa s} = \tfrac{\pa}{\pa s} \tfrac{\pa}{\pa t} + \kappa^2 \tfrac{\pa}{\pa s}$, $u$ satisfies
    \begin{align}\label{eq: evolution of k_s}
        u_t = u_{ss} + 4\kappa^2 u.
    \end{align}
    Using the fact that $\cM$ is nontrivial, we can apply Proposition \ref{prop: zeroset} to the function $u$ as Proposition \ref{prop: ancient local-min-path/local-max-path}
\end{proof}
\

\begin{lemma}[rough curvature lower bound of a sharp vertex]\label{lem: curvature lower bound of vertex}
    For a nontrivial rescaled flow $\bar{M}_{\tau}$, assume that the curvature satisfies $\left|\overline{\kappa}(v(\tau_0), \tau_0)\right|\ge K$ at some sharp vertex $v(\tau_0)$ for some $0< K < 1/\sqrt{2}$. Then, we have
    \begin{align} \label{ineq:compa1}
        |\overline{\kappa}(v(\tau), \tau)| > K \quad \text{for all }\tau< \tau_0
    \end{align} 
    along the vertex path $v(\tau)$. Furthermore, 
    \begin{align} \label{ineq:compa2}
        \liminf_{\tau \to -\infty} |\overline{\kappa}(v(\tau), \tau)| \geq \tfrac{1}{\sqrt{2}}
    \end{align}
\end{lemma}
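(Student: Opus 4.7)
My plan is to reduce the lemma to an ODE comparison for the scalar quantity $W(\tau):=\bar\kappa^2(v(\tau),\tau)$. At any sharp vertex the conditions $\bar\kappa_s=0$ and ``$|\bar\kappa|$ attains a local maximum'' force, after a short sign-case analysis, $\bar\kappa\cdot\bar\kappa_{ss}(v(\tau),\tau)\le 0$. Multiplying the rescaled curvature equation \eqref{eq: evolution of rescaled curvature} by $2\bar\kappa$ and dropping the non-positive term $2\bar\kappa\,\bar\kappa_{ss}$ then yields the pointwise inequality
\begin{align*}
    (\bar\kappa^2)_\tau \;\le\; \bar\kappa^2\bigl(2\bar\kappa^2-1\bigr)\qquad\text{at }v(\tau).
\end{align*}
Because $\bar\kappa_s=0$ at $v(\tau)$, the tangential motion of the vertex along $\bar M_\tau$ does not contribute to $\tfrac{d}{d\tau}W$, so this rewrites as the scalar ODE inequality $W'(\tau)\le W(\tau)\bigl(2W(\tau)-1\bigr)$ along the path (at smooth points of $v$).

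Next I would compare $W$ with the solution $W_\ast$ of the logistic ODE $W_\ast'=W_\ast(2W_\ast-1)$ with $W_\ast(\tau_0)=K^2$. Since $K^2\in(0,\tfrac12)$ and the right-hand side is strictly negative on $(0,\tfrac12)$, $W_\ast$ is strictly decreasing and the equilibrium $\tfrac12$ is the backward attractor: $W_\ast(\tau)$ strictly increases as $\tau$ decreases from $\tau_0$ and $W_\ast(\tau)\to\tfrac12$ as $\tau\to-\infty$. A standard backward ODE comparison, applied to the differential inequality above, yields $W(\tau)\ge W_\ast(\tau)$ for every $\tau\le\tau_0$; this immediately gives $W(\tau)>K^2$ for $\tau<\tau_0$ (strictness coming from the strict backward monotonicity of $W_\ast$) and $\liminf_{\tau\to-\infty}W(\tau)\ge\tfrac12$. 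Taking square roots produces \eqref{ineq:compa1} and \eqref{ineq:compa2}.

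The main technical obstacle I foresee is that the ancient sharp-vertex path $v(\tau)$ coming from Sturm's theory (Proposition \ref{prop: zeroset}) is only continuous, and may fail to be smooth at the isolated ``collision'' nodes where the zero set of $\bar\kappa_s$ splits or merges, so the classical chain-rule derivation of $W'\le W(2W-1)$ breaks down at such times. I would bypass this not by regularizing $v$, but by reading the inequality on $W$ in the upper-Dini-derivative sense via Hamilton's trick applied to the local maximum of $\bar\kappa^2$ on a fixed small arc around $v(\tau)$; together with the continuity of $W$, this weak form of the ODE inequality is exactly what a standard backward Gr\"onwall-type comparison requires, so the argument above goes through unchanged. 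No ingredients beyond \eqref{eq: evolution of rescaled curvature} and the existence of the ancient sharp-vertex path are needed.
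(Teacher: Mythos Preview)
Your proposal is correct and follows essentially the same approach as the paper: both derive the differential inequality $\tfrac{d}{d\tau}\bar\chi\le -\tfrac12\bar\chi+\bar\chi^3$ (equivalently your $W'\le W(2W-1)$) from $\bar\kappa_{ss}\bar\kappa\le 0$ at a sharp vertex, and then run an ODE comparison against the equilibrium $\tfrac{1}{\sqrt 2}$. The only differences are cosmetic---you square to avoid the sign case and invoke the logistic comparison solution explicitly, while the paper works with $\bar\chi$ directly and argues by hand via monotonicity and a linearized exponential bound; you are also more explicit than the paper about justifying the ODE inequality at non-smooth times of the vertex path via Hamilton's trick, which is a fair point since the paper glosses over this.
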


\begin{proof}
    For simplicity, let $\bar{\chi}(\tau) = \bar{\kappa}(v(\tau), \tau)$ for all $\tau \leq \tau_0$. 
    Assume that $\bar{\chi}(\tau_0) > 0$.
    We claim that if $\bar{\chi}(\tau_0) \geq 1/\sqrt{2}$, then $\bar{\chi}(\tau) \geq 1/\sqrt{2}$ for all $\tau \leq \tau_0$ and then the lemma holds trivially.
    By (\ref{eq: evolution of rescaled curvature}) and the maximality at a sharp vertex, whenever $0< \bar{\chi}(\tau) < 1/\sqrt{2}$, 
    \begin{align}\label{eq: curv l.b.1}
        \tfrac{d}{d \tau} \bar{\chi}(\tau) \leq -\tfrac12 \bar{\chi}(\tau) + \bar{\chi}(\tau)^3 < 0.
    \end{align}
    Suppose the contrary, there exists $\tau_1 < \tau_0$ so that $\bar{\chi}(\tau_1) < 1/\sqrt{2}$. 
    By the mean value theorem, there exists $\tau'\in (\tau_1, \tau_0)$ such that $\bar{\chi}(\tau') < 1/\sqrt{2}$ and $\tfrac{d \bar{\chi}}{d \tau} (\tau') > 0$, contradicting (\ref{eq: curv l.b.1}). 
    
    Suppose $ 0 < K \leq \bar{\chi}(\tau_0) < 1/\sqrt{2}$. 
    Let $\tau_* = \inf\{a \leq \tau_0\::\: \forall \tau \in [a, \tau_0], 0< \bar{\chi}(\tau) < 1/\sqrt{2}\}$.
    From (\ref{eq: curv l.b.1}), $\bar{\chi}(\tau)$ is strictly decreasing in $(\tau_*, \tau_0]$. Thus, $\bar{\chi}(\tau) > K$ on $(\tau_*, \tau_0)$. 
    If $\tau_* > -\infty$, then $\bar{\chi}(\tau_*) = 1/\sqrt{2}$, the situation in $(-\infty, \tau_*]$ reduces to the first paragraph, and we are done. If $\tau_* = -\infty$, then for $\tau \in (-\infty, \tau_0)$, using $K< \bar{\chi}(\tau) < 1/\sqrt{2}$ along with (\ref{eq: curv l.b.1}),
    \begin{align*}
        \tfrac{d}{d \tau} \bar{\chi} \leq -\tfrac12 \bar{\chi} + \bar{\chi}^3 = \bar{\chi} \big(\bar{\chi} - \tfrac{1}{\sqrt{2}}\big) \big(\bar{\chi} + \tfrac{1}{\sqrt{2}}\big) \leq \tfrac{K}{\sqrt{2}} \big(\bar{\chi} - \tfrac{1}{\sqrt{2}}\big) < 0,
    \end{align*}
    and therefore 
    \begin{align*}
        0\leq \big(\tfrac{1}{\sqrt{2}} - \bar{\chi}(\tau) \big) \leq \big(\tfrac{1}{\sqrt{2}} - \bar{\chi}(\tau_0) \big) \exp\big( \tfrac{K}{\sqrt{2}}(\tau - \tau_0)\big).
    \end{align*}
    In particular, $\big(\tfrac{1}{\sqrt{2}} - \bar{\chi}(\tau) \big)$ converges to 0 as $\tau$ tends to $-\infty$. 
    This completes the proof for $\bar{\chi}(\tau_0) > 0$. The assertions for $\bar{\chi}(\tau_0) < 0$ hold analogously since $-\tfrac12 x + x^3$ is odd.
\end{proof}

\bigskip

\section{Rough backward convergence of rescaled flow}
In this section, we prove a rough backward convergence result for nontrivial ancient embedded rescaled flows to a line up to a rotation, whose multiplicity is uniquely determined by its entropy. This will in turn give rise to the sheeting structure and determine the number of fingers and knuckles for ``very old" time-slices.
\subsection{Rough convergence theorem and graphical radius}
Let $\mathbb{L} = \mathbb{R}$ or $\mathbb{S}^{1}$, and let $\overline{M}_{\tau} = \overline{\gamma}(\mathbb{L}, \tau)$ be an ancient embedded solution to the rescaled curve-shortening flow $(\partial_{\tau}\mathbf{x})^{\perp} = \boldsymbol{\kappa} + \frac{\mathbf{x}^{\perp}}{2}$ in $\mathbb{R}^{2}$. For any curve $M\subset \R^2$, we consider its Gaussian weighted length centered at the origin at scale $1$:
\begin{align*}
    F(M) := F_{0, 1}(M) = \frac{1}{\sqrt{4\pi}}\int_{M}e^{-\frac{|\mathbf{x}|^{2}}{4}}\:d\cH^1(\mathbf{x}).
\end{align*}
It follows from Huisken's monotonicity formula that $F(\overline{M}_{\tau})$ is non-increasing in $\tau$.

\begin{lemma}\label{lem:compactness of time-slices}
    Suppose that $\{\overline{M}_{j}\}$ is a sequence of connected, smooth, embedded curves in $\mathbb{R}^{2}$ such that for all $j$, $F(\overline{M}_{j})<\Lambda$ for some $\Lambda>0$ and
    \begin{align}\label{eq:L^2 curvature goes to 0}
        \lim_{j\to\infty}\int_{\overline{M}_{j}}\big|\boldsymbol{\kappa} + \frac{\mathbf{x}^{\perp}}{2}\big|^{2}e^{-\frac{|\mathbf{x}|^{2}}{4}}\:d\cH^1(\mathbf{x}) = 0.
    \end{align}
    Then after passing to a subsequence, $\overline{M}_{j}$ converges in $C^{1, \alpha}_{\rm loc}$, $\alpha\in(0, 1/2)$, to a line $\ell$ passing through the origin or a circle $\mathbb{S}^{1}_{r}$ with radius $r=\sqrt{2}$, possibly with multiplicity. 
\end{lemma}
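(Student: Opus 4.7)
The plan is to extract a subsequential Radon-measure limit using the bounded Gaussian-weighted length, promote this to a $C^{1,\alpha}_{\mathrm{loc}}$ limit using the vanishing weighted $L^{2}$ norm of $\boldsymbol{\kappa}+\mathbf{x}^{\perp}/2$, and then identify the limit as a smooth one-dimensional self-shrinker in $\mathbb{R}^{2}$, which forces it to be either a line through the origin or the circle $\mathbb{S}^{1}_{\sqrt{2}}$, possibly with integer multiplicity.

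First, for any compact set $K\subset\mathbb{R}^{2}$ the weight $e^{-|\mathbf{x}|^{2}/4}$ is bounded below by a positive constant depending only on $K$, so $F(\overline{M}_{j})<\Lambda$ yields a uniform length bound $\mathcal{H}^{1}(\overline{M}_{j}\cap K)\leq C(K,\Lambda)$. Combining \eqref{eq:L^2 curvature goes to 0} with the triangle inequality and the trivial bound $|\mathbf{x}^{\perp}/2|\leq C(K)$ on $K$ then gives $\int_{\overline{M}_{j}\cap K}|\boldsymbol{\kappa}|^{2}\,d\mathcal{H}^{1}\leq C(K,\Lambda)$ uniformly in $j$. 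Parametrizing each $\overline{M}_{j}$ locally by arclength, the unit tangent has derivative $\boldsymbol{\kappa}$ bounded in $L^{2}$, so Morrey's embedding yields a uniform $C^{0,1/2}$ bound on the tangent, hence a uniform $C^{1,1/2}$ bound on the parametrizations. Together with embeddedness this gives, after passing to a subsequence, a decomposition of $\overline{M}_{j}\cap K$ into a uniformly bounded number of $C^{1,\alpha}$ graphs converging in $C^{1,\alpha}_{\mathrm{loc}}$ (any $\alpha<1/2$) to a limit curve $\Sigma_{\infty}$, with the associated Radon measures converging to a limit of the form $m\cdot\mathcal{H}^{1}\mres\Sigma_{\infty}$ for a locally constant integer multiplicity $m\in\mathbb{N}$ (locally constant by upper semicontinuity of density and connectedness of smooth components).

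Passing \eqref{eq:L^2 curvature goes to 0} to the limit, the distributional shrinker equation $\boldsymbol{\kappa}_{\Sigma_{\infty}}+\mathbf{x}^{\perp}/2=0$ holds on $\Sigma_{\infty}$, and elliptic regularity upgrades $\Sigma_{\infty}$ to a smooth complete embedded self-shrinker in $\mathbb{R}^{2}$. The classification of smooth complete embedded one-dimensional self-shrinkers---the closed case reducing by Abresch--Langer to the round circle $\mathbb{S}^{1}_{\sqrt{2}}$, and the noncompact case reducing by direct ODE analysis of the shrinker equation $\kappa=-\tfrac12\langle\mathbf{x},\bn\rangle$ to lines through the origin---identifies $\Sigma_{\infty}$ as one of the two models asserted. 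The main obstacle I expect is organizing the $C^{1,\alpha}$ convergence with multiplicity near the origin when $\Sigma_{\infty}$ is a line through $\mathbf{0}$, since several sheets of the $\overline{M}_{j}$ may a priori fold together there; this is handled by the density bound together with the smooth $C^{1,\alpha}$-graphical decomposition above, which persists across the origin because the density is locally constant on the smooth limit curve.
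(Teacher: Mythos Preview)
Your approach is essentially the same as the paper's: bound $\int_{\overline{M}_j\cap K}|\boldsymbol{\kappa}|^2$ using the vanishing shrinker quantity and the local mass bound, use the resulting $W^{2,2}$ control to extract a $C^{1,\alpha}_{\mathrm{loc}}$ limit satisfying the shrinker equation distributionally, upgrade by elliptic regularity, and invoke the Abresch--Langer classification.

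There is, however, one genuine gap. You assert that $\Sigma_\infty$ is a ``smooth complete \emph{embedded} self-shrinker'' and then apply the classification, but you never justify embeddedness of the limit. A $C^{1,\alpha}_{\mathrm{loc}}$ limit of embedded curves can in principle develop transverse self-intersections, and the Abresch--Langer list contains infinitely many immersed (non-embedded) closed shrinkers, so without this step the conclusion does not follow. The paper closes this gap explicitly: if $p\in\Sigma_\infty$ were a transverse self-intersection with tangent lines $L_1\neq L_2$, then by $C^{1,\alpha}$ convergence, for large $j$ the curve $\overline{M}_j$ would contain, in a small ball around $p$, two components graphical over $p+L_1$ and $p+L_2$ respectively with small $C^1$ norm; these two graphs must cross, contradicting the embeddedness of $\overline{M}_j$. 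You should insert this argument before invoking the classification. Your remark about ``folding near the origin'' is a separate (and easier) issue about multiplicity on a smooth limit; it does not address the possibility of a genuinely self-intersecting support.
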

\begin{proof}
    By assumption (\ref{eq:L^2 curvature goes to 0}), for any $R>0$ and $j$ sufficiently large,
    \begin{align*}
    e^{-\frac{R^{2}}{4}}\int_{\overline{M}_{j}\cap B_{R}}|\boldsymbol{\kappa}|^{2}\:d\mathcal{H}^{1}(\mathbf{x}) &\leq \int_{\overline{M}_{j}\cap B_{R}}|\boldsymbol{\kappa}|^{2}\:e^{-\frac{|\mathbf{x}|^{2}}{4}}\:d\cH^1(\mathbf{x})\\
    & \leq\int_{\overline{M}_{j}\cap B_{R}}\textstyle\left(\left|\boldsymbol{\kappa} + \frac{\mathbf{x}^{\perp}}{2}\right|^{2} + \left|\frac{\mathbf{x}^{\perp}}{2}\right|^{2}\right) e^{-\frac{|\mathbf{x}|^{2}}{4}}\:d\cH^1(\mathbf{x})\\
    &\leq \int_{\overline{M}_{j}\cap B_{R}}\big|\textstyle{\boldsymbol{\kappa}+ \frac{\mathbf{x}^{\perp}}{2}}\big|^{2}\:e^{-\frac{|\mathbf{x}|^{2}}{4}}\:d\cH^1(\mathbf{x}) + 
    \frac{\sqrt{\pi}}{2}R^{2}\Lambda\leq 10R^{2}\Lambda.
\end{align*}
Together with the mass bound $F(\overline{M}_{j})<\Lambda$ and the Sobolev embedding, it follows that $\overline{M}_{j}$ converges in $C^{1, \alpha}_{\rm loc}$, for some $\alpha\in(0, 1/2)$, to a limit curve $\overline{M}_{\infty}$ which satisfies
\begin{align*}
    \boldsymbol{\kappa} + \frac{\mathbf{x}^{\perp}}{2} = 0
\end{align*}
in the distribution sense. By elliptic regularity, $\overline{M}_{\infty}$ is a smooth shrinker.

 Next, we claim that $\overline{M}_{\infty}$ cannot have transverse self-intersection points. Suppose the contrary. Let $p\in\overline{M}_{\infty}$ be a transverse self-intersection point. 
There exist $L_1, L_2\in \mathrm{Gr_1}(\R^2)$ with $L_1\neq L_2$ such that $L_1 \cup L_2 \subset T_p \overline{M}_\infty$. 
Since the curvature of $\overline{M}_\infty$ at $p$ is bounded by $\lvert p\rvert$, 
by $C^{1,\alpha}$ convergence, for all small $\vare > 0$ there exists $\delta > 0$ depending only on $\lvert p \rvert$ such that for all sufficiently large $j$, $\overline{M}_j\cap B_\delta(p)$ contains two components which can be expressed respectively as a graph of $u_j^i$ over $p + L_i$ with $\delta^{-1} \lvert u_j^i\rvert +  \lvert (u_j^i)'\rvert  < \vare $ for $i = 1,2$.
It follows that these two components intersect each other, contradicting the embeddedness of $\overline{M}_j$.

By Abresch--Langer's classification of shrinkers \cite{AL86}, $\overline{M}_{\infty}$ must be a circle with radius $\sqrt{2}$ or a line passing through the origin. This finishes the proof.
\end{proof}

\

\begin{lemma}\label{lem: trivial case}
    Suppose that $\mathcal{M}$ is an ancient smooth embedded flow such that its rescaled flow $\overline{M}_{\tau}$ converges to $\mathbb{S}^{1}_{\sqrt{2}}$ as $\tau\to-\infty$ in $C^{\infty}$-sense. Then $\mathcal{M}$ is a family of shrinking circles.
\end{lemma}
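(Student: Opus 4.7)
The plan is to use the $C^\infty$-convergence of the rescaled flow to the round circle $\mathbb{S}^1_{\sqrt{2}}$ to promote $\mathcal{M}$ into a bona fide ancient compact embedded convex smooth curve shortening flow, and then to quote the classification of such flows by Daskalopoulos--Hamilton--Sesum \cite{DHS10}.

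First, I would extract from the hypothesis a $\tau_0 \ll -1$ such that for every $\tau \leq \tau_0$ the rescaled curve $\overline{M}_\tau$ is sufficiently $C^2$-close to $\mathbb{S}^1_{\sqrt{2}}$ to be a smooth closed embedded strictly convex curve. Rescaling back, $M_t$ inherits these properties on $(-\infty, t_0]$ with $t_0 = -e^{-\tau_0}$. Since smooth closed embedded convexity is preserved by the CSF until the curve shrinks to a point and $\mathcal{M}$ is ancient, this forces $M_t$ to be a smooth closed embedded strictly convex curve for every $t$ in its maximal existence interval.

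Next, I would invoke the Daskalopoulos--Hamilton--Sesum classification: every ancient compact embedded convex CSF is either a family of shrinking circles or an Angenent paper clip. To rule out the paper clip I would use that its rescaled flow backwardly converges to a multiplicity-two line and hence has entropy $2$, while the multiplicity-one smooth convergence of $\overline{M}_\tau$ to $\mathbb{S}^1_{\sqrt{2}}$ hypothesized here yields $\ent[\mathcal{M}] = \ent[\mathbb{S}^1_{\sqrt{2}}] = \sqrt{2\pi/e} < 2$. Hence $\mathcal{M}$ must be a family of shrinking circles.

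The substantive input is the classification \cite{DHS10}; the only point in the argument that deserves a moment of care is confirming that the smooth (as opposed to the merely $C^{1,\alpha}_{\rm loc}$ convergence that one obtains in Lemma~\ref{lem:compactness of time-slices}) convergence assumed here is strong enough to deliver smooth closed convex curves on an entire half-line of times rather than only along a subsequence. This is built into the hypothesis as phrased, so no real obstacle remains beyond invoking \cite{DHS10}.
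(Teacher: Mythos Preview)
Your argument is correct, but it takes a different and somewhat heavier route than the paper. The paper's proof runs Grayson's theorem \cite{G87} forward: since the curves are closed and embedded at some very negative time, the flow becomes extinct at a round point, where the Gaussian density equals $\sqrt{2\pi/e}$. The backward $C^\infty$-convergence to $\mathbb{S}^1_{\sqrt{2}}$ forces the Gaussian density at infinity to also equal $\sqrt{2\pi/e}$, so Huisken's monotonicity formula is an equality throughout, and its rigidity case yields self-similarity---hence the shrinking circle. Your approach instead deduces convexity from the $C^2$-closeness to the round circle, propagates it forward, and then appeals to the Daskalopoulos--Hamilton--Sesum classification \cite{DHS10} together with an entropy comparison to exclude the paper clip. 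Both arguments are valid; the paper's has the advantage of being essentially self-contained (Grayson plus monotonicity rigidity), whereas yours imports the full strength of the ancient convex classification, which is a considerably deeper result than what is needed here. A minor simplification of your final step: rather than comparing entropies, you could observe directly that the paper clip's rescaled flow converges backwardly to a multiplicity-two line, not to $\mathbb{S}^1_{\sqrt{2}}$, already contradicting the hypothesis.
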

\begin{proof}
    By Grayson's Theorem \cite{G87}, any closed, embedded curve will shrink to a round point in finite time. The result now follows from Huisken's monotonicity formula.
\end{proof}

\ 


The main result in this section is the following backward convergence of nontrivial ancient flows.

\begin{theorem}[rough convergence]\label{thm: rough convergence}
Let $\cM$ be an ancient nontrivial flow with entropy $\Lambda:= \ent[\cM]<\infty$. Then $\Lambda = m\in\mathbb{N}$, and the following property holds for its rescaled flow $\overline{M}_\tau$:

Given $R \gg 1$ and $\varepsilon>0$, there are $T = T(\varepsilon, R)\ll -1$ and a smooth rotation function $S:(-\infty,T]\to SO(2)$ such that $S(\tau) \overline{M}_\tau \cap B_R(0)$ is a disjoint union of $m$ graphs, each of which is $\varepsilon$-close in $C^{\lfloor\frac{1}{\varepsilon}\rfloor}$-sense to the coordinate axis $\ell_{0} = \{(y,0): |y|<R\}$.
\end{theorem}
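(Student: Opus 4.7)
The plan is to combine Huisken's monotonicity with the time-slice compactness Lemma \ref{lem:compactness of time-slices}, then use the nontriviality hypothesis and embeddedness to identify the backward limit as a multiplicity-$m$ line, and finally promote sequential convergence to the stated uniform, high-regularity form via Brakke rigidity and parabolic Schauder.

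First, Huisken's monotonicity yields that $F(\overline{M}_\tau)$ is nonincreasing in $\tau$ with $\lim_{\tau\to -\infty}F(\overline{M}_\tau)=\ent[\cM]=\Lambda$, and
\begin{align*}
\int_{-\infty}^{T_0}\!\!\int_{\overline{M}_\tau}\bigl|\boldsymbol{\kappa}+\tfrac{\mathbf{x}^\perp}{2}\bigr|^2 e^{-|\mathbf{x}|^2/4}\,d\cH^1\,d\tau \;=\; \sqrt{4\pi}\bigl(\Lambda-F(\overline{M}_{T_0})\bigr)\;<\;\infty.
\end{align*}
Hence there exists $\tau_j\to -\infty$ along which $\int_{\overline{M}_{\tau_j}}|\boldsymbol{\kappa}+\mathbf{x}^\perp/2|^2 e^{-|\mathbf{x}|^2/4}\,d\cH^1\to 0$; Lemma \ref{lem:compactness of time-slices} then produces a $C^{1,\alpha}_{\rm loc}$ subsequential limit that is either $\mathbb{S}^1_{\sqrt{2}}$ or a line through the origin, each with some integer multiplicity. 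If the limit is $\mathbb{S}^1_{\sqrt{2}}$ of multiplicity one, parabolic Schauder upgrades the convergence to $C^\infty$, and Lemma \ref{lem: trivial case} then forces $\cM$ to be shrinking circles, contradicting nontriviality. Circles of higher multiplicity are excluded by embeddedness: a single embedded smooth $\overline{M}_{\tau_j}$ whose varifold approaches $m'[\mathbb{S}^1_{\sqrt{2}}]$ with $m'\ge 2$ would have to wrap around a thin tubular neighborhood of $\mathbb{S}^1_{\sqrt{2}}$ more than once, forcing a self-intersection. Therefore the limit is an integer-multiplicity line, and $\Lambda=m\cdot \ent[\ell_0]=m\in\N$.

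For the passage from subsequential to full backward convergence, I would take an arbitrary sequence $\sigma_k\to -\infty$ and apply Brakke compactness to the shifted rescaled flows $\overline{\cM}(\,\cdot\,+\sigma_k)$. The limit $\overline{\cM}_\infty$ satisfies $F\equiv \Lambda$ on every time-slice, hence by Huisken rigidity it is backwardly self-similar and static in rescaled time; the previous step identifies its zero-time-slice as an $m$-fold line through the origin in some direction $\ell(\sigma_k)$. Therefore for any $\vare,R$ there is $T(\vare,R)$ so that $\overline{M}_\tau\cap B_R(0)$ is $\vare$-close (in varifold sense) to an $m$-fold line $\ell(\tau)$ for every $\tau\le T$. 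Combining the $C^{1,\alpha}$ convergence with embeddedness, $\overline{M}_\tau\cap B_R(0)$ decomposes into $m$ disjoint smooth graphs over $\ell(\tau)$; iterating parabolic Schauder estimates on the graph equation then promotes this closeness to $C^{\lfloor 1/\vare\rfloor}$. Smoothness of $\cM$ in $\tau$ finally yields a smooth lift $S:(-\infty,T]\to SO(2)$ rotating $\ell(\tau)$ to the coordinate axis via a standard covering-space argument.

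The main obstacle I anticipate lies in this last step, since White's $\vare$-regularity theorem is unavailable at multiplicity $m\ge 2$: the identification of the tangent flow at infinity must instead be forced uniformly in $\tau$ by Brakke compactness plus Huisken rigidity, and one must carefully rule out the subsequential limit oscillating between line and higher-multiplicity circle configurations as $\tau$ varies. A secondary subtlety is that the graphical decomposition into $m$ sheets and the smooth lift $S(\tau)$ both rely on embeddedness to separate the $m$ nearly-coincident sheets, so that the parabolic Schauder estimates can be applied sheet-by-sheet.
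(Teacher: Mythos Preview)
Your first steps (monotonicity, Lemma~\ref{lem:compactness of time-slices}, ruling out circle limits) match the paper, and you correctly identify the passage from subsequential to full backward convergence as the crux. The gap is that your Brakke-compactness-plus-rigidity argument delivers only \emph{varifold} closeness of $\overline{M}_\tau$ to an $m$-fold line at general $\tau$. Since White's regularity is unavailable at multiplicity $m\ge 2$---as you yourself note---varifold closeness cannot be upgraded to $C^{1,\alpha}$ closeness by any general mechanism. Your sentence ``Combining the $C^{1,\alpha}$ convergence with embeddedness, $\overline{M}_\tau\cap B_R(0)$ decomposes into $m$ disjoint smooth graphs'' is therefore unjustified: the only $C^{1,\alpha}$ information you possess is along the special sequence $\tau_j$, not at intermediate times, and embeddedness alone does not separate sheets without curvature control.

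The paper closes this gap with two geometric ingredients you are missing. First (its Claim~4), it shows that every local maximum of $|\mathbf{x}|^2$ on $\overline{M}_\tau$ lies outside $B_R(0)$ for all $\tau\le T(R)$, using the ancient path of tips from Section~\ref{subsec: knuckle/tip} together with the ODE bound $\tfrac{d}{d\tau}r\le r$ for $r(\tau)=|\overline{\gamma}(q(\tau),\tau)|^2$; this guarantees each connected component of $\overline{M}_{\tau_k}\cap B_R$ persists as a proper flow in $B_R$ throughout $[\tau_k,\tau_{k-1}]$. Second, it applies pseudolocality \cite[Theorem~1.5]{INS19} \emph{sheet by sheet}: at time $\tau_k$ each of the $m$ sheets is a small $C^{1,\alpha}$ graph (hence multiplicity one), and pseudolocality propagates that graphical structure forward to $\tau_{k-1}$. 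Covering $(-\infty,T]$ by the intervals $[\tau_k,\tau_{k-1}]$ yields the uniform graphical conclusion; $S(\tau)$ is then defined concretely as the inverse of the tangent direction at an ancient knuckle path rather than via a covering-space lift.
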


\begin{proof}
The finite entropy assumption and the Huisken monotonicity formula imply that 
\begin{align*}
    \int_{-\infty}^{-1}\int_{\overline{M}_{\tau}}{\textstyle\left|\boldsymbol{\kappa} + \frac{\mathbf{x}^{\perp}}{2}\right|^{2}}e^{-\frac{|\mathbf{x}|^{2}}{4}}\:d\cH^1(\mathbf{x})\:d\tau = \sum_{k=1}^{\infty}\int_{-(k+1)}^{-k}\int_{\overline{M}_{\tau}}{\textstyle\left|\boldsymbol{\kappa} + \frac{\mathbf{x}^{\perp}}{2}\right|^{2}}e^{-\frac{|\mathbf{x}|^{2}}{4}}\:d\cH^1(\mathbf{x})\:d\tau<\infty.
\end{align*}
For each $k\in\mathbb{N}$, let $\tau_{k}\in[-(k+1), -k]$ be the time such that 
\begin{align*}
    \int_{\overline{M}_{\tau_{k}}}{\textstyle\left|\boldsymbol{\kappa} + \frac{\mathbf{x}^{\perp}}{2}\right|^{2}}e^{-\frac{|\mathbf{x}|^{2}}{4}}\:d\cH^1(\mathbf{x})= \inf_{\tau\in[-(k+1), -k]}\int_{\overline{M}_{\tau}}{\textstyle\left|\boldsymbol{\kappa}+ \frac{\mathbf{x}^{\perp}}{2}\right|^{2}}e^{-\frac{|\mathbf{x}|^{2}}{4}}\:d\cH^1(\mathbf{x}).
\end{align*}
Thus, we must have
\begin{align*}
    \lim_{\tau_{k}\to-\infty}\int_{\overline{M}_{\tau_{k}}}{\textstyle\left|\boldsymbol{\kappa} + \frac{\mathbf{x}^{\perp}}{2}\right|^{2}}e^{-\frac{|\mathbf{x}|^{2}}{4}}\:d\cH^1(\mathbf{x}) = 0.
\end{align*}
By Lemma~\ref{lem:compactness of time-slices}, after passing to a subsequence, $\overline{M}_{\tau_{k}}$ converges in $C^{1, \alpha}_{\rm loc}$ to a limiting curve $\overline{M}_{-\infty}$, which is a line $\ell$ passing through the origin or a circle $\mathbb{S}^{1}_{\sqrt{2}}$, possibly with finite multiplicity. 

We can rule out the shrinking circles by the assumption that the flow is nontrivial.

\

\noindent{\bf Claim~1:}\quad $\overline{M}_{-\infty}$ cannot be a finite cover of $\mathbb{S}^{1}_{\sqrt{2}}$.
\begin{proof}[Proof of Claim~1.] 
Suppose that $\overline{M}_{-\infty}$ is an $m$-cover of $\mathbb{S}^{1}_{\sqrt{2}}$, $m\in\mathbb{N}$. By the $C^{1, \alpha}_{loc}$-convergence, $\overline{M}_{\tau_{j}}$ is a union of $C^{1, \alpha}$ graphs on the circle. Since $\overline{M}_{\tau_{j}}$ is embedded, the graphs are disjoint, which contradicts the connectedness. Thus, we must have $m=1$. By Lemma~\ref{lem: trivial case}, $\mathcal{M}$ is a family of shrinking circles, which contradicts the assumption that the flow is nontrivial.
\end{proof}

Hence, any subsequential limit of $\overline{M}_{\tau_{j}}$ is a line passing through the origin, possibly with multiplicity.

\

\noindent{\bf Claim~2:}\quad The multiplicity $m\in\mathbb{N}$ is independent of the subsequences.
\begin{proof}[Proof of Claim~2.]
    Suppose that along subsequences $\tau_{j}$, $\tau'_{j}$ we have
\begin{align*}
    \overline{M}_{\tau_{j}}\underset{C^{1, \alpha}_{loc}}{\longrightarrow} m\ell\quad\mbox{and}\quad \overline{M}_{\tau'_{j}}\underset{C^{1, \alpha}_{loc}}{\longrightarrow} m'\ell',
\end{align*}
we will show $m = m'$. Given any $\varepsilon>0$, pick a large $k_0$ such that $F(\overline{M}_{\tau_{k_{0}}})>m-\varepsilon$. Then for every $\tau'_{j}$ with $\tau'_{j}>\tau_{k_{0}}$, by monotonicity we have
\begin{align*}
    m'\geq F(\overline{M}_{\tau'_{j}})\geq F(\overline{M}_{\tau_{k_{0}}})> m-\varepsilon.
\end{align*}
Since $\varepsilon>0$ is arbitrary, $m'\geq m$. Similarly, $m\geq m'$, so $m = m'$. 
\end{proof}

We can now characterize the rough shape of the time slices $\overline{M}_{\tau_{j}}$ for $\tau_{j}\ll -1$.

\

\noindent{\bf Claim~3:}\quad For every $\varepsilon>0$ and $R>0$, there exists $J\in\mathbb{N}$ such that whenever $j\geq J$, $\overline{M}_{\tau_{j}}\cap B_{R}(0)$ is a union of $m$ graphs of functions $u^{1}_{j}, \cdots, u^{m}_{j}$ over a line $\ell$ passing through the origin, with $\max_{1\leq i\leq m}\|u^{i}_{j}\|_{C^{1, \alpha}(B_{R})}<\varepsilon$.
\begin{proof}[Proof of Claim~3.]
Suppose the contrary; then we can find a subsequence of $\tau_{j}$ along which the assertion does not hold. By Lemma~\ref{lem:compactness of time-slices} and the Claims above, there is a further subsequence that converges in $C^{1, \alpha}$-sense to a multiplicity $m$ line passing through the origin, a contradiction.
    
\end{proof}

It follows from Claim~3 that for a given $R\gg 1$, for $j$ large enough, the curve $\overline{M}_{\tau_{j}}$ can be decomposed into a disjoint union of graphs in $B_{R}$ and the arcs outside $B_{R}$ that contain the {\it tips}, namely, the local maxima of $|\mathbf{x}|^{2}$ on $\overline{M}_{\tau_{j}}$. The next claim shows that if $\tau_{j}\ll-1$, these tips will stay outside of $B_{R}$ as time goes backwards from $\tau_{j}$ to $\tau_{j-1}$.

\

\noindent{\bf Claim~4:}\quad Given $R>0$, there exists $T = T(R)\ll -1$ such that if $\tau\leq T$, the local maximum of $|\mathbf{x}|^{2}$ on $\overline{M}_{\tau}$ cannot be attained in $B_{R}(0)$.
\begin{proof}[Proof of Claim~4.]
    By Lemma~\ref{lem:compactness of time-slices} and Claim~3, we can find a sequence of rotations $S_{j}\in SO(2)$ such that $S_{j}\overline{M}_{\tau_{j}}$ converges to the axis $\ell_{0} = \{(y, 0)\::\:y\in\mathbb{R}\}$ with multiplicity $m$ in $C^{1, \alpha}_{loc}$-sense. 
    Let $\bq = \bigcup_{t\leq t_0} (q(t), t)$ be an ancient path of tips of $\cM$ defined in Section~\ref{subsec: knuckle/tip}. Converting to rescaled flow, $\overline{\gamma}(q(\tau), \tau)$ is a local maximum of the squared distance $|\mathbf{x}|^{2}$ on $\overline{M}_{\tau}$. Denote $r(\tau) := |\overline{\gamma}(q(\tau), \tau)|^{2}$. Observe that $\overline{\gamma}(q(\tau), \tau) = \overline{\gamma}(q(\tau), \tau)^{\perp}$. From this fact and the above convergence, we know that $r(\tau_{j}) := r(\overline{\gamma}(q(\tau_{j}), \tau_{j}))\to\infty$ as $\tau_{j}\to-\infty$. By the rescaled flow equation, we have
\begin{align*}
    \frac{d}{d\tau}r(\tau) = -2 \lvert \kappa \rvert \sqrt{r} + r \leq r.
\end{align*}
It follows that for any $\tau\in[\tau_{j+1}, \tau_{j}]\subseteq[-(j+2), -j]$,
\begin{align*}
    r(\tau)\geq e^{-2}\cdot r(\tau_{j}).
\end{align*}
Since $r(\tau_{j})\to\infty$, we conclude that $\lim_{\tau\to-\infty}r(\tau) = \infty$.
\end{proof}


For each $k$, define a curve-shortening flow $M^{k}_{\tau}$ by
\begin{align*}
    M^{k}_{t} := (-t)^{1/2}\overline{M}_{\tau_{k}-\log(-t)},\quad t\in [-1, -e^{\tau_k}).
\end{align*}
Then $M^{k}_{-1} = \overline{M}_{\tau_{k}}$. By Claim~3, for $k$ sufficiently large, $M^{k}_{-1}\cap B_{R}$ is a union of graphs of functions $u^{1}_{k}, \cdots, u^{m}_{k}$ on $\ell_{0}$ with small $C^{1, \alpha}$-norms. Given any connected component $\Sigma^{k}_{-1}$ of $M^{k}_{-1}\cap B_{R}$, Claim~4 implies that there is a well-defined curve-shortening flow $\{\Sigma^{k}_{t}\}_{t\in[-1, -e^{\tau_k - \tau_{k-1}}]}$ in $B_{R}$. Hence, the pseudo-locality theorem \cite[Theorem~1.5]{INS19} implies that, for $k$ sufficiently large, $\{S_{k}\overline{M}_{\tau}\cap B_{R}\}_{\tau_{k}\leq\tau\leq\tau_{k-1}}$ is a family of $m$ smooth graphs over $\ell_{0}$.

By Proposition~\ref{prop: ancient local-min-path/local-max-path}, we can choose an ancient path of knuckles $\{p(t) \in B_{\sqrt{-t}R}\}_{t\leq T(R)}$ of $\cM$. Converting to the rescaled flow, define an angle function $\sigma:(-\infty, \overline{\tau}(R)]\to \mathbb{S}^{1} \subset \R^2$ by
\begin{align}
    \sigma(\tau) = \pa_s \overline{\gamma}(p(\tau), \tau)\in \mathbb{S}^{1}.
\end{align}
Then $\sigma(\tau)$ depends smoothly on $\tau$ since $\overline{M}_{\tau}$ does. Identifying $\mathbb{S}^{1}$ with $SO(2)$ we may view $\sigma(\tau)^{-1}$ as a family of rotation matrices $S(\tau)\in SO(2)$.

\ 

\noindent{\bf Claim~5:}\quad For any $\varepsilon>0$, there is $T = T(\varepsilon, R)\gg 1$ such that whenever $\tau<-T$, $S(\tau)\overline{M}_{\tau}\cap B_{R}$ is $\varepsilon$-close to $\ell_{0}$ in the smooth sense.
\begin{proof}[Proof of Claim~5.]
    From the above discussion, we can find a large $K = K(\varepsilon, R)\in\mathbb{N}$ such that whenever $k\geq K$, $S_{k}\overline{M}_{\tau}\cap B_{R}$ is $\frac{\varepsilon}{2}$-close in $C^{1}$-sense to $\ell_{0}$, for $\tau\in[\tau_{k}, \tau_{k-1}]$. Pick $T\geq \max\{-\tau_{K-1}, -\overline{\tau}(R)\}$. Then for any $\tau\leq -T$, we can find $k\geq K$ such that $\tau\in[\tau_{k}, \tau_{k-1}]$. It follows that $d_{S^{1}}(\sigma(\tau), S^{-1}_{k})<\varepsilon/2$, where $d_{S^{1}}(\cdot, \cdot)$ is the arc-length distance on $\mathbb{S}^{1}$, and $S_{k}^{-1}$ is regarded as a point in $\mathbb{S}^{1}$ via the identification $SO(2) \simeq \mathbb{S}^{1}$. This implies that $S(\tau)\overline{M}_{\tau}\cap B_{R} = \sigma(\tau)^{-1}\overline{M}_{\tau}\cap B_{R}$ is $\varepsilon$-close to $\ell_{0}$ in $C^{1}$-sense. The higher order closeness then follows from parabolic regularity.
\end{proof}

Finally, we show that $\ent[\cM] = m$. Observe that for any $(x_0, t_0) \in \R^2 \times \R$, 
\begin{align*}
    S(\tau) \tfrac{1}{\sqrt{t_0 - t}}(M_t - x_0) = S(\tau)\sqrt{\tfrac{-t}{t_0 - t}}\left[(\tfrac{1}{\sqrt{-t}}M_{t}) - \tfrac{x_0}{\sqrt{-t}}\right] \to m\ell_{0} \quad \mbox{as}\; \tau = -\log(-t)\to - \infty,
\end{align*}
in $C^{\infty}_{loc}$-sense. Since the Gaussian weighted length of $\ell_{0}$ is 1, by monotonicity formula we conclude $\ent[\cM] = m$.

\end{proof}

\bigskip

\subsection{Applications}
We assume that $\cM$ is nontrivial and has entropy $m \in\mathbb{N}$.

\begin{definition}[graphical radius and sheets]\label{def:radius.sheet}
Given a smooth ancient embedded nontrivial curve-shortening flow $\cM$ with entropy $m$, let $\overline{M}_\tau$ denote the rescaled flow with respect to the spacetime origin. As a consequence of Theorem~\ref{thm: rough convergence}, there exists a smooth function $\rho(\tau) \geq 0$, called a \emph{graphical radius}, such that
\begin{align*}
    \lim_{\tau \to -\infty} \rho(\tau) = \infty, \quad  \rho'(\tau) \leq 0,
\end{align*}
and such that $S(\tau) \overline{M}_{\tau} \cap B_{2\rho(\tau)}(0)$ is a disjoint union of $m$ graphs each of which is $(2\rho)^{-2}$-close in $C^{\lfloor\rho\rfloor}$-sense to the line segment $(-2\rho(\tau), 2\rho(\tau))\times \{0\}$; namely,

\begin{align*}
    S(\tau) \overline{M}_{\tau} \cap B_{2\rho(\tau)}(0) \subset \bigsqcup_{i = 1}^m \big\{ (x, u_\tau^i (x))\::\: x\in (-2\rho(\tau), 2\rho(\tau)) \big\}
\end{align*}

where 
\begin{align*}
    \| u_\tau^i \|_{C^{\lfloor\rho\rfloor}\big( (-2\rho(\tau), 2\rho(\tau)) \big)} \leq (2\rho(\tau))^{-2}.
\end{align*}
Each graph $\big\{ (x, u_\tau^i (x))\::\: x\in (-2\rho(\tau), 2\rho(\tau)) \big\}$ is called a \emph{sheet} of $\bar{M}_\tau$.
\end{definition}

\

\begin{lemma}\label{lem: uniquenss of knuckle on sheet}
    Let $x_0\in \R^2$. If $t\ll -1$ is such that $\rho(\tau) > 10$ and $|\bar{x}_0| := \tfrac{\lvert x_0\rvert}{\sqrt{-t}} < \tfrac{1}{10}$, then each sheet of $\bar{M}_\tau \cap B_2(0)$ has one and exactly one knuckle with respect to $\bar{x}_0$.
\end{lemma}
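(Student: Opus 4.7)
The plan is to reduce the statement to a straightforward one-variable perturbation analysis on each nearly flat sheet. After applying the rotation $S(\tau)$ from Theorem~\ref{thm: rough convergence}, Definition~\ref{def:radius.sheet} expresses every sheet of $\bar M_\tau \cap B_{2\rho(\tau)}(0)$ as a graph $y\mapsto (y, u(y))$ over $(-2\rho(\tau), 2\rho(\tau))$ with $\|u\|_{C^2} \leq (2\rho(\tau))^{-2} < 1/400$, using $\rho(\tau) > 10$. Writing (the rotated) $\bar x_0 = (a,b)$, which still satisfies $|a|, |b| < 1/10$, the squared distance from $\bar x_0$ along the sheet is
\begin{equation*}
\varphi(y) := (y - a)^2 + (u(y) - b)^2,
\end{equation*}
and I would let $I := \{y : (y, u(y)) \in B_2(0)\}$, which is an interval containing $0$ with endpoints $y_\pm$ satisfying $|y_\pm| > 1.99$. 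Because distinct sheets are pairwise disjoint inside $B_{2\rho(\tau)}$, a knuckle of $\bar M_\tau$ lying on this sheet inside $B_2(0)$ corresponds exactly to an interior local minimum of $\varphi$ on $I$.

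The second step is the derivative test. Differentiating gives
\begin{equation*}
\varphi'(y) = 2(y - a) + 2\bigl(u(y) - b\bigr) u'(y), \qquad \varphi''(y) = 2 + 2 u'(y)^2 + 2\bigl(u(y) - b\bigr) u''(y).
\end{equation*}
The bounds $|u|, |u'|, |u''| \leq 1/400$ together with $|a|, |b| \leq 1/10$ yield $\varphi''(y) \geq 2 - 2(\tfrac{1}{400} + \tfrac{1}{10}) \cdot \tfrac{1}{400} > 1.99$ throughout $I$, so $\varphi'$ is strictly increasing. At the endpoints $y_\pm$, $|2(y_\pm - a)| > 3.7$ while $|2(u(y_\pm) - b) u'(y_\pm)| < 10^{-3}$, so $\varphi'(y_-) < 0 < \varphi'(y_+)$. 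Hence $\varphi'$ has a unique zero $y_\star \in I$, and $\varphi''(y_\star) > 0$ makes $y_\star$ a strict local minimum of $\varphi$.

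Finally, I would verify that $y_\star$ really lies in the interior of $I$: from $\varphi'(y_\star) = 0$ one has $|y_\star - a| = |(u(y_\star) - b) u'(y_\star)| < (\tfrac{1}{400} + \tfrac{1}{10}) \cdot \tfrac{1}{400} < 10^{-3}$, so the candidate knuckle $(y_\star, u(y_\star))$ sits inside $B_{1/2}(0)$, well away from $\partial B_2(0)$. Combined with the previous step this gives both existence and uniqueness of the knuckle on each of the $m$ sheets. I do not anticipate any substantive obstacle: the hypotheses $\rho(\tau) > 10$ and $|\bar x_0| < 1/10$ leave generous slack in every constant, and the entire argument is essentially the observation that a $C^2$-small perturbation of $(y-a)^2 + b^2$ inherits its unique minimum structure from $(y-a)^2$.
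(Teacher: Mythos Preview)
Your proposal is correct and follows essentially the same approach as the paper: both establish that the squared distance from $\bar{x}_0$ is strictly convex along each nearly-flat sheet, which forces a unique interior minimum. The paper invokes the arclength formula $\varphi^{(x_0)}_{ss} = 2 + 2\langle \gamma - x_0,\boldsymbol{\kappa}\rangle$ and the $C^2$-smallness to get $\varphi^{(x_0)}_{ss}>0$ directly, whereas you carry out the identical second-derivative test in graph coordinates with all constants written out; the existence step (your sign change of $\varphi'$ at the endpoints versus the paper's appeal to $C^0$-smallness) is likewise the same idea in different clothing.
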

\begin{proof}
    The smallness of the $C^0$ norm of sheets in $B_2(0)$ and $\lvert x_0 \rvert < \tfrac{1}{10}$ implies that each sheet has at least one knuckle with respect to $\bar{x}_0$. 
    The smallness of the $C^2$ norm of sheets in $B_2(0)$ and (\ref{eq: derivatives of distance function}) implies that $\varphi_{ss}^{(x_0)}(\cdot, t) > 0$ and hence $\varphi_s^{(x_0)}(\cdot, t)$ has only one zero on each sheet of $\bar{M}_\tau \cap B_2(0)$.
\end{proof}

 \   

\begin{lemma} \label{lem: lim of minimum}
    Let $x_0\in \R^2$ and let $\mathbf{p} = \cup_{t\leq t_0} (\bp(t))$ be an ancient path of knuckles as in Proposition~\ref{prop: ancient local-min-path/local-max-path}. Then, we have 
        \begin{equation}\label{eq: lim of minimum}
            \limsup_{t\to -\infty} \,(-t)^{-1}\lvert\ga(\mathbf{p}(t))\rvert^2 = 0.
        \end{equation}
\end{lemma}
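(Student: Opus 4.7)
The plan is to translate the statement to the rescaled flow and then exploit Theorem~\ref{thm: rough convergence}: once the rescaled flow is close to several straight lines through the origin and the rescaled center $\bar x_0(\tau):=e^{\tau/2}x_0$ has collapsed to the origin, any local minimum of squared distance to $\bar x_0(\tau)$ must itself lie close to the origin. Setting $\tau=-\log(-t)$, $\bar\ga(\cdot,\tau)=e^{\tau/2}\ga(\cdot,-e^{-\tau})$, and $\bar\bp(\tau)=\bp(-e^{-\tau})$, parabolic rescaling preserves the local-minimum property, so $\bar\ga(\bar\bp(\tau),\tau)$ is a local minimum of $|\bar x-\bar x_0(\tau)|^2$ on $\bar M_\tau$, and the identity
\begin{equation*}
(-t)^{-1}|\ga(\bp(t))|^2 \;=\; |\bar\ga(\bar\bp(\tau),\tau)|^2
\end{equation*}
reduces the lemma to proving $|\bar\ga(\bar\bp(\tau),\tau)|\to 0$ as $\tau\to-\infty$. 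In particular, part~(d) of Proposition~\ref{prop: ancient local-min-path/local-max-path} already forces $\bar\ga(\bar\bp(\tau),\tau)\in B_2(0)$ for all sufficiently negative $\tau$, and clearly $|\bar x_0(\tau)|\to 0$.

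Now I fix an arbitrary $\vare\in(0,1/2)$. Applying Theorem~\ref{thm: rough convergence} with $R=2$, there exist $T\ll -1$ and a rotation $S(\tau)\in SO(2)$ such that for every $\tau\le T$ the intersection $S(\tau)\bar M_\tau\cap B_2(0)$ is a disjoint union of $m$ smooth graphs $\{(y,u_\tau^i(y)):|y|<2\}$, $i=1,\dots,m$, with $\|u_\tau^i\|_{C^1}\le \vare$; after possibly shrinking $T$, also $|\bar x_0(\tau)|<\vare$. Since $\bar\ga(\bar\bp(\tau),\tau)\in B_2(0)$, it must lie on one of these graphs, say the graph of $u$ at parameter $y$. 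Writing $S(\tau)\bar x_0(\tau)=(a,b)$ with $a^2+b^2<\vare^2$, the local-minimum equation for the squared distance reads
\begin{equation*}
(y-a) \;+\; u'(y)\,(u(y)-b) \;=\; 0,
\end{equation*}
and the $C^1$ bounds on $u$ give $|y|\le |a|+|u'(y)|(|u(y)|+|b|)\le \vare+\vare(2\vare)\le 2\vare$, whence
\begin{equation*}
|\bar\ga(\bar\bp(\tau),\tau)|^2 \;=\; y^2+u(y)^2 \;\le\; 5\vare^2 \qquad \text{for all } \tau\le T(\vare).
\end{equation*}
As $\vare>0$ was arbitrary, $\limsup_{\tau\to-\infty}|\bar\ga(\bar\bp(\tau),\tau)|^2=0$, which is exactly \eqref{eq: lim of minimum}.

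The only genuinely delicate point is making sure that the path $\bar\bp(\tau)$ does not escape the graphical core $B_2(0)$ where Theorem~\ref{thm: rough convergence} applies; this is precisely what part~(d) of Proposition~\ref{prop: ancient local-min-path/local-max-path} provides, and this is why I invoke it at the very start of the argument. Everything else is an elementary computation on nearly-flat graphs, so I do not anticipate any deeper obstruction.
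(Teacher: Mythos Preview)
Your argument is correct and follows essentially the same route as the paper: use Proposition~\ref{prop: ancient local-min-path/local-max-path}(d) to trap the rescaled knuckle in $B_2(0)$, then invoke the rough convergence Theorem~\ref{thm: rough convergence} to conclude it lies near the origin. The only difference is cosmetic: the paper phrases the last step by citing Lemma~\ref{lem: uniquenss of knuckle on sheet} (uniqueness of the knuckle on each sheet via the second-derivative test $\varphi^{(x_0)}_{ss}>0$) and then appealing to rough convergence, whereas you compute the knuckle location directly from the first-order condition $(y-a)+u'(y)(u(y)-b)=0$ on the graph. Your version is slightly more self-contained and makes the $\varepsilon$-smallness explicit, but the underlying mechanism is identical.
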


\begin{proof}
    By (\ref{eq: limsup of minimum}) in Proposition \ref{prop: ancient local-min-path/local-max-path}, $\tfrac{1}{\sqrt{-t}} \ga(\mathbf{p}(t)) \in B_2(0)$ for $t\ll -1$. For $t\ll -1$ and $\rho(\tau)\gg 1$, Lemma \ref{lem: uniquenss of knuckle on sheet} implies that each sheet of $\bar{M}_\tau \cap B_2(0)$ has one and exactly one knuckle with respect to the origin. Thus,  the rough convergence theorem \ref{thm: rough convergence} implies the desired result.
\end{proof}

\

\begin{corollary}\label{cor: vertex is far}
    Let $\mathbf{v}$ be an ancient sharp-vertex-path as in Proposition~\ref{prop: ancient sharp-vertex path}. Then there exists $T \ll -1$ such that $\frac{1}{\sqrt{-t}}\lvert \ga\big(\bv(t)\big) \rvert \geq \rho\big(\tau)$ for $t\leq T$. In particular,
    \begin{align*}
        \lim_{t\to -\infty} \frac{1}{\sqrt{-t}} \lvert \ga\big(\bv(t)\big) \rvert = \infty.
    \end{align*}
\end{corollary}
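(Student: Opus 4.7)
The plan is to argue by contradiction, exploiting a sharp dichotomy between the rigid curvature lower bound at a sharp vertex, supplied by Lemma~\ref{lem: curvature lower bound of vertex}, and the near-flatness of the $m$ sheets constituting $S(\tau)\overline{M}_\tau \cap B_{2\rho(\tau)}(0)$ from Definition~\ref{def:radius.sheet}. Concretely, Lemma~\ref{lem: curvature lower bound of vertex} gives $\liminf_{\tau \to -\infty} |\overline{\kappa}(\bv(\tau), \tau)| \geq 1/\sqrt{2}$, so $|\overline{\kappa}(\bv(\tau), \tau)| \geq 1/2$ once $\tau$ is sufficiently negative; on the other hand, the graph functions $u^i_\tau$ defining the sheets satisfy $\|u^i_\tau\|_{C^{\lfloor \rho(\tau) \rfloor}} \leq (2\rho(\tau))^{-2}$, so the curvature along any sheet is pointwise bounded in absolute value by roughly $(2\rho(\tau))^{-2}$, which tends to $0$ as $\tau \to -\infty$.

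Suppose, toward contradiction, that the asserted inequality fails along a sequence $t_k \to -\infty$ with corresponding $\tau_k = -\log(-t_k) \to -\infty$. Then the rescaled vertex $\overline{\gamma}(v(\tau_k), \tau_k) = \tfrac{1}{\sqrt{-t_k}}\gamma(\bv(t_k))$ lies inside $B_{\rho(\tau_k)}(0)$. Since the rotation $S(\tau_k) \in SO(2)$ from Theorem~\ref{thm: rough convergence} preserves distance to the origin, the point $S(\tau_k)\overline{\gamma}(v(\tau_k), \tau_k)$ still lies in $B_{\rho(\tau_k)}(0) \subset B_{2\rho(\tau_k)}(0)$, and therefore belongs to some sheet $\{(x, u^{i_k}_{\tau_k}(x)) : x \in (-2\rho(\tau_k), 2\rho(\tau_k))\}$ by Definition~\ref{def:radius.sheet}. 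Writing the rotated vertex as $(x_k, u^{i_k}_{\tau_k}(x_k))$ and using the fact that curvature is invariant under rigid motions, one obtains
\begin{align*}
    |\overline{\kappa}(\bv(\tau_k), \tau_k)| = \frac{|(u^{i_k}_{\tau_k})''(x_k)|}{\bigl(1+|(u^{i_k}_{\tau_k})'(x_k)|^2\bigr)^{3/2}} \leq (2\rho(\tau_k))^{-2},
\end{align*}
which contradicts $|\overline{\kappa}(\bv(\tau_k), \tau_k)| \geq 1/2$ since $\rho(\tau_k) \to \infty$. Hence there exists $T \ll -1$ so that $\tfrac{1}{\sqrt{-t}}|\gamma(\bv(t))| \geq \rho(\tau)$ for all $t \leq T$; the \textquotedblleft in particular\textquotedblright\ part is immediate from $\lim_{\tau \to -\infty} \rho(\tau) = \infty$.

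There is no serious analytic obstacle here, since both inputs are already in hand; the only point that requires care is the rotation bookkeeping, namely that the graphical decomposition of Definition~\ref{def:radius.sheet} is stated after applying $S(\tau)$, while the curvature lower bound of Lemma~\ref{lem: curvature lower bound of vertex} is intrinsic. The argument works precisely because $S(\tau)$ is a Euclidean isometry: it preserves both the distance of the vertex to the origin (so the ball $B_{\rho(\tau)}(0)$ is untouched) and the unsigned curvature (so the graph computation yields a direct contradiction to the lower bound).
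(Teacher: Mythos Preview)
Your proof is correct and follows essentially the same approach as the paper's. The paper's one-sentence proof (``This follows from the curvature estimate of the rough convergence Theorem~\ref{thm: rough convergence} in the graphical region'') is just a compressed version of your argument: the curvature along any sheet in $B_{2\rho(\tau)}(0)$ is bounded by $(2\rho(\tau))^{-2}\to 0$, while Lemma~\ref{lem: curvature lower bound of vertex} (which the paper leaves implicit) forces $|\bar\kappa|$ at the sharp vertex to stay bounded away from zero.
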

\begin{proof}
    This follows from Lemma~\ref{lem: curvature lower bound of vertex} and the curvature estimate of the rough convergence theorem \ref{thm: rough convergence} in the graphical region.
\end{proof}

\
\begin{proposition}[max number of knuckles]\label{prop: finite number of minimum points} For any $x_0 \in \R^2$, $t\in I$, $M_t$ has at most $m$ knuckles with respect to $x_0$, that is, $\varphi^{(x_0)}$ has at most $m$ local minimum points.
\end{proposition}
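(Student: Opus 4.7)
The plan is to argue by contradiction. Suppose, for some $t_0 \in I$, that $M_{t_0}$ has $N \geq m+1$ distinct knuckles $p_1, \ldots, p_N$ with respect to $x_0$. By Proposition~\ref{prop: ancient local-min-path/local-max-path}(a), each $p_i$ extends to an ancient knuckle path $\bp_i = \bigcup_{t \leq t_0}(p_i(t), t)$ with $p_i(t_0) = p_i$. The goal is to derive a contradiction at sufficiently negative times by combining pairwise distinctness of these paths with the $m$-sheet structure coming from the rough convergence theorem.

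The first key step is to verify that these $N$ paths remain pairwise disjoint for all $t \leq t_0$. If some pair $\bp_i, \bp_j$ were to coincide at an earlier time, I would set $s^\ast := \sup\{\,s \leq t_0 : p_i(s) = p_j(s)\,\}$, which satisfies $s^\ast < t_0$ by continuity and the assumption $p_i \neq p_j$ at $t_0$. Writing $u := \varphi^{(x_0)}_s$, the common point $(p_i(s^\ast), s^\ast)$ is a zero of $u$; it must in fact be a \emph{multiple} zero, for otherwise the zero set of $u$ would be a smooth arc near this point (by the implicit function theorem applied to $u$) and both paths would necessarily coincide throughout a whole neighborhood, contradicting the supremum property. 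Then Proposition~\ref{prop: zeroset}(b)(ii) provides a spatial neighborhood of $p_i(s^\ast)$ and $\delta > 0$ so that $u(\cdot, s)$ has at most one zero in this neighborhood for all $s \in (s^\ast, s^\ast + \delta]$; continuity of $\bp_i$ and $\bp_j$ then forces $p_i(s) = p_j(s)$ on a right interval of $s^\ast$, contradicting the definition of $s^\ast$.

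Once pairwise distinctness is established, I would push to very negative times. Lemma~\ref{lem: lim of minimum} applied to each knuckle path gives $(-t)^{-1/2} |\gamma(p_i(t),t)| \to 0$ as $t \to -\infty$, so for $t$ sufficiently negative the $N$ distinct rescaled knuckle positions $e^{\tau/2}\gamma(p_i(t),t)$, with $\tau = -\log(-t)$, all lie inside $B_{1/10}(0)$. Meanwhile, the rough convergence Theorem~\ref{thm: rough convergence} guarantees that, for $|\tau|$ large enough, $\bar M_\tau \cap B_2(0)$ decomposes as a disjoint union of exactly $m$ nearly flat sheets, and Lemma~\ref{lem: uniquenss of knuckle on sheet} ensures that each sheet carries exactly one knuckle of $\bar M_\tau$ with respect to $\bar x_0 := e^{\tau/2} x_0$, which is at distance $< 1/10$ from the origin for $|\tau|$ large. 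Since parabolic rescaling preserves critical points of the squared distance to $x_0$, the $N \geq m+1$ rescaled knuckles are partitioned among the $m$ sheets; pigeonhole forces two to lie on the same sheet, contradicting per-sheet uniqueness.

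The main obstacle is the first step: ensuring the $N$ ancient paths remain pairwise distinct under backward continuation, because the Sturmian forest structure in Section~\ref{sec: 2.4} a priori permits paths extended from distinct present-time knuckles to merge when extended backward through multiple zeros, which would spoil the final pigeonhole count. Proposition~\ref{prop: zeroset}(b)(ii) is exactly the tool that prevents such spurious merging. Once that is secured, the remainder is a straightforward pigeonhole argument against the $m$-sheet asymptotic structure already encoded in the rough convergence theorem and Lemma~\ref{lem: uniquenss of knuckle on sheet}.
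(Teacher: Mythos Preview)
Your proposal is correct and follows essentially the same route as the paper's proof: assume $m+1$ knuckles, extend to ancient local-min paths via Proposition~\ref{prop: ancient local-min-path/local-max-path}, verify these paths stay pairwise distinct for all earlier times, push the rescaled knuckles into a small ball, and contradict the per-sheet uniqueness of Lemma~\ref{lem: uniquenss of knuckle on sheet}. Two minor differences are worth noting: the paper simply asserts pairwise distinctness of the ancient paths (relying implicitly on the forest structure of the zero set discussed after Proposition~\ref{prop: zeroset}), whereas you supply the explicit Sturmian argument via Proposition~\ref{prop: zeroset}(b)(ii), which is a welcome clarification; and the paper uses only the coarser bound \eqref{eq: limsup of minimum} from Proposition~\ref{prop: ancient local-min-path/local-max-path} to place the rescaled knuckles in $B_2(0)$, rather than your appeal to Lemma~\ref{lem: lim of minimum}, but either suffices for the pigeonhole step.
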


\begin{proof}
For some $t_0\in I$ and $x_0\in \R^2$, suppose that there exist $(m+1)$ distinct minimum points of the function $\varphi^{(x_0)}(\cdot, t_0)$, say $p_0^1,\ldots, p_0^{m+1}$.
By Proposition \ref{prop: ancient local-min-path/local-max-path}, there exist ancient paths of knuckles $\bp^i$ of $(p_0^i, t_0)$ for $k = 1,\ldots, m+1$, and such that $\bp^i(t) \neq \bp^j(t)$ for $i\neq j$, $t\leq t_0$. 
By (\ref{eq: limsup of minimum}) in Proposition \ref{prop: ancient local-min-path/local-max-path}, each knuckle $\bar{\gamma}(\bp^{i}(t))$ comes from within $B_2(0)$ for $t \ll -1$. 
Thus, for $t\ll -1$, $\bar{M}_\tau$ has at least $m+1$ knuckles with respect to $\bar{x}_0$ contained in $B_2(0)$. 
It follows from Lemma \ref{lem: uniquenss of knuckle on sheet} and the rough convergence theorem \ref{thm: rough convergence} that for $t\ll -1$ there are exactly $m$ knuckles with respect to $\bar{x}_0$, a contradiction.
\end{proof}

\
 \begin{corollary}[number and location of knuckles]\label{cor: location of knuckles} With the same assumption of Lemma \ref{lem: uniquenss of knuckle on sheet}, $\bar{M}_\tau$ has exactly $m$ knuckles with respect to $x_0$. In addition, every knuckle is contained in a sheet in $\bar{M}_\tau\cap B_2(0)$ $\bar{M}_\tau\cap B_2(0)$.
 \end{corollary}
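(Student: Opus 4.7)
The strategy is to combine the local count from Lemma \ref{lem: uniquenss of knuckle on sheet} with the global upper bound from Proposition \ref{prop: finite number of minimum points}, and then use the smallness of the sheets to localize each knuckle inside $B_1(0)$.

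First I would observe that under the standing assumption $\rho(\tau)>10$ and $\bar{x}_0 = |x_0|/\sqrt{-t} < 1/10$, the rough convergence theorem (Theorem \ref{thm: rough convergence}) gives that $\bar{M}_\tau\cap B_2(0)$ decomposes as a disjoint union of $m$ sheets, each a graph $y=u^i(x)$ with very small $C^{\lfloor\rho\rfloor}$ norm. Applying Lemma \ref{lem: uniquenss of knuckle on sheet} on each of these $m$ sheets produces at least $m$ distinct knuckles of $\bar{M}_\tau$ with respect to $\bar{x}_0$, all contained in $B_2(0)$. On the other hand, Proposition \ref{prop: finite number of minimum points} (applied to the rescaled flow after the obvious rescaling of its proof, or equivalently to the original flow at time $t$ relative to $x_0$) caps the total number of knuckles by $m$. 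The two bounds together force equality: $\bar{M}_\tau$ has exactly $m$ knuckles with respect to $\bar{x}_0$, and each of them lies on one of the $m$ sheets inside $B_2(0)$.

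Next I would refine the localization from $B_2(0)$ down to $B_1(0)$. Write $\bar{x}_0=(a,b)$ with $a^2+b^2 < 1/100$, and consider a sheet $\{(x,u^i(x)) : x\in(-2\rho,2\rho)\}$. A knuckle on this sheet is the unique critical point $x^\ast$ of $(x-a)^2+(u^i(x)-b)^2$, and differentiating gives the identity
\begin{equation*}
x^\ast - a = -\bigl(u^i(x^\ast)-b\bigr)\, u^i_x(x^\ast).
\end{equation*}
Since $\|u^i\|_{C^{\lfloor\rho\rfloor}}\leq (2\rho)^{-2}\leq 1/400$ and $|b|<1/10$, the right-hand side is bounded in absolute value by roughly $(1/10+1/400)(1/400)$, so $|x^\ast|\leq |a|+o(1)<1/10+o(1)$. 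Combined with $|u^i(x^\ast)|\leq 1/400$, the knuckle point $\bigl(x^\ast, u^i(x^\ast)\bigr)$ lies well inside $B_1(0)$.

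The only conceptual step is the global cap by $m$ in Proposition \ref{prop: finite number of minimum points}, which has already been proved; everything else is a careful bookkeeping using the quantitative flatness of the sheets guaranteed by the rough convergence theorem. I do not anticipate any substantive obstacle in this corollary, as the work has been set up precisely so that this localization becomes essentially tautological.
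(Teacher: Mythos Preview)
Your proposal is correct and follows the same approach as the paper: combine the per-sheet existence from Lemma \ref{lem: uniquenss of knuckle on sheet} with the global cap from Proposition \ref{prop: finite number of minimum points} to get exactly $m$ knuckles, all on sheets. The paper's proof is a one-line citation of these two results; you have simply made explicit the elementary calculus (the critical-point equation $x^\ast - a = -(u^i(x^\ast)-b)u^i_x(x^\ast)$ together with the $C^1$ smallness of the sheets) that pins each knuckle inside $B_1(0)$, which the paper leaves to the reader.
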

 \begin{proof}
     This follows from Lemma \ref{lem: uniquenss of knuckle on sheet} and Proposition \ref{prop: finite number of minimum points}.
 \end{proof}

 \

\begin{corollary}[number of fingers]\label{cor: number of fingers}
    Under the assumptions of Proposition \ref{prop: finite number of minimum points}, if $\mathbb{L} = \R$ (resp. $\mathbb{L} = \mathbb{S}^1$), then $M_t$ has at most $m-1$ (resp. $m$) fingers with respect to $x_0$.
\end{corollary}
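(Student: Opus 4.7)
The plan is to reduce the count of fingers to the bound on the number of knuckles via elementary combinatorics on the one-dimensional parameter space $\mathbb{L}$, relying entirely on Proposition~\ref{prop: finite number of minimum points}.

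First I would invoke Proposition~\ref{prop: finite number of minimum points} to obtain that $\varphi^{(x_0)}(\cdot,t)$ has at most $m$ local minima on $\mathbb{L}$; denote this number by $k \leq m$. Since there are only finitely many of them, the corresponding knuckles on $M_t$ are isolated, so they satisfy the hypothesis in Definition~\ref{def: crit pt of dist}(3)--(4). List the preimages in increasing parameter order as $s_1 < \cdots < s_k$.

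Next I would apply the definitions of finger and tail directly, by partitioning $\mathbb{L}$ along the isolated points $s_1,\ldots,s_k$. In the circle case $\mathbb{L} = \mathbb{S}^{1}$, these $k$ points split the circle cyclically into $k$ closed arcs $[s_1,s_2], [s_2,s_3], \ldots, [s_k,s_1]$ (when $k\geq 1$; if $k=0$ there are no fingers, and the bound is trivial); by Definition~\ref{def: crit pt of dist}(3) the $\gamma$-image of each such arc is a finger, yielding at most $k\leq m$ fingers. In the line case $\mathbb{L} = \R$, the points $s_1 < \cdots < s_k$ split $\R$ into two unbounded components $(-\infty,s_1]$ and $[s_k,\infty)$, whose images are the two tails, together with $k-1$ bounded intervals $[s_i,s_{i+1}]$, whose images are the fingers. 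This gives at most $k-1 \leq m-1$ fingers, as desired.

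There is essentially no obstacle beyond the accounting: all of the analytic content has already been absorbed into Proposition~\ref{prop: finite number of minimum points} (which itself rests on the rough convergence theorem and Lemma~\ref{lem: uniquenss of knuckle on sheet}), so what remains is only verification that the arcs produced by removing the knuckle set from $\mathbb{L}$ match the definitions of finger and tail, a one-dimensional topological bookkeeping that carries no hidden difficulty.
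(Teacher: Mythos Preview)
Your proof is correct and follows essentially the same approach as the paper: both invoke Proposition~\ref{prop: finite number of minimum points} to bound the number of knuckles by $m$, then observe that $k$ marked points partition $\mathbb{S}^1$ into $k$ arcs and $\R$ into $k-1$ bounded intervals, each of which is a finger by definition. The paper's version is simply terser.
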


\begin{proof}
    Every segment bounded by a pair of adjacent knuckles defines a unique finger. The preimage of $k$ knuckles decompose $\R$ into $k-1$ intervals and decompose $\mathbb{S}^1$ into $k$ intervals, respectively. Therefore, the assertion follows.
\end{proof}
\

A critical point of $\varphi^{(x_0)}$ is said to be \emph{multiple} if $\varphi^{(x_0)}_{ss}$ vanishes and \emph{simple} if otherwise.
\begin{corollary}[saturation time]\label{cor: saturation time}
    Under the assumptions of Proposition \ref{prop: finite number of minimum points}, there exists the \emph{saturation time} $t_*(x_0)$ with respect to $x_0$ such that if $t < t_*(x_0)$ then $M_t$ has exactly $m$ knuckles with respect to $x_0$ and every critical point of $\varphi^{(x_0)}(\cdot,t)$ is simple. 

\end{corollary}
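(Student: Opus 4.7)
The first assertion (exactly $m$ knuckles) follows immediately by rescaling. A knuckle of $M_t$ with respect to $x_0$ is precisely a knuckle of $\overline{M}_\tau$ with respect to $\bar{x}_0 := x_0/\sqrt{-t}$. For $t$ sufficiently negative the hypotheses of Lemma~\ref{lem: uniquenss of knuckle on sheet}, namely $|\bar{x}_0| < 1/10$ and $\rho(\tau) > 10$, are satisfied; Corollary~\ref{cor: location of knuckles} then yields exactly $m$ knuckles. Fix a threshold $T_0 < 0$ so that this conclusion holds for all $t \leq T_0$.

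For the second assertion, set $u(s,t) := \varphi^{(x_0)}_s(s,t)$, which satisfies the linear parabolic equation $u_t = u_{ss} + \kappa^2 u$ derived in the proof of Proposition~\ref{prop: ancient local-min-path/local-max-path}. A critical point of $\varphi^{(x_0)}(\cdot,t)$ is \emph{multiple} exactly when it is a multiple zero of $u(\cdot,t)$. Let $z(t)$ denote the total number of zeros of $u(\cdot,t)$ on $\mathbb{L}$. By Proposition~\ref{prop: zeroset}---applied with the periodic boundary condition when $\mathbb{L} = \mathbb{S}^1$, or with the non-vanishing boundary condition on arbitrarily large intervals $[-A,A]$ when $\mathbb{L} = \mathbb{R}$, chosen via Sard's theorem as in the proof of Proposition~\ref{prop: ancient local-min-path/local-max-path}---the function $z$ is non-increasing as $t$ advances and strictly decreases at every time a multiple zero appears. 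Equivalently, as $t$ decreases, $z$ is non-decreasing and jumps strictly upward at each multiple-zero time.

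The crux, and the main obstacle, is the uniform bound $z(t) \leq 2m$ for all $t \leq T_0$. At any time when $u(\cdot,t)$ has only simple zeros, the critical points of $s \mapsto |\gamma(s,t) - x_0|^2$ are non-degenerate and strictly alternate between local minima and local maxima. In the closed case this gives $\#\mathrm{max} = \#\mathrm{min}$, while in the noncompact case the coercivity $|\gamma(s,t) - x_0|^2 \to \infty$ as $|s| \to \infty$ forces the outermost critical points to be minima, so $\#\mathrm{max} = \#\mathrm{min} - 1$. By Proposition~\ref{prop: finite number of minimum points}, $\#\mathrm{min} \leq m$, hence $z(t) \leq 2m$ at such generic times; at non-generic times $z$ is even smaller by Sturm. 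Since $z$ is integer-valued, non-decreasing in backward time, bounded above by $2m$, and jumps strictly at each multiple-zero time, only finitely many multiple-zero times can occur in $(-\infty, T_0]$. Setting $t_*(x_0) \leq T_0$ strictly below the earliest such time (or $t_*(x_0) = T_0$ if none exist) gives the desired saturation time: for $t < t_*(x_0)$, every critical point of $\varphi^{(x_0)}(\cdot,t)$ is simple, and by the first paragraph $M_t$ has exactly $m$ knuckles with respect to $x_0$.
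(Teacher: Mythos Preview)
Your proof is correct and follows essentially the same approach as the paper's: both apply Sturm theory to $u=\varphi^{(x_0)}_s$ and use the knuckle bound from Proposition~\ref{prop: finite number of minimum points} to control the zero count, concluding that only finitely many multiple-zero times can occur. The paper's proof is extremely terse (it simply cites ``the argument of Corollary~\ref{cor: almost no multiple zero}'' together with Proposition~\ref{prop: finite number of minimum points}), whereas you have written out the alternation argument for the uniform bound $z(t)\le 2m$ and explicitly handled the ``exactly $m$ knuckles'' assertion via Corollary~\ref{cor: location of knuckles}; these are exactly the details the paper leaves implicit. One small wording quibble: your sentence ``at non-generic times $z$ is even smaller by Sturm'' should be read as comparing to a nearby \emph{earlier} generic time $t''<t$ (so that $z(t)\le z(t'')\le 2m$), not a later one---but your conclusion is unaffected.
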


\begin{proof}
    Applying Proposition~\ref{prop: zeroset} (b) to $\varphi_s^{(x_0)}$, every multiple zero causes the total number of zeros to increase as time goes backward.
    Along with Proposition \ref{cor: location of knuckles}, the set 
    \begin{align*}
        \{t\in I: \varphi^{(x_0)}(\cdot, t) \mbox{ has a multiple critical point}\}
    \end{align*}
    is finite.
    Take    
    \begin{align*}
    t_*(x_0) = \min \{t\in I: \varphi^{(x_0)}(\cdot, t) \mbox{ has a multiple critical point}\} > -\infty.
    \end{align*}
\end{proof}

 \bigskip

\section{Ancient low-entropy flows}
In this section, we complete the classification (Theorem~\ref{thm: low entropy flow}) of low-entropy ($\ent(\cM) \leq 2$) flows in the Brakke setting in Section \ref{sec: 2.2}. We first study the partial regularity of low-entropy flows. Next, we prove that all low-entropy weak flows are smooth, embedded except for crossing two lines, and convex. Theorem~\ref{thm: low entropy flow} then follows from the previous classification results.

\subsection{Partial regularity of low-entropy flows}
\begin{definition}
    Let $C\subset\mathbb{R}^{2}$ be a stationary cone. Then we call the spacetime track $C\times\mathbb{R}$ (resp. $C\times(-\infty, 0]$) a \emph{static} (resp. \emph{quasi-static}) solution to the Brakke flow.
\end{definition}
\
\begin{lemma}\label{lem: minimal cone}
    Let $\mu$ be an integral one-rectifiable Radon measure in $\R^2$ with $\ent[\mu] \leq 2$. If the associated varifold $V_\mu$ is a cyclic minimal cone, then $\mu = m \cH^1 \lfloor L$ for some straight line $L$ and $m \in \{1, 2\}$ or $\mu = \cH^1 \lfloor (L_1 \cup L_2)$ for some distinct straight lines $L_1, L_2$.
\end{lemma}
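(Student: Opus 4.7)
The plan is to exploit the rigidity of $1$-dimensional minimal cones in $\mathbb{R}^2$ and then read off all configurations compatible with the cyclic and low-entropy constraints.

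First, I would recall that a $1$-dimensional stationary integral varifold in $\mathbb{R}^2$ that is invariant under dilations is supported on a finite union of rays $R_1,\dots, R_k$ emanating from a common vertex, which after translation we place at the origin, with constant positive integer multiplicities $m_i$ along each $R_i$ (the mean curvature vanishes in the smooth interior of a ray, and constancy of density follows from the monotonicity formula). Applying the first variation formula to a vector field supported near the origin yields the balancing identity $\sum_{i=1}^k m_i v_i = 0$, where $v_i \in \mathbb{S}^1$ is the unit tangent of $R_i$.

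Next, I would extract parity and size constraints. The only boundary of a union of rays through the origin is concentrated at the origin with $\mathbb{Z}_2$ coefficient $\sum_i m_i \pmod 2$, so the cyclic condition $\partial [V_\mu] = 0$ forces $\sum_i m_i$ to be even. The Gaussian density at the origin at scale $1$ computes to
\begin{align*}
    \ent[\mu] \;\geq\; \frac{1}{\sqrt{4\pi}} \int e^{-|x|^2/4}\, d\mu \;=\; \frac{1}{2}\sum_{i=1}^k m_i,
\end{align*}
so $\sum_i m_i \leq 4$. Centering at a far interior point of $R_i$ and taking small scale gives $\ent[\mu] \geq m_i$ when $R_i$ has no opposite ray, and $\ent[\mu] \geq m_i + m_j$ when $R_j$ is opposite to $R_i$; combined with $\ent[\mu] \leq 2$, every ray has multiplicity at most $2$ and every opposite pair has total multiplicity at most $2$.

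Finally, I would enumerate using balancing. When $\sum m_i = 2$, a lone ray with $m_1 = 2$ violates $m_1 v_1 = 0$; two rays with $m_1 = m_2 = 1$ force $v_1 = -v_2$, producing a single line $L$ with multiplicity one. When $\sum m_i = 4$, I run through $k \in \{2,3,4\}$: for $k=2$ balancing forces $m_1 = m_2 = 2$ and $v_1 = -v_2$, giving $\mu = 2\mathcal{H}^1 \lfloor L$; for $k=3$ the only option is $(m_1,m_2,m_3) = (2,1,1)$ up to permutation, and then $v_2 + v_3 = -2v_1$ implies $|v_2 + v_3| = 2$, hence $v_2 = v_3$, which merges into a pair of rays and contradicts distinctness; for $k=4$ all $m_i = 1$ and, writing $v_1 + v_2 = -(v_3 + v_4) =: u$ and using that the unordered pair of unit vectors with prescribed sum $u$ is unique, one gets $\{v_3, v_4\} = \{-v_1, -v_2\}$, so the support is a union of two distinct lines $L_1, L_2$. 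The main, and essentially the only nontrivial, obstacle is this last planar rigidity step; everything else is a mechanical case check.
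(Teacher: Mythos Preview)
Your proof is correct and follows essentially the same route as the paper's: a stationary integral cone is a finite union of weighted rays with the balancing condition $\sum m_i v_i = 0$, the cyclic hypothesis forces $\sum m_i$ even, the entropy bound at the vertex gives $\sum m_i \le 4$, and then balancing pins down the configuration. Your case analysis is simply a more explicit version of the paper's terse ``simple calculation.''

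One small correction: your claim that centering at a far interior point of $R_i$ with small scale yields $\ent[\mu] \ge m_i + m_j$ when $R_j$ is opposite to $R_i$ is false. At small scale the Gaussian sees only a neighborhood of the center, where the measure is $m_i$ times a line; the opposite ray $R_j$ is far away and contributes negligibly, so one only obtains $\ent[\mu] \ge m_i$. (Indeed, for $\mu = 2\,\cH^1\lfloor L$ one has $\ent[\mu] = 2$, not $4$.) Fortunately you never invoke the opposite-pair bound in the enumeration---the bound $m_i \le 2$ alone suffices to force $(m_1,m_2) = (2,2)$ when $k=2$ and $(2,1,1)$ when $k=3$---so the argument stands once that sentence is deleted.
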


\begin{proof}
    Since $V_{\mu}$ is a cyclic cone, it is an even number of straight rays joining at 0. The entropy bound implies that there are two or four such rays. If there are two rays, then since $V_{\mu}$ is stationary, they form a straight line $L$. If there are four rays with directional unit vectors, say $\mathbf{v}_1, \mathbf{v}_2, \mathbf{v}_3, \mathbf{v}_4$ arranged counterclockwise, since $V_{\mu}$ is stationary, we have $\mathbf{v}_1 + \mathbf{v}_2 + \mathbf{v}_3 + \mathbf{v}_4 = 0$. 
    Observe that the angle bisectors $\mathbf{v}_1 + \mathbf{v}_2 = - (\mathbf{v}_3 + \mathbf{v}_4)$ and $\mathbf{v}_2 + \mathbf{v}_3 = - (\mathbf{v}_4 + \mathbf{v}_1)$. A simple calculation gives $\mathbf{v}_1 + \mathbf{v}_3 = \mathbf{v}_2 + \mathbf{v}_4 = 0$. Thus, in this case, $V_{\mu}$ consists of two straight lines or a straight line with multiplicity two, which completes the proof.
\end{proof}

\
\begin{proposition}\label{prop: low entropy selfsimilar flow}
    Suppose $\cM$ is a unit-regular, cyclic, integral, \emph{self-similar} Brakke flow with $\ent[\cM] \leq 2$. Then $\cM$ must be one of the following flows:
    \begin{itemize}
        \item If $\ent[\cM] = 1$, then $\cM$ is a quasi-static line with multiplicity one;
        \item If $\ent[\cM] = \sqrt{\tfrac{2\pi}{e}}\approx 1.52$, then $\cM$ is a shrinking circle;
        \item If $\ent[\cM] = 2$, then $\cM$ is a quasi-static line with multiplicity two or two crossing quasi-static lines.
    \end{itemize}
\end{proposition}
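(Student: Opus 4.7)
The first dichotomy is whether $\cM$ evolves nontrivially under the parabolic rescaling $\cD_\lambda$ or is quasi-static. In the quasi-static case, $\mu_t$ is independent of $t$ and the associated varifold is a stationary, cyclic, integral one-rectifiable varifold in $\R^2$, i.e., a minimal cone. Lemma~\ref{lem: minimal cone} applies and, combined with the entropy budget $\ent[\cM]\le 2$, restricts the flow to a multiplicity-$m$ line through the origin with $m\in\{1,2\}$ or a pair of distinct crossing lines. This handles the $\ent[\cM]=1$ case and the two sub-possibilities in the $\ent[\cM]=2$ case.

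In the nontrivial case, the time-$(-1)$ slice $\Sigma:=\mu_{-1}$ is a genuine self-shrinker (satisfying $\boldsymbol{\kappa}+\tfrac{x^\perp}{2}=0$ in the varifold sense), and the rest of $\cM$ is generated from $\Sigma$ by parabolic rescaling. My plan is to show $\Sigma$ is the circle of radius $\sqrt{2}$ using two ingredients.

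The first ingredient is a two-point rigidity: if there exists $X_0=(x_0,t_0)\in\mathrm{spt}\,\cM$ with $x_0\ne 0$ and Gaussian density $\Theta(\cM,X_0)=\ent[\cM]$, then the equality case in Huisken's monotonicity forces $\cM$ to be backwardly self-similar about $X_0$ in addition to $(0,0)$. The spacetime symmetries generated by two distinct spatial scaling centers contain a nontrivial spatial translation, so $\cM$ must be spatially translation-invariant, hence a static union of parallel lines. This contradicts nontriviality, so the Gaussian density of $\cM$ at every spacetime point off the time axis is strictly less than $\ent[\cM]$, and in particular strictly less than $2$.

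The second ingredient combines unit-regularity with the planar shrinker classification of Abresch--Langer. At every density-$1$ point $\Sigma$ is locally a smooth graph solving the shrinker ODE, so each connected smooth piece of $\Sigma$ lies on a line through the origin, on the circle of radius $\sqrt{2}$, or on an Abresch--Langer closed immersed curve. Line pieces would make $\cM$ static and fall into the quasi-static case already handled; Abresch--Langer curves carry transverse self-intersections at positive distance from the origin, each producing a spacetime point of Gaussian density $2$ off the time axis, which the first ingredient rules out; disjoint combinations (e.g., line plus circle) overshoot the entropy budget. What remains is a single circle of radius $\sqrt{2}$ with $\ent[\cM]=\sqrt{2\pi/e}$. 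The main obstacle I expect is formulating the two-point rigidity cleanly in the weak Brakke setting with multiplicity, so that "two distinct centers of self-similarity" really delivers spatial translation invariance for cyclic integral flows; the secondary technical step is the tangent-cone analysis at potential density-$2$ singular points of $\Sigma$, where Lemma~\ref{lem: minimal cone} applied to tangent cones together with the cyclic hypothesis replaces any singular configuration by one of the allowed cone models.
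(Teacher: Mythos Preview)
Your plan is correct but more circuitous than the paper's. The paper organizes the proof around a density dichotomy on the $t=-1$ slice rather than your quasi-static/nontrivial split, and the key observation you do not exploit is that for a self-similar flow the tangent flow at any $X=(x,-1)$ with $x\in\mathrm{spt}(M_{-1})$ is automatically the \emph{quasi-static} tangent cone of $M_{-1}$ at $x$. Since this cone is cyclic, integral, and minimal, Lemma~\ref{lem: minimal cone} forces $\Theta(\cM,X)\in\{1,2\}$. If some point has density $2$, then $\Theta(\cM,X,r)\equiv 2$ and monotonicity rigidity says $\cM-X$ coincides with its own tangent flow, i.e.\ with a static cone---no two-center group analysis needed---and self-similarity about the origin then pins $\cM$ down to one of the line configurations. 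If every point of $M_{-1}$ has density $1$, Allard gives smoothness, density $1$ everywhere also precludes self-intersections so $M_{-1}$ is \emph{embedded}, and the Abresch--Langer classification of embedded shrinkers yields a line or the radius-$\sqrt{2}$ circle in one stroke, bypassing your separate exclusion of Abresch--Langer curves (via self-intersection density) and of multi-component unions (via entropy budget). Your two-center rigidity and component-by-component analysis do work and may be more portable to settings where the tangent-flow-is-static shortcut is unavailable; but note that your spatial-translation argument as stated does not cover a potential density-$2$ point at the spatial origin (where the two scaling centers share the same spatial coordinate), so the tangent-cone step you flag as ``secondary'' actually carries real weight there.
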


\begin{proof}
    Let $x\in \mathrm{spt}(M_{-1})$ and let $X = (x, -1)$. Any tangent flow $\cM_X$ at $X$ is a quasi-static solution over some stationary cone $C$. Since $\ent[C] = \Th(\cM, X) \leq 2$, by Lemma \ref{lem: minimal cone}, $\ent[C] = \Th(\cM, X)$ is 1 or 2. 
    
    Suppose that there is a point $X = (x, -1)$ such that $\Th(\cM, X) = 2$. The entropy bound implies $\Th(\cM, X, r) = 2$ for $r>0$. By the rigidity part of Huisken's monotonicity formula, $\cM - X$ is a quasi-static solution over $C$. 
    This immediately implies that $\cM$ is either a quasi-static line with multiplicity 2 or two quasi-static crossing lines in which case $x = 0$. 

    Suppose that, for any point $x\in \mathrm{spt}(M_{-1})$, $\Th(\cM, (x, -1)) = 1$. Then, by Lemma \ref{lem: minimal cone} and Allard's regularity theorem \cite{Allard72}, $M_{-1}$ is smooth. Furthermore, $M_{-1}$ is embedded; otherwise, any point of self-intersection has Euclidean density at least 2 and hence $(x, -1)$ has Gaussian density 2, which is a contradiction. By the classification of smooth embedded self-similar solutions by Abresch-Langer \cite{AL86}, $M_{-1}$ is either a line or $\mathbb{S}_{\sqrt{2}}^1(0)$.
\end{proof}
\
\begin{proposition}[partial regularity]\label{thm: partial regularity}
    Let $\cM$ be an ancient unit-regular, cyclic, integral Brakke flow with $\ent[\cM] \leq 2$. Then there is either (1) no singularity, or (2) a round singularity, or (3) there is a point with density 2, in which case $\cM$ is a union of two quasi-static lines passing through a common point.
\end{proposition}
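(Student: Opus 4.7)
The plan is to combine the self-similar classification of Proposition~\ref{prop: low entropy selfsimilar flow} with the rigidity clause of Huisken's monotonicity formula. For each space-time point $X_0=(x_0,t_0)\in \cM$, any tangent flow $\hat{\cM}$ at $X_0$ is backwardly self-similar, unit-regular, cyclic, and integral, since these properties are preserved under Brakke convergence as recalled in Section~2.2. Upper semicontinuity of entropy yields $\ent[\hat\cM]\leq\ent[\cM]\leq 2$, so Proposition~\ref{prop: low entropy selfsimilar flow} forces $\hat\cM$ to be a multiplicity-one quasi-static line, a shrinking circle, a multiplicity-two quasi-static line, or two crossing quasi-static lines. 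Consequently $\Theta(\cM,X_0)\in\{1,\sqrt{2\pi/e},2\}$ at every $X_0\in\cM$.

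Next I would trichotomize on the densities. If $\Theta(\cM,X_0)=1$ for every $X_0\in \cM$, then unit-regularity makes every point regular and we land in case~(1). If some $X_0$ admits a shrinking-circle tangent flow, it is a round singular point and we land in case~(2). Otherwise, there must exist $X_0\in\cM$ with $\Theta(\cM,X_0)=2$, and the task reduces to upgrading this density-$2$ point to the global statement of case~(3).

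For the upgrade, observe that the monotone nondecreasing function $r\mapsto \Theta(\cM,X_0,r)$ satisfies $\lim_{r\to 0^+}\Theta(\cM,X_0,r)=2$ and is bounded above by $\ent[\cM]\leq 2$, so it must equal $2$ for every $r>0$. The rigidity clause of the Brakke monotonicity formula (cf.~\cite[Lemma~8]{Ilm95}) then forces $\cM^-_{X_0}:=(\cM-X_0)\cap\{t<0\}$ to coincide with all its parabolic dilations, i.e., to be backwardly self-similar with entropy $2$. Proposition~\ref{prop: low entropy selfsimilar flow} applied to $\cM^-_{X_0}$ identifies it as either a multiplicity-two quasi-static line through $x_0$ or two quasi-static lines crossing at $x_0$, which is the ``union of two quasi-static lines passing through a common point'' statement of case~(3). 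If $\cM$ extends past $t_0$, the time-$t_0$ slice is a stationary integral varifold and the Brakke continuation of a stationary union of lines is the same union, so the global description persists on the full time interval.

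The main obstacle is the rigidity step that converts the pointwise density identity $\Theta(\cM,X_0,r)\equiv 2$ into the global self-similarity of $\cM^-_{X_0}$, since $\cM$ may have nontrivial singular support rather than being a smooth flow. This is standard in the Brakke setting (see the discussion of the localized monotonicity formula in Section~2.2 and \cite[Lemma~8]{Ilm95}), but it is the crucial input without which the classification of self-similar flows could only be applied to individual tangent flows and not to $\cM$ itself.
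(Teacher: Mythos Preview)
Your proposal is correct and follows essentially the same approach as the paper: classify tangent flows via Proposition~\ref{prop: low entropy selfsimilar flow}, trichotomize on the possible density values $\{1,\sqrt{2\pi/e},2\}$, and invoke the rigidity of Huisken's monotonicity formula for the density-$2$ case. The paper's proof additionally establishes connectedness of $\mathrm{spt}(\cM)$ and uniqueness of the round singularity (via Allard regularity and Brakke's clearing-out lemma), but these go beyond what the stated proposition requires.
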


\begin{proof}
    Decompose $\mathrm{spt}(\cM)$ as $\mathrm{reg}(\cM) \sqcup \mathrm{sing}(\cM)$ where $X = (x,t)\in \mathrm{reg}(\cM)$ if and only if $M_t\cap B_r(x)$ is an embedded smooth manifold for some $r>0$. Suppose $X_0 = (x_0, t_0)$ is a singular point of $\cM$. Then $\cM$ being unit-regularity implies $\Th(\cM, X_0) > 1$. 
    By monotonicity formula, $\Th(\cM, X_0 , \infty) > 1$. 
    Any tangent flow at infinity $\check{\cM}$ is unit-regular, cyclic, integral, and self-similar, since all these properties are preserved limits of Brakke flows.
    By the classification in Proposition~\ref{prop: low entropy selfsimilar flow}, $\check{\cM}$ is either a shrinking circle, a quasi-static line with multiplicity 2, or two crossing quasi-static lines. 
    Furthermore, applying the same reasoning to any sub-flow of a connected component of $\mathrm{spt}(\cM)$, the entropy bound implies that $\mathrm{spt}(\cM)$ is connected. The same classification reasoning applies to any tangent flow $\hat{\cM}$ at $X_0$. 

    If $\Th(\cM, X) = 2$ for some $X$, then by the rigidity part of monotonicity formula and Proposition~\ref{prop: low entropy selfsimilar flow}, $\cM$ is a union of two quasi-static lines passing through $X$. 
    Suppose $\Th(\cM, X) < 2$ for all $X\in \cM$, then $\cM$ has only round singularities by the classification of the tangent flow. 
    By Allard's regularity theorem, for any small $\vare > 0$, there exists $r > 0$ such that $\cM \cap \{ B_{2r}(x_0) \times (t_0 -4r^2, t_0 - \vare r^2) \}$ is $\vare$-close to shrinking circle $\sqrt{-2t}\,\mathbb{S}^1$ in $C^\infty$ sense. 
    Furthermore, $X_0\not \in \bar{\mathrm{reg}(\cM) \cap \big(\R^2\times \{t > t_0\}\big)}$ by Brakke's clearing-out lemma \cite[6.3]{Bra84} (cf. also \cite[Proposition 4.23]{Eck2004RTM}). 
    All other singular points have the same asymptotic behavior, so $\mathrm{sing}(\cM)$ is discrete. It follows from the connectedness of $\cM$ and the structure of $\mathrm{sing}(\cM)$ that $M_t$ is smooth, embedded, and connected for all $t < t_0$ and therefore $\mathrm{sing}(\cM) = \{X_0\}$.
\end{proof}

\
\begin{lemma}\label{lem: exclusion of crossing lines}
    Two crossing lines are not a limit of a sequence of embedded smooth CSFs in Brakke flow sense on $P(\mathbf{0}, 2)$.
\end{lemma}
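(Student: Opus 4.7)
The plan is to argue by contradiction. Suppose $\cM_i = \{\mu^i_t\}_{t\in(-4,0]}$ is a sequence of smooth embedded CSFs on $P(\bo, 2)$ converging as Brakke flows to $\cM = (L_1\cup L_2)\times(-4, 0]$ with $L_1, L_2$ two distinct lines through the origin; by a rotation, take $L_1 = \{y = x\}$ and $L_2 = \{y = -x\}$. By Brakke's regularity theorem applied at density-one points of $\cM$, the convergence is $C^\infty_{\mathrm{loc}}$ on $P(\bo, 2)\setminus(\{\bo\}\times(-4, 0])$. Thus for any small $r>0$ and $t\in(-4, 0]$, the slice $\cM_i(t)\cap(B_2\setminus B_r)(\bo)$ is a $C^\infty$-small perturbation of the four arms of $L_1\cup L_2$ and meets $\partial B_r$ at four points near the arm directions.

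Embeddedness inside $B_r(\bo)$ forces $\cM_i(t)\cap B_r(\bo)$ to be two disjoint smooth arcs pairing the four endpoints in one of two topological types: Type A (pairing NE with SE and NW with SW, modeled on $x = \pm\sqrt{y^2+\vare}$) or Type B (pairing NE with NW and SE with SW, modeled on $y = \pm\sqrt{x^2+\vare}$). Since $\cM_i$ is smooth on all of $P(\bo, 2)$, the topological type is constant in time, and passing to a subsequence I assume all $\cM_i$ are Type A. Let $\Sigma^R_i(t)\subset\{x > 0\}$ denote the right arc (this inclusion follows from the avoidance principle with the static line $\{x = 0\}$) and set $\xi_i(t) := \min\{x:(x,y)\in\Sigma^R_i(t)\} > 0$.

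The crux is a quantitative gap-growth estimate: for some $c > 0$ depending only on the angle between $L_1$ and $L_2$,
\begin{equation*}
\xi_i(t) \geq c\sqrt{t - t_0},
\end{equation*}
for all $t_0 < t$ in $(-4, 0]$ and $i$ large enough. This would be proved by comparing $\Sigma^R_i$ with the self-similar expanding CSF solution $\mathcal{E} = \{E(\tau)\}_{\tau > 0}$ starting from the wedge $W = (L_1\cup L_2)\cap\{x\geq 0\}$. The expander $E(\tau) = \{(\sqrt{\tau}\,\phi(y/\sqrt\tau), y):y\in\R\}$ is smooth for $\tau > 0$, where $\phi$ solves $\phi - \tfrac{1}{2}y\phi' = \phi''/(1+(\phi')^2)$ with $\phi(y)/|y|\to 1$ as $|y|\to\infty$ and $\phi(0) =: c > 0$. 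Since $\Sigma^R_i(t)$ remains in $\mathrm{int}(W)$ (avoidance with $L_1, L_2$), and at a Brakke-continuity time $t_0$ one can choose $\tau_i\in[0, \xi_i(t_0)^2/c^2]$ so that the graph $f_i(\cdot, t_0)$ of $\Sigma^R_i(t_0)$ lies above $g(\cdot, \tau_i) := \sqrt\tau_i\,\phi(\cdot/\sqrt\tau_i)$, the maximum principle for the graph CSF equation $f_t = f_{yy}/(1+f_y^2)$ propagates this ordering and yields $\xi_i(t)\geq c\sqrt{\tau_i + (t - t_0)} \geq c\sqrt{t - t_0}$.

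Picking $t_0\in(-4, -3]$ and $t_1\in[-1, 0]$ at times of strong Brakke convergence, the gap estimate forces $\xi_i(t_1)\geq c\sqrt{2} > 0$ uniformly in $i$, which contradicts $\xi_i(t_1)\to 0$ coming from the Brakke convergence $\cM_i(t_1) \to L_1\cup L_2$. The main obstacle is verifying the initial comparison $f_i(\cdot, t_0)\geq g(\cdot, \tau_i)$: Brakke regularity yields smooth control only away from origin, so the shape of $\Sigma^R_i(t_0)$ in its tip region is not directly controlled. I would handle this by taking $\tau_i$ small so that the expander tip lies strictly below $\xi_i(t_0)$, and combining wedge-containment with the smooth asymptotic matching $f_i(y, t_0)\to |y|$ on $|y|\geq \delta$ to verify the barrier on the remaining parabolic boundary.
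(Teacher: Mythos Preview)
Your approach via expander barriers is genuinely different from the paper's, and much longer. The paper's argument is: once White's regularity on the annulus $B_2\setminus B_\delta$ shows $M^i_t\cap B_2$ has exactly two connected components $\Ga^{i,1}_t,\Ga^{i,2}_t$, one asserts that each subflow $\cG^{i,j}$ has Gaussian density ratio $\Theta(\cG^{i,j},X,r)\leq 1+\vare_W$ for every $X\in B_1(\mathbf{0})$ and $r\in(0,1)$; White's regularity applied to each $\cG^{i,j}$ separately then yields uniform curvature bounds, so $\Ga^{i,1}_t,\Ga^{i,2}_t$ converge in $C^2$ to $L_1,L_2$ respectively and must intersect, contradicting embeddedness. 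No pairing-type case split and no barrier comparison appear.

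Your argument has a real gap precisely where you flag it. First, the wedge-containment $\Sigma^R_i(t)\subset\mathrm{int}(W)$ (i.e.\ $f_i(y,t)>|y|$) does not follow from a bare appeal to avoidance with $L_1,L_2$: the subflow $\Sigma^R_i$ has boundary on $\partial B_2$ that is only $\vare$-close to $L_1,L_2$ (a priori on either side), and avoidance for flows with boundary requires separated boundaries. Second, even granting $f_i>|y|$, the initial comparison $f_i(\cdot,t_0)\geq g(\cdot,\tau_i)$ cannot be arranged with the expander from $W$ itself: for every $\tau>0$ one has $g(y,\tau)>|y|$ strictly, whereas $f_i(y,t_0)-|y|$ may tend to $0$ with $i$ on the arms $|y|\geq\delta$, and shrinking $\tau_i$ does not force $g(y,\tau_i)-|y|\leq f_i(y,t_0)-|y|$ there. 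The standard remedy is to compare with the expander $\tilde g$ from a strictly \emph{wider} wedge (half-angle enlarged by a fixed $\eta>0$): then $\tilde g(y,0^+)$ lies strictly inside the original wedge region, so both the initial comparison on $|y|\leq Y$ and the lateral comparison at $|y|=Y$ hold with a margin independent of $i$, and one still obtains $\xi_i(t)\geq\tilde c\sqrt{t-t_0}$ with $\tilde c>0$ depending only on the angle. With this modification your route goes through; as written, the barrier step fails.
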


\begin{proof}
    Let $L_1 \neq L_2$ denote the crossing two lines meeting at $0$. Suppose that there is a sequence of embedded smooth CSF $\cM^i$ such that $\cM^i \rightharpoonup (L_1 \cup L_2) \times \R$ in the Brakke flow sense on $P(\mathbf{0}, 2)$. 
    Given any $\vare, \delta > 0$ small with $\vare \ll \delta$, it follows from White's regularity theorem that for sufficiently large $i$, $M_t^i \cap (B_2(0) - B_\delta(0))$ is $\vare$-close to $(L_1 \cup L_2) \cap (B_2(0) - B_\delta(0))$ for all $t\in (-4, 0)$ in $C^2$-sense. This implies that for all $t\in (-4, 0)$, $M_t^i \cap B_2(0)$ has two connected components denoted by $\Ga_t^{i, 1}$ and $\Ga_t^{i, 2}$. Let $\cG^{i, 1}, \cG^{i, 2}$ be the spacetime profiles of such flows, respectively. 
    By the proximity between $\cM^i$ and $L_1\cup L_2$ in the Radon measure sense, for large $i$, $\Th(\cG^{i,j}, X, r) \leq 1 + \vare_W$ for any $X\in P(\mathbf{0},1)$, $0 < r <1$, $j = 1,2$, where $\vare_W > 0$ is the critical constant for White's regularity theorem. Thus, $\Ga_t^{i, 1}$ and $\Ga_t^{i, 2}$ have uniform curvature bounds and converge to $L_1, L_2$, respectively, in the sense of $C^2$ as $i \to \infty$. But this implies that $\Ga_t^{i, 1}$ and $\Ga_t^{i, 2}$ intersect, a contradiction to the embeddedness of $M^i$.
\end{proof}

\
\subsection{Convexity of low-entropy flows}
In the following statements, we assume that $\cM$ has $\ent[\cM]\leq 2$ and is not crossing quasi-static lines. By Proposition~\ref{thm: partial regularity}, we know that $\cM$ is smooth up to the first singular time. 
\begin{definition}\label{def: finger region}
    Let $x_0\in \R^2$, and let $\Gamma(x_0, t)$ denote a finger of $M_t$ with respect to $x_0$ with knuckles $\{p_1,p_2\}$, which could be identical if $M_t$ is closed. Then, the \textit{finger region} $\Omega_\Gamma (x_0, t)$ is the connected bounded open subset of $\R^2$ enclosed by $ \pa \bar{B}_r(x_0) \cup 
 \Ga(x_0, t)$ where $r=\max \{\lvert p_1 - x_0 \rvert , \lvert p_2 - x_0 \rvert \}$. 
 See Figure \ref{fig: finger region} for an illustration.
\end{definition}
Note that, by the definition of the finger, the distance between the tip and $x_0$ is greater than $r$ and hence the finger region is nonempty. 

\begin{figure}[h]
\centering
\includegraphics[width = 0.45\linewidth]{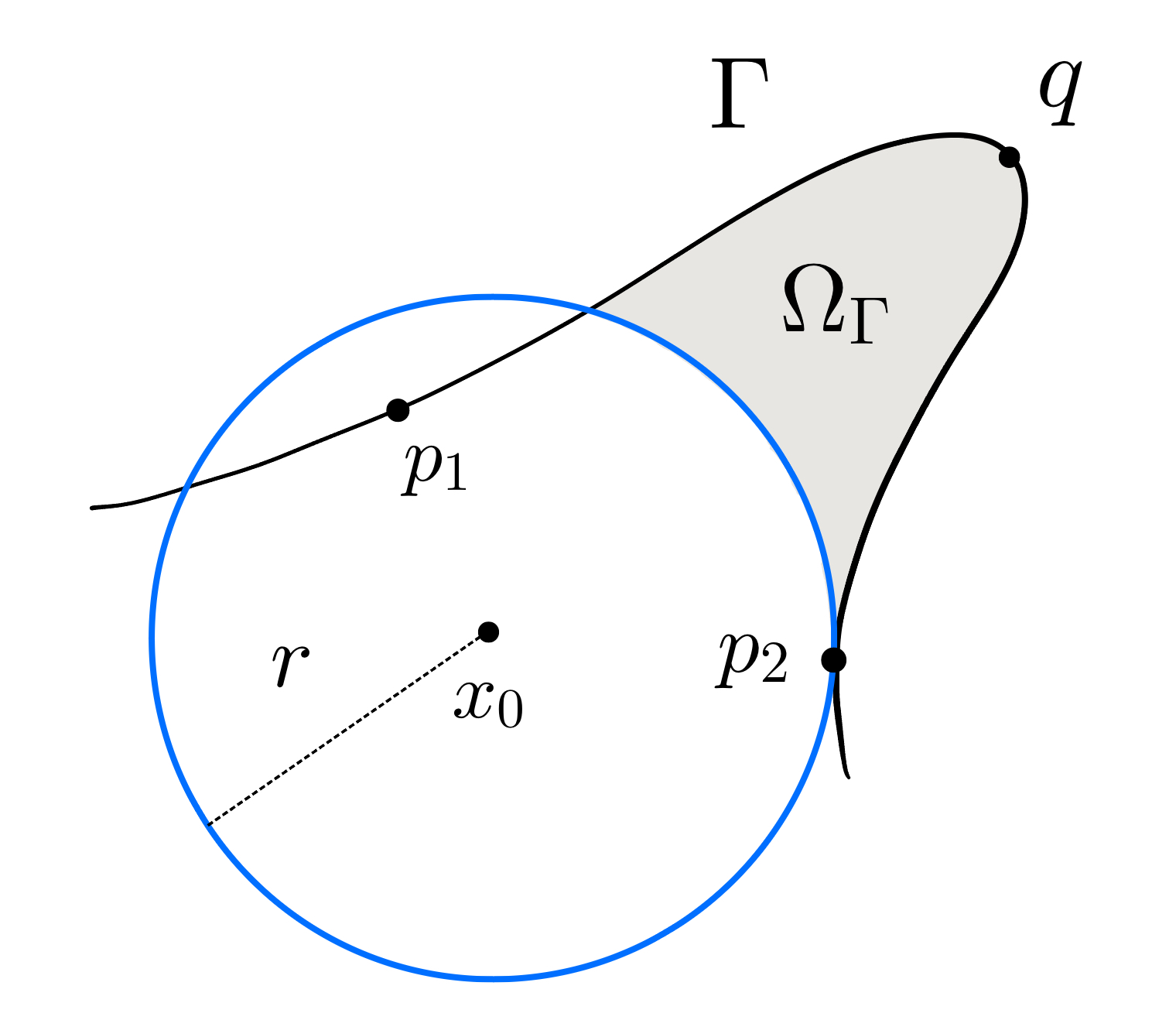}
\caption{Finger $\Ga$ with respect to $x_0$ with knuckles $p_1, p_2$, tip $q$, and (shadowed) finger region $\Om_\Ga$.}
\label{fig: finger region}
\end{figure}

 \bigskip

In Section 2.2, we introduced the level-set flows of inside $K_t$ and outside $K_t'$ of $M_t$. In case $M_t$ is closed, we take $K_t$ to be the compact domain bounded by $M_t$; in case $M_t$ is non-compact, we take $K_t$ such that $K_t$ contains the (unique) connected open region bounded by the two sheets of $M_t$ in the $B(0, \sqrt{-t}\rho(\tau))$ as $t \ll -1$. 
See Figure~\ref{fig:inside} for an illustration.
We choose the orientation of arc-length parametrization $\ga$ such that the unit normal $\bn = J\bt$ points from $K_t'$ into $K_t$ as in Section~\ref{sec:terms}. 
Define the function $\varphi: \R^2 \times \R \times I \to \R$ by $\varphi(x_0; s, t) = \lvert \ga(s,t) - x_0 \rvert^2 + 2 t$.

\begin{figure}
    \centering
    \includegraphics[width=0.6\linewidth]{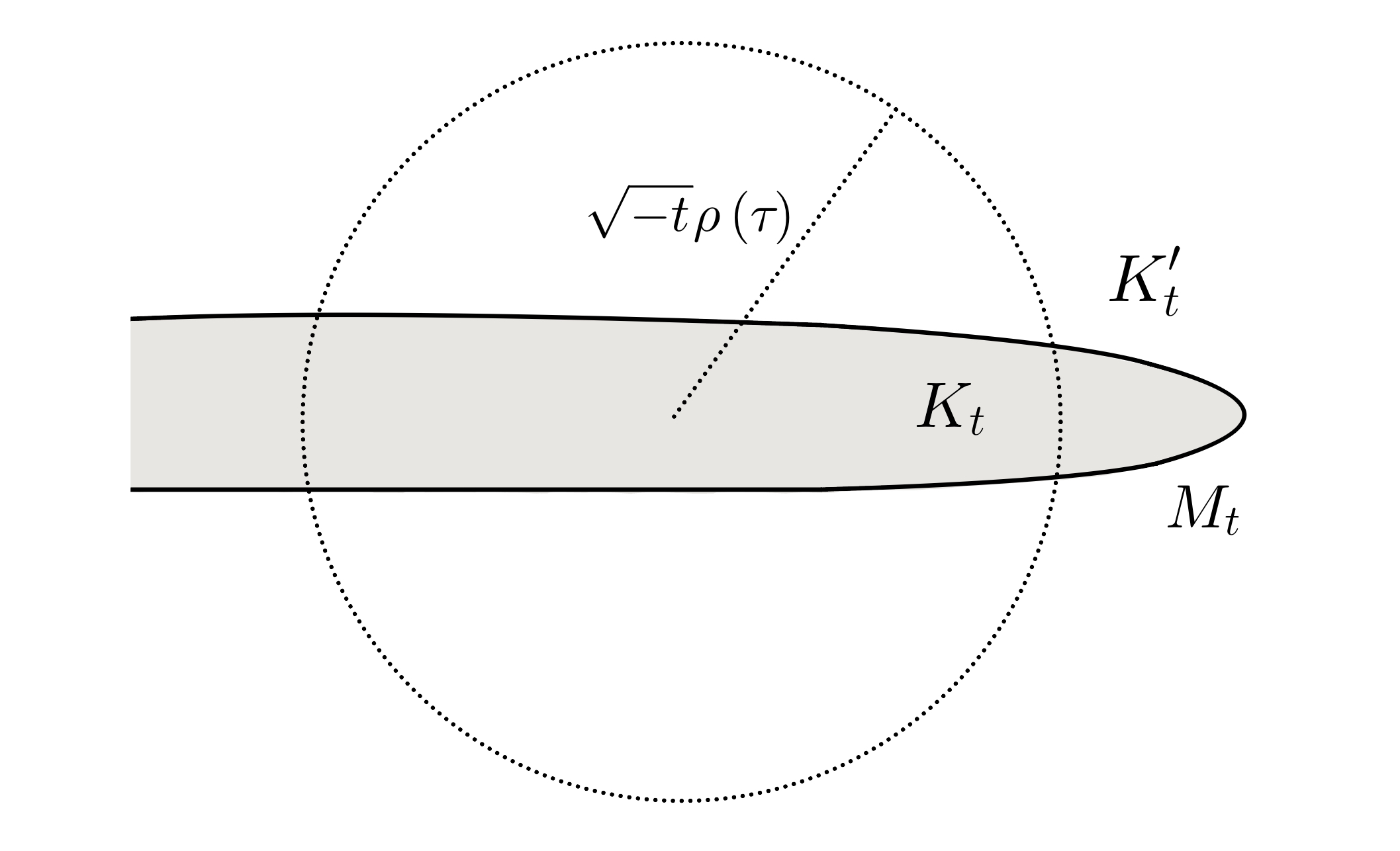}
    \caption{Inside $K_t$ and outside $K_t'$ for a finger of non-compact $M_t$.}
    \label{fig:inside}
\end{figure}

\begin{proposition}\label{prop: ancient finger region}
    Suppose that a finger $\Gamma(x_0, t_0)$ of $M_{t_0}$ with respect to $x_0$ has two distinct knuckles and satisfies $\Omega_{\Gamma(x_0, t_0)}\subset K_{t_0}$ $[\text{resp.\;} \Omega_{\Gamma(x_0, t_0)}\subset K'_{t_0}]$. Then, there is a continuous one-parameter family of curve segments $\{\Gamma(x_0, t)\}_{t\leq t_0}$ such that each $\Gamma(x_0, t)$ is a finger of $M_t$ that satisfies $\Omega_{\Gamma(x_0, t)}\subset K_t$ $[\text{resp.\;} \Omega_{\Gamma(x_0, t)}\subset  K_t']$.
\end{proposition}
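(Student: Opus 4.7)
The plan is to construct $\Gamma(x_0,t)$ for $t\le t_0$ by backward-tracking the two knuckles and an interior tip of $\Gamma(x_0,t_0)$ as ancient paths, and then to verify the containment $\Omega_{\Gamma(x_0,t)}\subset K_t$ by an open--closed argument in $t$. First I would apply Proposition~\ref{prop: ancient local-min-path/local-max-path} to $u=\varphi_s^{(x_0)}$ in order to extend the two knuckles $p_1,p_2$ of $\Gamma(x_0,t_0)$ and an interior tip $q_0\in\Gamma(x_0,t_0)$ (which exists as an interior local maximum of $|\gamma-x_0|$ between the two adjacent knuckles) to continuous ancient paths $\bp^1(t),\bp^2(t),\bq(t)$ for $t\le t_0$. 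Distinctness $\bp^1(t)\ne\bp^2(t)$ for all $t\le t_0$ would follow from Proposition~\ref{prop: zeroset}(b): if the two paths collided at some $t_1<t_0$ at a point $p_*$, then $(p_*,t_1)$ is a multiple zero of $u$, and Sturm's theorem forces the local zero count of $u(\cdot,t)$ near $p_*$ to be at most one for $t\in(t_1,t_0]$, contradicting $p_1\ne p_2$.

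I would then define $\Gamma(x_0,t)$ to be the finger of $M_t$ (in the sense of Definition~\ref{def: crit pt of dist}) whose interior contains $\bq(t)$; by Proposition~\ref{prop: finite number of minimum points} this is well-defined, with endpoints equal to the two knuckles of $M_t$ adjacent to $\bq(t)$ along $M_t$. At generic $t$ these endpoints coincide with $\bp^1(t),\bp^2(t)$ and the family varies smoothly. At the isolated Sturm times $\{t_j\}$ when a new (knuckle, tip) pair is born via a degenerate zero of $u$ inside the arc from $\bp^1(t)$ to $\bp^2(t)$, the endpoints of $\Gamma(x_0,t)$ shift to the newly born, closer knuckle; since the new knuckle coincides with the new tip at the birth instant, the family $\{\Gamma(x_0,t)\}$ is continuous in Hausdorff distance with controlled one-sided limits, as required.

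For the containment, I would let $S=\{t\le t_0:\Omega_{\Gamma(x_0,t)}\subset K_t\}$ and prove $S=(-\infty,t_0]$ by showing $S$ is open and closed in $(-\infty,t_0]$. Openness at a smooth $t_*\in S$ follows from smoothness of $M_t$ and of $\bp^1,\bp^2,\bq$, combined with the separation $\overline{\Omega_{\Gamma(x_0,t_*)}}\cap (M_{t_*}\setminus\Gamma(x_0,t_*))=\emptyset$, which persists under small perturbations of $t$. Closedness follows from closedness of $K_t$ together with Hausdorff continuity of $\Omega_\Gamma$. At a finger-shrinkage time $t_j$ the new finger region is contained in the closure of the old one (because the newly born knuckle lies strictly inside the old finger, so the new bounding arc of $\partial B_{r_{\mathrm{new}}}(x_0)$ lies in the closure of the old finger region), and hence the inclusion in $K_{t_j}$ is inherited from the old one. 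The case $\Omega\subset K_{t_0}'$ is handled identically by replacing $K_t$ with $K_t'$ throughout. The main obstacle will be the last topological check that the new bounding arc lies in the closure of the old finger region, which I expect to be reducible to the Sturm nodal structure of $u$ along $M_t$, since it prescribes the order of newly born critical points on the finger.
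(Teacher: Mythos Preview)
Your overall plan is sound, but you are working much harder than necessary because you have overlooked the standing hypothesis of Section~4: the preamble to this proposition assumes $\ent[\cM]\le 2$, so $m=2$. The paper's proof uses this as the decisive simplification. You already cite Proposition~\ref{prop: finite number of minimum points}, which gives at most $m=2$ knuckles of $M_t$ with respect to $x_0$ for every $t$; combined with your two distinct ancient knuckle paths $\bp^1(t)\ne\bp^2(t)$ (your Sturm argument for distinctness is correct and is implicit in the paper), this forces $\bp^1(t),\bp^2(t)$ to be the \emph{only} knuckles of $M_t$ for all $t\le t_0$. The paper records this by invoking Corollary~\ref{cor: saturation time}. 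Consequently no new (knuckle, tip) pair is ever born inside the arc, the finger $\Gamma(x_0,t)$ is simply the segment of $M_t$ from $\bp^1(t)$ to $\bp^2(t)$ containing $\bq(t)$, and the family is manifestly continuous.

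Your entire treatment of Sturm birth times $\{t_j\}$, finger shrinkage, and the ``main obstacle'' you flag at the end is therefore vacuous in this context. With continuity of the family and nondegeneracy of the finger region in hand, the containment $\Omega_{\Gamma(x_0,t)}\subset K_t$ follows directly; this is what the paper compresses into ``by continuity and non-extinction of finger region''. Your open--closed argument is a correct way to spell that line out, but the delicate topological check you anticipate never needs to be carried out once you use $m=2$.
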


\begin{proof}
    It follows from the assumption of distinct knuckles at $t_0$ and Proposition \ref{prop: ancient local-min-path/local-max-path} that there exist ancient paths of knuckles $p_1(x_0; t)\neq p_2(x_0; t)$ and tips $q(x_0; t)$ with respect to $x_0$ for $t \leq t_0$. 
    By Corollary \ref{cor: saturation time}, $p_1(x_0; t), p_2(x_0; t)$ are the only knuckles for $t \leq t_0$. 
    Then the curve segment on $M_t$ bounded by two \emph{adjacent} knuckles $p_1(x_0; t), p_2(x_0; t)$ and containing $q(x_0; t)$ defines a finger of $M_t$ with respect to $x_0$, denoted by $\Ga(x_0, t)$. 
    By the continuity and the non-extinction of finger regions, $\Om_\Ga(x_0, t) \subset K_t$ [resp. $\Omega_{\Gamma(x_0, t)}\subset  K_t'$] for $t< t_0$.
\end{proof}
 
\bigskip

\begin{lemma}\label{lem: inside/outside}
    If $\Om_\Ga(x_0, t) \subset K_t$ $[$resp. $\Om_\Ga(x_0, t) \subset K_t'$$]$, then the tip of $\Ga(x_0, t)$ has curvature $\ka > 0$ $[$resp. $\ka < 0$$]$.
\end{lemma}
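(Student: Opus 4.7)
The plan is to analyze the local geometry at the tip $q$ of $\Gamma(x_0, t)$, using Lemma~\ref{lem: center of distance} together with the geometric structure of $\Omega_\Gamma$ near its farthest boundary point from $x_0$.

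First, since $q$ is a local maximum of $|\gamma - x_0|^2$ along $\Gamma$, Lemma~\ref{lem: center of distance} gives $x_0 = q - \alpha \bn(q)$ for some $\alpha \in \R$ with $\ka(q) \neq 0$ and $\beta = \alpha \ka(q)$ in the local-maximum range; in particular this pins down the relative signs of $\alpha$ and $\ka(q)$. As a bookkeeping consequence, the unit vector pointing from $q$ toward $x_0$ equals $-\operatorname{sgn}(\alpha)\, \bn(q)$.

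Second, I would locate $\Omega_\Gamma$ in a small neighborhood of $q$. Because tips are strictly farther from $x_0$ than knuckles, $|q - x_0| > r$, so the arc part of $\partial\Omega_\Gamma$ sits in $\bar B_r(x_0) \subsetneq \bar B_{|q - x_0|}(x_0)$; and $\Gamma$ itself lies in $\bar B_{|q - x_0|}(x_0)$ near $q$ by the local-maximum property of $|\gamma - x_0|$. Since $\Omega_\Gamma$ is the bounded connected open set enclosed by $\Gamma \cup \pa B_r(x_0)$, one concludes $\bar \Omega_\Gamma \subset \bar B_{|q - x_0|}(x_0)$. Since $q \in \pa B_{|q - x_0|}(x_0)$, this forces $\Omega_\Gamma$ to lie locally near $q$ on the side of $\Gamma$ pointing toward $x_0$, namely the side of the vector $-\operatorname{sgn}(\alpha)\, \bn(q)$.

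Third, I would translate the location of $\Omega_\Gamma$ into a statement about $K$ versus $K'$ via the orientation convention that $\bn$ points from $K'$ into $K$. The $x_0$-side of $\Gamma$ at $q$ coincides with the $K$-side precisely when $-\operatorname{sgn}(\alpha)\,\bn(q) = \bn(q)$, i.e., when $\operatorname{sgn}(\alpha) = -1$, and otherwise coincides with the $K'$-side. Plugging in the relation between the signs of $\alpha$ and $\ka(q)$ from Step~1, the hypothesis $\Omega_\Gamma \subset K_t$ (resp.\ $K_t'$) forces a definite sign of $\alpha$, which in turn yields the claimed sign of $\ka$ at $q$.

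The main obstacle is the local topological justification in Step~2: one must rule out that $\Omega_\Gamma$ sits on the far side of $\Gamma$ at $q$. This is exactly where the connected bounded structure of $\Omega_\Gamma$ and the enclosure by $\Gamma \cup \pa B_r(x_0)$ enter decisively; once the side is identified, the remainder is routine sign tracking.
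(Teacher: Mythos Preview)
Your approach is essentially the same as the paper's: both identify the sign of $\langle q-x_0,\bn(q)\rangle$ (equivalently of $\alpha$) from which side of $\Gamma$ the finger region lies on at the tip, and then read off the sign of $\kappa$ from the second–derivative inequality $\varphi_{ss}^{(x_0)}=2(1+\kappa\langle\gamma-x_0,\bn\rangle)<0$. The paper compresses Step~2 into the one-line assertion ``if $\Omega_\Gamma\subset K_t$ then $\langle\gamma-x_0,\bn\rangle>0$ at the tip''; you supply the missing justification that $\Omega_\Gamma$ lies on the $x_0$-side of $\Gamma$ at $q$.

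One small repair in your Step~2: you write that $\Gamma$ lies in $\bar B_{|q-x_0|}(x_0)$ only ``near $q$'', but then conclude $\bar\Omega_\Gamma\subset\bar B_{|q-x_0|}(x_0)$ globally. The latter needs $\Gamma\subset\bar B_{|q-x_0|}(x_0)$ globally, which in fact holds: by definition a finger has no knuckle in its interior, so the distance to $x_0$ has no interior local minimum along $\Gamma$; since both endpoints are knuckles, the unique interior local maximum is the tip $q$, hence $q$ is the \emph{global} maximum of $|\gamma-x_0|$ on $\Gamma$. With this observation your enclosure argument goes through cleanly, and Step~3 is indeed routine sign tracking exactly as in the paper.
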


\begin{proof}
    Recall that $\varphi_{ss}(x_0, \cdot, t) = 2 \big(1 + \ka \langle \ga - x_0, \bn\rangle\big) < 0$ at the tip of $\Ga(x_0, t)$. This implies that $\ka \langle \ga - x_0, \bn\rangle < 0$ at the tip. 
    Recalling the orientation in Section \ref{sec:terms}, if $\Om_\Ga(x_0, t) \subset K_t$ $[$resp. $\Om_\Ga(x_0, t) \subset K_t'$$]$, then $\langle \ga - x_0, \bn\rangle < 0$ $[$resp. $> 0$$]$ at the tip and hence $\ka > 0$ $[$resp. $\ka < 0$ $]$ at the tip.
\end{proof}

\bigskip

\begin{theorem}\label{thm: convex}
Suppose $\cM$ is an ancient smooth complete embedded nontrivial solution to the CSF with $\ent(\cM) = 2$. Then $M_t$ is strictly convex for all $t\in I$.
\end{theorem}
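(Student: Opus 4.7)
The plan is to argue by contradiction using Sturm's theorem applied to $\kappa$, combined with the rough backward convergence of Section 3 and the partial regularity for entropy-$\leq 2$ Brakke flows established in Proposition~\ref{thm: partial regularity}.

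First, I would assume $\kappa(s_0, t_0) = 0$ for some $(s_0, t_0)$ and set up Sturm's theorem. Since $\kappa$ solves the semilinear parabolic equation $\kappa_t = \kappa_{ss} + \kappa^3$, treating $\kappa^2$ as a smooth coefficient Proposition~\ref{prop: zeroset} shows that the number of zeros of $\kappa(\cdot, t)$ is nonincreasing in $t$; the directed-forest structure of the zero set then produces an ancient path $\{(s^*(t), t)\}_{t \leq t_0}$ along which $\kappa$ vanishes, with $\kappa$ changing sign across this path for a.e.\ $t$. Next, for $t \ll -1$, by Theorem~\ref{thm: rough convergence} together with Corollaries~\ref{cor: location of knuckles}, \ref{cor: number of fingers}, and~\ref{cor: vertex is far}, the rescaled curve $\overline{M}_\tau$ has exactly two knuckles, at most one finger in the noncompact case and exactly two fingers in the compact case, and all sharp vertices lie outside $B(0, \sqrt{-t}\,\rho(\tau))$. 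Applying Lemma~\ref{lem: inside/outside} together with Proposition~\ref{prop: ancient finger region} then pins down the sign of $\kappa$ at each ancient tip for all sufficiently negative $t$.

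I would then split into the two topological cases. In the noncompact case there is a single ancient tip with a fixed sign $\varepsilon$, so any arcs carrying the opposite sign of $\kappa$ must lie in the graphical region for $t \ll -1$. I would parabolically rescale at $X_i := (\gamma(s^*(t_i), t_i), t_i)$ along a sequence $t_i \to -\infty$, with the scale chosen so that $|\kappa|$ becomes order one near $X_i$; Brakke compactness would give a subsequential limit $\widetilde{\cM}$ that is ancient, unit-regular, cyclic, integral with entropy at most $2$. Since $\kappa$ changes sign across the origin of $\widetilde{\cM}$, its Gaussian density there equals $1$, so Proposition~\ref{thm: partial regularity}, Proposition~\ref{prop: low entropy selfsimilar flow}, together with Lemma~\ref{lem: exclusion of crossing lines}, would force $\widetilde{\cM}$ to be a static multiplicity-one line; this forces $\kappa \equiv 0$ in a neighborhood of the rescaled origin, contradicting the simple sign-change across $s^*$. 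In the compact case, the two tip signs are fixed and the ancient zero path forces them to be opposite, placing one finger region in $K$ and the other in $K'$; then in the rescaled picture at $\tau \ll -1$ one cap would bend toward $K$ and the other toward $K'$ with respect to the limiting double line, contradicting either the embeddedness of $M_t$ via an avoidance argument against the two graphical sheets, or the exclusion of crossing-line limits in Lemma~\ref{lem: exclusion of crossing lines}.

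The main obstacle I anticipate is the blow-up step in the noncompact case: the rescaling scale must be chosen so that the limit captures $s^*$ as a regular point of density $1$ rather than as a point at infinity of a double-line limit, and the $C^{1,\alpha}_{\mathrm{loc}}$-convergence of Lemma~\ref{lem:compactness of time-slices} must be upgraded to smooth convergence through the zero of $\kappa$. This requires coupling the quantitative $C^{\lfloor \rho \rfloor}$-decay on each sheet from Theorem~\ref{thm: rough convergence} with standard parabolic regularity for the Brakke limit, and using the cyclic and unit-regular structure to rule out a multiplicity-two static line as the tangent limit near a simple sign-change of $\kappa$.
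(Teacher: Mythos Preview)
Your approach has a genuine gap in the noncompact blow-up step, and the paper's proof proceeds along an entirely different, more elementary line.

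The critical problem is your claim that Proposition~\ref{thm: partial regularity}, Proposition~\ref{prop: low entropy selfsimilar flow}, and Lemma~\ref{lem: exclusion of crossing lines} together force the blow-up limit $\widetilde{\cM}$ to be a static multiplicity-one line. They do not. Proposition~\ref{prop: low entropy selfsimilar flow} classifies only \emph{self-similar} flows, and Proposition~\ref{thm: partial regularity} only describes the singular set of an entropy-$\leq 2$ Brakke flow; neither says anything about a smooth ancient non-self-similar limit. The result that would classify $\widetilde{\cM}$ is Theorem~\ref{thm: low entropy flow}, but that theorem is proved \emph{using} Theorem~\ref{thm: convex}, so invoking it here is circular. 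Without such a classification you cannot rule out that $\widetilde{\cM}$ is, say, a grim reaper (which has $\kappa>0$ everywhere and no sign change), and then the contradiction evaporates because nothing prevents the simple zero of $\kappa$ from drifting off to spatial infinity under your rescaling. Relatedly, your rescaling scale is never pinned down: you blow up at a point where $\kappa=0$, so ``the scale at which $|\kappa|$ becomes order one'' is not well-defined without specifying the neighborhood, and different choices yield entirely different limits. Your claim that ``arcs carrying the opposite sign of $\kappa$ must lie in the graphical region'' is also unjustified: any nodal domain of $\kappa$ contains a sharp vertex, and by Lemma~\ref{lem: curvature lower bound of vertex} and Corollary~\ref{cor: vertex is far} that vertex escapes the graphical region as $t\to-\infty$.

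The paper's proof avoids blow-ups and limit classification entirely. It argues directly by topology: if $\kappa<0$ somewhere (with the chosen orientation), one manufactures a finger with respect to a nearby center $x_0$ whose finger region lies in the \emph{outside} $K'_{t_0}$ (Lemma~\ref{lem: inside/outside}). Proposition~\ref{prop: ancient finger region} propagates this backward to all $t\leq t_0$. At a very negative time $T$, one then continuously slides the center along the segment from $x_0$ to the origin; the three critical points (two knuckles on distinct sheets, one tip outside $B_{2\sqrt{-T}}$) stay separated throughout, so the finger region deforms isotopically and remains in $K'_T$. But with center at the origin the finger region sits between the two sheets, which by the very definition of $K_T$ lies in $K_T$, a contradiction. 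This argument uses only Section~3 and the elementary finger-region lemmas of Section~4, not any classification of ancient flows.
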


\begin{proof}
    Suppose $\ka(q_0, t_0) < 0$ for some $(q_0, t_0)$ with respect to the orientation chosen in the setting. 
    By Lemma \ref{lem: center of distance}, we can pick a center $x_0$ such that $\varphi(x_0; s, t_0)$ reaches the local maximum at $s = q_0$. 
    Let $\Ga(x_0, t_0)$ denote the finger of $M_{t_0}$ with respect to $x_0$ with knuckles $p_1(x_0; t_0), p_2(x_0; t_0)$ and tip $q(x_0; t_0) = q_0$. 
    Furthermore, by choosing $x_0$ close to the center of the osculating circle of $\Ga(x_0, t_0)$ at $q_0$, we can assume $p_1(x_0; t_0) \neq p_2(x_0; t_0)$. 
    By Lemma \ref{lem: inside/outside}, $\ka(q_0, t_0) < 0$ implies that the finger region $\Om_{\Ga}(x_0, t_0) \subset K_{t_0}'$. 
    See Figure~\ref{fig:inside out} for an illustration.
    By Proposition \ref{prop: ancient finger region}, there exists an ancient continuous family of fingers $\{\Ga(x_0, t)\}_{t\leq t_0}$ of $M_{t}$ with respect to $x_0$ with knuckles $p_1(x_0; t)\neq p_2(x_0; t)$ and a tip $q(x_0; t)$ such that $\Om_\Ga(x_0, t) \subset K_t'$.

    \begin{figure}
        \centering
        \includegraphics[width=0.6\linewidth]{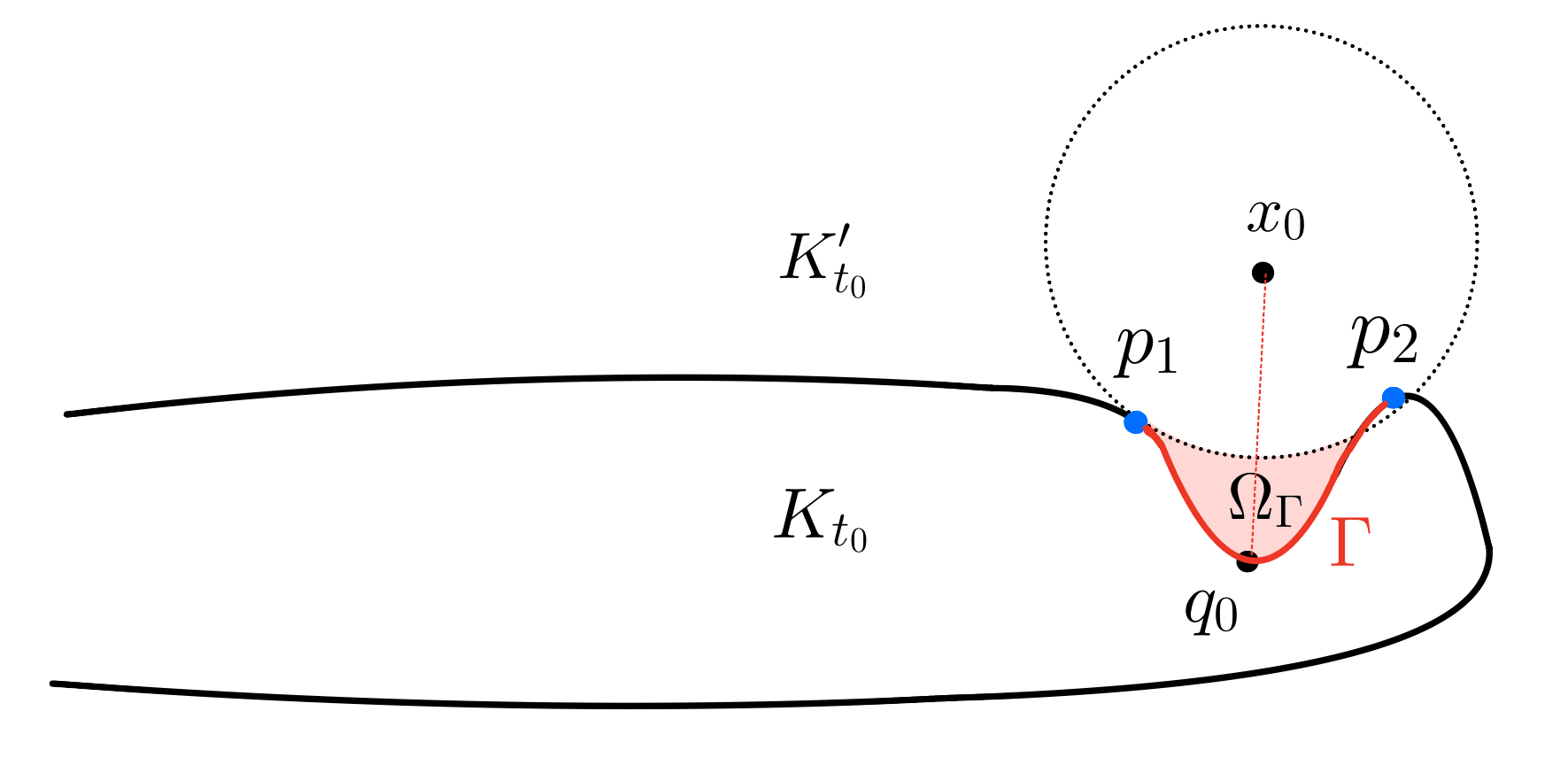}
        \caption{Finger region $\Om_\Gamma$ in the outside $K_{t_0}'$ owing to $\kappa(q_0,t_0)<0$.}
        \label{fig:inside out}
    \end{figure}

    Let $T \ll -1$ such that $\lvert x_0 \rvert \leq \sqrt{-T}$ and $\big((-T)^{-\frac12} M_T\big) \cap B_2(0)$ is $10^{-1}$-close to 2-lines. 
    It follows from Corollary \ref{cor: saturation time} that $p_1(x_0; T), p_2(x_0; T), q(x_0; T)$ are all simple critical points of $\varphi(x_0; \cdot, t)$. 
    Then, by the implicit function theorem, when deforming the reference point continuously along the ray $\{\si x_0: \si \geq 0\}$, the critical points $p_1(\si x_0; T), p_2(\si x_0; T), q(\si x_0; T)$ of $\varphi(\si x_0, \cdot, T)$ depend continuously on $\si$ in some neighborhood $S\subset [0, \infty)$ around $1$. 
    Note that for all $\si \in S\cap [0,1]$, $\ga\big(p_1(\si x_0; T), T\big), \ga\big(p_2(\si x_0; T), T\big)$ stay in distinct sheets of $M_T\cap B(0,2\sqrt{-T})$ and $\ga\big(q(\si x_0; T)\big)$ stay outside of $B(0,2\sqrt{-T})$. 
    In particular, these three critical points stay apart for $\si \in S \cap [0,1]$ and we can reiterate the above argument. 
    Thus, the standard continuity argument implies $S\cap [0, 1] = [0,1]$. 
    The continuous paths of $p_1(\si x_0; T), p_2(\si x_0; T), q(\si x_0; T)$ define an isotopic deformation of finger regions $\{\Om_\Ga(\si x_0, T)\}_{0\leq \si \leq 1}$. 
    Therefore, $\Om_\Ga(x_0, T) \subset K_T'$ implies that $\Om_\Ga(\si x_0, T) \subset K_T'$ for $0\leq \si \leq 1$. 
    But, $\Om_\Ga(0, T) \subset K'_T$ contradicts the choice of the continuous family $\{K_t\}$ in the setting. 
    Finally, with $\ka \geq 0$, the maximum principle implies that $\ka > 0$ for nontrivial solutions to the CSF.
\end{proof}

\bigskip
\subsection{Classfication of low-entropy flows}

\begin{theorem}[classification of low-entropy weak flows]\label{thm: low entropy flow}
    Let $\cM$ be an ancient unit-regular, cyclic, integral Brakke flow with $\ent[\cM] \leq 2$. Then $\cM$ must be one of the following:
    \begin{itemize}
        \item a quasi-static line with multiplicity one or two, or
        \item two crossing quasi-static lines, or
        \item a paper clip $($the Angenent oval$)$, or
        \item a shrinking circle, or
        \item a grim reaper.
    \end{itemize}
\end{theorem}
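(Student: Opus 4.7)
The plan is to split $\cM$ into cases by whether its Gaussian density ever reaches $2$, and then reduce the remaining case to the classification of strictly convex ancient curve shortening flows of Daskalopoulos--Hamilton--Sesum and Bourni--Langford--Tinaglia \cite{DHS10, BLT20}.

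First I would invoke Proposition~\ref{thm: partial regularity}. If there is a spacetime point $X$ with $\Theta(\cM, X) = 2$, that proposition directly gives that $\cM$ is a union of two quasi-static lines passing through $X$. If the two lines coincide, we obtain the quasi-static line of multiplicity two; if they are distinct, we obtain two crossing quasi-static lines. Both possibilities appear in the stated list.

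If instead $\Theta(\cM, X) < 2$ for every $X$, Proposition~\ref{thm: partial regularity} leaves only the subcases that $\cM$ has no singularities or has only round singularities. In either subcase $\cM$ is given by a smooth embedded ancient curve shortening flow away from at most one round extinction point. If $\cM$ is trivial, then by definition it is a static line of multiplicity one or a shrinking circle, both in the list. If $\cM$ is nontrivial, then Theorem~\ref{thm: rough convergence} forces $\ent[\cM] \in \N$, and combined with $\ent[\cM] \leq 2$ this gives $\ent[\cM] = 2$.

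For this final case I would apply Theorem~\ref{thm: convex} to conclude that $M_t$ is strictly convex for every $t$, and then invoke the classification of ancient convex embedded curve shortening flows \cite{DHS10, BLT20}: $\cM$ must be a shrinking circle, a translating grim reaper, or a shrinking Angenent oval. The shrinking circle has entropy $\sqrt{2\pi/e} < 2$ and is already absorbed into the trivial subcase, so only the grim reaper and the Angenent oval remain, matching the remaining items in the list. The proof itself is a clean case analysis; the main obstacle is not in the assembly here but in the prerequisite results --- Proposition~\ref{thm: partial regularity}, Theorem~\ref{thm: rough convergence}, and especially the entropy-two strict convexity Theorem~\ref{thm: convex} --- once these are granted, the classification is immediate.
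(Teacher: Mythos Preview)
Your proposal is correct and follows essentially the same route as the paper: invoke Proposition~\ref{thm: partial regularity} to reduce to the smooth embedded case (handling the density-$2$ case separately), then apply Theorem~\ref{thm: convex} and the convex classification \cite{DHS10, BLT20}. Your write-up is in fact more careful than the paper's terse proof, since you make explicit the use of Theorem~\ref{thm: rough convergence} to force $\ent[\cM]=2$ before invoking Theorem~\ref{thm: convex}, a step the paper leaves implicit.
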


\begin{proof}
    Considering the entropy condition, every trivial solution is one of a quasi-static line with multiplicity 1 or 2, two crossing quasi-static lines, and a shrinking circle.
    By Proposition~\ref{thm: partial regularity}, all nontrivial solutions are smooth and embedded up to the first singular time. Theorem \ref{thm: convex} implies that all such solutions are convex. Since we have the classification of ancient smooth embedded convex solutions (see \cite[Theorem 1.1]{BLT20}), this finishes the proof.
\end{proof}

\bigskip

Now we can conclude the classification of the ancient smooth embedded curve-shortening flow with entropy $\ent(\cM) < 3$.

\begin{proof}[Proof of Theorem \ref{thm:main.low.entropy}]
    Let $\mathcal{M}$ be an ancient \emph{smooth embedded} curve-shortening flow in $\mathbb{R}^2$ with $ \text{Ent}(\mathcal{M}) < 3$. By the rough convergence theorem~\ref{thm: rough convergence}, if $\cM$ is nontrivial, then $\ent(\cM) \leq 2$. It follows from Theorem~\ref{thm: low entropy flow}, that $\cM$ must be one of a static line, a shrinking circle, a paper clip, and a grim reaper due to the embeddeness of $\mathcal{M}$.
\end{proof}

\bigskip

\section{Asymptotic behavior of fingers and tails}\label{sec:5}
In this section, we will show that, by a suitable localization procedure, the fingers are modeled on entropy-two flows and the tails are modeled on entropy-one flows.

\subsection{Localized Gaussian density ratio of fingers and tails}

In order to study the local behavior of tips and tails outside of graphical regions, we will isolate a finger or a tail from other parts of the flow using the localizing cut-off function that was first considered by Brakke \cite{Bra84} (see also \cite[Proposition 4.17]{Eck2004RTM}). 

Let $\bar{X} = (\bar{x}, \bar{t}) \in \mathbb{R}^{2,1}$, $R > 0$ fixed, for $x\in \R^2$, $t\leq \bar{t}$, define the cutoff function
\begin{align*}
    \psi_{\bar{X}, R}(x, t) = \Big( 1 - \frac{\lvert x - \bar{x} \rvert^2 - 2 (\bar{t} - t) }{R^2} \Big)_+^3,
\end{align*}
where $(\cdot)_+ = \max\{ \cdot, 0\}$. Note that, for $t \leq \bar{t}$, the support of $\psi_{\bar{X}, R}(\cdot, t)$ is
\begin{align*}
    \mathrm{spt}\ \psi_{\bar{X}, R}(\cdot, t) =  B\big( \bar{x}, \sqrt{ R^2 + 2(\bar{t}- t)} \big),
\end{align*}
and note also that $ \psi_{\bar{X}, R}(\cdot, t) \geq 1 $ if and only if $x \in B\big(\bar{x}, \sqrt{2(\bar{t} - t)} \big)$
with maximum
\begin{align*}
    \max_x\ \psi_{\bar{X}, R}(x, t) = \psi_{\bar{X}, R}(\bar{x}, t) = \big( 1 + \tfrac{2(\bar{t} - t)}{R^2} \big)^3.
\end{align*}
As we can see, the cutoff function $\psi$ gets more distorted as $t\ll \bar{t}$. 

Suppose that $0< r < R$ and suppose that $\cM$ is a proper CSF in the intersection of the support of $\psi_{\bar{X}, R}$ and $\R^2 \times [\bar{t}-r^2, \bar{t})$ to avoid any boundary term in weighted monotonicity formula below. For $\si \in (0, r)$, we define the \emph{$R$-localized Gaussian density ratio} of $\cM$ around $\bar{X}$ at scale $\si$ by
\begin{equation*}
    \Th^{R}(\cM, \bar{X}, \si) = \int_{M_{ \bar{t} - \si^2}} \psi_{\bar{X}, R} \Phi_{\bar{X}}\ d\cH^1,
\end{equation*}
where $\Phi_{\bar{X}}(x, t) = \left( 4\pi (\bar{t} - t) \right)_+^{-\frac12} \exp\big(-\frac{\lvert x - \bar{x}\rvert^2}{4 (\bar{t} - t)}\big)$ denotes the backward heat kernel.
Then the weighted monotonicity formula for $\bar{t} - r ^2 < t < \bar{t}$
\begin{align}
    \frac{d}{dt} &\int_{M_t} \psi_{\bar{X}, R} \Phi_{\bar{X}}\ d\cH^1 \label{eq: weighted monotonicity formula}\\
    &= -\int_{M_t}  \left\{ 24\left( 1 - \tfrac{\lvert x - \bar{x} \rvert^2 - 2 ( \bar{t} - t) }{R^2}  \right)_+\left\lvert \tfrac{(x - \bar{x})^\top}{R^2}\right\rvert^2 + \psi_{\bar{X}, R} \left\lvert \boldsymbol{\kappa} + \tfrac{(x - \bar{x})^\perp}{2(\bar{t} - t)}\right\rvert^2 \right\}\Phi_{\bar{X}}(x, t)\ d\cH^1.\notag
\end{align}
implies that $\Th^R(\cG, \bar{X}, \si)$ is non-decreasing in $\si \in (0, r )$.

\ 

\begin{definition}\label{def: sub-flow}
    Let $\cM$ be assumed as in the rough convergence theorem \ref{thm: rough convergence} and let $\cG = \bigcup_{t\leq t_*} \Ga_t^{\cG} \times \{t\}$ denote the \emph{sub-flow} of a finger or a tail of $\cM$ with respect to the origin, where $t_*$ is the saturation time in Corollary \ref{cor: saturation time}. 
\end{definition}

 Now, let $X_0 = (x_0, t_0) \in \cG$ such that 
\begin{align}\label{eq: setting point}
    t_0\ll -1, \quad \rho_0 = \rho(\tau_0) \gg 0, \quad \lvert x_0 \rvert > 2\sqrt{-t_0} \, \rho_0.
\end{align}
Let $\cG_{X_0} := \cG - X_0$ be the spacetime translation of $\cG$ by $X_0$. Set 
\begin{align}\label{eq: setting radii}
    r = \lvert x_0 \rvert / \sqrt{\rho_0} > 2\sqrt{- t_0} \sqrt{\rho_0}, \quad R = \tfrac{\lvert x_0 \rvert}{2} = \tfrac{1}{2} \sqrt{\rho_0} r.
\end{align}

\ 

\begin{lemma}\label{lem: proper flow}
    Let $\cG_{X_0}, r, R$ be as above. For any $\bar{X} = (\bar{x}, \bar{t}) \in P(\mathbf{0}, r )$, $\cG_{X_0}\cap \mathrm{spt}\big(\psi_{\bar{X}, R} \big) \cap \big(\R^2 \times ( -r^2, 0]\big)$ is a proper flow in $\mathrm{spt}\big(\psi_{\bar{X}, R}\big) \cap \big( \R^2 \times (-r^2, 0] \big)$.
\end{lemma}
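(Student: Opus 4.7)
The plan is to interpret \emph{proper} as requiring that the sub-flow $\cG_{X_0}$, restricted to $\mathrm{spt}(\Psi_{\bar X, R})$, has no interior boundary inside that support, so that on the cutoff support $\cG_{X_0}$ coincides locally with the ambient shifted flow $\cM - X_0$ and inherits its Brakke flow structure; this is exactly what is needed for a clean application of the weighted monotonicity formula \eqref{eq: weighted monotonicity formula}. Because $\cG$ is a finger or a tail of $\cM$ with respect to the origin (Definition~\ref{def: subflow}), the only places where $\cG$ can fail to equal $\cM$ locally are along the one or two bounding knuckle paths $(p(t),t)\in\cM$ produced by Proposition~\ref{prop: ancient local-min-path/local-max-path} (the ``other end'' of a tail escapes to spatial infinity and is automatically outside the compact $\mathrm{spt}(\Psi_{\bar X, R})$). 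So the task reduces to showing that, for every such knuckle path and every admissible $\bar X \in P(\mathbf{0}, r)$, the translated point $(p(t) - x_0, t - t_0)$ lies outside $\mathrm{spt}(\Psi_{\bar X, R})$.

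First I would collect the a priori bounds on the cutoff support. By construction, $\mathrm{spt}(\Psi_{\bar X, R})$ sits inside $B(\bar x, R\sqrt{2}) \times (\bar t - R^2/2, \bar t]$ in shifted coordinates, with spatial radius $\sqrt{R^2 + 2(\bar t - \tau')}$ at shifted time $\tau'$. Converting back via $t = t_0 + \tau'$ and using $\bar X \in P(\mathbf 0, r)$, any original time that is relevant satisfies
\begin{align*}
    -t \;\leq\; -t_0 + r^2 + R^2/2.
\end{align*}
Inserting the choices $R = |x_0|/2$, $r = |x_0|/\sqrt{\rho_0}$ from \eqref{eq: setting radii} and the standing assumption $|x_0| > 2\sqrt{-t_0}\,\rho_0$ from \eqref{eq: setting point}, the three terms on the right are bounded by $|x_0|^2/(4\rho_0^2)$, $|x_0|^2/\rho_0$, and $|x_0|^2/8$, whose sum is a fraction of $|x_0|^2$ strictly below $1/2$ once $\rho_0$ is large. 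Hence $\sqrt{-t} \leq c_0|x_0|$ for an explicit constant $c_0 < 1/\sqrt{2}$. Independently, the triangle inequality and $|\bar x| < r$ give the lower bound
\begin{align*}
    |p(t) - x_0 - \bar x| \;\geq\; (1 - \rho_0^{-1/2})|x_0| - |p(t)|.
\end{align*}

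The main step is then Lemma~\ref{lem: lim of minimum}, which guarantees $|p(t)| = o(\sqrt{-t})$ along any ancient local-min (knuckle) path as $t \to -\infty$. Combined with $\sqrt{-t} \leq c_0|x_0|$ from the previous step, this upgrades to $|p(t)| \leq \vare|x_0|$ for any prescribed $\vare > 0$, provided $-t_0$ is taken above a threshold depending on $\vare$. Choosing $\vare$ so that $\vare + \rho_0^{-1/2} < 1 - 1/\sqrt{2}$ makes the resulting lower bound on $|p(t) - x_0 - \bar x|$ strictly exceed the spatial-radius ceiling $R\sqrt{2} = |x_0|/\sqrt{2}$ of the cutoff support, so the translated knuckle path is excluded from $\mathrm{spt}(\Psi_{\bar X, R})$ and the lemma follows. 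The only delicate point — and the one I expect to be the main obstacle — is converting the non-quantitative $o(\cdot)$ rate of Lemma~\ref{lem: lim of minimum} into an effective inequality uniform over the time window; this is handled in the standard way by first fixing the parameters $\vare$ and $\rho_0$ and then upgrading the qualitative ``$t_0 \ll -1$'' into a quantitative lower bound on $-t_0$ depending on both.
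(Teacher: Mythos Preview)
Your approach is essentially the paper's: reduce to showing the bounding knuckle paths stay outside the cutoff support, then compare $|p(t)-x_0-\bar x|$ (close to $|x_0|$) against the spatial radius of the support (close to $|x_0|/2$). One correction: your assertion that $\mathrm{spt}(\Psi_{\bar X,R})\subset B(\bar x,R\sqrt{2})\times(\bar t-R^2/2,\bar t]$ is false, since $\Psi_{\bar X,R}(\bar x,t)>0$ for every $t\le\bar t$ and the support extends back to $t=-\infty$. What is actually needed (and what the paper checks) is properness on $\mathrm{spt}(\Psi_{\bar X,R})\cap\big(\R^2\times[\bar t-r^2,\bar t]\big)$, the time window on which the weighted monotonicity is applied; there the spatial radius is at most $\sqrt{R^2+2r^2}+r=\tfrac{|x_0|}{2}\big(1+O(\rho_0^{-1/2})\big)$, which only improves your inequalities. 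Also, the paper bounds $|p_i(t)|$ directly from the graphical-radius definition (so $|p_i(t)|/\sqrt{-t}=O(\rho_0^{-2})$ quantitatively for $t\le t_0$) rather than via the qualitative Lemma~\ref{lem: lim of minimum}; the ``delicate point'' you flag about converting $o(\cdot)$ into an effective bound is therefore unnecessary.
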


\begin{proof}
    By construction, $\cG_{X_0}$ is smoothly embedded. It suffices to show that for any $\bar{t} - r ^2 < t \leq \bar{t}$, $\pa (\Ga_t^{\cG} - x_0) \subset \R^2 \setminus \mathrm{spt}\big(\psi_{\bar{X}, R}(\cdot, t)\big)$. Let $\pa \Ga_t^{\cG} = \{p_1(t), p_2(t)\}$. According to our setting, for $i = 1,2$,
    \begin{align*}
        d(0, p_i(t) - x_0 ) = d(x_0, 0) + d(0, p_i(t)) =  \lvert x_0 \rvert + O(\rho_0^{-2}).
    \end{align*}
    Here $d$ denotes the Euclidean distance. Whereas
    \begin{align*}
        \mathrm{spt}\big(\psi_{\bar{X}, R}(\cdot, t)\big) = B \Big(\bar{x}, \sqrt{ R^2 + 2(\bar{t}- t)}\Big) \subset B \Big(0, \sqrt{R^2 + 2r ^2}+ \lvert r  \rvert \Big),
    \end{align*}
    and $\sqrt{R^2 + 2r ^2}+ \lvert r  \rvert \leq \frac12 \lvert x_0 \rvert (1 + O(\rho_0^{-1/2})) < d(0, p_i(t) - x_0 )$ provided $\rho_0$ is large.
\end{proof}

\bigskip
\begin{proposition}[Gaussian density ratio upper bound, unrescaled version]\label{prop: unrescaled GD upper bound}
     Let $\cG_{X_0}, r , R$ be assumed as (\ref{eq: setting radii}). There exists a numerical constant $C> 0$ with the following property. If $\cG$ is the sub-flow of a finger, then for any $\bar{X} \in P(\mathbf{0}, r)$
    \begin{align}
        \Th^R( \cG_{X_0}, \bar{X}, r ) \leq 2 + C\rho_0^{-1};
    \end{align}
    if $\cG$ is the sub-flow of a tail, then for any $\bar{X} \in P(\mathbf{0}, r)$
    \begin{align}
        \Th^R( \cG_{X_0}, \bar{X}, r ) \leq 1 + C\rho_0^{-1}.
    \end{align}
\end{proposition}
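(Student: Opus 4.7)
The plan is to reduce the estimate to a blow-down computation at scale $r$. First, parabolically rescale by $r$: define $\tilde\cG := \cD_r(\cG - X_0)$, $\tilde{\bar X} := \bar X/r \in P(\mathbf{0}, 1)$, and $\tilde R := R/r = \tfrac{1}{2}\sqrt{\rho_0}$. By the scale-invariance of the localized density,
\[
\Theta^R(\cG_{X_0}, \bar X, r) = \Theta^{\tilde R}(\tilde\cG, \tilde{\bar X}, 1),
\]
so it suffices to bound the right-hand side by $k + C\rho_0^{-1}$, where $k=2$ for a finger and $k=1$ for a tail. The cutoff factor already contributes an error of the right size, since $\sup \Psi_{\tilde{\bar X}, \tilde R} \leq (1 + 2/\tilde R^2)^3 = 1 + O(\rho_0^{-1})$.

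The core step is to identify the Brakke limit of the rescaled ambient flow $\tilde\cM := \cD_r(\cM - X_0)$ along a sequence with $\rho_0 \to \infty$. Using the quantitative relations $|x_0|/r = \sqrt{\rho_0}$ and $|t_0|/r^2 \leq (4\rho_0)^{-1}$, in the new coordinates the original spatial origin lies at distance $\sqrt{\rho_0}$ from the new origin, and the original time $t=0$ corresponds to $\tilde t = O(\rho_0^{-1})$. Since $r \to \infty$ as $\rho_0 \to \infty$, the rescaling places us in the tangent-flow-at-infinity regime: combining Theorem~\ref{thm: rough convergence}, Brakke compactness, and the entropy identity $\ent[\cM]=m$, $\tilde\cM$ converges on any compact subset of $\R^2\times(-\infty, 0]$ to $m\ell_0 \times \R$ for an appropriate line $\ell_0$ through the new origin.

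Next, I identify the limit of the sub-flow $\tilde\cG \subset \tilde\cM$. The knuckles that bound the finger (or tail) lie within the graphical region of $\cM$, hence at distance at least $\tfrac{1}{2}\sqrt{\rho_0}$ from the new origin in the rescaled coordinates, so they exit any fixed compact set as $\rho_0 \to \infty$. Combined with Corollaries~\ref{cor: location of knuckles}--\ref{cor: number of fingers}, this shows that the rescaled finger is locally the union of its two sides (two adjacent sheets of $\tilde\cM$), while the rescaled tail is a single sheet; both collapse onto $\ell_0$ in the Brakke limit with multiplicity $k=2$ and $k=1$, respectively. Since the Gaussian density of $k\ell_0 \times \R$ at scale $1$ is at most $k$, the qualitative bound $\Theta^{\tilde R}(\tilde\cG, \tilde{\bar X}, 1) \leq k + o(1)$ as $\rho_0 \to \infty$ follows.

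The final task is to upgrade the $o(1)$ error to the quantitative rate $O(\rho_0^{-1})$. The cutoff already contributes $O(\rho_0^{-1})$. For the deviation of $\tilde\cG$ from $k\ell_0 \times \R$ on the effective support $B(\tilde{\bar x}, O(1))$ of the Gaussian weight $\Phi_{\tilde{\bar X}}$, the rough-convergence rate $\rho_0^{-2}$ inside the graphical region of $\cM$ translates after the $r$-rescaling to a $C^0$-deviation $O(\rho_0^{-3/2}) = o(\rho_0^{-1})$. The main obstacle, and the technical heart of the argument, is to propagate this closeness estimate to the region near $x_0$, which lies outside the graphical region of $\cM$; this is handled by a quantitative Brakke-compactness argument with uniform Gaussian density control on the sub-sequences at hand.
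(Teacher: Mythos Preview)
Your proposal has a genuine gap, and it stems from misidentifying where the computation actually takes place. The localized density ratio $\Theta^R(\cG_{X_0}, \bar X, r)$ is by definition an integral over the time slice $t' = t_0 + \bar t - r^2$, not over the slice at $t_0$. With the scale choices $r = |x_0|/\sqrt{\rho_0}$ and $R = |x_0|/2$, one has $-t_0 < r^2/(4\rho_0)$ (from $|x_0| > 2\sqrt{-t_0}\,\rho_0$), so $\sqrt{-t'}$ is comparable to $r$. At time $t'$ the graphical region has radius $2\rho(\tau')\sqrt{-t'} \geq 2\rho_0\sqrt{-t'}$, whereas the support of $\Psi_{\bar X, R}(\cdot, t')$ is contained in a ball of radius $\sqrt{R^2 + 2r^2} + r \lesssim \sqrt{\rho_0}\,\sqrt{-t'}$ about the origin. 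Thus the entire support lies \emph{inside} the graphical region at time $t'$. Your ``main obstacle''---propagating estimates to a region outside the graphical region---does not exist.

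Once this is seen, no compactness or Brakke-limit argument is needed. On the support, the rescaled curve $\tfrac{1}{r}(\Gamma^{\cG}_{t'} - x_0 - \bar x)$ consists of one (tail) or two (finger) graphs over a line, each with curvature $O(\rho_0^{-2})$ and minimum distance to the origin $O(\rho_0^{-2})$, inherited directly from Definition~\ref{def:radius.sheet}. A direct elementary estimate (Lemma~\ref{lem: est of loc.G.D.R}) then bounds the Gaussian integral over each such curve by $1 + C\rho_0^{-1}$, and the cutoff contributes a further $1 + O(\rho_0^{-1})$ factor since $r^2/R^2 = 4/\rho_0$. Your qualitative $o(1)$ bound via compactness is correct but unnecessary, and the promised ``quantitative Brakke-compactness'' upgrade is neither standard nor needed: the rate comes for free from the explicit graphical-region control at the earlier time $t'$.
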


\begin{proof}
    By rescaling the variable,
    \begin{align*}
        \Th^R( \cG_{X_0}, \bar{X}, r )
        = \int_{\frac{1}{r}(\Ga_{t_0 + \bar{t} - r ^2} - x_0 - \bar{x})}\big(1 - \tfrac{r^2}{R^2} (\lvert x\rvert^2 - 2)\big)_+^3 \tfrac{1}{\sqrt{4\pi }} e^{-\frac{\lvert x \rvert^2}{4}}d\cH^1.
    \end{align*}
    For brevity, let $t' = t_0 + \bar{t} -r^2$. 
    Note that the support of the last integral is
    \begin{align}\label{eq: support}
        &\tfrac{1}{r}\Big\{(\Ga_{t'} - x_0 - \bar{x}) \cap B(0, \sqrt{R^2 +2r^2})\Big\}\\
        = &\tfrac{\sqrt{-t'}}{r} \Big\{ \tfrac{1}{\sqrt{-t'}} \Ga_{t'} \cap 
        B\Big(\tfrac{x_0 + \bar{x} }{\sqrt{-t'}}, \sqrt{\tfrac{R^2 + 2r^2}{-t'}}\Big) - \tfrac{x_0 + \bar{x} }{\sqrt{-t'}}  \Big\}.\notag
    \end{align}
    Using (\ref{eq: setting radii}) and $\bar{X}\in P(\mathbf{0}, r)$, there exists a numerical constant $C>0$
    \begin{align}
        1 \leq \tfrac{\sqrt{-t'}}{r} &\leq  \sqrt{2} + \tfrac{C}{\rho_0},\label{eq: est rR 1}\\
        \tfrac{\lvert x_0 + \bar{x}\rvert }{\sqrt{-t'}} &\leq \sqrt{\rho_0} + 1,\label{eq: est rR 2}\\
        \sqrt{\tfrac{R^2 + 2r^2}{r^2}} &\leq \tfrac{1}{2}\sqrt{\rho_0}+ C\tfrac{1}{\sqrt{\rho_0}}.\label{eq: est rR 3}
    \end{align}
    If $\rho_0$ is sufficiently large, then from (\ref{eq: est rR 1}), (\ref{eq: est rR 2}), (\ref{eq: est rR 3}),
    \begin{align*}
        \tfrac{1}{\sqrt{-t'}} \Ga_{t'} \cap B \Big( \tfrac{x_0 + \bar{x} }{\sqrt{-t'}}, \sqrt{\tfrac{R^2 + 2r^2}{-t'}}\Big) \subset \tfrac{1}{\sqrt{-t'}} \Ga_{t'} \cap B_{2\rho_0}(0),
    \end{align*}
    which is a disjoint union of one (resp. two) curve(s) with curvature and the minimum of $\lvert x\rvert$ bounded by $(2\rho_0)^{-2}$ if $\cG$ is the sub-flow of a tail (resp. a finger). 
    After rescaling using (\ref{eq: est rR 1}) and applying Lemma \ref{lem: proper flow}, the support (\ref{eq: support}) is a union of curves without boundary in $B(0, \tfrac{1}{r}\sqrt{R^2 + r^2})$ having curvature and the minimum of $\lvert x\rvert$ bounded from above by $\rho_0^{-2}$. 
    Lastly, using Lemma~\ref{lem: est of loc.G.D.R} together with (\ref{eq: est rR 3}) and counting the number of components of the support, we get the desired estimates.
\end{proof}

We end this subsection by stating the rescaled version of localized Gaussian density ratio estimate.
\begin{proposition}[Gaussian density ratio upper bound, rescaled version]\label{prop: rescaled GD upper bound}
     Let $\cG_{X_0}, r , R$ be assumed as (\ref{eq: setting radii}) and let $\lambda > 0$. If $\cG$ is the sub-flow of a finger, then for any $\bar{X}\in P(\mathbf{0}, r/ \la)$,
    \begin{align}\label{eq: rescaled density estimate 1}
        \Th^{R/\la}( \cG_{X_0,\la}, \bar{X}, r/ \la ) \leq 2 + O(\rho_0^{-1});
    \end{align}
    if $\cG$ is a sub-flow of a tail, then for any $\bar{X} \in P(\mathbf{0}, r/ \la)$
    \begin{align}\label{eq: rescaled density estimate 2}
        \Th^{R/ \la}( \cG_{X_0, \la}, \bar{X}, r/ \la ) \leq 1 + O(\rho_0^{-1}).
    \end{align}
\end{proposition}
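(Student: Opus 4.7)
The plan is to deduce the rescaled bound directly from the unrescaled Proposition~\ref{prop: unrescaled GD upper bound} by exploiting the parabolic scale-invariance of the localized Gaussian density ratio. Concretely, I will first verify the identity
\begin{equation*}
    \Th^{R/\la}(\cG_{X_0, \la}, \bar X, r/\la) \,=\, \Th^{R}(\cG_{X_0}, \la \bar X, r),
\end{equation*}
where $\la \bar X := (\la \bar x, \la^2 \bar t)$ for $\bar X = (\bar x, \bar t)$, and then observe that the hypothesis $\bar X \in P(\bo, r/\la)$ forces $\la \bar X \in P(\bo, r)$, so that the unrescaled bound at $\la \bar X$ with scale $r$ transfers verbatim.

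For the identity, the natural change of variables is $y = \la x$, $s = \la^2 t$. Under this substitution one checks directly that the cutoff is invariant, $\Psi_{\bar X, R/\la}(x, t) = \Psi_{\la \bar X, R}(y, s)$, since both the numerator and denominator in its definition scale by $\la^{-2}$; the backward heat kernel transforms as $\Phi_{\bar X}(x, t) = \la\,\Phi_{\la \bar X}(y, s)$, because $(4\pi(\bar t - t))^{-1/2}$ contributes a factor $\la$ while the exponent is scale-invariant; and the one-dimensional Hausdorff measure on a spatially scaled curve satisfies $d\cH^1(x) = \la^{-1}\,d\cH^1(y)$. Since $\cG_{X_0, \la} = \cD_\la(\cG_{X_0})$ is by definition the parabolic rescaling by $\la^{-1}$, its time-slice at $\bar t - (r/\la)^2$ is the $\la^{-1}$-scaling of the $(\la^2 \bar t - r^2)$-time-slice of $\cG_{X_0}$. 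Assembling these ingredients, the factors $\la$ and $\la^{-1}$ cancel and the claimed identity follows.

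The parabolic-ball condition transforms correctly under $\bar X \mapsto \la \bar X$: if $\bar X = (\bar x, \bar t) \in P(\bo, r/\la)$, i.e.\ $|\bar x| < r/\la$ and $\bar t \in (-(r/\la)^2, 0]$, then $|\la \bar x| < r$ and $\la^2 \bar t \in (-r^2, 0]$, so $\la \bar X \in P(\bo, r)$. Applying Proposition~\ref{prop: unrescaled GD upper bound} at $\la \bar X$ with scale $r$ then yields (\ref{eq: rescaled density estimate 1}) in the finger case and (\ref{eq: rescaled density estimate 2}) in the tail case, with the same constant $C$.

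The only real obstacle is careful bookkeeping of the scaling factors in $\Th^R$, in particular verifying that the factor $\la$ from the heat kernel and the factor $\la^{-1}$ from the arclength element precisely cancel to give a genuine parabolic scale invariance. No further geometric or analytic input is needed: the structural hypotheses underlying the unrescaled proof (the setting (\ref{eq: setting point})--(\ref{eq: setting radii}), embeddedness, the proper-flow Lemma~\ref{lem: proper flow}, and the graphical curvature control from Theorem~\ref{thm: rough convergence}) are automatically inherited by the rescaled flow $\cG_{X_0,\la}$.
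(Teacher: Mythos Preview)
Your proof is correct and matches the paper's intended approach: the paper states the rescaled version immediately after the unrescaled Proposition~\ref{prop: unrescaled GD upper bound} without a separate proof, treating it as an immediate consequence of the parabolic scale-invariance of $\Th^R$ that you have spelled out. Your verification of the scaling identity $\Th^{R/\la}(\cG_{X_0,\la},\bar X, r/\la)=\Th^{R}(\cG_{X_0},\la\bar X, r)$ and the inclusion $\la\bar X\in P(\bo,r)$ is exactly the bookkeeping needed.
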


\

\subsection{Asymptotic behavior of fingers and tails}

Let $\cM$ be an ancient flow and $X_0 = (x_0, t_0)\in \cM$. Define the \emph{regularity scale} at $X_0$ to be 
\begin{align}\label{eq: regularity scale}
     \lambda^\cM(X_0) = \sup\;\{r>0: \cM \text{ is a proper smooth flow in $P(X_0, r)$ with $\lvert \kappa \rvert \leq r^{-1}$}\}.
\end{align}

\begin{theorem}[asymptotic behavior of a large curvature region]\label{thm: asymp of finger}
    Let $\cG \subset \cM$ be the sub-flow of a finger and let $\{X_i = (x_i,t_i)\}_{i\in \mathbb{N}}\subset \cG$ be a sequence such that $t_i\to -\infty$, $\lvert x_i \rvert  > 2\sqrt{-t_i} \rho_i $, and $\liminf \sqrt{-t_i}\lambda_i^{-1} = \al > 0$ where $\rho_i = \rho(\tau_i)$ is the graphical radius of $\mathcal{M}$ and $\lambda_i$ is the regularity scale of $\cG$ at $X_i$. Then $\cG_{X_i,\lambda_i}$ converges subsequentially to a grim reaper with unit speed in the $C^{\infty}_{\text{loc}}$-topology.
\end{theorem}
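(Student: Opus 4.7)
The plan is to extract a subsequential smooth limit of the rescaled sub-flow $\cG_{X_i,\lambda_i}$ and to invoke the classification of ancient low-entropy Brakke flows (Theorem~\ref{thm: low entropy flow}) to identify this limit as a unit-speed grim reaper. By the very definition of the regularity scale in (\ref{eq: regularity scale}) combined with the smoothness of $\cG$, the rescaled flow $\cG_{X_i,\lambda_i}$ is smooth in $P(\mathbf{0},1)$ with $|\kappa|\le 1$ there, and $\sup_{\overline{P(\mathbf{0},1)}}|\kappa|=1$ is attained at some point (otherwise maximality of $\lambda_i$ would be violated). The hypothesis $\liminf\sqrt{-t_i}\,\lambda_i^{-1}=\alpha>0$ yields $r/\lambda_i\geq 2\sqrt{\rho_i}(\sqrt{-t_i}/\lambda_i)\to\infty$ with $r=|x_i|/\sqrt{\rho_i}$, so Proposition~\ref{prop: rescaled GD upper bound} supplies the localized Gaussian density bound $\Theta^{R/\lambda_i}(\cG_{X_i,\lambda_i},\bar X,r/\lambda_i)\le 2+O(\rho_i^{-1})$ uniformly for every $\bar X\in P(\mathbf{0},r/\lambda_i)$, on parabolic balls whose radii diverge.

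Applying Ilmanen's Brakke compactness with this density bound produces a subsequential Brakke limit $\cG_\infty$, defined on some $(-\infty,T^\ast]$ with $T^\ast\geq \alpha^2/2$, which is unit-regular, cyclic, and integral by the closure properties recalled in Section~2.2. Since the localized density bound becomes a genuine entropy bound as $r/\lambda_i\to\infty$, we have $\ent[\cG_\infty]\le 2$. Parabolic regularity applied to $|\kappa|\le 1$ in $P(\mathbf{0},1)$ upgrades the Brakke convergence to $C^\infty_{\mathrm{loc}}$ convergence on compact subsets of the regular part, and passing a subsequence of attainment points to a limit produces a point where $|\kappa_\infty|=1$. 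By the classification in Theorem~\ref{thm: low entropy flow}, $\cG_\infty$ is then a quasi-static line of multiplicity $1$ or $2$, two crossing quasi-static lines, a shrinking circle, a shrinking Angenent oval, or a grim reaper.

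It remains to exclude every candidate other than the grim reaper. The two line cases have $\kappa\equiv 0$, contradicting $|\kappa_\infty|=1$. The crossing-lines case is ruled out directly by Lemma~\ref{lem: exclusion of crossing lines} since each $\cG_{X_i,\lambda_i}$ is a smooth embedded CSF. To eliminate the compact candidates, I would use the finger topology: by Corollary~\ref{cor: location of knuckles} the two knuckles bounding $\cG$ at time $t_i$ lie in $B(0,\sqrt{-t_i})$, so after translating by $x_i$ and rescaling by $\lambda_i^{-1}$ they are at distance at least $(|x_i|-\sqrt{-t_i})/\lambda_i\ge \rho_i\alpha/2 \to\infty$ from the origin. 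Since $\cG_{X_i,\lambda_i}$ at rescaled time $0$ is a single embedded arc passing through $\mathbf{0}$ with both endpoints escaping to infinity, it crosses every annulus $B(\mathbf{0},R+1)\setminus B(\mathbf{0},R)$ (for $R<\rho_i\alpha/2-1$) at least twice, depositing arclength at least $2$. The Portmanteau-type inequality for weak-$*$ convergence of Radon measures on closed sets then forces $\cG_\infty$ to have arclength at least $2$ in every such annulus, so $\cG_\infty$ is noncompact, ruling out the shrinking circle and the Angenent oval. Therefore $\cG_\infty$ is a grim reaper, and the normalization $\sup|\kappa_\infty|=1$ identifies it as the unit-speed one.

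The main obstacle I anticipate is the simultaneous justification of two transfers of information to the limit: the attainment of $|\kappa_\infty|=1$ (needed to exclude the quasi-static lines) and the preservation of the annular arclength lower bound (needed to exclude the compact shrinkers). Both rest on the uniform $C^\infty$ control in $P(\mathbf{0},1)$ granted by the regularity scale together with the topological fact that a finger is a single embedded arc whose endpoints escape to infinity under the given rescaling.
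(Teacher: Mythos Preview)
Your proposal is correct and follows essentially the same route as the paper: localized Gaussian density bound from Proposition~\ref{prop: rescaled GD upper bound}, Ilmanen compactness to an ancient unit-regular cyclic integral Brakke flow with entropy at most $2$, the curvature normalization $\sup_{P(\mathbf{0},1)}|\kappa|=1$ from the definition of regularity scale, and the classification Theorem~\ref{thm: low entropy flow}. The only notable difference is in the elimination of the compact candidates: the paper dispatches the shrinking circle and Angenent oval with the one-line observation that $\widetilde{\Gamma}_0$ is not a finite Radon measure, whereas you spell this out via the knuckle-escape and annular arclength argument; your version is a legitimate (and more explicit) justification of the same fact.
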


\begin{proof}  
    For each $i\in \mathbb{N}$, set $r_i=|x_i|/\sqrt{\rho_i}>2\sqrt{-t_i}\sqrt{\rho_i}$ and $R_i = \lvert x_i \rvert/2$ as in \eqref{eq: setting radii}. By assumption,
    \begin{align}\label{eq: range of rescaled cutoff}
        \liminf_i R_i/\la_i \geq \liminf_i r_i/\la_i \geq \liminf_i 2 \sqrt{\rho_i} \sqrt{-t_i}/\la_i \geq 2\al \liminf_i  \sqrt{\rho_i} = \infty.
    \end{align} 
    By Proposition \ref{prop: rescaled GD upper bound} for $r=r_i, \lambda=\lambda_i$ for all $i$, and the weighted monotonicity formula, for any fixed $\sigma_0 > 1$, 
    \begin{align}\label{eq: density ratio bound}
        \limsup_{i} \sup_{\bar{X}\in P(\mathbf{0}, \sigma_0),\: 0< \si < \sigma_0} \Th^{R_i/ \la_i}(\cG_{X_i, \la_i}, \bar{X}, \si) \leq 2.
    \end{align}
    Since the limit of cutoff functions $\lim_i \psi_{0, R_i/\la_i}(X) = \psi_{0, \infty}(X) = 1$ for any $X$, (\ref{eq: density ratio bound}) implies that the Euclidean density ratio of scale less than $\sigma_0$ around any point in $B_{\sigma_0}(0)$ is uniformly bounded. By Ilmanen's compactness theorem \cite[7.1]{Ilm1994ERP}, $\cG_{X_i, \la_i}$ converges subsequentially to an ancient flow $\widetilde{\cG}$ on $P(\mathbf{0}, \sigma_0)$ in the sense of Brakke flow. By diagonal argument and relabeling the index, we assume that $\cG_{X_i, \la_i}$ converges to $\widetilde{\cG}$ on any compact sets as Brakke flows. By the definition of regularity scale, we can also assume that $\widetilde{\cG} \cap P(\mathbf{0}, 1)$ is a proper flow satisfying $\mathbf{0} \in \widetilde{\cG}$ and
    \begin{align}\label{eq: curvature bound in unit ball}
        \sup\:  \{\ \lvert \kappa(X)\rvert \::\: X\in \widetilde{\cG} \cap P(\mathbf{0}, 1)\}= 1.
    \end{align}
    Furthermore, (\ref{eq: density ratio bound}) together with Fatou's lemma implies that the $\ent[\widetilde{\cG}] \leq 2$.

    Since $\lim_i R_i/ \la_i = \infty$, for any $\sigma_0 > 1$, for all large $i$, $\cG_{X_i, \la_i}\cap P(\mathbf{0}, \sigma_0)$ is proper. Passing to the weak limit, $\widetilde{\cG}$ is cyclic, unit-regular, and integral. It is clear that $\widetilde{\Ga}_0$ is not a finite Radon measure. Combining these facts together with (\ref{eq: curvature bound in unit ball}), Theorem \ref{thm: low entropy flow} implies that $\widetilde{\cG}$ is a grim reaper. Finally, by standard parabolic theory, such convergence holds in $C^{\lfloor \sigma_0 \rfloor}$-topology in $P(\mathbf{0}, \sigma_0)$ for any $\sigma_0 > 1$.
\end{proof}

\bigskip

\begin{theorem}[curvature decay of a tail]\label{thm: curv decay of tail}
    Let $\cG \subset \cM$ be a sub-flow of a tail. There exist a time $T = T(\cM) \ll -1$ and a constant $C = C(\cM)$ such that for any $X = (x,t) \in \cG$ with $t\leq T$,
    \begin{align}\label{eq: curv decay of tail 1}
        \lvert \kappa(X) \rvert \leq \tfrac{C \sqrt{\rho}}{\lvert x \rvert},
    \end{align}
    and
    \begin{align}\label{eq: curv decay of tail 2}
        \lvert \kappa(X) \rvert \leq \tfrac{C}{\sqrt{-t} \sqrt{\rho}}.
    \end{align}
\end{theorem}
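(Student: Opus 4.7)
The plan is to split $\cG$ into its graphical and non-graphical portions, to prove (\ref{eq: curv decay of tail 1}) by a blow-up contradiction in the latter, and to deduce (\ref{eq: curv decay of tail 2}) algebraically. On the graphical region $\{|x|\le 2\sqrt{-t}\,\rho(\tau)\}$, Definition \ref{def:radius.sheet} already gives $|\kappa|\le (2\rho)^{-2}/\sqrt{-t}$ in the original coordinates, which implies both estimates for $\rho\ge 1$.

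For (\ref{eq: curv decay of tail 1}) outside the graphical region, I would argue by contradiction: suppose there exist $X_i=(x_i,t_i)\in\cG$ with $t_i\to-\infty$, $|x_i|>2\sqrt{-t_i}\,\rho_i$, and $|\kappa(X_i)|\,|x_i|/\sqrt{\rho_i}\to\infty$. Take $\lambda_i=|\kappa(X_i)|^{-1}$, so that in the notation of (\ref{eq: setting radii}), $r_i/\lambda_i\to\infty$ and $R_i/\lambda_i=\tfrac{1}{2}\sqrt{\rho_i}\,(r_i/\lambda_i)\to\infty$. Proposition \ref{prop: rescaled GD upper bound} applied to the tail sub-flow then yields
\[
\Theta^{R_i/\lambda_i}(\cG_{X_i,\lambda_i},\bar{X},\sigma)\le 1+O(\rho_i^{-1})
\]
for all $\bar{X}\in P(\mathbf{0},r_i/\lambda_i)$ and $\sigma<r_i/\lambda_i$; and since $R_i/\lambda_i\to\infty$, the cutoff $\Psi_{\mathbf{0},R_i/\lambda_i}$ tends to $1$ on every compact set. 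For large $i$ the density therefore falls below the threshold in White's regularity theorem, so White's local regularity combined with Ilmanen's compactness gives smooth subsequential convergence of $\cG_{X_i,\lambda_i}$ to an ancient, unit-regular, cyclic, integral Brakke flow $\widetilde{\cG}$ with entropy at most $1$. By the classification Theorem \ref{thm: low entropy flow}, $\widetilde{\cG}$ is a multiplicity-one quasi-static line and so has vanishing curvature, contradicting $|\kappa_{\widetilde{\cG}}(\mathbf{0})|=\lim_i|\kappa_{\cG_{X_i,\lambda_i}}(\mathbf{0})|=1$.

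Given (\ref{eq: curv decay of tail 1}), the second estimate (\ref{eq: curv decay of tail 2}) follows for $|x|>2\sqrt{-t}\,\rho$ from $|\kappa|\le C\sqrt{\rho}/|x|<C/(2\sqrt{-t}\sqrt{\rho})$, while the graphical region is handled as in the first paragraph. The main obstacle I anticipate is upgrading Brakke-flow compactness to smooth convergence of the rescaled tails; this is precisely what the $1+O(\rho_i^{-1})$ bound in Proposition \ref{prop: rescaled GD upper bound}, combined with the cutoff limit $\Psi_{\mathbf{0},R_i/\lambda_i}\to 1$, is designed to deliver via White's regularity theorem.
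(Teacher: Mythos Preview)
Your proof is correct and follows the same blow-up strategy as the paper: handle the graphical region directly from Definition~\ref{def:radius.sheet}, and outside it rescale, pass to a Brakke limit with entropy at most $1$, and contradict the classification Theorem~\ref{thm: low entropy flow}. The only cosmetic difference is that the paper rescales by the regularity scale $\lambda_i = \lambda^{\cG}(X_i)$ (so the normalization $\sup_{P(\mathbf{0},1)}|\kappa|=1$ is built in and smooth convergence on $P(\mathbf{0},1)$ is automatic), whereas you take $\lambda_i = |\kappa(X_i)|^{-1}$ and recover smooth convergence a posteriori from White's regularity via the $1+O(\rho_i^{-1})$ density bound; both routes reach the same contradiction.
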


\begin{proof}
    By the definition of graphical radius, for all sufficiently large $-t$, if $\lvert x \rvert \leq 2 \sqrt{-t}\rho(\tau)$, then $\sqrt{-t}\lvert \kappa(X)\rvert \leq \tfrac14 \rho^{-2}$ and hence $\lvert x \rvert \cdot\lvert \kappa(X) \rvert \leq \rho^{-1}$. 
    Thus, suppose the opposite of (\ref{eq: curv decay of tail 1}); then there exists a sequence $\{X_i = (x_i, t_i)\}\subset \cG$ with $\rho_i = \rho(\tau_i)$ and $r_i=|x_i|/\sqrt{\rho_i}$ satisfying $\lim_i t_i = -\infty$, $\lvert x_i \rvert > 2 \sqrt{-t_i}\rho_i$, and $\lim_i \la_i^{-1}r_i = \lim_i \la_i^{-1}\lvert x_i \rvert /\sqrt{\rho_i} = \infty$. Following an argument similar to the one of Theorem \ref{thm: asymp of finger}, $\cG_{X_i, \la_i}$ converges subsequentially to a cyclic unit-regular integral ancient Brakke flow $\widetilde{\cG}$. Since $\cG$ is a sub-flow of a tail, Proposition \ref{prop: rescaled GD upper bound} implies that $\ent[\widetilde{\cG}] \leq 1$. By Theorem \ref{thm: low entropy flow}, $\widetilde{\cG}$ is a quasi-static line, which contradicts the curvature condition (\ref{eq: curvature bound in unit ball}). 
    
    Next we take a larger $-T$ such that for $t \leq T$, $\tfrac14 \rho^{-2} \leq C/\sqrt{\rho}$ and hence (\ref{eq: curv decay of tail 2}) holds true if $\lvert x \rvert \leq 2 \sqrt{-t}\rho(\tau)$. If $\lvert x \rvert > 2 \sqrt{-t}\rho(\tau)$, then (\ref{eq: curv decay of tail 1}) implies (\ref{eq: curv decay of tail 2}).
\end{proof}

\bigskip

\section{Vertices}

Given any ancient flow (or sub-flow) $\mathcal{M} = \bigcup_{t\in I}M_t \times \{t\}$, $X\in\mathbb{R}^{2}\times I$, and $r>0$, recall that $\mathcal{M}_{X,r} := \mathcal{D}_{1/r}(\mathcal{M} - X)$, where $\mathcal{D}_{1/r}$ is the parabolic rescaling by $1/r$. Recall that an {\it ancient path}  $\mathbf{p} = \bigcup_{t\in I}\mathbf{p}(t)$ on $\mathcal{M}$ is a piecewise smooth curve in $\mathbb{R}^{2}\times I$ such that $\mathbf{p}(t)\in M_{t}$ for all $t\in I$. Here, for brevity, we abuse the notations and write $\mathbf{p}(t) = (p(t), t)$ for a  spacetime curve.

\begin{definition}[$\varepsilon$-grim reaper]\label{def: esp grim reaper}
Given $\varepsilon>0 $ and an ancient flow (or sub-flow) $\mathcal{M}$, we say $X$  lies  at the tip of an $\varepsilon$-grim reaper at scale $r$ if $\mathcal{M}_{X,r}$ is $\varepsilon$-close in $B(0,1/\varepsilon)\times [-1/\varepsilon^2, 0]$ in $C^{[1/\varepsilon]}$-sense to a translating grim reaper with width $\pi$ and tip on the spacetime origin.

Moreover, we say $\mathcal{M}$ is $\varepsilon$-grim reaper-like along an ancient path $\mathbf{p}=\cup_{t\in I}\mathbf{p}(t) \subset\mathcal{M}$ (at curvature scale), if every $\bp(t)$ lies  at the tip of an $\varepsilon$-grim reaper at scale $|\kappa(\bp(t))|\neq 0$.
\end{definition}

\ 

Recall the definition of regularity scale (\ref{eq: regularity scale}). We now show that the ancient curve resembles the translating grim reaper around its sharp vertex paths.

\begin{theorem}[$\Rightarrow$ Theorem~\ref{thm:main.vertex.asymp}]\label{thm: vertex path epsilon grim reaper}
Let $\cG \subset \cM$ be the sub-flow of a finger and $\mathbf{v}$ is an ancient path of 
sharp vertices on $\mathcal{G}$. Then, given any $\varepsilon>0$, there is $T\ll -1$ such that for any $t\leq T$, $\mathbf{v}(t)$ lies at the tip of an $\varepsilon$-grim reaper at scale $\lambda_\bv(t)$, where $\lambda_\bv(t)$ is the regularity scale of $\cG$ at $\mathbf{v}(t)$. 
\end{theorem}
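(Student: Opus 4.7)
The plan is to argue by contradiction and blow up the sub-flow $\cG$ at the sharp-vertex path $\bv(t_i)$ using the regularity scale $\lambda_i := \lambda_\bv(t_i)$ along a sequence $t_i \to -\infty$ witnessing the failure of the conclusion, then invoke Theorem~\ref{thm: asymp of finger} to extract a unit-speed translating grim reaper as the limit, whose tip is forced to sit at the space-time origin. So fix $\varepsilon_0 > 0$ and suppose there is a sequence $t_i \to -\infty$ with $\bv(t_i)$ not at the tip of an $\varepsilon_0$-grim reaper at scale $\lambda_i$; write $X_i = \bv(t_i)$ and $x_i = v(t_i)$.

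First I would verify the hypotheses of Theorem~\ref{thm: asymp of finger} for the sequence $\{X_i\}$. For the spatial condition, Corollary~\ref{cor: vertex is far} gives $|x_i|/\sqrt{-t_i} \to \infty$, which after possibly replacing the graphical radius function $\rho(\tau)$ by $\rho(\tau)/2$ (still a valid choice of graphical radius) produces $|x_i| > 2\sqrt{-t_i}\,\rho_i$ for all large $i$. For the scale condition, the definition of regularity scale in (\ref{eq: regularity scale}) forces $\lambda_i \leq |\kappa(X_i)|^{-1}$, while Lemma~\ref{lem: curvature lower bound of vertex} applied to the rescaled curvature at $\bv(\tau_i)$ yields $\sqrt{-t_i}\,|\kappa(X_i)| = |\bar\kappa(\bv(\tau_i), \tau_i)| \geq \tfrac{1}{\sqrt{2}} - o(1)$. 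Combining, $\liminf \sqrt{-t_i}\,\lambda_i^{-1} \geq \tfrac{1}{\sqrt{2}} > 0$, so Theorem~\ref{thm: asymp of finger} applies.

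This produces a subsequence along which $\cG_{X_i, \lambda_i}$ converges in $C^\infty_{\mathrm{loc}}$ to a unit-speed translating grim reaper $\mathcal{R}$ of width $\pi$. Since $\bo = X_i - X_i$ lies on $\cG_{X_i, \lambda_i}$ at time $0$, passing to the limit gives $\bo \in \mathcal{R}_0$. The sharp-vertex identities $\kappa_s = 0$ and local maximality of $|\kappa|$ at $\bv(t_i)$ are invariant under translation and parabolic rescaling, and they pass to the $C^\infty_{\mathrm{loc}}$ limit; hence $\bo$ is a sharp vertex of $\mathcal{R}$ at time $0$. The unique sharp vertex of a translating grim reaper at any given time is its tip, so $\bo$ is the tip of $\mathcal{R}$ at time $0$. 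By $C^\infty_{\mathrm{loc}}$ convergence, for all large $i$ along the subsequence, $\cG_{X_i, \lambda_i}$ is $\varepsilon_0$-close in $C^{\lfloor 1/\varepsilon_0 \rfloor}$ to $\mathcal{R}$ on $B(0, 1/\varepsilon_0) \times [-1/\varepsilon_0^2, 0]$, contradicting the choice of $t_i$. The main obstacle is securing nondegeneracy and correct alignment of the blow-up limit: the lower bound $\sqrt{-t_i}/\lambda_i \gtrsim 1$ from Lemma~\ref{lem: curvature lower bound of vertex} is exactly what prevents $\mathcal{R}$ from collapsing to a static line, while the persistence of the sharp-vertex structure under smooth convergence, combined with uniqueness of the sharp vertex on a grim reaper, pins the tip of $\mathcal{R}$ at the origin.
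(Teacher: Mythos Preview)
Your proof is correct and follows essentially the same contradiction-plus-blowup approach as the paper: verify the hypotheses of Theorem~\ref{thm: asymp of finger} via Corollary~\ref{cor: vertex is far} and Lemma~\ref{lem: curvature lower bound of vertex}, then extract a unit-speed grim reaper limit. You supply a detail the paper leaves implicit, namely that the sharp-vertex condition $\kappa_s = 0$ passes to the $C^\infty_{\mathrm{loc}}$ limit and pins the tip of the grim reaper at the space-time origin; this is exactly what is needed to match Definition~\ref{def: esp grim reaper}.
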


\begin{proof}
    Suppose not. It follows from Lemma \ref{lem: curvature lower bound of vertex} that we can find a sequence of spacetime points along $\mathbf{v}(t)$ that satisfies the hypotheses of Theorem \ref{thm: asymp of finger}. But the result of Theorem \ref{thm: asymp of finger} leads to a contradiction.
\end{proof}

\bigskip

Theorem~\ref{thm: vertex path epsilon grim reaper} implies that for an ancient path of sharp vertices $\bv$ on a sub-flow $\mathcal{G}$ of a finger, there exists a positive \emph{increasing} function $\omega: (-\infty, T] \rightarrow \R_+$ such that $\lim_{t\to -\infty}\omega(t) = 0$ and $\mathbf{v}(t)$ lies at the tip of $\omega(t)$-grim reaper at scale $\lambda_\bv(t)$. 
From the structure of $\omega(t)$-grim reaper, in $M_t\cap B(\mathbf{v}(t), \lambda_\bv(t)/\omega(t) )$ we have
\begin{align}\label{eq: k of GR}
    \kappa = \lambda^{-1}_\bv \cos \theta + \eta^{(0)},
\end{align}
where $\theta(s, t) = \int_{v(t)}^{s} \kappa(\sigma, t)\ d\sigma$ and $\eta^{(0)}$ satisfies the estimate
\begin{align*}
    \sup_{M_t\cap B(v(t), \lambda_\bv(t)/\omega(t) )}\ \lambda_\bv \lvert \eta^{(0)} \rvert + \lambda_\bv^2 \lvert \eta_s^{(0)} \rvert + \lambda_\bv^3 \lvert \eta_{ss}^{(0)} \rvert \leq 2\omega(t).
\end{align*}
Let $d(t)$ denote the Euclidean distance from the tip $\mathbf{q}(t)$. Then whenever $\lambda_\bv /\omega > d \geq 200\pi \lambda_\bv$ with $\omega$ sufficiently small, we have a curvature estimate
\begin{align}\label{eq: curv decay of GR}
    \lambda_\bv^{-1}e^{-\tfrac{99d}{100\lambda_\bv}} -2\omega \lambda_{\bv}^{-1} \leq \lvert \kappa \rvert \leq \lambda_\bv^{-1}e^{-\tfrac{d}{\lambda_\bv}} + 2\omega \lambda_{\bv}^{-1}.
\end{align}
A direct computation using (\ref{eq: k of GR}) shows 
\begin{align}
    \kappa_s &= -\lambda_\bv^{-2} \sin \theta \cos \theta + \tilde{\eta}^{(1)} = -\kappa\sqrt{\lambda_\bv^{-2} - \kappa^2} + \eta^{(1)}, \label{eq: ks of GR}\\
    \kappa_{ss} &= -\lambda_\bv^{-3} [2\cos^3 \theta  - \cos \theta] + \tilde{\eta}^{(2)} = -2\kappa^3 + \lambda_\bv^{-2} \kappa + \eta^{(2)}. \label{eq: kss of GR}
\end{align}
Here, $\tilde{\eta}^{(1)}$, $\eta^{(1)}$, $\tilde{\eta}^{(2)}$, and $\eta^{(2)}$ are error terms involving $\eta^{(0)}$ and its derivative up to second order.  Furthermore, there exists a numerical constant $c_0>0$ such that in $M_t\cap B(v(t), \lambda_\bv(t) /\omega(t) )$

\begin{align}\label{eq: err of k}
    \lvert \eta^{(1)}\rvert \leq c_0\omega \lambda_\bv^{-2}\ ,\  \lvert \eta^{(2)}\rvert \leq c_0\omega \lambda_\bv^{-3}.
\end{align}

\bigskip

\begin{lemma}\label{lem: unique vertex}
Let $\cG \subset \cM$ be a sub-flow of a finger and $\mathbf{v}$ is an ancient path of 
sharp vertices on $\mathcal{G}$. Suppose that $\bv_0 := \mathbf{v}(t_{0})$ lies at the tip of a $\omega_0 := \omega(t_{0})$-grim reaper at scale $\lambda_0 := \lambda_{\mathbf{v}(t_{0})}$ with $t_0 \ll -1$. Then, for each $t\in [t_0 + \tfrac14\lambda_0^2 \log \omega_0, t_0]$, $\Ga^\cG_t$ has a single sharp vertex in $B(v_0,  -\tfrac14\lambda_0 \log \omega_0)$.
\end{lemma}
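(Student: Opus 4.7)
The plan is to work in rescaled coordinates and exploit that the hypothesis provides $\omega_0$-closeness to a grim reaper throughout a large parabolic cylinder, not merely at the single time $t_0$. Let $\widetilde{\cG} := \cG_{\bv(t_0), \lambda_0}$; by definition, $\widetilde{\cG}$ is $\omega_0$-close in the $C^{\lfloor 1/\omega_0 \rfloor}$-sense on $B(\bo, 1/\omega_0) \times [-1/\omega_0^2, 0]$ to a unit-speed translating grim reaper $\mathcal{R}$ whose tip sits at the spacetime origin at rescaled time $0$. Writing $R_0 := -\tfrac{1}{4}\log \omega_0$, the rescaled version of the claim becomes: at each rescaled time $\tilde t \in [-R_0, 0]$, the slice $\widetilde{\Gamma}_{\tilde t}$ has a unique sharp vertex in $B(\bo, R_0)$. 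Since $\mathcal{R}$ translates at unit speed, its tip $\mathbf{r}(\tilde t)$ satisfies $\lvert \mathbf{r}(\tilde t) \rvert = \lvert \tilde t \rvert \leq R_0$, and the cylinder $B(\bo, R_0) \times [-R_0, 0]$ sits well inside the domain of $C^{\lfloor 1/\omega_0\rfloor}$-closeness because $R_0 \ll 1/\omega_0$.

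For existence, the ancient sharp-vertex path proposition produces a continuous sharp-vertex path $\bv(t)$ through $\bv(t_0) = (v_0, t_0)$. The $C^{\lfloor 1/\omega_0 \rfloor}$-closeness forces the rescaled path to remain within $O(\omega_0)$ of $\mathbf{r}(\tilde t)$, placing $\bv(t) \in B(v_0, r_0)$ for every $t \in [T_*, t_0]$, where $T_* := t_0 + \tfrac14 \lambda_0^2 \log \omega_0$ and $r_0 := -\tfrac14 \lambda_0 \log \omega_0$.

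For uniqueness, suppose a second sharp vertex $w \neq \bv(t)$ existed in $B(v_0, r_0)$ at some $t \in [T_*, t_0]$. Closeness to $\mathcal{R}$ (where $\kappa_{\mathcal{R}} > 0$) forces $\kappa > 0$ throughout the region, so both $\bv(t)$ and $w$ are local maxima of $\kappa$. Evaluating \eqref{eq: kss of GR} at $w$ with $\kappa_{ss}(w) \leq 0$ and using the bound $\lvert \eta^{(2)} \rvert \leq c_0 \omega_0 \lambda_\bv^{-3}$ from \eqref{eq: err of k} yields $\kappa(w) \geq \lambda_\bv^{-1}/\sqrt{2}\,(1 - O(\omega_0))$. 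On $\mathcal{R}$, however, $\kappa_{\mathcal{R}} = \lambda_\bv^{-1}\cos\theta$, so the region $\{\kappa_{\mathcal{R}} \geq \lambda_\bv^{-1}/\sqrt{2}\}$ is the arclength neighborhood $\lvert \theta \rvert \leq \pi/4$ of the tip. In that neighborhood a direct computation gives $\kappa_{\mathcal{R},ss} = -\lambda_\bv^{-3}\cos\theta \cos(2\theta)$, strictly negative on $\lvert \theta \rvert < \pi/4$. By $C^2$-closeness of $\kappa$ to $\kappa_{\mathcal{R}}$, $\kappa$ remains strictly concave in this neighborhood inside $\widetilde{\cG}$ and so admits a unique local maximum, namely $\bv(t)$, contradicting the existence of $w$.

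The main obstacle is propagating the grim-reaper snapshot from the single time $t_0$ to the entire interval $[T_*, t_0]$; this is exactly what Definition~\ref{def: esp grim reaper} secures by demanding $C^{\lfloor 1/\omega_0\rfloor}$-closeness on a whole parabolic cylinder. The choices of time window $\tfrac14 \lambda_0^2 \lvert \log \omega_0 \rvert$ and spatial radius $\tfrac14 \lambda_0 \lvert \log \omega_0 \rvert$ are calibrated against the exponential curvature decay of the grim reaper: at distance $r_0$ from the tip, $\kappa_{\mathcal{R}} \sim \lambda_0^{-1} \omega_0^{1/4}$ comfortably dominates the perturbation size $\omega_0/\lambda_0$, so the curvature-maximum analysis of the preceding paragraph localizes every sharp vertex to a small neighborhood of the moving tip, where the non-degeneracy of $\kappa_{\mathcal{R}}$ then yields the desired uniqueness.
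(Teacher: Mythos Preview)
Your approach is essentially the paper's: exploit the $\omega_0$-grim-reaper closeness on the full parabolic cylinder of Definition~\ref{def: esp grim reaper}, then use the approximation formulas \eqref{eq: k of GR}--\eqref{eq: err of k} to analyse critical points of $\kappa$. The existence part and the calibration of $r_0$ and $T_*$ against the exponential decay \eqref{eq: curv decay of GR} match the paper's reasoning.

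There is, however, a small gap in your uniqueness step. From $\kappa_{ss}(w)\le 0$ and \eqref{eq: kss of GR} you obtain (after ruling out the small-curvature branch, as you do in the last paragraph) only $\kappa(w)\ge \lambda_\bv^{-1}/\sqrt{2}\,(1-O(\omega_0))$, i.e.\ $|\theta(w)|\le \pi/4 + O(\omega_0)$. You then want strict concavity of $\kappa$ on this arc; but $\kappa_{\mathcal R,ss}=-\lambda_\bv^{-3}\cos\theta\cos 2\theta$ vanishes exactly at $|\theta|=\pi/4$, so after adding the $O(\omega_0)\lambda_\bv^{-3}$ error you cannot assert $\kappa_{ss}<0$ on the boundary layer $|\theta|\in[\pi/4-C\omega_0,\pi/4+C\omega_0]$. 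Two sharp vertices could in principle hide there without contradicting your concavity claim.

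The paper avoids this by using \eqref{eq: ks of GR} rather than \eqref{eq: kss of GR} for the localisation step: if $\kappa_s(w)=0$ and $\kappa(w)\ge \lambda_0^{-1}/2$, then $\kappa(w)\sqrt{\lambda_\bv^{-2}-\kappa(w)^2}\le c_0\omega_0\lambda_\bv^{-2}$ forces $\kappa(w)\ge \lambda_\bv^{-1}(1-O(\omega_0^2))$, placing $w$ in a neighbourhood of the tip of arclength $O(\omega_0\lambda_\bv)$. On that tiny arc $\kappa_{ss}\approx -\lambda_\bv^{-3}$ is robustly negative, so the concavity argument closes. The paper then handles the intermediate range $\lambda_0^{-1}\sqrt{\omega_0}\le \kappa < \lambda_0^{-1}/2$ directly from \eqref{eq: ks of GR} (there $\kappa_s\neq 0$), and finally uses \eqref{eq: curv decay of GR} to check that $B(v_0,r_0)$ at any $t\in[T_*,t_0]$ is covered by $\{\kappa\ge \lambda_0^{-1}\sqrt{\omega_0}\}$. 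Your argument is easily repaired along the same lines: just replace your use of $\kappa_{ss}$ by \eqref{eq: ks of GR} to pin down $\kappa(w)$.
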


\begin{proof} 
    In $\cG \cap P(\bv_0, \lambda_0/\sqrt{\omega_0})$, $\lvert \kappa(\bv(t))\lambda_0 - 1\rvert \leq 2\omega_0$ and, from (\ref{eq: kss of GR}) and (\ref{eq: err of k}), $\kappa_{ss}\neq 0$ whenever $\lvert \kappa \rvert \geq \lambda_0^{-1} / 2$.
    Thus, from (\ref{eq: ks of GR}) and (\ref{eq: err of k}), $v(t)$ is the unique vertex in any neighborhood of $v(t)$ with $\lvert \kappa \rvert \geq \lambda_0^{-1} / 2$ for any $t_0 - \lambda_0^2/\omega_0 < t\leq t_0$.
    Next observe that if $\omega_0$ is sufficiently small by taking $t_0 \ll -1$, then from (\ref{eq: ks of GR}) and (\ref{eq: err of k}), in $\cG \cap P(\bv_0, \lambda_0/\sqrt{\omega_0})$ we have $\kappa_{s} \neq 0$ whenever $\lambda_0^{-1} /2 > \lvert \kappa\rvert \geq \lambda_0^{-1} \sqrt{\omega_0}\gg c_0 \lambda_0^{-1} \omega_0$. 
    Combining the results with both conditions on curvature together with (\ref{eq: curv decay of GR}), for any $t_0 - \lambda_0^2/\omega_0 < t\leq t_0$, $v(t)$ is the unique vertex in $B(v(t), - \tfrac12 \lambda_0 \log \omega_0 + 4 \lambda_0 \omega_0)$. 
    Finally, we observe that for $t\in [t_0 + \tfrac14 \lambda_0^2 \log \omega_0, t_0]$, $v(t) \in B(v_0, - \tfrac14 \lambda_0 \log \omega_0 + 2 \lambda_0 \omega_0) \subset B(v(t), -\tfrac12 \lambda_0 \log \omega_0 + 4 \lambda_0 \omega_0)$ since the speed of vertex is roughly $\lambda_0^{-1}$. Therefore, the uniqueness of sharp vertex holds in the desired spacetime neighborhood of $\bv_0$.
\end{proof}

\bigskip

\begin{proposition}[improved curvature lower bound of a sharp vertex]\label{prop: impro.vert.l.b}
    Let $c_0$ be the numerical constant in $(\ref{eq: err of k})$. Suppose the rescaled flow $\bar{\cM}$ is $\omega$-grim reaper-like along an ancient path of the sharp vertices $\{\bar{\mathbf{v}}(\tau)\}_{\tau\le \tau_0}$. For any small $\omega_0 > 0 $, there exists $T\ll -1$ such that for all $\tau \leq T$
    \begin{align}\label{eq: impro.vert.l.b}
        \lvert \bar{\kappa}(\bar{\bv}(\tau))\rvert = \sqrt{-t}\lvert \kappa(\bv(t)) \rvert \geq \tfrac{1}{2\sqrt{c_0\omega_0}}.
    \end{align}
    In particular,
    \begin{align}\label{eq: impro.vert.l.b.0}
        \lim_{\tau\to -\infty} \lvert \bar{\kappa}(\bar{\bv}(\tau)) \rvert =  \infty.
    \end{align}
\end{proposition}

\begin{proof}
    Choose $T$ such that $\omega(\tau) < \omega_0$ for all $\tau<T$. Let $\bar{\chi}(\tau) =  \bar{\kappa}(\bar{\bv}(\tau))$. Substituting (\ref{eq: kss of GR}) into the evolution equation (\ref{eq: evolution of rescaled curvature}) yields 
    \begin{align*}
        \tfrac{\pa}{\pa \tau} \bar{\kappa} = -\bar{\kappa}^3 + \bar{\chi}^2 \bar{\kappa} - \tfrac12 \bar{\kappa}+ \bar{\eta}^{(2)} =: Q
    \end{align*}
    around the tip $\bar{\bv}(\tau)$. In particular, $Q(\bar{\bv}(\tau)) = -\tfrac12 \bar{\chi} + \bar{\eta}^{(2)}$.
    Since (\ref{eq: err of k}) is scaling invariant, we still have $\rvert\bar{\eta}^{(2)}\rvert \leq c_0 \omega_0 \bar{\chi}^3$. Thus, if $\bar{\chi}(\tau) > 0$,
    \begin{align}\label{eq: impro.vert.l.b.1}
        Q(\bar{\bv}(\tau)) \leq -\bar{\chi}[ \tfrac12 - c_0 \omega_0\bar{\chi}^2]; 
    \end{align}
    if $\bar{\chi}(\tau) < 0$,
    \begin{align}\label{eq: impro.vert.l.b.2}
        Q(\bar{\bv}(\tau)) \geq -\bar{\chi}[ \tfrac12 - c_0 \omega_0\bar{\chi}^2].
    \end{align}
    Note that the roots of the polynomial on the right-hand sides of (\ref{eq: impro.vert.l.b.1}) and (\ref{eq: impro.vert.l.b.2}) are $\bar{\chi} = 0, \pm 1/\sqrt{2c_0 \omega_0}$.
    Applying the same reasoning as Lemma~\ref{lem: curvature lower bound of vertex} yields the improved lower bound (\ref{eq: impro.vert.l.b}).
\end{proof}

\bigskip
Following the idea in \cite{CHN13}, we can make the classification of entropy-two self-similar flow quantitative.
\begin{lemma}[quantitative rigidity of entropy-two self-similar flow]\label{lem: quantitative rigidity}
    For any $\vare > 0$, there exists $0 < \delta = \delta(\vare) < \tfrac14$ with the following significance. 
    For any $R > 10$, if $\cG$ is a proper smooth embedded CSF in $P(\mathbf{0}, R)$ satisfying that for some $r \in (0,  \delta R)$
    \begin{align*}
        2 - \delta \leq \Theta^R(\cG, \mathbf{0}, r) \leq \Theta^R(\cG, \mathbf{0}, 2r) \leq 2 + \delta,
    \end{align*}
    then $\cG_{0, r}$ is $\vare$-close in $C^2$ to a line with multiplicity 2 in $B(0, 2/\vare) \times [-4,-1]$.
\end{lemma}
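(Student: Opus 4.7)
I would argue by contradiction, using Ilmanen's compactness theorem, the rigidity part of Huisken's monotonicity formula, the classification of low-entropy self-similar Brakke flows (Proposition~\ref{prop: low entropy selfsimilar flow}), the exclusion of crossing lines (Lemma~\ref{lem: exclusion of crossing lines}), and White's regularity theorem. Suppose the statement fails: there are $\varepsilon_0>0$ and sequences $\delta_i\downarrow 0$, $R_i>10$, proper smooth embedded CSFs $\mathcal{G}_i$ on $P(\mathbf{0},R_i)$, and $r_i\in(0,\delta_i R_i)$ satisfying the density bounds but such that the parabolic rescalings $\tilde{\mathcal{G}}_i:=\mathcal{G}_{i;0,r_i}$ are not $\varepsilon_0$-close in $C^2$ to a multiplicity-$2$ line on $B(0,2/\varepsilon_0)\times[-4,-1]$. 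Each $\tilde{\mathcal{G}}_i$ lives on $P(\mathbf{0},R_i/r_i)$ with $R_i/r_i>1/\delta_i\to\infty$, and the assumption becomes $2-\delta_i\le \Theta^{R_i/r_i}(\tilde{\mathcal{G}}_i,0,1)\le \Theta^{R_i/r_i}(\tilde{\mathcal{G}}_i,0,2)\le 2+\delta_i$.

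\textbf{Extracting the limit.} The weighted monotonicity formula \eqref{eq: weighted monotonicity formula}, combined with the centered density hypothesis, yields uniform local Euclidean density-ratio bounds in any fixed compact subset of spacetime. Ilmanen's compactness theorem thus produces a subsequential Brakke flow limit $\tilde{\mathcal{G}}_\infty$ on $\mathbb{R}^2\times(-\infty,0]$ that is integral, unit-regular, and cyclic. Because $R_i/r_i\to\infty$, the cutoff $\Psi_{0,R_i/r_i}$ tends to $1$ locally uniformly, so the localized density ratio passes in the limit to the usual Gaussian density ratio, giving $\Theta(\tilde{\mathcal{G}}_\infty,0,1)=\Theta(\tilde{\mathcal{G}}_\infty,0,2)=2$.

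\textbf{Rigidity and classification.} Monotonicity then forces $\Theta(\tilde{\mathcal{G}}_\infty,0,\sigma)\equiv 2$ on $[1,2]$, and the rigidity case of Huisken's monotonicity formula implies that $\tilde{\mathcal{G}}_\infty$ is a backwardly self-similar cone from the spacetime origin on the corresponding time strip. Ancientness together with the scaling invariance of the self-shrinker equation propagates the self-similarity to all negative times. By Proposition~\ref{prop: low entropy selfsimilar flow}, the only self-similar flows with Gaussian density $2$ at the origin are a multiplicity-$2$ line through the origin and two crossing quasi-static lines meeting at the origin (the shrinking circle has density $\sqrt{2\pi/e}<2$ at its center). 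Lemma~\ref{lem: exclusion of crossing lines} rules out the crossing case as a Brakke-flow limit of embedded smooth CSFs, so $\tilde{\mathcal{G}}_\infty=2|L|\times\mathbb{R}$ for some line $L$ through the origin.

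\textbf{Upgrading to $C^2$ closeness.} The principal obstacle is to convert this Brakke-flow convergence to $C^2$-closeness, since density-$2$ points on $L$ are not directly reached by White's regularity. For each $X_0\in\big(B(0,2/\varepsilon_0)\times[-4,-1]\big)\setminus(L\times\mathbb{R})$, Brakke's clearing-out plus White's regularity shows that $\tilde{\mathcal{G}}_i$ is empty near $X_0$ for large $i$. For $X_0\in L\times[-4,-1]$, I would exploit the embeddedness of each $\tilde{\mathcal{G}}_i$ together with the varifold convergence to $2|L|$ to show that, in a small parabolic ball about $X_0$, $\tilde{\mathcal{G}}_i$ decomposes into exactly two disjoint connected sub-flows for large $i$, each with Gaussian density ratio at most $1+O(\delta_i)<1+\varepsilon_W$ at every scale up to a definite size. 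White's regularity then gives $C^{\lfloor 1/\varepsilon_0\rfloor}$-closeness of each sheet to $L$, hence $C^2$-closeness of the union to $2|L|$ on $B(0,2/\varepsilon_0)\times[-4,-1]$, contradicting the standing assumption. The technically delicate point is this separation into two embedded sheets in a neighborhood of $L$, which parallels (and is somewhat easier than) the separation argument used in the proof of Lemma~\ref{lem: exclusion of crossing lines}.
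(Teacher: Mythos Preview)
Your proposal is correct and follows essentially the same route as the paper's proof: contradiction, Ilmanen compactness, rigidity of the weighted monotonicity formula together with Proposition~\ref{prop: low entropy selfsimilar flow} and Lemma~\ref{lem: exclusion of crossing lines} to identify the limit as a multiplicity-two line, and then a decomposition into two embedded sheets on which White's regularity theorem is applied to upgrade Brakke convergence to $C^2$ convergence. The paper's version is more terse (it states the two-sheet decomposition on the whole region $B(0,2/\vare)\times[-4,-1]$ directly from Hausdorff convergence rather than localizing), but the logical skeleton is identical.
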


\begin{proof}
    Let $\vare>0$. Suppose the contrary, then there exist a sequence $R^i>10$ and a proper smooth embedded flow $\cG^i$ which is defined in $P(\mathbf{0}, R_i)$ satisfying
    \begin{align*}
        2 - 1/i \leq \Theta^{R_i}(\cG^i, \mathbf{0}, r_i) \leq \Theta^{R_i}(\cG^i, \mathbf{0}, 2r_i) \leq 2 + 1/i,
    \end{align*}
    for $r_i\in (0, R_i/i)$, but $\tilde{\cG}^{i} := \cG^i_{0, r_i}$ is not $\vare$-close in $C^2$ to any line with multiplicity 2 in $B(0, 2/\vare) \times [-4,-1]$.
    From the scaling property of the localized Gaussian density ratio, we have
    \begin{align}\label{eq: quantitative rigidity 1}
        2 - 1/i \leq \Theta^{R_i/r_i}(\tilde{\cG^i}, \mathbf{0}, 1) \leq \Theta^{R_i/r_i}(\tilde{\cG^i}, \mathbf{0}, 2) \leq 2 + 1/i.
    \end{align}
    By Ilmanen's compacteness theorem for Brakke flows \cite{Ilm1994ERP}, the weighted monotonicity (\ref{eq: weighted monotonicity formula}), (\ref{eq: quantitative rigidity 1}), the fact that $\lim_i R_i/r_i = \infty$, and Proposition~\ref{prop: low entropy selfsimilar flow}, up to a subsequence, $\tilde{\cG^i}$ converges to a line with multiplicity 2 or quasi-static crossing two lines in $B(0, 2/\vare)\times [-4, -1]$. 
    The latter case is excluded by Lemma~\ref{lem: exclusion of crossing lines} together with the embeddedness assumption.
    Convergence to a multiplicity-two line in Hausdorff distance implies that $\tilde{\cG}^{i}$ has two connected components (sub-flows) in $B(0, 2/\vare)\times [-4, -1]$ for $i$ large. 
    Let $\tilde{\cG}^{i,1}$, $\tilde{\cG}^{i,2}$ denote these two sub-flows of $\tilde{\cG}^i$ for all large $i$. 
    Note that for $j = 1,2$, $\Theta(\tilde{\cG}^{i,j}, X) \leq 1 + 1/i$ for all $X \in B(0, 2/\vare)\times [-4, -1]$. 
    By White's regularity theorem \cite{Whi05}, up to passing to a further subsequence, both sub-flows $\tilde{\cG}^{i,1}, \tilde{\cG}^{i,2}$ converge in $C^2$ to the same quasi-static line in $B(0, 2/\vare)\times [-4, -1]$, which contradicts our assumption.
\end{proof}

\

\begin{proposition}\label{prop: curv.est.of.finger}
Let $\cG \subset \cM$ be a sub-flow of a finger with a sharp vertex path $\mathbf{v}$. For any $\varepsilon >0$, there exists $T\ll -1$ with the following significance:  
Suppose that $\bv_0 := \mathbf{v}(t_{0})\in \cG$ lies at the tip of a $\omega_0 := \omega(t_{0})$-grim reaper at scale $\lambda_0 := \lambda_{\mathbf{v}(t_{0})}$ for $t_0 < T$. 
Then $\Ga^\cG_t\cap B(v_0, \sqrt{(t_0 - t)}/\vare)$ contains exactly two components that are $\vare$-close in $C^2$ to a line with multiplicity 2 for $ t\in [t_2 , t_1]$ 
where $t_1 = t_0 + \tfrac14\lambda_0^2 \log \omega_0$, $t_2 = t_0 - \tfrac{\lvert v_0\rvert^2}{4\rho_0}$, and $\rho_0 = \rho(\tau_0)$ is the graphical radius at $t_0$. 
In particular, for $ t\in [ \rho_0  t_0 , t_1]$
\begin{align}
    \sup_{\Gamma_{t}^\cG \cap B(v_0, \sqrt{(t_0 - t)}/\vare)} \ \lvert \kappa \rvert \leq \frac{\vare}{\sqrt{t_0 - t}} \Big( \leq \tfrac{2\vare}{\lambda_0 \sqrt{-\log \omega_0}} \Big).
\end{align}
\end{proposition}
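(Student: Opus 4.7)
The plan is to apply the quantitative rigidity Lemma \ref{lem: quantitative rigidity} to the translated sub-flow $\cG_{X_0}$ with $X_0=\bv_0$, using the parameters $R=|v_0|/2$ and $r=|v_0|/\sqrt{\rho_0}$ of Proposition \ref{prop: unrescaled GD upper bound}. The heart of the argument is to sandwich the localized Gaussian density ratio $\Th^R(\cG_{X_0},\bo,\sigma)$ between $2-\delta(\vare)$ and $2+\delta(\vare)$ for every scale $\sigma$ in the interval $[\sigma_1,r/2]$, where $\sigma_1:=\sqrt{t_0-t_1}=\tfrac12\lambda_0\sqrt{-\log\omega_0}$ and $r/2=\sqrt{t_0-t_2}=|v_0|/(2\sqrt{\rho_0})$; under the substitution $\sigma=\sqrt{t_0-t}$ this scale range parametrizes the time interval $[t_2,t_1]$.

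The upper bound $\Th^R(\cG_{X_0},\bo,\sigma)\leq 2+C\rho_0^{-1}$ follows immediately for $\sigma\leq r$ from Proposition \ref{prop: unrescaled GD upper bound} combined with monotonicity of the localized density ratio in the scale variable. For the matching lower bound, I will exploit the $\omega_0$-grim reaper assumption: $\cG_{X_0,\lambda_0}$ is $\omega_0$-close in $C^{\lfloor 1/\omega_0\rfloor}$ to a unit-speed translating grim reaper $\Gamma$ on $P(\bo,1/\omega_0)$, so after parabolic rescaling $\cG_{X_0}$ is comparably close to $\lambda_0\Gamma$ on $P(\bo,\lambda_0/\omega_0)$. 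Since $\Gamma$ has entropy $2$ and its two wings are asymptotic to parallel vertical rays, a direct integral computation yields $\Theta(\Gamma,\bo,s)\to 2$ as $s\to\infty$, equivalently $\Theta(\lambda_0\Gamma,\bo,\sigma)\to 2$ as $\sigma/\lambda_0\to\infty$. At the smallest scale $\sigma_1$ we have $\sigma_1/\lambda_0=\tfrac12\sqrt{-\log\omega_0}\to\infty$ while $\sigma_1\ll\lambda_0/\omega_0$, so the grim reaper approximation controls the density on $P(\bo,\sigma_1)$. Since $R/\sigma\geq R/r=\sqrt{\rho_0}/2$, the cutoff $\Psi_{\bo,R}$ equals $1-O(\rho_0^{-1})$ on $P(\bo,\sigma)$, so the discrepancy between $\Th^R$ and $\Theta$ is likewise $O(\rho_0^{-1})$. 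Assembling these three estimates yields $\Th^R(\cG_{X_0},\bo,\sigma_1)\geq 2-\eta$ with $\eta=\eta(\omega_0,\rho_0^{-1})\to 0$, and monotonicity in $\sigma$ propagates the lower bound to every $\sigma\in[\sigma_1,r/2]$.

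Choosing $T$ negative enough makes both $\omega_0=\omega(t_0)$ and $\rho_0^{-1}$ smaller than $\delta(\vare)/2$, so Lemma \ref{lem: quantitative rigidity} applies at every $\sigma\in[\sigma_1,r/2]$ and produces $\vare/2$-closeness of $\cG_{X_0,\sigma}$ in $C^2$ to a line with multiplicity $2$ on $B(\bo,4/\vare)\times[-4,-1]$. Reading the rescaled time-$(-1)$ slice and unrescaling (so that original time is $t=t_0-\sigma^2$) gives that $\Gamma^\cG_t\cap B(v_0,\sqrt{t_0-t}/\vare)$ is $\vare$-close in $C^2$ to a multiplicity-two line; smoothness and embeddedness of $\cG$ force exactly two connected components, since any third sheet would have to cross one of the two near-linear graphs, violating embeddedness (as in the exclusion used in Lemma \ref{lem: exclusion of crossing lines}). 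The pointwise curvature bound $|\kappa|\leq\vare/\sqrt{t_0-t}$ is the unrescaled version of $C^2$-closeness to a zero-curvature line, and the secondary bound $|\kappa|\leq 2\vare/(\lambda_0\sqrt{-\log\omega_0})$ in the parenthetical clause follows from $\sqrt{t_0-t}\geq\sqrt{t_0-t_1}=\tfrac12\lambda_0\sqrt{-\log\omega_0}$ whenever $t\leq t_1$.

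The main technical obstacle will be the density lower bound in the second paragraph, which demands simultaneous control of two independent error sources---the $\omega_0$-error from replacing $\cG_{X_0}$ by $\lambda_0\Gamma$ and the $\rho_0^{-1}$-error from the localization cutoff---together with a careful verification that the $C^{\lfloor 1/\omega_0\rfloor}$-closeness in $P(\bo,\lambda_0/\omega_0)$ translates into a Gaussian density estimate at scale $\sigma_1$ via an elementary comparison of weighted length integrals. It is also necessary to invoke the improved sharp-vertex lower bound of Proposition \ref{prop: impro.vert.l.b} to ensure $\lambda_0=o(\sqrt{-t_0})$, which in turn guarantees the scale separation $\sigma_1\ll r/2$ that makes the sandwich argument valid across the entire interval $[t_2,t_1]$.
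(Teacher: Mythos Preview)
Your proposal is correct and follows essentially the same approach as the paper: the same choice of $X_0=\bv_0$, $R=|v_0|/2$, $r=|v_0|/\sqrt{\rho_0}$, the same sandwich $2-\delta\le\Th^R(\cG_{X_0},\bo,\sigma)\le 2+\delta$ on $[\sigma_1,r]$ via Proposition~\ref{prop: unrescaled GD upper bound} above and the grim-reaper structure below, the same invocation of Proposition~\ref{prop: impro.vert.l.b} to guarantee $\sigma_1\ll r$, and the same appeal to Lemma~\ref{lem: quantitative rigidity} across the scale interval.  The only cosmetic differences are that the paper obtains the lower bound by computing the localized density directly with Lemma~\ref{lem: est of loc.G.D.R} on the rescaled time-$t_1$ slice (rather than by comparison with the ideal grim reaper density), and it phrases the application of Lemma~\ref{lem: quantitative rigidity} as a dyadic iteration $\sigma,2\sigma,\dots,2^N\sigma$ rather than a continuous range; neither changes the substance.
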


\begin{proof}
Now we choose $\sigma = \tfrac12\lambda_0 \sqrt{-\log \omega_0}$ so that $t_1 = t_0 - \sigma^2$ and take $r = \lvert v_0 \rvert /\sqrt{\rho_0}$, $R = \lvert v_0 \rvert /2$ as (\ref{eq: setting radii}). We claim that 
\begin{align}\label{eq: curv.est.of.finger 1}
    2 - \delta < \Theta^R(\cG, \bv_0, \sigma) < \Theta^R(\cG, \bv_0, r) < 2 + \delta
\end{align}
whenever $t_0 \ll -1$, where $\delta = \delta(\vare)$ is determined in Lemma~\ref{lem: quantitative rigidity}.
At $t_1 = t_0 + \tfrac14 \lambda_0^2 \log \omega_0$, by the $\omega_0$-grim-reaper structure around $\bv_0$, $\lvert v(t_1) - v_0 \rvert = - \tfrac14 \lambda_0 \log \omega_0 + O(\omega_0 \lambda_0)$. 
From estimate $(\ref{eq: curv decay of GR})$, $\Ga_{t_1} \cap B(v_0, - \tfrac18 \lambda_0 \log \omega_0)$ has curvature $\lvert \kappa \rvert \leq 2\lambda_0^{-1} \sqrt[8]{\omega_0}$. 
After rescaling by $1/\sigma$, $\tfrac{1}{\sigma}\Ga_{t_1} \cap B(v_0, \tfrac14 \sqrt{-\log \omega_0})$ has curvature
\begin{align}\label{eq: curv.est.of.finger 2}
    \lvert \kappa \rvert \leq \sqrt[8]{\omega_0} \sqrt{-\log \omega_0} 
    = - \tfrac14 \sqrt[8]{\omega_0} \log \omega_0 \cdot \big(\tfrac14 \sqrt{-\log \omega_0}\big)^{-1}
\end{align}
and has
\begin{align}\label{eq: curv.est.of.finger 3}
    \min \ \lvert x \rvert \leq \tfrac{8}{\sqrt{-\log \omega_0}}.
\end{align}
By Proposition~\ref{prop: impro.vert.l.b}, $\lambda_0 \leq 2\sqrt{-c_0 \omega_0 t_0}$. Thus, if $0 < \omega_0 \ll 1$ and $\rho_0 \gg 1$ by taking $t_0 \ll -1$, then $\sigma \leq \sqrt{c_0 \omega_0 \log \omega_0 t_0} \ll 2\sqrt{-\rho_0 t_0} < r$.
Furthermore,
\begin{align}\label{eq: curv.est.of.finger 4}
    \sigma/R < r/R < 2/\sqrt{\rho_0} < \delta,
\end{align}
one hypothesis in Lemma~\ref{lem: quantitative rigidity}, holds true if $\rho_0 \gg 1$.
Using Lemma~\ref{lem: est of loc.G.D.R} with (\ref{eq: curv.est.of.finger 2}), (\ref{eq: curv.est.of.finger 3}), (\ref{eq: curv.est.of.finger 4}), we can compute
\begin{align*}
    \Theta^R(\cG, \bv_0, \sigma) &\geq \int_{\tfrac{1}{\sigma}(\Ga_{t_1} - v_0) \cap B(v_0, \tfrac14 \sqrt{-\log \omega_0})} \Big(1 - \tfrac{\sigma^2}{R^2}(\lvert x \rvert^2 -2)\Big)_+^3 \tfrac{1}{\sqrt{4\pi}}e^{-\frac{\lvert x \rvert^2}{4}} \ d\cH^1(x)\\
    &\geq 2 - \tfrac{C}{\lvert \log \omega_0 \rvert} - \tfrac{C}{\rho_0} - C \sqrt[8]{\omega_0} \lvert \log \omega_0 \rvert - Ce^{-\tfrac{\sqrt{-\log \omega_0}}{16}}\\
    &> 2 -\delta.
\end{align*}
provided $0 <\omega_0 \ll 1$ and $\rho_0 \gg 1$ by taking $t_0 \ll -1$.
On the other hand, by Proposition~\ref{prop: unrescaled GD upper bound}, the upper bound $\Th^R(\cG, X, r) \leq 2 + \delta$ holds if $\rho_0$ sufficiently large.

Choosing a positive integer $N$ such that $2^{N}\sigma \leq 2^{-1}r < 2^{N+1} \sigma \leq r$. 
Then applying Lemma~\ref{lem: quantitative rigidity} recursively on the dyadic scales $\sigma < 2\sigma < 2^2 \sigma < \ldots < 2^N\sigma$ shows that $\Ga_t \cap B(v_0, \sqrt{t_0 - t}/ \vare)$ is $\vare$-close to a line with multiplicity 2 for all $t\in [t_0 - 4^N\sigma^2, t_0 - \sigma^2]$. The curvature estimate is an immediate consequence of this approximation.
\end{proof}

\

\begin{proposition}\label{prop: unique vertex}
    Let $\cG = \bigcup_{t < t_0} \Ga^{\mathcal{G}}_t \times\{t\} \subset \cM$ be a sub-flow of a finger. Given $\omega_0>0$, 
    there exists $T = T(\omega_0) \ll -1$ such that for any $t\leq T$, $\Ga_t^{\cG}$ has at most one sharp vertex that belongs to a vertex path along which $\mathcal{G}$ is $\omega_0$-grim reaper-like and that has curvature $\lvert \kappa \rvert \geq (-4t)^{-\frac{1}{2}}$.
\end{proposition}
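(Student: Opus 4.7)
The plan is to argue by contradiction. Suppose that at some $t_0\leq T$ there exist two distinct sharp vertices $\bv_1(t_0), \bv_2(t_0)\in\Gamma^{\cG}_{t_0}$, both belonging to $\omega_0$-grim-reaper-like vertex paths and with $|\kappa(\bv_i(t_0))|\geq(-4t_0)^{-1/2}$. Set $\lambda_i:=|\kappa(\bv_i(t_0))|^{-1}\leq 2\sqrt{-t_0}$ and, after relabeling, assume $\lambda_1\leq\lambda_2$. First, applying Lemma~\ref{lem: unique vertex} at $\bv_2$ with its grim-reaper scale $\lambda_2$ immediately yields the separation lower bound
\[
d:=|v_1(t_0)-v_2(t_0)| > -\tfrac{1}{4}\lambda_2\log\omega_0,
\]
since $\bv_2(t_0)$ is the unique sharp vertex of $\Gamma^{\cG}_{t_0}$ inside $B(v_2(t_0),-\tfrac{1}{4}\lambda_2\log\omega_0)$.

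Next, fix $\varepsilon=\varepsilon(\omega_0)>0$ with $\varepsilon\sqrt{|\log\omega_0|}\ll 1$ and apply Proposition~\ref{prop: curv.est.of.finger} at $\bv_2$. At the intermediate time $\tilde{t}:=t_0+\tfrac{1}{4}\lambda_2^{2}\log\omega_0$ the set $\Gamma^{\cG}_{\tilde{t}}\cap B(v_2(t_0),\sigma_2/\varepsilon)$ is $\varepsilon$-close in $C^2$ to a multiplicity-two line with curvature bounded by $\varepsilon/\sigma_2$, where $\sigma_2:=\tfrac{1}{2}\lambda_2\sqrt{|\log\omega_0|}$. On the other hand, because $\bv_1$ lies on an $\omega_0$-grim-reaper-like path and $\lambda_1$ changes only slightly over the short interval $[\tilde{t},t_0]$ of length $\sigma_2^{2}$, we retain the lower bound
\[
|\kappa(\bv_1(\tilde{t}))| \geq \frac{1-2\omega_0}{\lambda_1(\tilde{t})} \geq \frac{1-2\omega_0}{\lambda_2}.
\]
If one can verify the inclusion $v_1(\tilde{t})\in B(v_2(t_0),\sigma_2/\varepsilon)$, then combining the two curvature bounds forces $\sqrt{|\log\omega_0|}\lesssim\varepsilon$, contradicting the choice of $\varepsilon$.

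The main obstacle is thus establishing the inclusion $v_1(\tilde{t})\in B(v_2(t_0),\sigma_2/\varepsilon)$, which reduces to upper bounds on the separation $d$ and on the displacement $|v_1(\tilde{t})-v_1(t_0)|$. The latter is manageable: by the $\omega_0$-grim-reaper translation speed at $\bv_1$, the displacement is of order $\sigma_2^{2}/\lambda_1\leq\lambda_2^{2}|\log\omega_0|/(4\lambda_1)$. Bounding $d$ from above is the delicate point, since a priori the finger sub-flow may extend to arbitrarily large spatial scales. The plan to close the argument is to combine the localized Gaussian density bound of Proposition~\ref{prop: unrescaled GD upper bound}, applied with base point taken at (or near) the midpoint of $\bv_1$ and $\bv_2$, with a direct computation of the density contributions from the two nearly grim-reaper regions of scale $\lambda_i\leq 2\sqrt{-t_0}$ separated by $d$. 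If $d$ were much larger than $\lambda_2\sqrt{|\log\omega_0|}/\varepsilon$, the two localized contributions together would strictly exceed the $2+O(\rho_0^{-1})$ ceiling of Proposition~\ref{prop: unrescaled GD upper bound} at an appropriate scale; hence $d$ must satisfy the required upper bound. Feeding this and the displacement estimate into the inclusion yields the desired contradiction and completes the proof.
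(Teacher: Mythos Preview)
Your approach is genuinely different from the paper's, and the final step has a real gap. The claimed density argument for an upper bound on $d$ is only a ``plan'' and, as stated, does not go through: the localized Gaussian density of a \emph{single} grim reaper already saturates the ceiling $2$, while the contribution of a second grim-reaper region at distance $d$ to the density centered near the first (or at the midpoint) at any scale $\sigma\leq r$ is suppressed by the Gaussian weight $e^{-d^2/4\sigma^2}$; it is not at all clear that two grim-reaper tips on one finger force $\Theta^R>2+\eta$ at any admissible base point and scale. In addition, even granting a bound on $d$, your displacement estimate $|v_1(\tilde t)-v_1(t_0)|\sim\sigma_2^{2}/\lambda_1$ can be $\gg\sigma_2/\varepsilon$ when $\lambda_1\ll\lambda_2$, so the inclusion $v_1(\tilde t)\in B(v_2(t_0),\sigma_2/\varepsilon)$ is not secured. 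Finally, the inequality $\lambda_1(\tilde t)\leq\lambda_2(t_0)$ that underlies your curvature lower bound $(1-2\omega_0)/\lambda_2$ is asserted but not justified.

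The paper avoids all of these quantitative issues by a continuity argument with \emph{knuckle paths}. One applies Proposition~\ref{prop: curv.est.of.finger} centered at $\bv_1(t_0)$, producing at time $t_1$ two knuckles $p,q$ with respect to $v_1(t_0)$; by Lemma~\ref{lem: unique vertex}, $v_2(t_1)$ lies outside the arc $[p,q]$, say between $p$ and one end of the finger. One then tracks the ancient knuckle path $\bp(t)$: at the further past time $t_2=t_0-\lvert v_1(t_0)\rvert^2/(4\rho_0)$ the point $p(t_2)$ has entered the graphical region, whereas $v_2(t)$, having $|\kappa|\geq(-4t)^{-1/2}$, never does. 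Hence the ordering of $\bp$ and $\bv_2$ along the curve must switch, so $\bp(t')=\bv_2(t')$ for some $t'\in(t_2,t_1)$. At that moment Proposition~\ref{prop: curv.est.of.finger} gives the $\varepsilon$-two-lines structure around $\bp(t')$, which is incompatible with the $\omega_0$-grim-reaper structure around $\bv_2(t')$. This argument requires no a priori upper bound on $d$ and no control on the ratio $\lambda_1/\lambda_2$.
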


\begin{proof}
    Let $0< \vare \ll 1$. We take $T$ as stated in Proposition~\ref{prop: curv.est.of.finger}.
    Suppose that there is some $t_0 \leq T$ and vertices $\mathbf{v}_1(t_0), \mathbf{v}_2(t_0)$ of $\Gamma^{\mathcal{G}}_{t_0}$ that belong to vertex paths $\mathbf{v}_1, \mathbf{v}_2$ along which $\mathcal{G}$ is $\omega_0$-grim reaper-like and has curvature $\lvert \kappa_1 \rvert, \lvert \kappa_2 \rvert \geq (-4t)^{-\frac{1}{2}}$.
    Let $\lambda_0$ denote the regularity scales of $\cG$ at $\mathbf{v}_1(t_0)$, which is less than $2\sqrt{-t_0}$ from curvature assumption. 
    By Proposition~\ref{prop: curv.est.of.finger}, at $t_1 := t_0 + \tfrac14\lambda_0^2 \log \omega_0$, $\Ga^{\mathcal{G}}_{t_1}$ has two knuckles $p, q \in B(\mathbf{v}_1(t_{0}), \sqrt{2(t_0 - t_1)})$ with respect to $v_1(t_{0})$. 
    From $\omega_0$-grim-reaper structure around $\bv_1(t_0)$,
    \begin{align}\label{eq: unique vertex 1}
        \lvert p \rvert, \lvert q \rvert \leq 2\pi \lambda_0 < 4\pi \sqrt{-t_0}.
    \end{align}
    It follows from Lemma \ref{lem: unique vertex} that $v_2(t_1)$ is not in the curve segment of $\Ga_{t_1}$ bounded by $p$ and $q$. 
    We may assume without loss of generality that $\mathbf{v}_2(t_1)$ lies on the curve segment on $\Ga^{\mathcal{G}}_{t_1}$ bounded by $p$ and one end of $\Ga^{\mathcal{G}}_{t_1}$ in the graphical region. 
    Let $\bp(t), \bq(t)$ denote the ancient paths of knuckles $p, q$ with respect to $\mathbf{v}_1(t_0)$ for all $t\leq t_1$.

    Since $\lvert \kappa(\bv_2(t)) \rvert \geq (-4t)^{-\frac{1}{2}}$ for all $t < t_1$ from assumption, $v_2(t)$ needs to stay away from the graphical region $B(0, 2\sqrt{-t}\rho(\tau))$ for all $t < t_1$.
    Whereas from the proof of Proposition~\ref{prop: unrescaled GD upper bound} with $x_0 = v_1(t_0)$, at $t_2 := t_0 - \lvert v_1(t_0) \rvert^2 /(4\rho_0)$, $v_1(t_0)$ lies in $B(0, \sqrt{-t_2}\rho(\tau_2))$, and so does $p(t_2)$ owing to (\ref{eq: unique vertex 1}).
    It follows that $\bv_2(t_2)$ lies in the curved segment of $\Gamma_{t_2}$ bounded by $p(t_2)$ and $q(t_2)$. 
    By the continuity, $\mathbf{p}(t) = \mathbf{v}_2(t)$ for some $t' \in (t_2, t_1)$. 
    However, the $\omega_0$-grim-reaper structure around $\bv_2(t')$ contradicts the $\vare$-two-lines structure around $\bp(t')$ as claimed by Proposition~\ref{prop: curv.est.of.finger}.
\end{proof}

\ 

\begin{proposition}\label{prop: no vertex on tail}
Let $0< \vare < 1$. There exists $T\ll -1$ such that for any $t\leq T$, any tail of $M_t$ does not contain any sharp vertex that belongs to an $\varepsilon$-grim-reaper vertex path. 
\end{proposition}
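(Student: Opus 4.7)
The plan is to derive a contradiction by comparing two bounds on the rescaled vertex curvature $\bar{\chi}(\tau) := \bar{\kappa}(\bar{\bv}(\tau))$ at the rescaled time $\tau_0 = -\log(-t_0)$: a \emph{lower} bound produced by the $\varepsilon$-grim-reaper structure along the ancient vertex path, and an \emph{upper} bound produced by the curvature decay on tails (Theorem~\ref{thm: curv decay of tail}).

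The first ingredient will be Proposition~\ref{prop: impro.vert.l.b}. Its proof is a pure ODE analysis of $\bar{\chi}$ that relies only on the \emph{local} grim-reaper approximation around the vertex to control the error term $\bar{\eta}^{(2)}$; in particular, it goes through for any ancient sharp vertex path that is $\omega$-grim-reaper-like, regardless of whether the underlying points happen to lie on fingers or on tails. Since the path $\mathbf{v}$ in the hypothesis is $\varepsilon$-grim-reaper-like, and hence $\omega_0$-grim-reaper-like for any $\omega_0 > \varepsilon$, I will apply Proposition~\ref{prop: impro.vert.l.b} with, say, $\omega_0 = 2\varepsilon$ to produce $T_1 = T_1(\varepsilon) \ll -1$ with
\begin{equation*}
|\bar{\chi}(\tau)| \;\geq\; \frac{1}{2\sqrt{2c_0\,\varepsilon}} \qquad \text{for all } \tau \leq T_1.
\end{equation*}

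Next, suppose for contradiction that for some $t_0 \leq T$ (with $T$ TBD) the sharp vertex $\bv(t_0)$ lies on a tail $\Gamma$ of $M_{t_0}$. Using the ancient path of the end-knuckle of $\Gamma$ (Proposition~\ref{prop: ancient local-min-path/local-max-path}(a)) together with the stabilization of the knuckle count past the saturation time (Corollary~\ref{cor: saturation time}), $\Gamma$ is the time-$t_0$ slice of an ancient sub-flow $\cG \subset \cM$ of a tail. Applying Theorem~\ref{thm: curv decay of tail} at $X = \bv(t_0) \in \cG$ then gives, for $t_0 \leq T_2(\cM)$,
\begin{equation*}
|\bar{\chi}(\tau_0)| \;=\; \sqrt{-t_0}\,|\kappa(\bv(t_0))| \;\leq\; \frac{C}{\sqrt{\rho(\tau_0)}}.
\end{equation*}
Combining the two bounds forces $\rho(\tau_0) \leq 8 c_0 \varepsilon C^2$. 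Since $\rho(\tau) \to \infty$ as $\tau \to -\infty$, choosing $T \leq \min(T_1, T_2)$ small enough that $\rho(\tau) > 8 c_0 \varepsilon C^2$ for all $\tau \leq T$ yields the desired contradiction.

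The only (minor) obstacle I foresee is justifying the ancient tail-sub-flow structure used above --- that a tail $\Gamma$ present at a single time $t_0 \ll -1$ is genuinely the time-slice of an \emph{ancient} sub-flow to which Theorem~\ref{thm: curv decay of tail} is applicable. This should follow cleanly from the ancient-knuckle-path construction of Proposition~\ref{prop: ancient local-min-path/local-max-path}(a) together with the rough convergence theorem (Theorem~\ref{thm: rough convergence}), which together control both the count and rough locations of knuckles for $t \ll -1$.
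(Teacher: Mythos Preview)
Your proof is correct and follows essentially the same approach as the paper: contrast a lower bound on the rescaled vertex curvature $|\bar\chi|$ along the ancient sharp-vertex path with the tail curvature decay $|\bar\chi|\le C/\sqrt{\rho(\tau)}\to 0$ from Theorem~\ref{thm: curv decay of tail}. The only difference is that the paper invokes the more elementary Lemma~\ref{lem: curvature lower bound of vertex} (already giving $\liminf_{\tau\to-\infty}|\bar\chi(\tau)|\ge 1/\sqrt{2}$) rather than Proposition~\ref{prop: impro.vert.l.b}; since any positive lower bound on $|\bar\chi|$ suffices for the contradiction, your use of the improved bound is harmless overkill, while your explicit discussion of why the tail at time $t_0$ is the slice of an ancient tail sub-flow is a point the paper leaves implicit.
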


\begin{proof}
It follows directly from (\ref{eq: curv decay of tail 2}) in Theorem~\ref{thm: curv decay of tail}  and Lemma~\ref{lem: curvature lower bound of vertex}.
\end{proof}

\bigskip

\begin{theorem}[uniqueness of a vertex of a finger]\label{thm: uniqueness of vertex}
    There exists $T = T(\cM) \ll -1$ such that for $t\leq T$, each finger of $M_t$ has a unique sharp vertex, and the curvatures of the sharp vertex and the tip have the same sign. Furthermore, the angle difference\footnote{The angle difference is defined as the integral of curvature by the arc-length measure along the given finger.} between two adjacent knuckles  is $\pm\pi + o(1)$ as $t \to -\infty$ where the sign is the same as the sign of the curvature at the tip.
\end{theorem}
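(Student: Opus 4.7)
The proof would proceed in three steps: existence and uniqueness of a sharp vertex on each finger, matching the signs of curvature at the vertex and the tip, and finally computing the total turning.

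\emph{Existence and uniqueness of a sharp vertex.} Since each finger $\Ga \subset M_t$ is compact with endpoints $p_1, p_2$ that are local minima of $|\ga - x_0|^2$, the function $|\ga - x_0|^2$ must attain a local maximum at a tip $q$ in the interior of $\Ga$, where Lemma~\ref{lem: center of distance} gives $|\ka(q)| \geq |\ga(q)-x_0|^{-1} > 0$. By Corollary~\ref{cor: location of knuckles} the knuckles lie in a sheet of the graphical region with $|\ka(p_i)| \leq C(2\rho)^{-2}/\sqrt{-t}$, which is much smaller than $|\ka(q)|$ for $-t$ large. Hence the maximum of $|\ka|$ over the compact $\Ga$ is attained in the interior and produces at least one sharp vertex. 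For uniqueness, I would extend any sharp vertex $v_0$ at time $t_0$ backward to an ancient sharp-vertex path $\bv(t)$ (using the proposition on ancient sharp-vertex paths in Section 2.6). Lemma~\ref{lem: curvature lower bound of vertex} gives $|\ka(\bv(t))| \geq (-2t)^{-1/2}$, which exceeds the graphical curvature scale, so $\bv$ cannot cross any knuckle path and therefore remains inside the finger subflow $\cG$. Theorem~\ref{thm: vertex path epsilon grim reaper} then ensures $\cG$ is $\omega(t)$-grim-reaper-like along $\bv$ with $\omega(t) \to 0$. Fixing a small $\omega_0$ and choosing $T$ so that $\omega(T) < \omega_0$ and $(-2t)^{-1/2} > (-4t)^{-1/2}$, Proposition~\ref{prop: unique vertex} gives at most one such vertex per finger.

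\emph{Sign of curvature at the tip.} By Theorem~\ref{thm: vertex path epsilon grim reaper}, on $B(v_0, \la_0/\omega_0)$ with $\la_0 := |\ka(v_0)|^{-1}$ the flow is $\omega_0$-close in $C^{[1/\omega_0]}$ to a translating grim reaper with tip $v_0$, on which $\ka$ has constant sign $\mathrm{sgn}(\ka(v_0))$; $\omega_0$-closeness propagates this sign to the actual flow in that neighborhood. On an exact grim reaper the unique local maximum of $|\ga - x_0|^2$ coincides with the grim reaper tip, so the $\omega_0$-perturbation forces the finger tip $q_0$ to lie within $O(\omega_0 \la_0)$ of $v_0$, inside the fixed-sign region. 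Therefore $\mathrm{sgn}(\ka(q_0)) = \mathrm{sgn}(\ka(v_0))$.

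\emph{Angle difference.} Set $\Delta\theta := \int_{p_1}^{p_2}\ka\,ds$, which equals the total turning of the tangent. Decompose $\Ga = \Si_{\mathrm{core}} \cup \Si_{\mathrm{out}}$, where $\Si_{\mathrm{core}} := \Ga \cap B(v_0, \la_0\sqrt{|\log\omega_0|})$. The grim reaper approximation of Theorem~\ref{thm: vertex path epsilon grim reaper} gives
\begin{align*}
    \int_{\Si_{\mathrm{core}}} \ka\,ds = \mathrm{sgn}(\ka(v_0))\,(\pi - o(1))
\end{align*}
as $t_0 \to -\infty$, since the translating grim reaper has total turning $\pi$. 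On $\Si_{\mathrm{out}}$ the graphical portion (inside $B(0, \sqrt{-t_0}\,\rho)$) has $|\ka| \leq C(2\rho)^{-2}/\sqrt{-t_0}$ on arclength $O(\sqrt{-t_0}\,\rho)$, contributing $O(1/\rho) = o(1)$; on the intermediate transition between graphical region and grim-reaper core, Proposition~\ref{prop: curv.est.of.finger} supplies a multiplicity-two-line approximation at slightly earlier times $t \in [t_2, t_1]$, and parabolic regularity carries this approximation up to time $t_0$, yielding an $o(1)$ contribution. Summing gives $\Delta\theta = \pm\pi + o(1)$ with sign equal to $\mathrm{sgn}(\ka(v_0)) = \mathrm{sgn}(\ka(q_0))$. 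The main obstacle will be the intermediate-transition estimate: its Euclidean extent grows like $|v_0| \to \infty$, so a naive $L^1$-bound on $|\ka|$ is too weak; one must carefully propagate the two-line approximation of Proposition~\ref{prop: curv.est.of.finger} forward in time, or else directly track the tangent-angle endpoints using the grim reaper and graphical normalizations.
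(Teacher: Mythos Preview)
Your existence and uniqueness arguments are essentially the paper's, and are fine.

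\medskip

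\textbf{Sign of curvature.} Your direct argument has a gap: you assert that the finger tip $q_0$ lies within $O(\omega_0\lambda_0)$ of the sharp vertex $v_0$, but this presupposes $q_0\in B(v_0,\lambda_0/\omega_0)$, which you have not established. The tip is the local maximum of $|\ga|^2$ along the \emph{whole} finger, and at time $t_0$ you have no control on the finger in the intermediate region between the graphical ball and the grim-reaper window; a priori $q_0$ could sit there. The paper avoids this by a contradiction argument that uses only what has already been proved: if $\mathrm{sgn}\,\ka(q_0)\neq\mathrm{sgn}\,\ka(v_0)$, then by Proposition~\ref{prop: ancient local-min-path/local-max-path}(b) and Lemma~\ref{lem: curvature lower bound of vertex} the signs stay opposite along the ancient paths. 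Uniqueness of the sharp vertex then forces $|\ka|$ at the tip to be bounded by the curvature at the knuckle (there is no sharp vertex of the tip's sign), which is of graphical size; but then, as in the existence paragraph, the rescaled tip would be trapped in $B_1(0)$ as $\tau\to-\infty$, contradicting rough convergence.

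\medskip

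\textbf{Angle difference.} The obstacle you flag is real, and your proposed fixes (propagating Proposition~\ref{prop: curv.est.of.finger} forward by parabolic regularity, or a naive $L^1$ bound on $|\ka|$ over the transition region at time $t_0$) do not work: the curvature estimate of Proposition~\ref{prop: curv.est.of.finger} degenerates as $t\uparrow t_0$, and the transition region has unbounded length. The paper's idea is to \emph{not} compute the angle at time $t_0$ directly. First, since both knuckles $p_1,p_2$ lie in the graphical region, the tangent at each is within $O(\rho^{-2})$ of horizontal, so $\int_{p_1}^{p_2}\ka\,ds=m\pi+O(\rho^{-2})$ for some \emph{integer} $m$ that is constant in $t$. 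Second, to identify $m$, introduce new knuckles $r_1,r_2$ with respect to the point $x_0=v(t_0)$. At time $t_1=t_0+\tfrac14\lambda_0^2\log\omega_0$ these sit inside the $\omega_0$-grim-reaper region, where $\int_{r_1}^{r_2}\ka\,ds=\pi+O(\vare)$. Proposition~\ref{prop: curv.est.of.finger} then holds on the whole interval $[t_2,t_1]$ (this is exactly its range of validity), so the two-line structure keeps $\int_{r_1(t)}^{r_2(t)}\ka\,ds=\pi+O(\vare)$ down to $t_2$. At $t_2$ the point $v(t_0)$, and hence $r_1(t_2),r_2(t_2)$, lie in the graphical region, so the arcs from $p_j(t_2)$ to $r_j(t_2)$ contribute only $O(\rho^{-1})$ to the turning. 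Thus $m\pi=\pi+O(\vare)+O(\rho^{-1})$, giving $m=1$. The quantization-plus-backward-in-time trick is the missing ingredient in your plan.
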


\begin{proof}
    First, we show that each sub-flow of a finger has at least one ancient path of sharp vertices. 
    Let $T$ be the time so that Corollary~\ref{cor: vertex is far} and Proposition~\ref{prop: unique vertex} hold and so that $\rho(\tau) > 10$ for all $t\leq T$. 
    Suppose that a sub-flow of a finger $\cG = \bigcup_{t\leq T} \Ga^{\mathcal{G}}_t\times \{t\}$ did not contain any sharp vertex path. 
    This means that the maximum of $\lvert \kappa \rvert$ on $\Ga_t$ occurs at the knuckles for all $t \leq T$. 
    Thus, the curvature $\lvert \bar{\kappa} \rvert$ of the rescaled flow $\bar{\Ga}^{\mathcal{G}}_\tau$ is uniformly bounded by $\tfrac{1}{100}$ for all $\tau \leq -\log(-T)$. 
    As $\tau \to -\infty$, the rescaled flow $\bar{\Ga}^{\mathcal{G}}_\tau$ merges into $B_1(0)$, a contradiction to rough convergence theorem~\ref{thm: rough convergence}.

    Let $n$ be the number of fingers of $M_t$ as $t \leq t_*$ in Corollary \ref{cor: saturation time}.
    By Corollary~\ref{cor: vertex is far} and Proposition \ref{prop: no vertex on tail}, we know that any sharp vertex path is far away from the graphical region and the tails for $t\leq T$ for some $T\ll -1$. 
    Thus, any sharp vertex path must be trapped in a sub-flow of a finger for $t\leq T$.  
    It follows from the first paragraph and Proposition \ref{prop: unique vertex} that $\cM$ has one and exactly one ancient path of sharp vertices for all $t\leq T$.

    Next, we show that the curvatures at the tip and the sharp vertex of any finger have the same sign for $t\leq T$. 
    Suppose the opposite; by Proposition~\ref{prop: ancient local-min-path/local-max-path} (b) and Lemma~\ref{lem: curvature lower bound of vertex}, the curvatures of the tip and the sharp vertex of the finger remain having opposite signs for all $t < T$. 
    By the uniqueness of the sharp vertex, i.e., maximum of $\lvert\kappa\rvert$, on the finger, the curvature of the tip is bounded between 0 and the curvature of the knuckle, which results in a situation similar to that of the first paragraph.
    As $t\to-\infty$, the tip merges into the graphical region, a contradiction.

    By the rough convergence theorem~\ref{thm: rough convergence}, the angle difference between two adjacent knuckles $\mathbf{p}_1(t), \mathbf{p}_2(t)$ with respect to the origin is $m \pi + O(\rho^{-2})$ for some fixed integer $m$ as $t\ll -1$. 
    Assume that $\cG$ is the sub-flow of fingers bounded by these paths of knuckles, assume that the unique sharp vertex $\bv$ has \emph{positive} curvature with respect to proper orientation such that $\mathbf{p}_1(t) < \mathbf{p}_2(t)$ in the arc-lengh parametrization. 
    
    We claim that $m = 1$.
    Let $0< \vare \ll 1$. By Proposition~\ref{prop: curv.est.of.finger}, if we choose $t_0\ll -1$ and $x_0 = v(t_0)$, then at $ t_1 := t_0 + \tfrac14\lambda_0^2 \log \omega_0$, $\Ga^\cG_{t_1}\cap B(x_0, \sqrt{(t_0 - t_1)}/\vare)$ has two $\vare$-linear components, where $\lambda_0$ is the regularity scale at $\mathbf{v}(t_{0})$ and $\omega_0:= \omega(t_{0})$.
    Thus, there exist exactly two knuckles $r_1, r_2$ of $\Gamma^{\mathcal{G}}_{t_1}$ with respect to $x_0$ with $r_1 < r_2$ in the arc-lengh parametrization.
    Again, their ancient paths are denoted by $\mathbf{r}_1(t), \mathbf{r}_2(t)$.
    Since $r_1 = r(t_1), r_2 = r(t_1)$ are in the $\omega_0$-grim-reaper region around $\bv(t_0)$, we know that 
    \begin{align}\label{eq: angle difference}
        \int_{r_1(t_1)}^{r_2(t_1)} \kappa \ ds = \pi + O(\vare).
    \end{align}
    By Proposition~\ref{prop: curv.est.of.finger}, (\ref{eq: angle difference}) is valid for all $t\in [t_2, t_1]$.
    Note that $r_1(t_2), r_2(t_2)$ are in the graphical region, so 
    \begin{align*}
        \left\lvert \int_{p_1(t_2)}^{p_2(t_2)} \kappa \ ds - \int_{r_1(t_2)}^{r_2(t_2)} \kappa \ ds \right\rvert = O(\rho^{-1}).
    \end{align*}
    This concludes the claim.
\end{proof}

\ 
Recall \emph{finger region} in Definition~\ref{def: finger region}.
\begin{corollary}\label{cor: finger.area}
    The area of the finger region bounded by any finger of rescaled flow $\bar{M}_\tau$ is $\pi + o(1)$ as $\tau\to -\infty$.
\end{corollary}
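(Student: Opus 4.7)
The plan is to derive a first-order linear ODE for the rescaled finger area $\bar A(\tau) := \mathrm{Area}(\bar\Omega_\Gamma(\tau))$ of the form $\bar A'(\tau) = \bar A(\tau) - \pi + o(1)$ and then integrate it explicitly using the fact that the inhomogeneous term is small.

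First, I would note that the two knuckles $\bar p_1(\tau), \bar p_2(\tau)$ bounding the finger both tend to the origin as $\tau \to -\infty$ by Lemma~\ref{lem: lim of minimum}, so the radius $\bar r(\tau) := \max\{|\bar p_1(\tau)|, |\bar p_2(\tau)|\}$ of the arc $\bar\alpha \subset \partial B_{\bar r}(0)$ in $\partial \bar\Omega_\Gamma = \bar\Gamma \cup \bar\alpha$ decreases monotonically to $0$ (Proposition~\ref{prop: ancient local-min-path/local-max-path}(c)). Applying the divergence identity $2\bar A = \int_{\partial \bar\Omega_\Gamma}\bar\gamma \cdot \bn_{\mathrm{out}}\,d\bar s$ and extracting the arc contribution of order $\bar r^2\Delta\phi$ would give
\begin{equation*}
\int_{\bar\Gamma} \bar\gamma \cdot \bn \, d\bar s = -2 \bar A + o(1),
\end{equation*}
with $\bn = J\bar\gamma_{\bar s}$ denoting the inward normal.

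Next, I would compute $d\bar A/d\tau$ via Reynolds transport. Along $\bar\Gamma$, the rescaled flow equation $(\partial_\tau\bar\gamma)^\perp = (\bar\kappa + \tfrac{1}{2} \bar\gamma \cdot \bn) \bn$ yields outward normal velocity $-(\bar\kappa + \tfrac{1}{2} \bar\gamma \cdot \bn)$; combining the resulting integral with Theorem~\ref{thm: uniqueness of vertex} (the total angle change $\int_{\bar\Gamma} \bar\kappa \,d\bar s = \pi + o(1)$, with the orientation making $\bar\Omega_\Gamma$ positively enclosed) and with the divergence identity above produces the contribution $\bar A - \pi + o(1)$. Along $\bar\alpha$, the radial motion at rate $d\bar r/d\tau$ contributes $-\tfrac{1}{2} (d\bar r^2/d\tau)\Delta\phi$, which is $L^1((-\infty, T])$ because $\bar r^2$ is monotone with $\bar r^2(\tau)\to 0$. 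Adding these contributions yields the ODE $\bar A' = \bar A - \pi + g$ with $g\in L^1((-\infty, T])$ tending to zero in a suitable sense.

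Finally, setting $f := \bar A - \pi$ and integrating gives
\begin{equation*}
f(\tau) = f(T)e^{\tau - T} - e^\tau \int_\tau^T e^{-s} g(s)\,ds.
\end{equation*}
The first summand decays exponentially. For the second, given any $\vare > 0$ I would choose $\tau_0$ with $\int_{-\infty}^{\tau_0}|g|\,ds < \vare$ and split at $\tau_0$: $|e^\tau \int_\tau^T e^{-s}g(s)\,ds| \leq \vare + e^{\tau - \tau_0}\|g\|_{L^1(\tau_0, T)}$, which tends to $\vare$ as $\tau \to -\infty$. Since $\vare$ is arbitrary, $f(\tau) \to 0$, hence $\bar A(\tau) \to \pi$ as desired.

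The hardest step will be making the derivation of the ODE rigorous when $\bar r(\tau) = \max\{|\bar p_1|, |\bar p_2|\}$ fails to be smooth in $\tau$ (e.g., when the two knuckles swap as the farther one) and verifying that all error terms assemble into an $L^1$ or pointwise $o(1)$ inhomogeneity; working with $\bar r^2$ and its signed monotone derivative rather than with $\bar r$ directly should handle the smoothness issue cleanly.
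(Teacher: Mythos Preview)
Your approach is essentially the same as the paper's: derive the ODE $\bar A' = \bar A - \pi + o(1)$ from the divergence identity plus the rescaled flow equation, invoke Theorem~\ref{thm: uniqueness of vertex} for the total curvature $\int_{\bar\Gamma}\bar\kappa\,d\bar s = \pi + o(1)$, and integrate. The one difference is in how the region is closed off. You work with the finger region of Definition~\ref{def: finger region}, closed by a circular arc on $\partial B_{\bar r(\tau)}(0)$ with $\bar r = \max\{|\bar p_1|,|\bar p_2|\}$; this forces you to confront the non-smoothness of $\bar r$ when the farther knuckle switches and to justify $L^1$-integrability of the boundary contribution. The paper instead closes the region with the straight segment $\bar L(\tau)$ joining $\bar p_1(\tau)$ to $\bar p_2(\tau)$. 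Since both knuckles lie within $O(\rho^{-2})$ of the origin, every contribution from $\bar L$ (to the area formula and to its derivative) is $O(\rho^{-4})$, giving a clean pointwise error and avoiding the $\max$ altogether. The two enclosed areas differ by a region of diameter $O(\rho^{-2})$, hence area $o(1)$, so the conclusions are equivalent. Your plan is correct, but if you adopt the straight-line closing you can drop the last paragraph of caveats entirely.
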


\begin{proof}
    Let $\bar{\gamma}$ be the arc-length parametrization of a given finger of $\bar{M}_\tau$. Denote the two ancient paths of the knuckles of the finger by $\bp(\tau), \bq(\tau)$ with $p(\tau) < q(\tau)$. 
    For each $\tau \ll -1$, we connect $\bar{\gamma}((\bp(\tau)), \bar{\gamma}(\bq(\tau))$ with a straight line segment $\bar{L}(\tau)$, and let $\bar{\Om}(\tau)$ denote the bounded region enclosed by the finger and $\bar{L}(\tau)$, which has a small area difference $= O(\rho(\tau)^{-4})$ from the one of the finger region in Definition~\ref{def: finger region}.
    
    By divergence theorem, the area of $\bar{\Om}(\tau)$ is given by
    \begin{align}\label{eq: area of finger}
        A(\tau) = \frac12 \int_{\bar{\Om}(\tau)} \Div_{\R^2}(\mathbf{x}) \, d\mathbf{x} = \frac12 \int_{p(\tau)}^{q(\tau)} \langle \bar{\ga}, \nu \rangle \,ds + \frac12 \int_{\bar{L}(\tau) \cap \pa \bar{\Om}(\tau)}  \langle \mathbf{x}, \nu\rangle \,ds
    \end{align}
    where $\nu$ is the outward unit normal of $\pa \bar{\Om}(t)$ and $ds$ is oriented counterclockwise.
    By differentiating $A(\tau)$ in $\tau$, integrating the integral involving $\tfrac{d \mathbf{\nu}}{d\tau}$ by parts, together with fact that the boundary terms and integral over $\bar{L}$ are uniformly bounded by $C\rho^{-4}$ for some constant $C$, and using evolution equation of rescaled flow, we get 
    \begin{align*}
        \tfrac{d}{d\tau} A(\tau) &= - \int_{p(\tau)}^{q(\tau)} \langle \tfrac{d\bar{\ga}}{d\tau}, \nu \rangle \,ds +  O(\rho^{-4}) \\
        &= - \int_{p(\tau)}^{q(\tau)} \bar{\kappa}(s, \tau) \,ds +  \frac12 \int_{p(\tau)}^{q(\tau)} \langle \bar{\ga}, \nu \rangle \,ds + O(\rho^{-4})\\
        &= \big(-\pi +  A(\tau)\big) + O(\rho^{-4}).
    \end{align*}
    The last equality uses the result about the angle difference of two adjacent knuckles of a finger in Theorem \ref{thm: uniqueness of vertex} and (\ref{eq: area of finger}). 
    Let $f(\tau) = A(\tau) - \pi$. Then $f(\tau)$ satisfies the ODE $\tfrac{d}{d\tau} f = f + E(\tau)$ where $\vert E(\tau)\rvert \leq C\rho(\tau)^{-4}$ uniformly for all $\tau \ll -1$. Since $\rho$ is monotone increasing, one can solve $f(\tau) = f(\tau_0) e^{\tau -\tau_0} + O(\rho(\tau_0)^{-4})$ for $\tau \ll \tau_0 \ll -1$. 
    Therefore, after taking $\tau \to -\infty$ and then $\tau_0 \to -\infty$, $A(\tau) = \pi + o(1)$.
\end{proof}

\

We conclude this section by studying the {\it edges}. See Definition~\ref{def: vertex} for edges.

\begin{definition}\label{def: bumpy}
    A complete smooth curve $M$ is called \emph{bumpy} if all the zeros of $\kappa$ and $\kappa_s$ on $M$ are discrete and simple.
\end{definition}

\ 

The following result shows that it suffices to consider bumpy curves in our study of finite-entropy ancient flows.

\begin{corollary}\label{cor: bumpy}
    For $t\leq T = T(\cM)$ as in Theorem \ref{thm: uniqueness of vertex}, $M_t$ is bumpy. 
\end{corollary}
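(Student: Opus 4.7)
The plan is to apply Angenent's Sturm theory (Proposition \ref{prop: zeroset}) to the two linear parabolic equations satisfied by $\kappa$ and by $u = \kappa_s$, namely $\kappa_t = \kappa_{ss} + \kappa^2 \cdot \kappa$ from \eqref{eq: evolution of curvature} and $u_t = u_{ss} + 4\kappa^2\, u$ from \eqref{eq: evolution of k_s}. The zeros of $\kappa$ are the inflection points and the zeros of $\kappa_s$ are the vertices of $M_t$, so bumpiness at time $t$ is exactly the assertion that neither function admits a multiple zero at that time; discreteness of the zero set is automatic for smooth nontrivial solutions of parabolic equations. For closed curves I would apply Sturm with the periodic boundary condition; for noncompact curves, for each $t_0 \le T_1$ (where $T_1$ is the time from Theorem \ref{thm: uniqueness of vertex}) I would use the Sard-type selection as in the proof of Proposition \ref{prop: ancient local-min-path/local-max-path}(a) to find arbitrarily large arclength radii $A>0$ with $\kappa(\pm A, t)\neq 0$ and $\kappa_s(\pm A, t) \neq 0$ on a short time neighborhood of $t_0$, and apply Sturm on $[-A,A]$ with the nonvanishing boundary condition.

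Let $N_\kappa(t)$ and $N_{\kappa_s}(t)$ denote the resulting number of zeros of $\kappa$ and $\kappa_s$ on $M_t$. Each is a nonincreasing integer-valued function of $t$ which, by Proposition \ref{prop: zeroset}(b), strictly decreases at every time when a multiple zero appears. The crucial step is to produce a uniform upper bound on these two counts for all $t\le T_1$. This bound is to be assembled piecewise: Theorem \ref{thm: uniqueness of vertex} fixes the number of sharp vertices at $n$ (one per finger); the grim-reaper structure from Theorem \ref{thm: vertex path epsilon grim reaper} together with the expansions \eqref{eq: k of GR}--\eqref{eq: kss of GR} and Lemma \ref{lem: unique vertex} controls zero counts in a fixed parabolic neighborhood of each sharp vertex; Proposition \ref{prop: curv.est.of.finger} and Proposition \ref{prop: no vertex on tail} handle the transition-to-graphical region and the tails, with Theorem \ref{thm: curv decay of tail} giving the curvature decay $|\kappa|\lesssim (-t)^{-1/2}\rho^{-1/2}$ along tails. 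In the graphical region, each sheet is $(2\rho)^{-2}$-small in $C^{\lfloor \rho \rfloor}$ by Definition \ref{def:radius.sheet}, and restricting $\kappa_s$ and $\kappa$ to a single sheet and applying Sturm there together with the parabolic regularity of the sheet graphs $u^i_\tau$ prevents accumulation of zeros on each sheet.

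Once a uniform upper bound $N_{\max}$ is in hand, both $N_\kappa(t)$ and $N_{\kappa_s}(t)$ are bounded monotone integer-valued functions on $(-\infty, T_1]$, hence eventually constant: there exists $T \le T_1$ such that $N_\kappa$ and $N_{\kappa_s}$ are constant on $(-\infty, T]$. The Sturm dichotomy of Proposition \ref{prop: zeroset}(b) then rules out multiple zeros of $\kappa$ and $\kappa_s$ at every $t\le T$, which is exactly the bumpiness of $M_t$ for $t\le T$. The main obstacle in this outline is the uniform count in the graphical region: smallness of $\kappa_s$ in $C^{\lfloor \rho\rfloor}$ does not by itself bound the number of its zeros, so an additional argument using the rough convergence Theorem \ref{thm: rough convergence} and parabolic regularity on each sheet will be needed to make the per-sheet count independent of the graphical radius $\rho(\tau)$ as $\tau \to -\infty$.
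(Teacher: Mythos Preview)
Your Sturm-theoretic framework is correct, and the implication ``bounded monotone integer $\Rightarrow$ eventually constant $\Rightarrow$ no multiple zeros'' is valid. But the obstacle you flag in the graphical region is a genuine gap in your outline, and the additional argument you gesture at (rough convergence plus parabolic regularity on each sheet) does not resolve it: $C^{\lfloor\rho\rfloor}$-smallness on a growing interval simply cannot bound the number of sign changes of $\kappa$ or $\kappa_s$ there.

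The paper avoids the graphical-region count altogether by using a sharper consequence of Theorem~\ref{thm: uniqueness of vertex}: for $t\le T$ the number of sharp vertices is already \emph{constant} (equal to the number of fingers), not merely bounded. The argument then runs by direct contradiction rather than via eventual stabilization. If $\kappa_s(\cdot,t_0)$ had a multiple zero for some $t_0\le T$, Proposition~\ref{prop: zeroset}(b) applied to \eqref{eq: evolution of k_s} would yield strictly more local extrema of $\kappa$ at some $t<t_0$, and hence an additional sharp vertex, contradicting the fixed count. If $\kappa(\cdot,t_0)$ had a multiple zero, Proposition~\ref{prop: zeroset}(b) applied to \eqref{eq: evolution of curvature} would yield an extra nodal domain of $\kappa$ at some $t<t_0$; that nodal domain contains its own local maximum of $|\kappa|$, i.e., an extra sharp vertex, again a contradiction. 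So the same time $T$ from Theorem~\ref{thm: uniqueness of vertex} already suffices, and no per-sheet zero counting is needed.
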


\begin{proof}
    It follows from Theorem \ref{thm: uniqueness of vertex} that the (maximal) number of local extrema of curvature remains constant for all $t \leq T$. 
    Suppose that $\kappa_s(\cdot, t_0)$ has a multiple zero for some $t_0\leq T$. Applying Proposition \ref{prop: zeroset} (b) to equation (\ref{eq: evolution of k_s}), for some $t < t_0$, $\kappa(\cdot, t)$ has more local extrema than $\kappa(\cdot, t_0)$ does, a contradiction. 
    Similarly, if $\kappa(\cdot, t_{0})$ has a multiple zero for some $t_0\leq T$, then by applying Proposition \ref{prop: zeroset} to equation (\ref{eq: evolution of curvature}), for some $t < t_0$, $\kappa(\cdot, t)$ has more nodal domains than $\kappa(\cdot, t_0)$ does. There is one extra sharp vertex of $M_t$ in one of the extra nodal domains of $\kappa(\cdot, t)$, a contradiction.

\end{proof}

\ 

We now prove the uniqueness of inflection points and flat vertices.

\begin{lemma}\label{lem: uniqueness of inflection and flat vertex}
    Let $M$ be a complete smooth bumpy curve. Let $\Sigma\subset M$ be an edge with two end points $v_1,v_2$ which are sharp vertices. If $\kappa(v_1) \kappa(v_2)<0$, then $\kappa$ is strictly monotone on $\Sigma$ and $\Sigma$ has a unique inflection point. If $\kappa(v_1) \kappa(v_2)>0$, then $\Sigma$ is convex and it has a unique flat vertex.
\end{lemma}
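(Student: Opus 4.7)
The plan is to convert both statements into a sign analysis for $\kappa$ and $\kappa_s$ along $\Sigma$, leveraging three standing facts: (i) bumpiness forces every zero of $\kappa$ and of $\kappa_s$ on $M$ to be simple, hence sign-changing; (ii) the endpoints $v_1,v_2$ are vertices (so $\kappa_s(v_i)=0$) and are local extrema of $|\kappa|$, which pins down the sign of $\kappa_s$ just inside $\Sigma$; (iii) by the edge hypothesis no sharp vertex lies in $\mathrm{int}(\Sigma)$, so any interior critical point $w$ of $\kappa$ must be a flat vertex. The sign of $\kappa(w)$ then dictates the type of critical point: if $\kappa(w)>0$ then $w$ must be a local minimum of $\kappa$ (a local maximum would be a forbidden sharp vertex), if $\kappa(w)<0$ then $w$ must be a local maximum of $\kappa$, and $\kappa(w)=0$ is impossible because $\kappa_s(w)=0$ would then make $w$ a multiple zero of $\kappa$, contradicting bumpiness.

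For Case 1 with $\kappa(v_1)>0>\kappa(v_2)$ (the opposite sign choice being symmetric), the goal is to show that $\kappa$ is strictly monotone on $\Sigma$. Suppose for contradiction $w\in\mathrm{int}(\Sigma)$ is a critical point of $\kappa$. If $\kappa(w)>0$ then by (iii) $w$ is a local minimum of $\kappa$, so $\kappa$ initially increases as we leave $w$ toward $v_2$; since eventually $\kappa$ must fall to $\kappa(v_2)<0$, some later interior point is a local maximum of $\kappa$ with $\kappa>0$, i.e. a sharp vertex, contradiction. A symmetric argument rules out $\kappa(w)<0$. Monotonicity of $\kappa$ plus opposite endpoint signs produces, by the intermediate value theorem, a single simple zero of $\kappa$ on $\Sigma$, which is the unique inflection point (and rules out flat vertices automatically).

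For Case 2 with $\kappa(v_1),\kappa(v_2)>0$ (the negative case is symmetric), I first prove $\kappa>0$ throughout $\Sigma$: any minimum $w_*$ of $\kappa$ on $\Sigma$ attaining a nonpositive value must lie in $\mathrm{int}(\Sigma)$ and is then an interior critical point with $\kappa(w_*)\leq 0$, so $|\kappa|$ has a local maximum there, giving a forbidden sharp vertex. Hence $\Sigma$ is strictly convex with no inflection points. For uniqueness of the flat vertex, I study the zeros of $\kappa_s$ on $\mathrm{int}(\Sigma)$: since $v_1,v_2$ are local maxima of $\kappa$ on $\Sigma$, $\kappa_s$ is negative just inside $v_1$ and positive just inside $v_2$, and by bumpiness every interior zero of $\kappa_s$ is simple, so the signs of $\kappa_s$ alternate. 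If there were at least three such zeros $w_1<w_2<w_3$, then $w_2$ would be a local maximum of $\kappa$ at a point where $\kappa>0$, again a sharp vertex; so there is exactly one zero, the unique flat vertex.

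The main obstacle is the careful bookkeeping between the three classes of interior critical points (flat vertex, inflection point, and the forbidden sharp vertex) and the signs of $\kappa,\kappa_s$; once the observation in (iii) above — that the sign of $\kappa(w)$ at an interior critical point of $\kappa$ uniquely selects between local max and local min of $\kappa$ — is in place, both cases reduce to short contradictions using bumpiness and the intermediate value theorem.
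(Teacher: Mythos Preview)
Your proof is correct and follows essentially the same approach as the paper: both argue by showing that any extra interior critical point of $\kappa$ would force a forbidden interior sharp vertex, and then use the intermediate value theorem. Your version is in fact more carefully argued than the paper's terse proof---in particular, your explicit case split on the sign of $\kappa(w)$ at an interior critical point (and the observation that $\kappa(w)=0$ is excluded by bumpiness) fills a gap that the paper glosses over in its Case~1, where it simply asserts that ``an interior local extremum contradicts the definition of edge'' without distinguishing sharp from flat vertices.
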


\begin{proof}
If $\kappa(v_1) \kappa(v_2)<0$, then the curvature $\kappa$ is strictly monotone. 
Otherwise, since $\Sigma$ is bumpy, $\Sigma$ contains an interior local extremum ($\kappa_s = 0$), a contradiction to the definition of an edge. 
By the intermediate value theorem, $\lvert \kappa \rvert$ has a single local minimum when $\kappa=0$. 

If $\kappa(v_1) \kappa(v_2)> 0$, then $\kappa \neq 0$ on $\Sigma$. Otherwise, since $M$ is bumpy, we can find another vertex in the nodal domain in the interior of $\Sigma$, a contradiction. We may assume that $\lvert \kappa \rvert = \kappa$ with proper orientation. The unique minimum of $\lvert\kappa \rvert$ has $\kappa_s = 0$, which is a flat vertex. There are no other flat vertices; otherwise, we can find one sharp vertex between two flat vertices, a contradiction.
\end{proof}

\begin{proposition}\label{thm: curvature of tail is monotone}
    Suppose $\mathcal{M}$ is a non-compact ancient flow. Then, for $t \leq T(\cM)$ as in Theorem \ref{thm: uniqueness of vertex}, $M_{t}$ has two non-compact edges. 
    Furthermore, for each of the non-compact edges, their curvature $\kappa$ is nonzero and $\lvert \kappa \rvert$ monotonically decreases to $0$ as $\lvert x \rvert\to +\infty$.
\end{proposition}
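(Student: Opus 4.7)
The plan is first to establish the edge structure by counting sharp vertices along $M_t$, and then to rule out interior critical points of $|\kappa|$ on each of the two unbounded edges via a topological pigeonhole argument combined with curvature decay on the tails.

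For the structural step, I would note that for $t\le T$ as in Theorem~\ref{thm: uniqueness of vertex}, Corollary~\ref{cor: location of knuckles} gives exactly $m$ knuckles of $M_t$ with respect to the origin, which in arc-length order partition $M_t$ into two tails and $m-1$ fingers. Theorem~\ref{thm: uniqueness of vertex} supplies one sharp vertex per finger, while Proposition~\ref{prop: no vertex on tail} excludes sharp vertices from the tails, so $M_t$ has exactly $m-1$ sharp vertices $v_1,\dots,v_{m-1}$ at arc-length positions $s_1<\dots<s_{m-1}$. These decompose $M_t$ into $m-2$ compact edges and two non-compact edges $\Sigma_-=\{\ga(s,t):s\le s_1\}$ and $\Sigma_+=\{\ga(s,t):s\ge s_{m-1}\}$. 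Lemma~\ref{lem: curvature lower bound of vertex} yields $|\kappa(v_i,t)|>0$, while Theorem~\ref{thm: curv decay of tail} yields $|\kappa(x,t)|\to 0$ as $|x|\to +\infty$ along each $\Sigma_\pm$.

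For the monotonicity step (the main point), I would focus on $\Sigma_+$; the case of $\Sigma_-$ is symmetric. After orienting so that $\kappa(v_{m-1},t)>0$, the claim is that $|\kappa|$ has no critical point in the interior of $\Sigma_+$, where by critical point I mean either a vertex (where $\kappa_s=0$) or an inflection point (where $\kappa=0$). Assuming such an $s_*>s_{m-1}$ existed, the edge definition would force $s_*$ to be a local \emph{minimum} of $|\kappa|$ (since $s_*$ is not a sharp vertex), arising either as a flat vertex with $\kappa\ne 0$ or as a simple inflection point with $|\kappa|(s_*,t)=0$. Either way, $|\kappa|$ would be strictly greater than $|\kappa(s_*,t)|$ just to the right of $s_*$; combined with $|\kappa|\to 0$ at infinity, this would produce a local maximum $s^{**}\in(s_*,+\infty)$. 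By Corollary~\ref{cor: bumpy}, $\kappa$ and $\kappa_s$ cannot vanish simultaneously at $s^{**}$, so $\kappa(s^{**},t)\ne 0$ and $\kappa_s(s^{**},t)=0$, making $s^{**}$ a sharp vertex interior to $\Sigma_+$, contradicting the definition of an edge. Once interior critical points are ruled out, $|\kappa|$ is strictly monotone and, by the boundary and asymptotic values, decreases strictly from $|\kappa(v_{m-1},t)|>0$ to $0$.

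The main obstacle is the monotonicity step: the edge definition only forbids interior sharp vertices, so one must harness the decay $|\kappa|\to 0$ at infinity to manufacture an interior local maximum from a hypothetical local minimum, and the bumpy condition is essential to upgrade this maximum into a genuine sharp vertex rather than a coincidental double zero of $\kappa$ and $\kappa_s$. All other ingredients---the knuckle count, uniqueness of sharp vertices within fingers, curvature decay on tails, and bumpiness---are already in place from the earlier sections.
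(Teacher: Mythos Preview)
Your proof follows the same two–step structure as the paper (first establish that the two curved rays past the outermost sharp vertices are genuine edges, then deduce monotonicity of $|\kappa|$), and your monotonicity argument is more explicitly spelled out than the paper's terse ``by bumpiness, curvature decay, and non-existence of sharp vertex.''

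There is one gap in the structural step. Proposition~\ref{prop: no vertex on tail}, as stated, only excludes from the tails those sharp vertices that lie on an $\varepsilon$-grim-reaper vertex path. Since Theorem~\ref{thm: vertex path epsilon grim reaper} (which supplies the grim-reaper structure) is proved only for vertex paths already contained in a sub-flow of a \emph{finger}, you cannot directly invoke Proposition~\ref{prop: no vertex on tail} to conclude that the tails carry no sharp vertices whatsoever; a hypothetical sharp vertex whose ancient path is not a priori known to sit on a finger is not covered. The paper fills exactly this gap with a short dynamical argument: if an extra sharp vertex existed on $E_1(t_0)=\gamma((-\infty,v_1(t_0)))$, its ancient path would by Lemma~\ref{lem: curvature lower bound of vertex} eventually have rescaled curvature at least $1/\sqrt{2}$; by the curvature smallness in the graphical region (Theorem~\ref{thm: rough convergence}) and on the tail (Theorem~\ref{thm: curv decay of tail}), the path is therefore forced into the leftmost finger for all sufficiently negative $t$, producing a second sharp vertex there and contradicting Theorem~\ref{thm: uniqueness of vertex}. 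Adding this one-paragraph argument in place of your citation of Proposition~\ref{prop: no vertex on tail} makes your proof complete; the rest of your argument is correct.
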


\begin{proof}
Let $\gamma$ be the arc-length parametrization of $\cM$. 
Denote by $\bv_1$ and $\bv_2$ the ancient paths of sharp vertices on the leftmost and rightmost fingers of $\cM$, respectively.
Curved rays $E_1(t) := \gamma((-\infty, v_1(t))$ and $E_2(t) :=\gamma((v_2(t), \infty))$ do not contain any sharp vertex for any $t \leq T$.
Suppose the claim is false, say without loss of generality $E_1(t_0)$ contains a sharp vertex in addition to the existing ones on all fingers for some $t_0 \leq T$.
By Lemma~\ref{lem: curvature lower bound of vertex}, Theorem~\ref{thm: rough convergence}, and  Theorem~\ref{thm: curv decay of tail} (\ref{eq: curv decay of tail 2}), this extra ancient sharp vertex path from the ray of $E_1(t_0)$ must escape into the leftmost finger as $t \to -\infty$. 
This contradicts Theorem~\ref{thm: uniqueness of vertex}. 
Thus, for $t\leq T$, $E_1(t)$ and $E_2(t)$ are non-compact edges of $M_t$. 
Since $M_t$ is bumpy for all $t\leq T$, by Theorem~\ref{thm: curv decay of tail} (\ref{eq: curv decay of tail 1}) and the non-existence of sharp vertices, $\lvert \kappa \rvert > 0 $ and $\lvert \kappa \rvert$ monotonically decreases to $0$ as $\lvert s\rvert\to +\infty$ on non-compact edges. 
\end{proof}

\

\begin{proof}[Proof of Theorem~\ref{thm:main.geometry}]
    The numbers of fingers and knuckles are obvious by Corollary~\ref{cor: location of knuckles} and Corollary~\ref{cor: number of fingers}; 
    the number of sharp vertices follows from Theorem~\ref{thm: uniqueness of vertex} and Proposition~\ref{prop: no vertex on tail}; the sum of the numbers of flat vertices and inflection points follows from Lemma~\ref{lem: uniqueness of inflection and flat vertex} and Proposition~\ref{thm: curvature of tail is monotone}.
\end{proof}

\bigskip

 \section{Graphical radius lower bound}

  In this section, we will establish lower bounds for the graphical radius of each time slice of rescaled flows, in terms of the profiles' $C^2$-norm on a compact interval. We first define the $\varepsilon$-\textit{trombone time} to state the main theorem.

\begin{definition}\label{def:trombone}
Given $\varepsilon \in (0,10^{-1000})$, we call $\tau_\varepsilon$ the $\varepsilon$-trombone time of an ancient flow $\mathcal{M}$ if for every $\tau \leq \tau_\varepsilon$ the rescaled time slice $\overline{M}_\tau$ satisfies
\begin{enumerate}
    \item $\overline{M}_\tau$ is bumpy,
    \item each finger has a unique sharp vertex.
    \item the curvatures of the sharp vertex and the tip of each finger have the same sign,
    \item the angle difference $\theta$ between two adjacent knuckles satisfies $||\theta|-\pi| <\varepsilon$,
\end{enumerate}
and every path of sharp vertices of $\mathcal{M}$ for $t \leq -e^{-\tau_\varepsilon}$ lies at the tip of a $\frac{\varepsilon}{100}$-grim reaper.
\end{definition}

\begin{remark}\label{rmk:trb.time}
 The existence of the $\varepsilon$-trombone time has been proven by Theorem~\ref{thm: vertex path epsilon grim reaper}, Theorem~\ref{thm: uniqueness of vertex}, and Corollary~\ref{cor: bumpy}.   
\end{remark}

 \

 \begin{theorem}[graphical radius lower bound]\label{thm:grph.radius}
 Let $\mathcal{M}$ be an ancient flow with $\text{Ent}(\mathcal{M})=m \geq 2$. Then, given $\varepsilon \in (0,10^{-1000})$, $\mathcal{M}$ has the $\varepsilon$-trombone time $\tau_\varepsilon$ and there exists some $\delta\leq \varepsilon^8$ with the following significance. Suppose that
for each $\tau \leq \tau_\varepsilon$, there exists a rotation $S(\tau) \in SO(2)$ such that $S(\tau)\overline{M}_{\tau}\cap B_{2}(0)$ consists of the graphs of $m$ functions $u^1(y),\cdots,u^m(y)$ satisfying $ \|u^i\|_{C^2([-2,2])}\leq \delta$ for each $i\in \{1,\cdots,m\}$. Then, in the ball $B_\rho(0)$ of radius $\rho$, defined by
 \begin{equation}\label{graphical radius rho}
       \rho^{-4} :=\max_{1\leq i\leq m}\| u^i\|_{C^2([-1,1])},
 \end{equation}
$S(\tau)\overline{M}_{\tau}\cap B_\rho(0)$ is a union of the graphs of $m$ functions $u^1,\cdots,u^m$ satisfying 
\begin{align*}
    &|u^i(y)|\leq (y+2)^2\rho^{-4},\quad |u^i_y(y)|\leq \varepsilon \quad\mbox{for }\;|y|\leq \rho,\;\mbox{and}\\
    &|u^i_{yy}(y)|\leq 5\varepsilon \rho^{-1}\quad\mbox{for }\;|y|\leq \rho/2,
\end{align*}
for each $i\in \{1,\cdots,m\}$.
 \end{theorem}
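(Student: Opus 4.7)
The plan is a continuity-and-barrier argument on each of the $m$ sheets, using the rescaled curve shortening equation together with the $\varepsilon$-trombone structure to extend graphicality from $B_2(0)$ out to $B_\rho(0)$. Fix $\tau \le \tau_\varepsilon$ and a single sheet $u = u^i(\cdot, \tau)$; after applying $S(\tau)$, the graph equation reads
\begin{equation}\label{eq:grapheq_plan}
u_\tau = \frac{u_{yy}}{1 + u_y^2} - \tfrac{y}{2} u_y + \tfrac{u}{2}, \qquad \kappa = \frac{u_{yy}}{(1 + u_y^2)^{3/2}}.
\end{equation}
From the standing hypothesis $\|u(\cdot, s)\|_{C^2([-2, 2])} \le \delta$ for every $s \le \tau_\varepsilon$, parabolic Schauder estimates applied to \eqref{eq:grapheq_plan} on the spacetime strip $[-2, 2] \times [\tau - 1, \tau]$ upgrade this to $\|u\|_{C^k([-1, 1] \times \{\tau\})} \le C_k \delta$, so in particular $|u_\tau|$ and $|u_{yyy}|$ are controlled by $C \delta$ on $[-1, 1]$.

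Next, I would define the stopping radius
\begin{equation*}
R_*(\tau) := \sup\bigl\{ R \in [1, \rho] : u \text{ extends smoothly on } (-R, R),\ |u_y| \le \varepsilon,\ |u| \le (y + 2)^2 \rho^{-4} \text{ there} \bigr\},
\end{equation*}
and aim to show $R_* = \rho$. If $R_* < \rho$, then either graphicality fails at $y = \pm R_*$ or one of the pointwise inequalities saturates. Graphicality cannot fail inside $B_\rho(0)$: a vertical tangent would force $|\kappa| \to \infty$, but Proposition~\ref{prop: impro.vert.l.b} applied to the $\tfrac{\varepsilon}{100}$-grim-reaper vertex paths of Definition~\ref{def:trombone} gives $|\overline{\kappa}| \gtrsim \varepsilon^{-1/2}$ at any sharp vertex, while Corollary~\ref{cor: vertex is far} places those vertices strictly outside the provisional graphical disk as soon as $|u_{yy}| \ll \varepsilon^{1/2}$ is known there. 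Thus the argument reduces to strictly improving the two pointwise inequalities inside $(-R_*, R_*)$.

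The improvement step treats \eqref{eq:grapheq_plan} as a second-order $y$-ODE at fixed $\tau$, namely
\begin{equation*}
u_{yy} = (1 + u_y^2)\bigl( u_\tau + \tfrac{y}{2} u_y - \tfrac{u}{2} \bigr).
\end{equation*}
On $[-R_*, R_*]$ the right-hand side is dominated by $C(\varepsilon |y| + \rho^{-2} + \delta)$, using the a priori barrier inequalities and the Schauder bound $|u_\tau| \le C \delta$. A Gr\"onwall-in-$y$ estimate initialized by the $C^k$ bounds at $y = \pm 1$ delivers $|u_{yy}(y)| \le C \varepsilon \rho^{-1}$ on $[-\rho/2, \rho/2]$; integrating twice, and bootstrapping the slope bound outward to $[-\rho, \rho]$ via interior regularity applied to the $C^0$-bound $|u| \le (y+2)^2 \rho^{-4}$, yields $|u_y| \le \varepsilon$ and $|u| \le (y + 2)^2 \rho^{-4}$ with strictly improved constants. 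A standard open-closed argument then forces $R_* = \rho$.

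The main obstacle I anticipate is keeping the constant in $|u_{yy}| \le 5\varepsilon \rho^{-1}$ sharp: a naive Gr\"onwall in $y$ would produce an exponentially large factor in $\rho$, so the argument must simultaneously exploit the tight decay rate $\rho^{-4}$ of the $C^2$ seed on $[-1, 1]$ and the interior $\tau$-regularity bound on $u_\tau$, balancing them against the transport term $\tfrac{y}{2} u_y$ in \eqref{eq:grapheq_plan}. The $\varepsilon$-trombone structure enters precisely to ensure that no curvature concentration can spoil this balance before one reaches the tip region outside $B_\rho(0)$.
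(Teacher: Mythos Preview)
Your approach has a genuine gap that cannot be repaired by tightening constants. The ODE-in-$y$ strategy fails for two structural reasons. First, the Schauder bound $|u_\tau|\le C\delta$ is only available on $[-1,1]$; you have no mechanism to propagate control of $u_\tau$ out to $[-R_*,R_*]$, and in fact $u_\tau$ is \emph{not} small there (near the tips the profile moves at unit speed in the rescaled flow). Second, even granting a $u_\tau$ bound, the term $\tfrac{y}{2}u_y$ in your ODE $u_{yy}=(1+u_y^2)\bigl(u_\tau+\tfrac{y}{2}u_y-\tfrac{u}{2}\bigr)$ gives, under your barrier hypotheses $|u_y|\le\varepsilon$ and $|y|\le\rho$, only $|u_{yy}|\lesssim \varepsilon\rho$, not $\varepsilon\rho^{-1}$; integrating back destroys the slope bound. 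This is precisely the exponential-in-$y^2$ growth you flag, and it is not a matter of balancing constants. Separately, the assertion that a vertical tangent forces $|\kappa|\to\infty$ is false: the graph parametrization breaks down at a vertical tangent while the curvature remains finite (indeed bounded, at the grim-reaper tip).

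The paper's proof is not a PDE argument but a purely spatial, geometric one at each fixed $\tau$, built on three facts you never invoke. (i)~Along each edge the curvature $|\kappa|$ is \emph{monotone} away from the unique sharp vertex (Theorem~\ref{thm: uniqueness of vertex}, Lemma~\ref{lem: uniqueness of inflection and flat vertex}), so on the segment from the knuckle to the inflection point one has $|\kappa(y)|\le|\kappa(k_i)|\le\rho^{-4}$; integrating this twice yields the quadratic barrier $|u^i|\le(y+2)^2\rho^{-4}$ directly. (ii)~The total curvature of each finger is $\pi+o(1)$, which bounds $\int|u_{yy}|\,ds$ and hence the total variation of $u_y$. (iii)~The finger region has area $\pi+o(1)$ (Corollary~\ref{cor: finger.area}), which, compared against the quadratic barrier from~(i), forces the tip to sit beyond $\rho$. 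The proof then splits into cases (Lemmas~\ref{lem:grph.rad.tail}--\ref{lem:grph.two.inflections}) according to whether a given edge belongs to a tail or a finger and where the inflection point sits relative to the gradient thresholds $\bar d_i$, $d_i$; each case is closed by integrating the monotone curvature bound or by a tangent-line/area comparison. The final estimate $|u_{yy}|\le 5\varepsilon\rho^{-1}$ on $|y|\le\rho/2$ again comes from curvature monotonicity past the inflection point: if $|u_{yy}|$ exceeded $5\varepsilon\rho^{-1}$ at some $y_0\le\rho/2$, monotonicity would keep it above $4\varepsilon\rho^{-1}$ on $[y_0,\rho]$, and integrating would violate $|u_y|\le\varepsilon$.
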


\bigskip

\subsection{Setting-up}
We fix $\varepsilon\in (0,10^{-1000})$, $\delta \in (0, \varepsilon^8)$, and a time $\tau'\leq \tau_\varepsilon$ so that for all $i = 1,\ldots, m$, $\|u^i\|_{C^2([-2,2])}\leq \delta$. 
For $i=1, \cdots, m$, denote by $\Sigma^i$ the edge of $S(\tau')\overline{M}_{\tau'}$ that contains $\{(y,u^i(y)): |y|<2\}$, respectively. 
By rearrangement, we may assume that the $(i+1)$-th edge $\Sigma^{i+1}$ is adjacent to the $i$-th edge $\Sigma^i$. 
Note that if the curve $S(\tau')\overline{M}_{\tau'}$ is closed, then $\Sigma^1$ is adjacent to $\Sigma^m$ by the arrangement. 
Furthermore, we assume that the $i$-th edge $\Sigma^{i}$ is on top of the $(i+1)$-th edge $\Sigma^{i+1}$.

For each $i$, there is a maximal closed interval $I_i = [a_i, b_i]$, where $-\infty \leq a_i < -2$ and $2 < b_i \leq +\infty$, such that we can extend the domain of $u^i$ to $I_i$ satisfying
\begin{enumerate}
    \item $(y,u^i(y))\in S(\tau')\overline{M}_{\tau'}$ for $y\in I_i$, 
    \item $u^i\in C^\infty((a_i,b_i))$,
    \item $\lim_{y\downarrow a^i} |u^i_y|=+\infty$ if $a_i:=\inf I_i>-\infty$,
    \item $\lim_{y\uparrow b^i} |u^i_y|=+\infty$ if $b_i:=\sup I_i<+\infty$.
\end{enumerate}
Then, we parametrize $\Sigma^i \cap \{(y,u^i(y)):y\in I_i\}$ by $\gamma^i(y) =(y,u^i(y))$ and we denote by $\bt^i:= \pm\gamma^i_y/|\gamma^i_y|$,  $\bn^i:= J\bt^i$, and $\kappa^i:= \langle \bt^i_y/|\gamma^i_y|,\bn^i\rangle$, respectively, its unit tangent vector, unit normal vector, and curvature that is compatible with the global orientation in Section 2.3. 
In addition, in each $\Sigma^i \cap \{(y,u^i(y)):y\in I_i\}$, we can define the ordering—\emph{left} or \emph{right}—by comparing the ordering of the $y$-value; we can extend this ordering throughout $\Sigma^i$ by choosing a proper arc-length parametrization.
Next, we mark some points with analytic significance.
\begin{enumerate}
    \item Denote by $\gamma^i(k_i)$ the unique\footnote{See Corollary \ref{cor: location of knuckles}} knuckle of $\Sigma^i$. Note that $\lvert k_i\rvert \leq \delta \ll 1$ (Lemma \ref{lem: lim of minimum}).

    \item We define $c_i\in [k_i,b_i]$ as follows: If $\Sigma^i$ contains no inflection point on the right of $\gamma^i(k_i)$, then set $c_i=k_i$; if $\Sigma^i\cap \{(y,u^i(y)):y\in [k_i, b_i)\}$ contains the unique\footnote{See Lemma~\ref{lem: uniqueness of inflection and flat vertex}.} inflection point of $\Sigma^i$, then set $(c_i,u^i(c_i))$ to be the inflection point; otherwise, set set $c_i = b_i$.
    
    \item Let $\bar{d}_i = \sup\{ z \in (2, b_i): |u^i_y(y)|<\tfrac{\vare}{100}, \forall y\in (0, z)\}$. Note $|u^i_y(\bar d_i)| = \tfrac{\vare}{100}$.
    \item Let $d_i = \sup\{ z \in (2, b_i): |u^i_y(y)|<\varepsilon, \forall y\in (0, z)\}$. Note $|u^i_y(d_i)| =\vare$.
\end{enumerate}

\medskip
Now fix $i$. Up to rotation and reflection, we may assume $(10,u^i(10))$ belongs to a \emph{right-pointing} finger $\Gamma^i$ or a \emph{right-going} tail $\Phi^i$. Then, either $\Phi^i\subset \Sigma^i$, or $\Gamma^i\subset (\Sigma^i\cup\Sigma^{i + 1})$. Furthermore, let $\widehat{\Gamma}^i:=\Gamma^i\setminus \cup_{j=i,i+1}\{(y,u^j(y)):y\in I_j\}$ denote the non-graphical part of the finger $\Gamma^i$. See Figure \ref{fig:enter-label}.
 
\begin{figure}[h]
    \centering
    \includegraphics[width=0.7\linewidth]{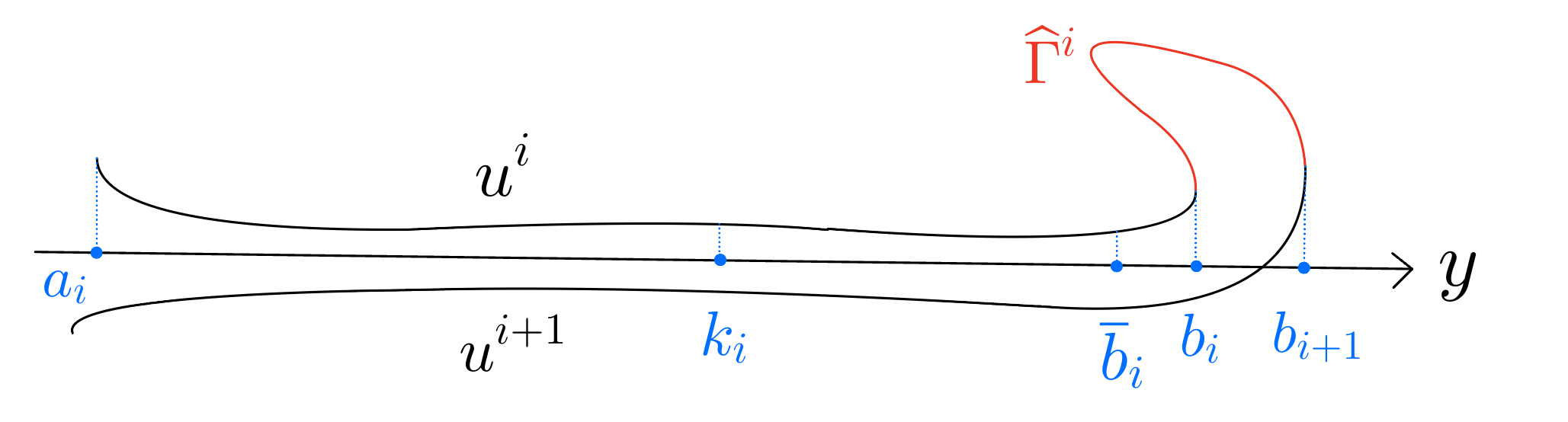}
    \caption{A right-pointing finger $\Gamma^i$ in which $\widehat{\Gamma}^i$ is the non-graphical part.}
    \label{fig:enter-label}
\end{figure}

\subsection{Tail estimate}
Let us consider the first case. We define $\rho>0$ by (\ref{graphical radius rho}).

\begin{lemma}\label{lem:grph.rad.tail}  
Suppose $\Sigma^i$ contains a right-going tail $\Phi^i$. Then, $\bar d_i\geq 15\rho^3$ holds. Namely, $\Phi^i\cap B_\rho(0)$ is the graph of $u^i$ with $|u^i_y|\leq \varepsilon$. 
\end{lemma}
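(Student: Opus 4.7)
The plan is a continuity/ODE bootstrap that propagates the $C^2$-smallness on $[-2,2]$ outward along the right-going tail $\Phi^i$. The two key inputs are Proposition~\ref{thm: curvature of tail is monotone}, which says that $|\kappa|$ is monotone decreasing along the noncompact edge $\Phi^i$ from the knuckle out to infinity, and the constraint $\delta \leq \vare^8$, which via the definition $\rho^{-4}=\max_i\|u^i\|_{C^2([-1,1])}$ forces $\rho \geq \vare^{-2}$.

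First I would read off the base estimate at $y=1$: the definition of $\rho$ yields $|u^i_y(1)|, |u^i_{yy}(1)|\leq \rho^{-4}$, so the curvature of the graph at $(1,u^i(1))$ satisfies $|\kappa|\leq \rho^{-4}$. Since $\Phi^i$ is a noncompact edge of $\bar M_{\tau'}$ and parabolic rescaling preserves the monotone ordering of $|\kappa|$ along the curve, Proposition~\ref{thm: curvature of tail is monotone} then gives $|\kappa(y,u^i(y))|\leq \rho^{-4}$ for every $y\geq 1$ in the graphical range of $\Phi^i$: indeed, while $|u^i_y|$ stays bounded, the arclength $s$ is a smooth strictly increasing function of $y$, so the monotonicity in $s$ transfers to monotonicity in $y$.

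The bootstrap step: define
\begin{equation*}
y_* = \sup\{z\geq 1 \,:\, u^i \text{ is defined on } [1,z] \text{ and } |u^i_y(y)|\leq \vare/100 \text{ there}\}.
\end{equation*}
By continuity and the initial smallness at $y=1$, $y_*>1$. On $[1,y_*]$ we have $|u^i_y|\leq 1/2$, so $|u^i_{yy}|=|\kappa|(1+(u^i_y)^2)^{3/2}\leq 2\rho^{-4}$, and integrating from $1$ yields $|u^i_y(y)|\leq \rho^{-4}(1+2y)$. If $y_*<15\rho^3$, this gives $|u^i_y(y_*)|\leq 31\rho^{-1}\leq 31\vare^2$, strictly less than $\vare/100$ because $\vare<10^{-4}$. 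This contradicts the maximality of $y_*$, which would force $|u^i_y(y_*)|=\vare/100$. Hence $y_*\geq 15\rho^3$, and combined with $|u^i_y|\leq \delta \ll \vare/100$ on $(0,2)$ coming from the hypothesis $\|u^i\|_{C^2([-2,2])}\leq\delta$, one obtains $|u^i_y|<\vare/100$ on the whole interval $(0,15\rho^3)$; in particular $\bar d_i\geq 15\rho^3$, and since $\rho\leq 15\rho^3$, the intersection $\Phi^i\cap B_\rho(0)$ is the graph of $u^i$ with $|u^i_y|\leq \vare/100 \leq \vare$.

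I do not foresee a substantive obstacle. The only minor care needed is that Proposition~\ref{thm: curvature of tail is monotone} is stated for the unrescaled flow $M_t$ along arclength, while we work on $\bar M_{\tau'}$ parametrized by $y$; both reductions (scaling invariance of the monotonicity of $|\kappa|$ along the curve, and $s(y)$ being strictly increasing wherever the graph exists) are entirely routine, so the proof reduces to the one-line ODE integration above.
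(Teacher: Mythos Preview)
Your proof is correct and follows essentially the same route as the paper's. Both arguments use Proposition~\ref{thm: curvature of tail is monotone} to bound $|\kappa|\le \rho^{-4}$ along the tail past a point in $[-1,1]$, then integrate $u^i_{yy}$ to control $u^i_y$; the paper integrates directly from the knuckle $k_i$ to $\bar d_i$ and inverts the inequality $\tfrac{\vare}{50}\le 3\bar d_i\rho^{-4}$, while you phrase the same computation as a continuity bootstrap from $y=1$ ending in a contradiction at $y_*$. One terminological nit: $\Phi^i$ is the tail, not the noncompact edge itself---the edge is $\Sigma^i\supset\Phi^i$---but since the knuckle (and hence $y=1$) already lies on $\Sigma^i$ past the sharp vertex, the monotonicity applies exactly as you use it.
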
 

\begin{proof}
Remembering $|k_{i}|\leq \delta$, $|u^i_y(k_i)|\leq \delta \ll \varepsilon$, and $|\kappa^i|=|u^i_{yy}|(1+|u^i_y|^2)^{-\frac{3}{2}}$. We have
\begin{equation}
    \tfrac{1}{50}\varepsilon \leq |u^i_y(\bar d_i)-u^i_y(k_i)|=\left|\int_{k_i}^{\bar d_i}u^i_{yy}(y)dy\right|\leq 2\int_{k_i}^{\bar d_i}|\kappa^i(y)|dy.
\end{equation}
By Proposition~\ref{thm: curvature of tail is monotone}, $|\kappa^i(z)|\leq |\kappa^i(k_i)|$ for $z\geq k_{i}$. Hence,
\begin{equation}
    \tfrac{1}{50}\varepsilon \leq  2(\bar d_i-k_i)|\kappa^i(k_i)|\leq 3\bar d_i\|u^i\|_{C^2([-1,1])}\leq 3\bar d_i\rho^{-4}.
\end{equation}
The assumption $\rho^{-4}\leq \delta\leq  \varepsilon^8$ implies $\varepsilon\rho\geq \varepsilon^{-1}\geq 10^4$. Hence, we have $\bar d_i \geq 15\rho^3$. 
\end{proof}

\medskip

\subsection{Finger estimates}

Next, we deal with the case that $\Sigma^i\cup \Sigma^{i + 1}$ contains a right-pointing finger $\Gamma^i$.
 
\begin{lemma}\label{lem:gradient.inflection}
  If $c_i\geq \bar d_i$, then $\bar d_i\geq 15\rho^3$ holds. If $c_i\leq \bar d_i$, then $ |u^i_y(y)|\leq  2(y+1)\rho^{-4}$ and $|u^i(y)|\leq (y+2)^2\rho^{-4}$ hold for $y\in [k_{i},c_i]$.
\end{lemma}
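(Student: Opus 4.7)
The plan is to identify $\Sigma^i$ as an edge of the bumpy rescaled curve, use Lemma~\ref{lem: uniqueness of inflection and flat vertex} to obtain strict monotonicity of the curvature along the edge (in the presence of an inflection), and then integrate the resulting curvature bound to propagate the smallness of $u^i$ from $[-1,1]$ outward in $y$. First, in Case 1 ($c_i\ge\bar d_i$), the option $c_i=k_i$ in the definition of $c_i$ is ruled out because $|k_i|\le\delta\ll 2<\bar d_i$; hence $\Gamma^i\cap\Sigma^i$ must contain an inflection point (either on the graphical part at $c_i$, or in the non-graphical portion $\widehat\Gamma^i\cap\Sigma^i$ when $c_i=b_i$). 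By Lemma~\ref{lem: uniqueness of inflection and flat vertex} the curvature $\kappa$ is then strictly monotone along the whole edge $\Sigma^i$ and vanishes exactly at the inflection. Moving in arclength from $k_i$ toward the sharp vertex of $\Gamma^i$ one crosses the inflection before the vertex, so $|\kappa^i|$ decreases monotonically on $[k_i,\bar d_i]$; combined with $|\kappa^i(k_i)|\le\|u^i\|_{C^2([-1,1])}=\rho^{-4}$ this yields $|\kappa^i(y)|\le\rho^{-4}$ on $[k_i,\bar d_i]$.

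With this curvature bound in hand, Case 1 follows from a single integration. On $[k_i,\bar d_i]$ we have $|u^i_y|\le\vare/100<1$, so the identity $\kappa^i=u^i_{yy}(1+(u^i_y)^2)^{-3/2}$ gives $|u^i_{yy}|\le 2\rho^{-4}$, and since $\kappa^i$ has constant sign so does $u^i_{yy}$. Therefore
\begin{equation*}
\tfrac{\vare}{100}-|u^i_y(k_i)|\ \le\ |u^i_y(\bar d_i)-u^i_y(k_i)|\ =\ \int_{k_i}^{\bar d_i}|u^i_{yy}|\,dy\ \le\ 3\bar d_i\rho^{-4}.
\end{equation*}
Combining with $|u^i_y(k_i)|\le\rho^{-4}\le\delta\le\vare^8$ and $\rho\ge\vare^{-2}$ (which comes from $\rho^{-4}\le\delta\le\vare^8$) produces $\bar d_i\ge\rho^4\vare/600\ge\rho^3/(600\vare)\ge 15\rho^3$ whenever $\vare<10^{-4}$.

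For Case 2 ($c_i\le\bar d_i$), the integration is analogous on the shorter interval $[k_i,c_i]$. If $c_i=k_i$ the interval degenerates to a point and the claimed bounds reduce to $\|u^i\|_{C^2([-1,1])}\le\rho^{-4}$. Otherwise $c_i$ is a genuine inflection point on the graphical part, and the same monotonicity argument gives $|\kappa^i(y)|\le\rho^{-4}$ and hence $|u^i_{yy}(y)|\le 2\rho^{-4}$ for $y\in[k_i,c_i]$. Two successive integrations from $k_i$ then yield $|u^i_y(y)|\le\rho^{-4}\bigl(1+2(y-k_i)\bigr)\le 2(y+1)\rho^{-4}$ (using $|k_i|\le\delta\le 1/2$) and $|u^i(y)|\le\rho^{-4}\bigl(1+(y+1)^2\bigr)\le (y+2)^2\rho^{-4}$ (for $y\ge -1$).

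The only delicate point is the monotonicity statement for $|\kappa^i|$ in the subcase $c_i=b_i$ of Case 1, where the inflection lies in the non-graphical tip region $\widehat\Gamma^i\cap\Sigma^i$ and is not accessible through the $y$-parameter at all. Here I need that the arclength direction from $k_i$ through the graphical part all the way to the sharp vertex of $\Gamma^i$ crosses the inflection before the vertex; this is where the fact that $\Sigma^i$ contains exactly one inflection on the edge (between the two sharp-vertex endpoints) is essential, since it forces $|\kappa^i|$ to still decrease monotonically on the full interval $[k_i,b_i]\supseteq[k_i,\bar d_i]$ in the $y$-parameter.
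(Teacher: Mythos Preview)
Your proposal is correct and follows essentially the same approach as the paper: invoke Lemma~\ref{lem: uniqueness of inflection and flat vertex} to obtain strict monotonicity of $\kappa$ on the edge $\Sigma^i$ when an inflection is present, deduce $|\kappa^i(y)|\le|\kappa^i(k_i)|\le\rho^{-4}$ on $[k_i,\min(c_i,\bar d_i)]$, and then integrate (exactly as in the tail estimate Lemma~\ref{lem:grph.rad.tail}). The paper's proof is terser---it simply says ``curvature is monotone, and therefore we have $\bar d_i\ge 15\rho^3$ as the proof of Lemma~\ref{lem:grph.rad.tail}''---whereas you spell out the subcase $c_i=b_i$ and the direction of monotonicity of $|\kappa|$ explicitly; this extra care is justified but not a different argument.
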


\begin{proof}
If $ c_i \geq \bar d_i$, then by Lemma~\ref{lem: uniqueness of inflection and flat vertex}, curvature is monotone, and therefore we have $\bar d_i \geq 15\rho^3$ as the proof of Lemma \ref{lem:grph.rad.tail}. Now, we suppose that $c_i \leq \bar d_i$. Then, as the proof of Lemma \ref{lem:grph.rad.tail}, we have for $y \in [k_i,c_i]$,
\begin{equation}
    |u^i_y(y)|\leq |u^i_y(k_i)| +2(y-k_i)|\kappa^i(k_i)|\leq 2(y+1)\rho^{-4}.
\end{equation}
Then, a direct integration yields
\begin{equation}
    |u^i(y)|\leq |u^i(k_i)| + (y-k_i)^2 \rho^{-4} + 2(y-k_i)\rho^{-4} \leq (y+2)^2\rho^{-4}.
\end{equation}
\end{proof}

\medskip

\begin{lemma}\label{lem:infl.out.rad}
  If $c_i< b_i$, then $b_i > b_{i+1}$ holds unless $\gamma^i(b_i)=\gamma^{i+1}(b_{i+1})$. In addition, $(y,u^i(y))$ is not an inflection point for each $y\in (c_i,b_i]$.
\end{lemma}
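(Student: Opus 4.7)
The second claim follows from Lemma~\ref{lem: uniqueness of inflection and flat vertex}: since $c_i$ is the unique inflection point of the edge $\Si^i$ (guaranteed by the hypothesis $c_i < b_i$), the curvature $\ka^i$ has constant nonzero sign on $(c_i,b_i]$, so no point there is an inflection. The endpoint $\ga^i(b_i)$ likewise satisfies $\ka^i \ne 0$, either because it is an interior point of $\Si^i$ and the uniqueness applies, or because it coincides with a sharp vertex, at which $|\ka|$ is bounded away from zero by the bumpiness given in Corollary~\ref{cor: bumpy}.

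For the first claim, my plan is to argue by the contrapositive: assuming $b_i \le b_{i+1}$, I aim to conclude $\ga^i(b_i) = \ga^{i+1}(b_{i+1})$. Let $v^i$ denote the common right-endpoint of $\Si^i$ and $\Si^{i+1}$, i.e., the sharp vertex of the finger $\Ga^i$ (unique by Theorem~\ref{thm: uniqueness of vertex}). A preliminary observation handles the case $v^i = \ga^i(b_i)$ (or symmetrically $v^i = \ga^{i+1}(b_{i+1})$): at such a point the tangent of the flow is vertical, and since $v^i$ is also the right-endpoint of $\Si^{i+1}$, the maximal graphical parametrization of $\Si^{i+1}$ by $y$ from the knuckle $k_{i+1}$ must terminate exactly at $v^i$, forcing $\ga^{i+1}(b_{i+1}) = v^i = \ga^i(b_i)$.

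What remains is the case where $v^i$ lies strictly in the interior of the non-graphical arc of $\Ga^i$ joining $\ga^{i+1}(b_{i+1})$ through $v^i$ to $\ga^i(b_i)$. My approach will be to track the unit tangent in the global orientation along this arc. Since $\Si^i$ lies above $\Si^{i+1}$, the orientation forces $u^i_y \to -\infty$ at $b_i$ and $u^{i+1}_y \to +\infty$ at $b_{i+1}$, so the global unit tangent at both graphical endpoints is the same vertical direction. If $\ka$ kept one constant sign along the whole non-graphical arc, the monotone tangent rotation from this vertical direction back to itself would have to be a nonzero multiple of $2\pi$, turning the arc into a self-touching loop and violating the embeddedness of $M_t$. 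Hence $\ka$ must change sign along the arc, giving an inflection in $\widehat{\Ga}^i$; since $c_i<b_i$ already places the unique inflection of $\Si^i$ in the graphical part, this extra inflection must lie on $\Si^{i+1}$. A careful coordination of the tangent rotation, the signs of $\ka^{i+1}$ on the two sides of this inflection, and the $y$-profile along the arc (decreasing from $b_{i+1}$ to a minimum at $v^i$, then increasing to $b_i$) will then force $b_i > b_{i+1}$, contradicting $b_i \le b_{i+1}$. The main obstacle is precisely this coordinated geometric argument, synthesizing the tangent rotation, the curvature sign structure, and the $y$-monotonicity to rigorously pin down the inequality.
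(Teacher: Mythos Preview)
Your treatment of the second claim is fine. The first claim, however, has a genuine gap and runs in the opposite direction from what actually works.

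The step ``if $\ka$ kept one constant sign along the non-graphical arc, the $2\pi$ rotation would force a self-touching loop and violate embeddedness'' is false as stated: a strictly convex embedded arc can have total turning $2\pi$ (e.g.\ one turn of a logarithmic spiral, or any outward-spiralling convex arc). To get a contradiction from convexity one must use the extra constraint that the arc is trapped between the two graphical sheets $\Si^i$ and $\Si^{i+1}$, and that requires precisely the kind of winding argument you do not supply. Even if one grants you an inflection on $\Si^{i+1}$, you have not explained how ``coordinating tangent rotation, curvature signs, and the $y$-profile'' yields $b_i>b_{i+1}$; you flag this as the main obstacle, and indeed it is the entire content of the lemma.

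The paper proceeds by \emph{embracing} convexity rather than trying to rule it out. Assuming $b_i\le b_{i+1}$ and $\ga^i(b_i)\ne\ga^{i+1}(b_{i+1})$, it picks the rightmost point $p_0$ of the finger $\Ga^i$ (so $y(p_0)\ge b_i$ and $p_0\ne\ga^i(b_i)$). Both $\ga^i(b_i)$ and $p_0$ have vertical tangent and are local maxima of the first coordinate, which forces the same sign of $\ka$ at the two points; since the arc from $c_i$ through $\ga^i(b_i)$ to $p_0$ carries at most one inflection, it must in fact be strictly convex. A concrete spiral argument---tracking the normal angle in steps of $\pi/2$ and using that the convex arc is confined between the graphs of $u^i$ and $u^{i+1}$---then shows that beyond $\ga^i(b_i)$ the curve stays strictly in $\{y<b_i\}$, contradicting $y(p_0)\ge b_i$. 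The missing ingredients in your proposal are exactly this choice of $p_0$ and the ensuing winding argument; your attempted detour through an inflection on $\Si^{i+1}$ does not reach the conclusion.
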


 \begin{proof}
We assign the finger $\Gamma^i$ with the orientation as in Figure~\ref{fig:inside} such that the region $\{(y, w): 1 \leq y \leq 2, u^{i+1}(y) \leq w \leq u^{i}(y)\}$ is contained in inside. Since the unit normal points towards inside, the curveture at the tip of $\Gamma^i$ is positive and therefore, by Theorem~\ref{thm: uniqueness of vertex}, so is the curvature at the vertex of $\Gamma^i$ .
Since $c_i < b_i$, the curved segment of $\Sigma^i$ on the right of the unique inflection point $\gamma^i(c_i)$ has positive curvature everywhere. Note that the unit tangent vector points to the left along $\Sigma^i$. Thus, $\theta(\gamma(b_i)) = \pi/2 \mod 2\pi$, where $\theta$ is the angle of the unit tangent vector measured from the $+y$-direction counterclockwise.

Now, we suppose $\gamma^i(b_i)\neq \gamma^{i+1}(b_{i+1})$ and $b_i\leq b_{i+1}$ to get a contradiction. 
Then there is $ p_0\in \Gamma^i$ such that $p_0\neq \gamma^i(b_i)$ and $y(p_0) \geq y(p)$ holds for all $p\in \Gamma^i$. 
Note that the inside region must be in the $(-y)$-direction of $\Gamma^i$ around $p$, so the unit normal at $p_0$ points exactly toward the $(-y)$-direction, $\kappa(p_0) > 0$, and $\theta(p_0) = \pi/2 \mod 2\pi$. 
Consider the curve segment with positively oriented regular parameterization $\hat \gamma:[-2,2]\to \Gamma^i$ such that $\hat \gamma(-2) = p_0$, $\hat \gamma(0)=\gamma^i(b_i)$, $\hat \gamma(1)=\gamma^i(c_i)$, and $\hat \gamma(2)=\gamma^i(k_i)$. 
Since $\hat \gamma([-2,1))$ contains at most one inflection point and the curvature is positive at $\hat\gamma(-2)$ and in $\hat\gamma([0, 1))$, $\hat\gamma([-2,1))$ has positive curvature everywhere. 

To understand the rest of the proof, it will be helpful to refer to Figure~\ref{fig: spiral}. 
By $\theta(\hat\gamma(0)) = \pi/2 \mod 2\pi$ and convexity of $\hat\gamma$, there exists $\eta> 0$ such that $y(\hat\gamma(\sigma)) < b_i$ for any $\sigma \in (-\eta, 0)$. 
From the assumption $y(p_0) \geq b_i$, we can find $h = \inf\{ a \in [-2, 0)\::\: y(\hat\gamma(\sigma)) < b_i, \forall \sigma\in (a, 0)\}$. 
By embeddedness and structure in $B_2(0)$, it is clear that the curve segment $\hat\gamma(( h, 0))$ lies in the region $\mathcal{R} :=\{(y, w)\::\: 2< y < b_i, u^{i+1}(y) < w < u^i(y)\}$. 
By reparamatrizing $\hat\gamma$, we may assume that $y(\hat\gamma(\sigma))$ reaches a local minimum $y_* > 2$ at $\sigma = -1$ and $y(\hat\gamma(\sigma))$ is monotone decreasing on $(h, -1)$. 
Let $\mathcal{U}$ be the bounded region enclosed by $\hat\gamma([-1 , 2])$ and $\{y_*\} \times \R$.
By convexity again, $\theta(\hat\gamma(\sigma)) = -\pi/2 \mod 2\pi$ and there exists $\xi > 0$ so that $\hat\gamma(\sigma) \in \mathcal{U}$ for $\sigma \in (\xi, -1)$.
Then, by the embedding and monotonicity of $y(\hat\gamma(\sigma))$ in $(h, -1)$, we have $\hat\gamma((h, -1))\subset \mathcal{U}$. 
But recall that $y(\hat\gamma(h)) = b_i$ and $y\rvert_{\pa \mathcal{U}} < b_i$ except at $ \hat\gamma(0)$. This implies $\hat\gamma(y) = \hat\gamma(0)$, which contradicts the embeddedness.

\begin{figure}[h]
    \centering
    \includegraphics[width=0.8\linewidth]{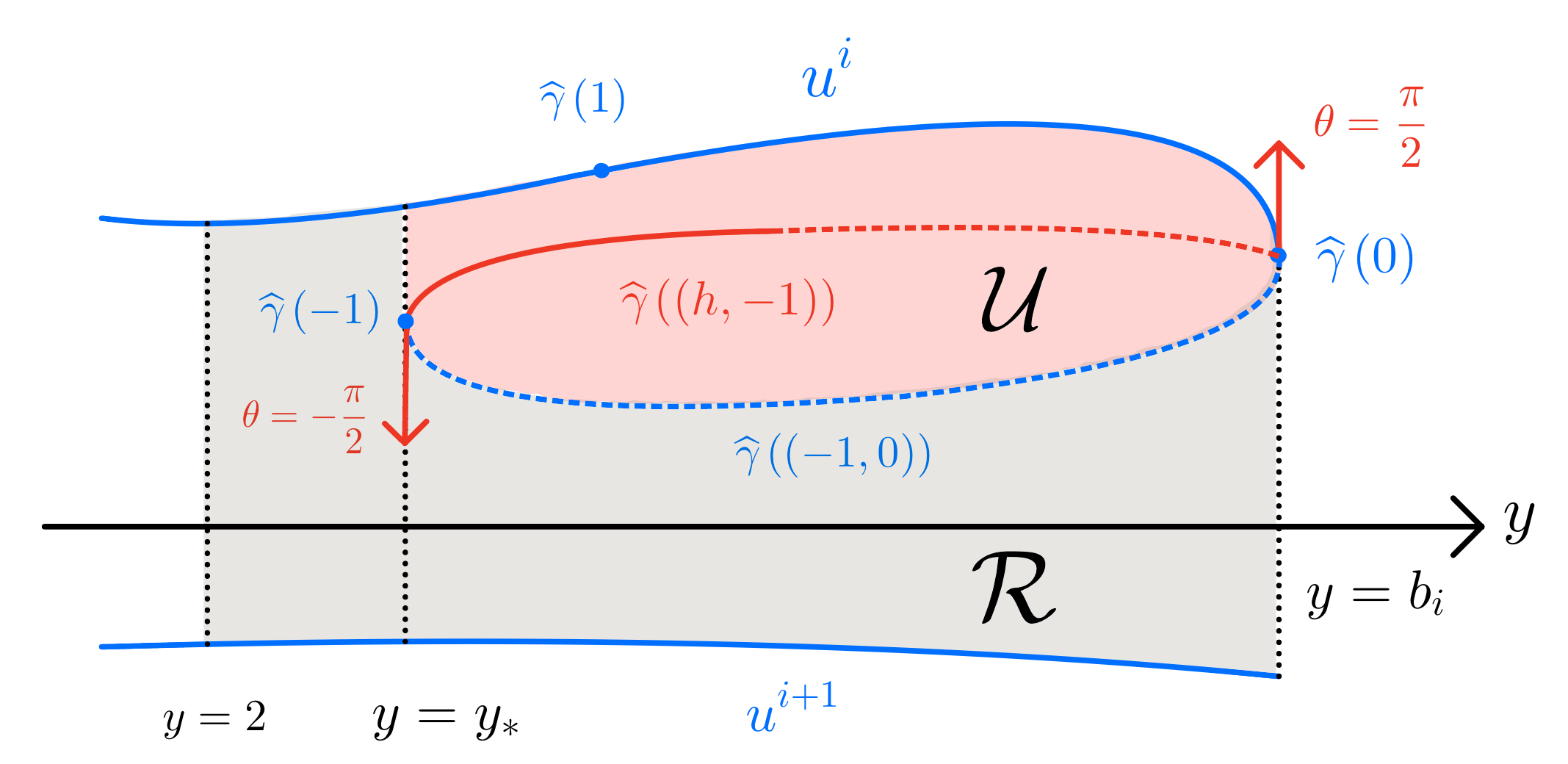}
    \caption{Violation of embedding since $\hat\gamma(h)$ must reach $y = b_i$.}
    \label{fig: spiral}
\end{figure}

\end{proof}
\medskip

\begin{lemma}\label{lem:gradient.flat lower sheet}
  If $c_i \leq \bar d_i$ and $c_{i+1}\geq \bar d_{i+1}$ hold, then $\Gamma^i \cap B_\rho(0)$ consists of the graphs of $u^i,u^{i+1}$ with $|u^i_y|,|u^{i+1}_y|\leq \varepsilon$. 
\end{lemma}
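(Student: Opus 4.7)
The plan is to handle the two sheets asymmetrically, leveraging the asymmetric hypotheses. For the lower sheet $\Sigma^{i+1}$, the condition $c_{i+1}\ge \bar d_{i+1}$ is exactly the hypothesis of the first clause of Lemma~\ref{lem:gradient.inflection} applied with index $i+1$, which immediately gives $\bar d_{i+1}\ge 15\rho^{3}$. Hence $|u^{i+1}_y|\le \varepsilon/100<\varepsilon$ on $[-\rho,\rho]$, settling the lower sheet completely.

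For the upper sheet $\Sigma^{i}$, I would first establish graphicality past $\rho$. Since $c_i\le \bar d_i<b_i$, Lemma~\ref{lem:infl.out.rad} applies and yields both $b_i\ge b_{i+1}\ge 15\rho^{3}\gg \rho$ and that $\kappa^{i}$ has a constant sign on $(c_i,b_i]$. Without loss of generality assume $u^{i}_{yy}<0$ past $c_i$ (the symmetric case is identical with signs reversed), so that $u^{i}_y$ is monotone decreasing on $[c_i,b_i]$. The upper bound $u^{i}_y(y)\le \varepsilon$ on $[k_i,\rho]$ is then easy: Lemma~\ref{lem:gradient.inflection} yields $u^{i}_y(y)\le 2(y+1)\rho^{-4}\le \varepsilon/100$ on $[k_i,c_i]$, and monotonicity together with $u^{i}_y(c_i)\le 2(c_i+1)\rho^{-4}\ll \varepsilon$ gives $u^{i}_y\le u^{i}_y(c_i)\le \varepsilon$ on $[c_i,\rho]$.

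The core of the argument is the lower bound $u^{i}_y\ge -\varepsilon$ on $[c_i,\rho]$, which I propose to prove by contradiction using the width $w(y):=u^{i}(y)-u^{i+1}(y)$; note that $w>0$ on the common graphical domain by embeddedness of the curve. Suppose $y^{*}\in(c_i,\rho]$ is the first point where $u^{i}_y(y^{*})=-\varepsilon$; then $|u^{i}_y|\le \varepsilon$ on $[c_i,y^{*}]$. Concavity of $u^{i}$ past $c_i$ combined with Lemma~\ref{lem:gradient.inflection} gives $u^{i}(y^{*})\le u^{i}(c_i)+u^{i}_y(c_i)(y^{*}-c_i)\le O(\rho^{-2})$, while the near-flatness of $\Sigma^{i+1}$ gives $u^{i+1}(y^{*})\ge -\varepsilon\rho/100-\delta$, so $w(y^{*})\le O(\varepsilon\rho)$. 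Past $y^{*}$, monotonicity of $u^{i}_y$ yields $u^{i}_y\le -\varepsilon$ while $|u^{i+1}_y|\le \varepsilon/100$ persists all the way to $\bar d_{i+1}\ge 15\rho^{3}$; hence $w'(y)<-\varepsilon/2$ on $[y^{*},15\rho^{3}]$. Advancing by a distance of order $\rho$ past $y^{*}$ (well inside $[0,15\rho^{3}]$) drives $w$ below zero, contradicting $w>0$.

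The main obstacle I foresee is careful book-keeping: every invocation of $|u^{i+1}_y|\le \varepsilon/100$ must stay within $[-\bar d_{i+1},\bar d_{i+1}]$, and the $O(\rho)$ advance past $y^{*}$ must lie inside both $[0,b_i]$ and $[0,b_{i+1}]$. Thanks to the bound $\min(b_i,b_{i+1})\ge 15\rho^{3}$ secured in the second paragraph, all the required inclusions hold with enormous room to spare, so these checks should be routine.
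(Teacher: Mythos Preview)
Your width argument is correct and is genuinely different from the paper's proof. The paper splits into two cases according to whether $\Sigma^i$ has an inflection point: in the inflection case it bounds $|\kappa^i|$ from below past $d_i$ and integrates to contradict the total-turning bound $\int|\kappa|\,ds\le\pi$; in the convex case it runs a tangent-line-versus-horizontal-line argument to force $b_i\le \bar d_{i+1}<b_{i+1}$, contradicting Lemma~\ref{lem:infl.out.rad}. Your single embeddedness argument via $w=u^i-u^{i+1}>0$ handles both cases at once and is more transparent; the paper's approach, on the other hand, never needs to pin down the sign of $u^i_{yy}$ in the inflection case.

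The one point to fix is your ``WLOG $u^i_{yy}<0$ (the symmetric case is identical)''. It is \emph{not} a WLOG: reflecting across the $x$-axis swaps which sheet is on top and therefore swaps the roles of the asymmetric hypotheses $c_i\le\bar d_i$ and $c_{i+1}\ge\bar d_{i+1}$, so the ``symmetric case'' is a different lemma. What is true---and what you should argue directly---is that $u^i_{yy}<0$ on $(c_i,b_i)$ is forced: by Lemma~\ref{lem:infl.out.rad} the sign of $u^i_{yy}$ is constant there, and since $\Sigma^i$ is the \emph{upper} sheet of a right-pointing finger whose total turning is $\pi+O(\varepsilon)$ (trombone condition~(4), or the $\tfrac{\varepsilon}{100}$-grim-reaper structure at the sharp vertex), the upper sheet must curve \emph{downward} toward $\Sigma^{i+1}$, i.e.\ $u^i_y(b_i^-)=-\infty$ and hence $u^i_{yy}<0$. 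With that justification in place, your argument is complete; the book-keeping you anticipate is indeed routine, since $y^*+O(\rho)\ll 15\rho^3\le\min(b_i,\bar d_{i+1})$.
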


\begin{proof}
We recall that Lemma \ref{lem:gradient.inflection} says
\begin{equation}\label{eq:grph.rad.inflection}
\bar d_{i+1} \geq 15\rho^3.    
\end{equation}
Thus, we only need to exclude the case that $d_i\in (\bar d_i,\rho]$. By Lemma \ref{lem:infl.out.rad} we have $b_i\geq b_{i+1}\geq \bar d_{i+1} \geq 15\rho^3$. 

\medskip

Now, we first consider the case that $\Sigma^i$ has an inflection point. Then, by Lemma~\ref{lem: uniqueness of inflection and flat vertex} $|\kappa^i(y)|$ increases in $\{y \geq c_i\}$, and thus by mean value theorem, $|\kappa^i(y)|\geq \frac{99}{200}\varepsilon(d_i-c_i)^{-1} \geq \frac{1}{3}\varepsilon\rho^{-1}$ holds for $y \geq d_i$. Therefore,
\begin{equation}
\tfrac{1}{3} \varepsilon \rho^{-1}(b_i-d_i) \leq  \int_{d_i}^{b_i} |\kappa^i(z)|dz\leq \int_{d_i}^{b_i} |\kappa^i(z)| \sqrt{1+(u^{i}_{y})^{2}(z)}\:dz\leq \pi.
\end{equation}
Thus, combining with $b_i\geq 15\rho^3$ yields $1 \geq \varepsilon \rho^2$. This contradicts $\varepsilon \rho^2 \geq \varepsilon^{-3}\geq 10^{12}$.
\medskip

Next, we assume that $\Sigma^i$ is convex. Recall that in such a case $k_i = c_i$. From assumption, $u^i>u^{i+1}$ in $I_i\cap I_{i+1}$. Then, $|u^i_y(d_i)| =\varepsilon \gg  \varepsilon^2\geq|u^i_y(c_i)|$ and $d_i > c_i$ implies $u^i_y(d_i)=-\varepsilon$. We consider the tangent line $\ell := \{(y, l(y)=-\varepsilon (y-d_i)+u^i(d_i)): y\in\mathbb{R}\}$ of $u^i$ at $y=d_i$, which satisfies $l(y) \geq u^i(y)\geq u^{i+1}(y)$ for $y\in [d_i,b_{i+1}]$. Also, $|u^i_y(y)|\leq \varepsilon$  for $y\in [k_i,d_i]$ yields $|u^i(d_i)|\leq |u^i(k_i)|+\varepsilon(d_i-k_i)\leq \rho^{-4}+2\varepsilon \rho $.

The tangent line $\ell$ intersects the horizontal line $\mathfrak{h}:=\{(y, -15\rho^{3}\frac{\varepsilon}{100}-\varepsilon^{8}):y\in\mathbb{R}\}$ at $(\bar{y}, -15\rho^{3}\frac{\varepsilon}{100}-\varepsilon^{8})$, with $\bar{y} = d_i + \frac{u^{i}(d_{i})}{\varepsilon}+\frac{15\rho^{3}}{100}+\varepsilon^{7}\leq3\rho + \frac{3}{20}\rho^{3}+2\varepsilon^{7}\leq \rho^{3}\leq\overline{d}_{i+1}$. 
Since $\Sigma^{i}$ is convex and $\Gamma^{i}$ is embedded, $\Sigma^{i}$ lies on the left of $\ell$ and on the top of $\mathfrak{h}$, which implies that
\begin{align*}
    b_{i}\leq\overline{y}\leq\overline{d}_{i+1}<b_{i+1},
\end{align*}
contradicts Lemma~\ref{lem:infl.out.rad}. See Figure~\ref{fig: prob.fig}.
\end{proof}

\begin{figure}[h]
    \centering
    \includegraphics[width = 0.8\textwidth]{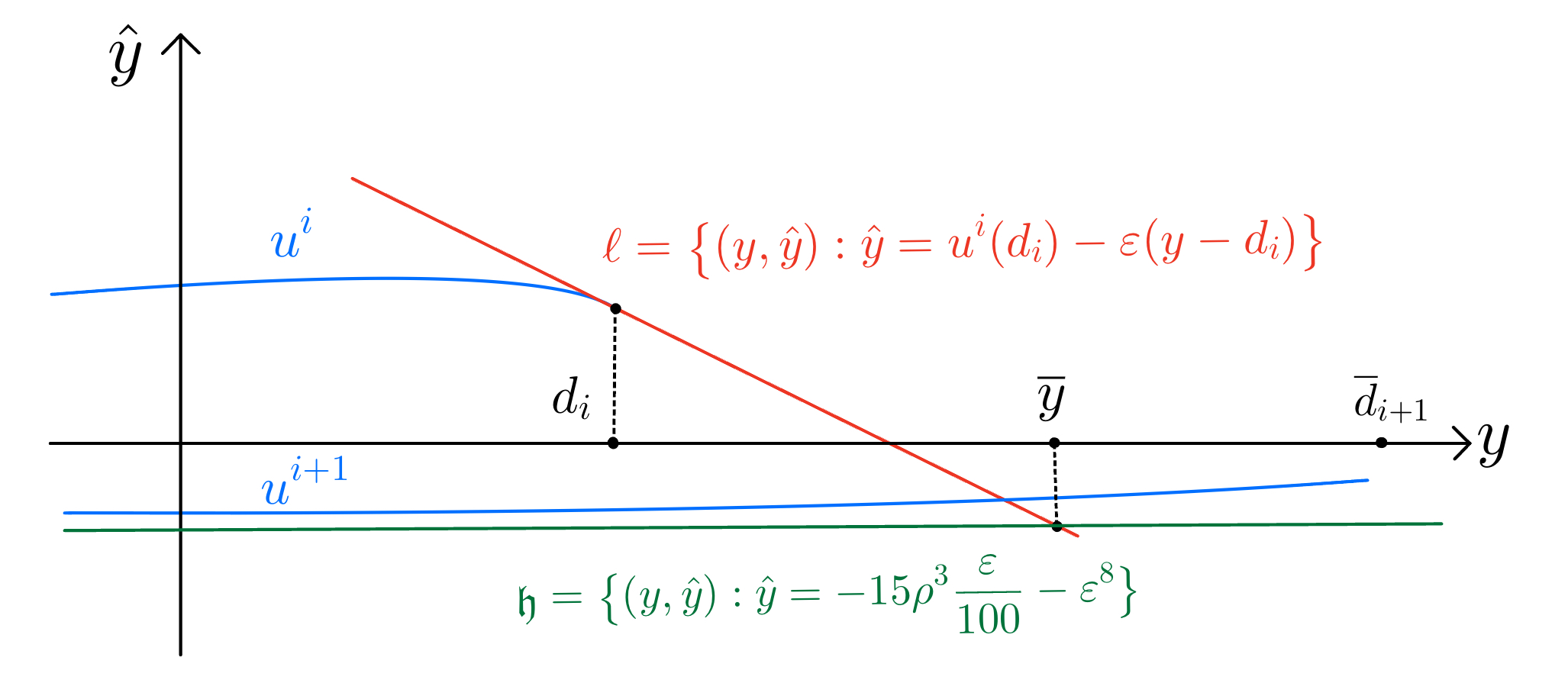}
    \caption{The problematic figure that contradicts Lemma~\ref{lem:infl.out.rad}.}
    \label{fig: prob.fig}
\end{figure}

\begin{lemma}\label{lem:grph.two.inflections}
  If $c_i \leq \bar d_i$ and $c_{i+1}\leq \bar d_{i+1}$ hold, then $\Gamma^i\cap B_\rho(0)$ consists of the graphs of $u^i,u^{i+1}$ with $|u^i_y|,|u^{i+1}_y|\leq \varepsilon$. 
\end{lemma}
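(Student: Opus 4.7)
The plan is to argue by contradiction, assuming $d_i<\rho$ (and by symmetry $d_i\le d_{i+1}$), adapting the scheme of Lemma~\ref{lem:gradient.flat lower sheet}. The role of the lower bound $b_{i+1}\ge 15\rho^{3}$ used in that lemma, which is unavailable here since we lack $\bar d_{i+1}\ge 15\rho^{3}$, will be replaced by an \emph{upper} bound on $b_i$ obtained from a tangent-line comparison exploiting the presence of inflection points on both sheets simultaneously.

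First, I would invoke Lemma~\ref{lem:infl.out.rad} twice. When both $\Sigma^i$ and $\Sigma^{i+1}$ contain inflection points, one application to each sheet forces $b_i\ge b_{i+1}$ and $b_{i+1}\ge b_i$, with equality only when the graph endpoints coincide. Hence $b_i=b_{i+1}$ and $\gamma^i(b_i)=\gamma^{i+1}(b_{i+1})$, which by Theorem~\ref{thm: uniqueness of vertex} is the sharp vertex $SV_i$ of the finger $\Gamma^i$. The remaining subcases, where one or both of $\Sigma^i,\Sigma^{i+1}$ are convex, are treated analogously via the flat-vertex analog in Lemma~\ref{lem: uniqueness of inflection and flat vertex}.

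Next, following Lemma~\ref{lem:gradient.flat lower sheet}, the mean value theorem applied to $u^i_y$ on $[c_i,d_i]$ together with the monotonicity of $|\kappa^i|$ past $c_i$ (Lemma~\ref{lem: uniqueness of inflection and flat vertex}) yields $|\kappa^i(y)|\ge \varepsilon/(3\rho)$ for $y\in[d_i,b_i]$, and the definite sign of $\kappa^i$ on $[c_i,b_i]$ shows that $u^i$ is concave there; symmetrically, $u^{i+1}$ is convex on $[c_{i+1},b_{i+1}]$. Thus the tangent lines
\[
  l_i(y)=-\varepsilon(y-d_i)+u^i(d_i),\qquad l_{i+1}(y)=\varepsilon(y-d_{i+1})+u^{i+1}(d_{i+1})
\]
satisfy $u^i\le l_i$ and $u^{i+1}\ge l_{i+1}$ on the relevant intervals. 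Evaluating at the common endpoint $y=b_i=b_{i+1}$ with $u^i(b_i)=u^{i+1}(b_{i+1})$, and using the elementary estimate $|u^j(d_j)|\le\delta+\varepsilon d_j$ derived from Lemma~\ref{lem:gradient.inflection} together with $|u^j_y|\le\varepsilon$ on $[k_j,d_j]$, I deduce
\[
 \varepsilon\bigl(2b_i-d_i-d_{i+1}\bigr)\le u^i(d_i)-u^{i+1}(d_{i+1})\le 2\delta+2\varepsilon\rho,
\]
hence $b_i\le 2\rho+\varepsilon^{7}$.

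Finally, the contradiction comes from combining this upper bound on $b_i$, together with the analogous control $|u^i(b_i)|\le \delta+2\varepsilon\rho$ coming from the same tangent-line argument, with the far-vertex property: Corollary~\ref{cor: vertex is far} forces $|SV_i|$ to be arbitrarily large in the rescaled picture for $\tau'$ negative enough, so that the bound $|SV_i|^{2}=b_i^{2}+u^i(b_i)^{2}\le 5\rho^{2}$ implied by Step~3 is inconsistent with the lower bound $|SV_i|\ge \rho_{\text{graph}}(\tau')$. The step I expect to be the main obstacle is this quantitative comparison: relating the $\rho$ defined via the $C^{2}$ norm on $[-1,1]$ to the rough-convergence graphical radius $\rho_{\text{graph}}(\tau')$, and carefully handling the subcases where one or both sheets are convex rather than having an honest inflection point (in which case the role of the inflection point in the monotonicity argument is played by the unique flat vertex of the edge).
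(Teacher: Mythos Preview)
Your tangent-line estimate is correct: assuming $d_i,d_{i+1}<\rho$, the inequality $u^i(b)\le l_i(b)$ and $u^{i+1}(b)\ge l_{i+1}(b)$ at the common endpoint $b=b_i=b_{i+1}$ really does yield $b\le 2\rho+O(\varepsilon^7)$. The problem is the last step. To extract a contradiction from $|SV_i|\le C\rho$ via Corollary~\ref{cor: vertex is far} you would need $\rho_{\mathrm{graph}}(\tau')>C\rho(\tau')$, i.e.\ a quantitative comparison between the rough-convergence radius and the $\rho$ defined from $\|u^i\|_{C^2([-1,1])}$ at the \emph{same} time $\tau'$. No such inequality is available; indeed, establishing that the graphical radius is at least $\rho(\tau')$ is precisely the content of Theorem~\ref{thm:grph.radius}, so invoking it here is circular. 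Both $\rho_{\mathrm{graph}}(\tau')$ and $\rho(\tau')$ diverge as $\tau'\to-\infty$, and nothing prevents $\rho(\tau')\gg\rho_{\mathrm{graph}}(\tau')$. (A secondary imprecision: $\gamma^i(b_i)$ is the point of vertical tangent, not the sharp vertex, though they are $O(1)$ apart by the $\varepsilon$-grim-reaper structure.)

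The missing ingredient, which the paper uses in place of the far-vertex input, is the \emph{finger area}: Corollary~\ref{cor: finger.area} gives $\mathrm{Area}\approx\pi$, a fixed positive lower bound that does not depend on any comparison of graphical radii. The paper traps the finger region between the graphs of $v^i,v^{i+1}$ (equal to $u^j$ on $[k_j,c_j]$ and to the tangent line of slope $|u^j_y(c_j)|\le C\rho^{-3}$ past $c_j$), computes $\mathrm{Area}\le C\rho^{-4}(b^2\rho+\rho^3)$, and concludes $b\gtrsim\rho^{3/2}$. The gradient bound on $[0,\rho]$ then follows from the $\varepsilon$-grim-reaper structure, which confines all but $\varepsilon/10$ of the total turning to a ball of radius $O(1/\varepsilon)$ about the vertex. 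Note that your bound $b\le 3\rho$ together with this same area argument would give $\pi\lesssim\rho^{-4}\cdot\rho\cdot(3\rho)^2=O(\rho^{-1})$, an immediate contradiction --- so you are one idea away, but that idea (the area, not the vertex location) is the one that closes the argument.
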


\begin{proof}
    Lemma \ref{lem:infl.out.rad} implies that either $b_i>b_{i+1}$ or $\gamma^i(b_i)=\gamma^{i+1}(b_{i+1})$. By the uniqueness of the inflection point (Lemma~\ref{lem: uniqueness of inflection and flat vertex}), $b_i>b_{i+1}$ is impossible. Hence, we are in the situation of $\gamma^i(b_i)=\gamma^{i+1}(b_{i+1})$. 
    We may assume $u^i\geq u^{i+1}$ and denote $b:=b_i=b_{i+1}$. 
    Since $c_i\leq \bar d_i$, $c_{i+1}\leq \bar d_{i+1}$, and $\widetilde{\Gamma}^i:=\cup_{j=i,i+1}\{\gamma^j(y):y\in [c_j,b]\}$ is convex, we have
    \begin{equation}\label{eq:tip.total.curvature}
        \int_{\widetilde{\Gamma}^i} |\kappa|\:ds\leq \pi +\tfrac{3}{100}\varepsilon.
    \end{equation}
    By Theorem~\ref{thm: vertex path epsilon grim reaper} and \ref{thm: uniqueness of vertex}, choosing sufficiently negative $T$ we have a unique vertex $p\in \Gamma^{i}$ of the finger, and $(\Gamma^{i}-p)|\kappa(p)|$ is $\frac{\varepsilon}{100}$-close in $B(0, \tfrac{100}{\varepsilon})$ to a grim reaper curve with unit speed whose tip is the origin. 
    Note that for a unit-speed grim reaper curve $G$ with tip at the origin, we have the estimate that for any $r > 200\pi$
    \begin{align*}
        \int_{G\cap B(0, r)} \kappa \: ds \geq \pi - e^{-\frac{99}{100}r}.
    \end{align*}
    With $r = \log \big(\frac{50}{\vare}\big)$, considering the deviation from the unit-speed grim reaper and scaling invariance of the total curvature, we find
    \begin{align}\label{eq:tip.total.curvature.1}
        \int_{\widetilde{\Gamma}^i\cap B(p, \sigma)} |\kappa|\:ds\geq \pi - \textstyle\frac{\varepsilon}{20},
    \end{align}
    where $\sigma := \log \big(\frac{50}{\vare}\big)\lvert \kappa(p) \rvert^{-1}$.
    Combining \eqref{eq:tip.total.curvature} and \eqref{eq:tip.total.curvature.1} gives
    \begin{align*}
        \int_{\widetilde{\Gamma}^i\setminus B(p, \sigma)}|\kappa|\:ds\leq\textstyle\frac{\varepsilon}{10}.
    \end{align*}
    Note that from \eqref{eq: impro.vert.l.b.0} in Corollary~\ref{prop: impro.vert.l.b} , by taking $T \ll -1$, we can make $\sigma \leq 10^{-1}$. 
    Thus, on the interval $0\leq y\leq b- 2\sigma$
    we have
    \begin{align*}
        |u^{i}_{y}(y)|\leq |u^{i}_{y}(c_{i})| + \int_{c_{i}}^{b-2\sigma}|u^{i}_{yy}(z)|\:dz\leq\textstyle\frac{\varepsilon}{100}+\frac{\varepsilon}{10}\leq\varepsilon.
    \end{align*}
    Similarly, $|u^{i+1}_{y}|\leq\varepsilon$.
    Therefore, it suffices to show that $b \geq \rho + 1$.

\begin{figure}[h]
    \centering
    \includegraphics[width = 0.7\textwidth]{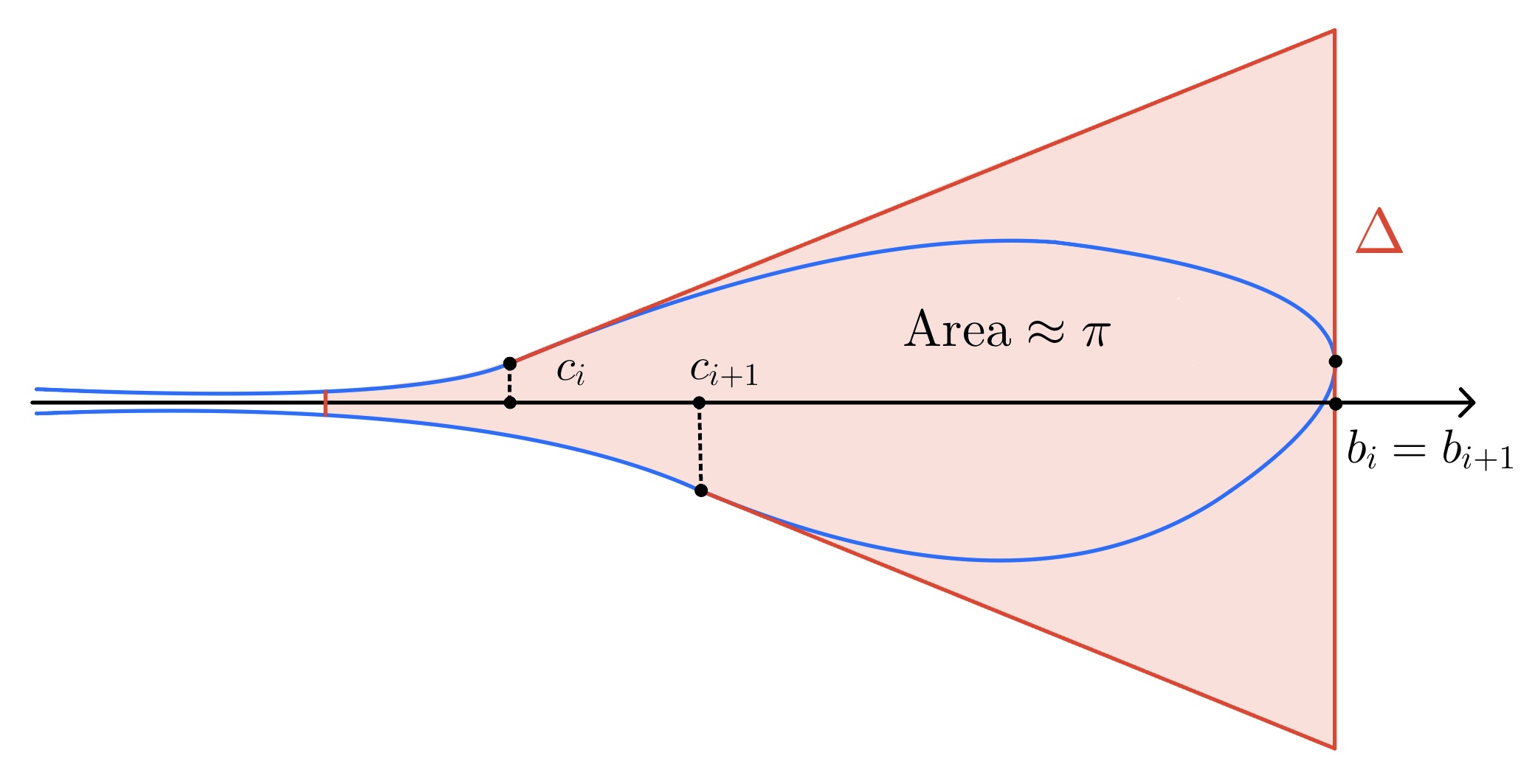}
    \caption{Area of a finger region and the extended triangle $\Delta$. }
    \label{fig: area comparison}
\end{figure}

To prove the claim, we only need to consider the case $c_i,c_{i+1}\leq \rho + 1$, because $b\geq c_i,c_{i+1}$. 
For each $j=i,i+1$ we define a function $v^j$ by $v^j=u^j$ for $y\leq c_j$ and $v^j=u^j_y(c_j)y+u^j(c_j)$. 
Then, we have $u^i\leq v^i$ and $u^{i+1}\geq v^{i+1}$. 
Thus, the region $\Delta :=\{(y,z): 0\leq y \leq b,\: v^{i+1}(y)\leq z \leq v^i(y)\}$ includes the finger region of $\Gamma^i$ (see Figure~\ref{fig: area comparison}). 
By Corollary~\ref{cor: finger.area} we have the area lower bound $\rm{Area}(\Delta) \geq 3$ for $T \ll -1$. Using $|v^i-v^{i+1}|\leq |v_i|+|v_{i+1}|$, we have
\begin{equation}\label{eq: area.l.b}
    3 \leq {\rm{Area}}(\Delta)\leq \int^{b}_0 |v^i(y)|+|v^{i+1}(y)|\:dy.
\end{equation}
On the other hand, by Lemma \ref{lem:gradient.inflection}, we have
\begin{align}\label{eq: area.u.b}
    \rho^4\int^{b}_0 |v^j(y)|dy &\leq \int_{0}^{c_j} (y+2)^2 dy+\int_{c_j}^b 2(c_j+1)y +(c_j+2)^2 dy\leq 10\rho^3+3\rho b^2
\end{align}
for $j=i, i+1$.
Combining \eqref{eq: area.l.b} and \eqref{eq: area.u.b} gives $3\rho^4\leq 20\rho^3+6\rho b^2$.
Therefore, $\rho^3\leq 3b^2$.
This implies that $b > \rho + 1$ as desired. 
\end{proof}
 
 \medskip

\subsection{Proof of Theorem~\ref{thm:grph.radius}}
\begin{proof}[Proof of Theorem \ref{thm:grph.radius}]
As discussed in Remark \ref{rmk:trb.time}, the $\varepsilon$-trombone time does exist for $\mathcal{M}$. Then, for each fixed $\tau'\leq \tau_\varepsilon$, the profiles $u^i$ satisfy the gradient estimate $|u^i_y|\leq \varepsilon$ in $B_{\rho}(0)$ by Lemma~\ref{lem:gradient.inflection}, \ref{lem:gradient.flat lower sheet}, and \ref{lem:grph.two.inflections}. 

Next, we show $|u^i|\leq (y+2)^2\rho^{-4}$. Consider a finger $\Gamma^i\subset (\Sigma^i\cup \Sigma^{i+1})$. We may assume $u^i\geq u^{i+1}$ and recall the profile functions $v^i,v^{i+1}$ for the tangent lines in the proof of Lemma \ref{lem:grph.two.inflections}. Note that for each $j=i,i+1$, $v^j$ is well-defined in the interval $[k_j,\rho]$ even if $c_j\geq \bar b_j$ by Lemma \ref{lem:gradient.inflection}. Thus, we have $v^{i+1}\leq u^{i+1}\leq u^i\leq v^i$. Hence, observing $|v^i|,|v^{i+1}|\leq (y+2)^2\rho^{-4}$, the estimate for $|u^{i}|$ follows. 

Finally, we show $|u^i_{yy}|\leq 5\varepsilon \rho^{-1}$ for $|y|\leq \rho/2$. We have $|u^i_{yy}|\leq 2 |\kappa^i|\leq 2\rho^{-4}$ holds for $y\leq c_i$. Suppose $|u^i_{yy}(y_i)|=5\varepsilon \rho^{-1}$ for some $y_i \in [c_i,\rho/2]$. Then, the monotonicity of the curvature implies $|u^i_{yy}(y_i)|\geq 4\varepsilon \rho^{-1}$ for $y\in  [y_i,\rho]$, and therefore $|u^i_{y}(y_i)-u^i_{y}(\rho)|\geq 2\varepsilon$, a contradiction.
\end{proof}

\bigskip

\section{Uniqueness of tangent flow}

In this section, we show that the rescaled flow converges exponentially fast to a unique tangent flow by using the method of Allard-Almgren. Since we already classified low-entropy flows, we assume in this section that $\mathcal{M}$ is a nontrivial ancient flow with $\text{Ent}(\mathcal{M})=m\geq 2$.

\subsection{Setting-up for spectral analysis}

Suppose that the graph of a function $u:I \times [t_1,t_2] \to \mathbb{R}$ is a rescaled curve-shortening flow. Then, the profile $u$ satisfies
\begin{equation}
    u_\tau=\frac{u_{yy}}{1+u_y^2}-\frac{y}{2}u_y+\frac{u}{2}=\mathcal{L}u -\frac{u_y^2u_{yy}}{1+u_y^2},
\end{equation}
namely
\begin{equation}
    \mathcal{L}:=\partial_{y}^2-\tfrac{1}{2}y\partial_y+\tfrac{1}{2}.
\end{equation}
We recall the inner product
\begin{equation}
    \langle f,g\rangle_\mathcal{H}:=\int_{\mathbb{R}}  \frac{fg}{\sqrt{4\pi}} e^{-\frac{y^2}{4}}dy,
\end{equation}
which defines the Gaussian $L^2$-norm $\|\cdot\|_{\mathcal{H}}$. Since $\mathcal{L}-\frac{1}{2}$ is the Ornstein–Uhlenbeck operator, $\mathcal{L}$ has an eigendecomposition of the Gaussian $L^2$-space. In particular, $\varphi_1(y):=1$ is the only unstable eigenfunction of $\mathcal{L}$, and $\varphi_2(y):=2^{-\frac{1}{2}}y$ is the only neutral eigenfunction of $\mathcal{L}$. Moreover, we can directly compute
\begin{align}
&\|\varphi_1\|_{\mathcal{H}}=\|\varphi_2\|_{\mathcal{H}}=1, && \mathcal{L}\varphi_1-\tfrac{1}{2}\varphi_1=0, && \mathcal{L}\varphi_2=0.
\end{align}
Now, we define the projections $P_+,P_0,P_-$ to the unstable, neutral, stable spaces, respectively. Namely,
\begin{align}
   & P_+f:=\langle \varphi_1,f\rangle_{\mathcal{H}}\varphi_1, & P_0f:=\langle \varphi_2,f\rangle_{\mathcal{H}}\varphi_2, && P_-f:=f-P_+f-P_0f.
\end{align}
Note that $\varphi_3(y):=2^{-\frac{3}{2}}(y^2-2)$ is the stable eigenfunction with the least positive eigenvalue $\lambda_3=\frac{1}{2}$. Thus,
\begin{equation}
    \langle P_- f,\mathcal{L} P_-f\rangle_{\mathcal{H}}\leq -\tfrac{1}{2}\|P_-f\|_{\mathcal{H}}^2.
\end{equation}

\bigskip

On the other hand, the profile of a nontrivial rescaled ancient flow is not an entire function. Hence, we consider a cut-off function $0\leq \eta \in C^\infty(\mathbb{R})$ such that $\eta(s)\equiv 1$ for $|s|\leq 1$, $\eta(s)\equiv 0$ for $|s|\geq 2$, and $|\eta'|,|\eta''|\leq 2$ in $\mathbb{R}$. Then,  given $r\geq 1$ we define 
\begin{equation}
    \hat u(y,\tau):=u(y,\tau)\eta(y/r),
\end{equation}
and
\begin{equation}
   E(u,r):= \hat u_\tau- \mathcal{L}\hat u.
\end{equation}
For simplicity, we denote by $\mathcal{P}^\tau_{r,s}$ the spacetime region in $\mathbb{R}\times (-\infty,0]$ that 
\begin{equation}
    \mathcal{P}^\tau_{r,s}:=(-r,r)\times (\tau-s,\tau].
\end{equation}

\begin{definition}
    We say that a rescaled ancient flow $\overline{M}_\tau$ is $\delta$-linear with multiplicity $m$ at time $\tau'$ if there exist some functions $u^i:\mathcal{P}^{\tau'}_{4,2}\to \mathbb{R}$ for $i=1,\cdots,m$ such that for each $\tau \in [\tau'-2,\tau']$, $\overline{M}_\tau\cap B_{4}(0)$  consists of the graphs of profiles $u^1(\cdot,\tau),\cdots,u^m(\cdot,\tau)$ and $\|u^i\|_{C^2(\mathcal{P}^{\tau'}_{4,2})}\leq \delta$ holds for every $i=1,\cdots,m$.
\end{definition}

To deal with $u^1,\cdots,u^m$ together, we use the notation
\begin{align}
  \mathbf{u}  =( u^1,\cdots, u^m).
\end{align}
If $u^i$ are profiles of a flow, then we call $\mathbf{u}$ a profile bundle of the flow. Also, we denote 
\begin{align}
&    a^i(\tau):=\langle  \hat u^i,\varphi_1\rangle_{\mathcal{H}}, && b^i(\tau):=\langle  \hat u^i,\varphi_2\rangle_{\mathcal{H}}.
\end{align}

\bigskip

 \subsection{Improvement of flatness in Gaussian $L^2$-space}  Let us show that the profiles become closer to linear functions in the norm $\|\cdot\|_{\mathcal{H}}$ as $\tau \to -\infty$. We begin by gluing the domains $\mathcal{P}^\tau_{4,2}$ of the profiles $u^i$ in order to extend the domains with a fixed rotation.

\begin{proposition}\label{prop:glued.domain}
 There are numeric constants $C_0> 1$ and $\sigma_0 \in (0,1)$ with the following significance. Suppose that for some $\tau_1 \ll -1$ and $L\in \mathbb{N}$ with $L \geq 2$, every $\tau'\leq \tau_1$ has a rotation $S_{\tau'}\in SO(2)$ such that  $S_{\tau'}\overline{M}_\tau$ is $\delta$-linear with multiplicity $m$ at $\tau'$ where $\delta  \leq \sigma_0/L $. Then,  there is a profile bundle $ \mathbf{u}:\mathcal{P}^{\tau'}_{3,4L}\to \mathbb{R}^m$ of $S_{\tau'}\overline{M}_{\tau'}$ such that
 \begin{equation}
     S_{\tau'}\overline{M}_\tau\cap B_3(0)\subset 
    \{(y,\mathbf{u}(y,\tau)):|y|\leq 3\},
 \end{equation}
and
\begin{equation}
\|\mathbf{u}\|_{C^2(\mathcal{P}^{\tau
}_{3,4L})}\leq C_0L\delta,
\end{equation}
hold for $\tau \in [\tau'-4L,\tau']$.
\end{proposition}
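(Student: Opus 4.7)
The plan is to bound the drift of the rotation family $\{S_\tau\}$ across a backward window of length $4L$, and thereby show that every time slice in that window remains graphical when viewed in the single fixed frame $S_{\tau'}$. The heart of the argument is a rigidity statement: if a single curve is represented as a family of nearly-horizontal graphs in two rotated frames, then the two frames must differ by a rotation of small angle.

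I first carry out a one-step drift estimate. For $\tau_a > \tau_b$ with $\tau_a - \tau_b \leq 1$ (both $\leq \tau_1$), the $\delta$-linearity hypothesis at $\tau_a$ and $\tau_b$ represents each sheet of $\overline{M}_\tau$, for $\tau$ in the overlap $[\tau_a - 1, \tau_a]$, as a $C^2$-small graph in both frames $S_{\tau_a}$ and $S_{\tau_b}$. Since each sheet has tangent within angle $2\delta$ of horizontal in each of those two frames, the composed rotation $R := S_{\tau_a} S_{\tau_b}^{-1}$ that converts one frame to the other has rotation angle $|\theta(R)| \leq C_1 \delta$ for some numeric constant $C_1$. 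Telescoping over a chain $\tau' = \xi_0 > \xi_1 > \cdots > \xi_N = \tau''$ with $\xi_{k-1} - \xi_k \leq 1$ and $N \leq 4L + 1$, the rotation angle of $S_{\tau'} S_{\tau''}^{-1}$ is then bounded by $C_1 N \delta \leq 5 C_1 L \delta$. Choosing $\sigma_0 \leq 1/(100 C_1)$ makes this cumulative drift at most $1/20$ for every $\tau'' \in [\tau' - 4L, \tau']$, well within the regime where a graph over $(-4,4)$ persists as a graph over $(-3,3)$ after rotation.

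I then assemble the profile bundle. For each $\tau'' \in [\tau' - 4L, \tau']$, $S_{\tau''} \overline{M}_{\tau''}$ consists of $m$ graphs $(y, v^i(y, \tau''))$ over $(-4, 4)$ with $C^2$ norm $\leq \delta$. Applying $S_{\tau'} S_{\tau''}^{-1}$ and invoking the standard formula for a graph expressed in a rotated coordinate system, which for small rotation angle $\theta$ changes $C^0$ by $O(|\theta| \cdot |y|)$, $C^1$ by $O(|\theta|)$, and $C^2$ by a multiplicative factor close to $1$, each rotated sheet becomes the graph of some $u^i(\cdot, \tau'')$ over $(-3, 3)$ satisfying
\[
\|u^i(\cdot, \tau'')\|_{C^2((-3,3))} \leq C_2 \bigl(\delta + 5 C_1 L \delta\bigr) \leq C_0 L \delta,
\]
for an appropriate numeric $C_0$. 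Joint smoothness of $u^i$ in $(y, \tau)$ follows from smoothness of the rescaled flow and the implicit function theorem, so $\mathbf{u} = (u^1, \ldots, u^m)$ is the desired profile bundle on $\mathcal{P}^{\tau'}_{3, 4L}$.

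The main obstacle is the cumulative rotation estimate: each one-step drift is only $O(\delta)$, but the number of steps scales with $L$, forcing the tradeoff $L \delta \leq \sigma_0$ that is built into the hypothesis. The one-step rigidity itself reduces to elementary planar geometry once one observes that two nearly-horizontal graph representations of the same collection of disjoint arcs over a common interval pin down the relative rotation of the two frames up to an error controlled by the gradients of the profiles; the $C^2$ bookkeeping in the final step is then routine.
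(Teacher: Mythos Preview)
Your proposal is correct and follows essentially the same approach as the paper: bound the one-step rotation drift by $O(\delta)$ via the common overlap window, telescope over $O(L)$ steps to get cumulative drift $O(L\delta)$, and then re-express the graphs in the fixed frame $S_{\tau'}$. You have in fact spelled out more detail than the paper does, particularly the $C^2$ bookkeeping in the final assembly step, which the paper leaves implicit. One small slip: the overlap interval for $\tau_a>\tau_b$ with $\tau_a-\tau_b\leq 1$ is $[\tau_a-2,\tau_b]$, not $[\tau_a-1,\tau_a]$ (the latter is not contained in the $\tau_b$-window since $\tau_a>\tau_b$); but any single time in the genuine overlap suffices for your rigidity step, so the argument is unaffected.
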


\begin{proof}
There are $S_{\tau'},S_{\tau'-1},\cdots, S_{\tau'-4L+2}\in SO(2)$ such that for each $j=0,\cdots,4L-2$, $S_{\tau'-j}\overline{M}_\tau\cap B_4(0)$ consists of the graph of a profile bundle $\mathbf{u}^j:\mathcal{P}^{\tau'-j}_{4,2}\to \mathbb{R}^m$ satisfying $\|\mathbf{u}^j\|_{C^2(\mathcal{P}^{\tau'-j}_{4,2})}\leq \delta$. Let $\mathbf{u}^j:=(u^{1,j},\cdots,u^{m,j})$ and denote by $\Gamma^{1,j}$ the graph of $u^{1,j}$. Then, we may even assume  $u^{1,j}>\cdots>u^{m,j}$ and $S_{\tau'-j}^{-1}\Gamma^{1,j}_\tau=S_{\tau'-j+1}^{-1}\Gamma^{1,j}_\tau$ in the overlapping interval $[\tau'-j-1,\tau'-j]$. Thanks to the overlapping interval, we have $|S_{\tau'-j}-S_{\tau'-j+1}|\leq C\delta$. Therefore, remembering $\delta L\ll 1$, we have $|S_{\tau'}-S_{\tau'-j}|\leq CL\delta$ for all $j=0,\cdots,4L-2$. This completes the proof.
\end{proof}

\bigskip

 \begin{lemma}\label{lem:rough.weighted.C2}
Given $\varepsilon \in (0,10^{-1000})$, there is some constant $\sigma_1(\mathcal{M},\varepsilon) \leq C_0\sigma_0$ with the following significance. 
Let $\tau_\varepsilon(\mathcal{M})$ denote the $\varepsilon$-trombone time. Suppose that 
 \begin{equation}
\|\mathbf{u}\|_{C^2(\mathcal{P}^{\tau'}_{3,4L})}\leq (4r)^{-4}\leq \sigma_1,
\end{equation}
holds for some $\tau'\leq \tau_\varepsilon$, $r \geq 100$, and $L\geq 2$. Then, for each $i=1,\cdots,m$,
\begin{align}\label{eq:rough.esti_profile}
&|u^i|\leq (4r)^{-4}(|y|+2)^2, && |u^i_y|\leq \varepsilon, && |u^i_{yy}|\leq \tfrac{5\varepsilon}{4r},
\end{align}
 hold in $\mathcal{P}^{\tau'}_{2r,4L}$. In particular, each function $\hat u^i:=\eta(y/r)u^i$ satisfies
\begin{equation}\label{eq:rough.Gaussian.L2}
    \|\hat u^i\|_{\mathcal{H}}\leq Cr^{-4}
\end{equation}
in $\tau \in [\tau'-4L,\tau']$, where $C$ is  some numeric constant.
 \end{lemma}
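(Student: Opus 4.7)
The strategy is to apply the graphical radius theorem (Theorem~\ref{thm:grph.radius}) time-slice by time-slice over the interval $[\tau'-4L,\tau']$, with the given profile bundle $\mathbf{u}$ playing the role of the initial $C^2$-small graphical representation on $B_2(0)$. Since the bundle $\mathbf{u}$ is already defined on $(-3,3)\times(\tau'-4L,\tau']$, the rotation $S(\tau)\in SO(2)$ appearing in the hypothesis of Theorem~\ref{thm:grph.radius} may be taken to be the identity for every $\tau$ in this range, which avoids any subtlety about choosing a time-dependent rotation: the conclusion of the graphical radius theorem will provide an extension of that same bundle.

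Concretely, fix any $\tau\in[\tau'-4L,\tau']$. Since $\tau\le\tau'\le\tau_\varepsilon$, the rescaled time-slice $\overline{M}_\tau$ has all the trombone-time properties. The hypothesis $\|\mathbf{u}\|_{C^2(\mathcal{P}^{\tau'}_{3,4L})}\le (4r)^{-4}$ restricts to $\|u^i(\cdot,\tau)\|_{C^2([-2,2])}\le(4r)^{-4}\le\sigma_1$. Choosing $\sigma_1=\sigma_1(\mathcal{M},\varepsilon)$ small enough so that $\sigma_1\le\delta$, where $\delta=\delta(\varepsilon)$ is the constant in Theorem~\ref{thm:grph.radius}, we may invoke that theorem. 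The graphical radius $\rho(\tau)$ defined there satisfies
\begin{equation*}
\rho(\tau)^{-4}=\max_{1\le i\le m}\|u^i(\cdot,\tau)\|_{C^2([-1,1])}\le (4r)^{-4},
\end{equation*}
hence $\rho(\tau)\ge 4r$. The three conclusions of Theorem~\ref{thm:grph.radius} then yield, for $|y|\le 2r\le\rho(\tau)/2$,
\begin{equation*}
|u^i(y,\tau)|\le(|y|+2)^2\rho(\tau)^{-4}\le(|y|+2)^2(4r)^{-4},\quad |u^i_y(y,\tau)|\le\varepsilon,\quad |u^i_{yy}(y,\tau)|\le\tfrac{5\varepsilon}{\rho(\tau)}\le\tfrac{5\varepsilon}{4r},
\end{equation*}
which are exactly the three claimed estimates in $\mathcal{P}^{\tau'}_{2r,4L}$.

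For the final Gaussian $L^2$ bound, since $\hat u^i=\eta(y/r)u^i$ is supported in $\{|y|\le 2r\}\subset\{|y|\le\rho(\tau)\}$, one substitutes the pointwise bound $|u^i(y,\tau)|\le(|y|+2)^2(4r)^{-4}$ and computes
\begin{equation*}
\|\hat u^i(\cdot,\tau)\|_{\mathcal{H}}^2\le (4r)^{-8}\int_{\mathbb{R}}\frac{(|y|+2)^4}{\sqrt{4\pi}}e^{-y^2/4}\,dy\le C^2 r^{-8},
\end{equation*}
giving $\|\hat u^i\|_{\mathcal{H}}\le Cr^{-4}$ uniformly in $\tau\in[\tau'-4L,\tau']$, as desired.

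The main thing to verify is that Theorem~\ref{thm:grph.radius} may indeed be applied with $S(\tau)\equiv\mathrm{Id}$ at every time in the interval, but this is automatic: Theorem~\ref{thm:grph.radius} only requires the existence of \emph{some} rotation with respect to which the flow is graphical with small $C^2$ norm, and the hypothesis of the present lemma produces such a rotation already. The constant $\sigma_1$ is then chosen so that $\sigma_1\le\min\{\delta(\varepsilon),C_0\sigma_0\}$, where $\delta(\varepsilon)$ comes from Theorem~\ref{thm:grph.radius} and the upper bound $C_0\sigma_0$ ensures compatibility with Proposition~\ref{prop:glued.domain}, which is how bundles on $\mathcal{P}^{\tau'}_{3,4L}$ arise in subsequent applications. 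No deeper difficulty is expected: the lemma is essentially a time-interval restatement of the single-time graphical radius result combined with a routine Gaussian integration.
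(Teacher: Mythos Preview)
Your proposal is correct and follows the same approach as the paper: apply the graphical radius theorem (Theorem~\ref{thm:grph.radius}) at each fixed time $\tau\in[\tau'-4L,\tau']$ to obtain the pointwise estimates \eqref{eq:rough.esti_profile}, then integrate the first of these against the Gaussian weight to get \eqref{eq:rough.Gaussian.L2}. The paper's own proof is a terse two-line version of exactly what you wrote out in detail.
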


\begin{proof}
 The graphical radius lower bound theorem \ref{thm:grph.radius} implies \eqref{eq:rough.esti_profile}. Then, the first inequality in \eqref{eq:rough.esti_profile} yields \eqref{eq:rough.Gaussian.L2}. This completes the proof.
\end{proof}

 \bigskip

\begin{lemma}\label{lem:error.proj}
We recall $\mathbf{u},\delta ,L,\tau_1,C_0$ from Proposition \ref{prop:glued.domain} and assume $\tau_1 \leq \tau_\varepsilon$ for some $\varepsilon\in (0,10^{-1000})$ and $r:=\frac{1}{4}(C_0L\delta)^{-\frac{1}{4}}\geq 100$ by choosing a small enough $\delta$. Then, the profile bundle $\mathbf{u}$ of $S_{\tau'}\mathcal{M}_\tau$ is well-defined on $\mathcal{P}^{\tau'}_{2r,4L}$ for every $\tau' \leq \tau_1$. Moreover, each $E^i:=\hat u^i_\tau-\mathcal{L}\hat u^i$ satisfies
\begin{equation}\label{eq:error.proj.finite.space}
    \|P_+E^i\|_{\mathcal{H}} + \|P_0 E^i\|_{\mathcal{H}} \leq C\varepsilon \|  \hat u^i_y\|_{\mathcal{H}}^2+Cr^{-\frac{3}{2}}e^{-\frac{r^2}{8}},
\end{equation}
 and
 \begin{equation}\label{eq:error.proj.stable}
      \|P_-E^i\|_{\mathcal{H}} \leq C\varepsilon   r^{-1}\|  \hat u^i_y\|_{\mathcal{H}}+Cr^{-\frac{3}{2}}e^{-\frac{r^2}{8}},
 \end{equation}
for $\tau \in [\tau'-4L,\tau']$, where $C$ is some numeric constant.
\end{lemma}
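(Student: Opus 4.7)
The plan is to compute $E^i = \hat u^i_\tau - \mathcal{L}\hat u^i$ explicitly, split it into a quasilinear error coming from the CSF equation and a linear commutator supported only in the cutoff transition region $r\leq |y|\leq 2r$, and estimate each separately; the Gaussian tail will absorb the commutator into the $Cr^{-3/2}e^{-r^2/8}$ error. First I would verify that Lemma~\ref{lem:rough.weighted.C2} applies: with $r = \tfrac14(C_0L\delta)^{-1/4}$ we have $(4r)^{-4}=C_0L\delta$, and Proposition~\ref{prop:glued.domain} gives $\|\mathbf{u}\|_{C^2(\mathcal{P}^{\tau'}_{3,4L})}\leq C_0L\delta = (4r)^{-4}$. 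Choosing $\sigma_0$ small enough ensures $r\geq 100$ and $(4r)^{-4}\leq \sigma_1$, so Lemma~\ref{lem:rough.weighted.C2} extends $\mathbf{u}$ to $\mathcal{P}^{\tau'}_{2r,4L}$ together with the pointwise bounds $|u^i|\leq (4r)^{-4}(|y|+2)^2$, $|u^i_y|\leq \varepsilon$, $|u^i_{yy}|\leq 5\varepsilon/(4r)$.

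Since $u^i_\tau = \mathcal{L}u^i - (u^i_y)^2u^i_{yy}/(1+(u^i_y)^2)$ and $\hat u^i = \eta(y/r)u^i$, a direct expansion of $[\mathcal{L},\eta(y/r)]u^i$ yields $E^i = N^i + R^i$ where
\begin{equation*}
N^i = -\eta(y/r)\,\frac{(u^i_y)^2 u^i_{yy}}{1+(u^i_y)^2},\qquad R^i = -\frac{\eta''(y/r)}{r^2}u^i - \frac{2\,\eta'(y/r)}{r}u^i_y + \frac{y\,\eta'(y/r)}{2r}u^i,
\end{equation*}
with $\mathrm{supp}\,N^i\subset\{|y|\leq 2r\}$ and $\mathrm{supp}\,R^i\subset\{r\leq |y|\leq 2r\}$. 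On the support of $R^i$, the pointwise bounds give $|R^i|\leq C/r$, and the Gaussian tail $\int_{r\leq |y|\leq 2r} e^{-y^2/4}/\sqrt{4\pi}\,dy\leq (C/r)e^{-r^2/4}$ yields $\|R^i\|_{\mathcal{H}}\leq Cr^{-3/2}e^{-r^2/8}$, which via Cauchy--Schwarz bounds each of the three projections of $R^i$ by $Cr^{-3/2}e^{-r^2/8}$.

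For $N^i$ and the projections onto $\varphi_1,\varphi_2$, use $|u^i_{yy}|\leq C\varepsilon/r\leq C\varepsilon$ to obtain
\begin{equation*}
|\langle N^i,\varphi_j\rangle_{\mathcal{H}}|\leq C\varepsilon\int \eta(y/r)(u^i_y)^2\,\frac{e^{-y^2/4}}{\sqrt{4\pi}}\,dy \qquad (j=1,2).
\end{equation*}
Since $\hat u^i_y = u^i_y$ on $\{|y|\leq r\}$, the integral over this region is at most $\|\hat u^i_y\|_{\mathcal{H}}^2$; the complementary contribution from $\{r\leq |y|\leq 2r\}$ is at most $C\varepsilon^2 r^{-1}e^{-r^2/4}$ by the tail estimate. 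Together these give \eqref{eq:error.proj.finite.space}. For \eqref{eq:error.proj.stable}, bound $\|N^i\|_{\mathcal{H}}$ directly by factoring $(N^i)^2\leq \eta(y/r)(u^i_y)^2\cdot (u^i_y)^2(u^i_{yy})^2\leq C\varepsilon^4 r^{-2}\eta(y/r)(u^i_y)^2$, which gives $\|N^i\|_{\mathcal{H}}\leq C\varepsilon^2 r^{-1}\|\hat u^i_y\|_{\mathcal{H}} + Cr^{-3/2}e^{-r^2/8}$; combined with the bound on $\|R^i\|_{\mathcal{H}}$ and the trivial $\|P_- E^i\|_{\mathcal{H}}\leq \|E^i\|_{\mathcal{H}}$, this yields \eqref{eq:error.proj.stable}.

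The only substantive bookkeeping is in the transition region, where one must ensure that the discrepancy between $\int\eta(y/r)(u^i_y)^2 e^{-y^2/4}$ and $\|\hat u^i_y\|_{\mathcal{H}}^2$ (caused by the derivative of the cutoff, $\hat u^i_y = \eta u^i_y + r^{-1}\eta'u^i$) is dominated by $Cr^{-3/2}e^{-r^2/8}$. This succeeds since on $\{|y|\geq r\}$ the Gaussian weight beats any polynomial in $r$, so no delicate cancellation is needed.
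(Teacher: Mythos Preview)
Your proposal is correct and follows essentially the same route as the paper. The only cosmetic difference is in the splitting of the nonlinear piece: the paper sets $E_1=-\hat u_y^{2}u_{yy}/(1+u_y^{2})$ (with $\hat u_y$ rather than $u_y$) and absorbs the resulting cross terms $\eta(\eta-1)u_y^{2}+2\eta\eta' u u_y/r+|\eta'|^{2}u^{2}/r^{2}$ into the commutator $E_2$. This choice makes the projection estimates land directly on $\|\hat u_y\|_{\mathcal H}^{2}$ without the transition-region bookkeeping you do in your last paragraph; your version handles that bookkeeping correctly via the Gaussian tail, so the two arguments are equivalent. One small point worth making explicit in your write-up: for $\varphi_2=y/\sqrt{2}$ the bound $|\langle N^i,\varphi_2\rangle_{\mathcal H}|\leq C\varepsilon\|\hat u_y^i\|_{\mathcal H}^{2}$ uses $|y\,u^i_{yy}|\leq 2r\cdot 5\varepsilon/(4r)=C\varepsilon$ on the support of $\eta$, not merely $|u^i_{yy}|\leq C\varepsilon$.
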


\begin{proof}
    For the sake of brevity, we drop the index $i$ in this proof. We observe
    \begin{equation}
        E(u,r)=\hat u_\tau-\mathcal{L}\hat u=E_1+E_2,
    \end{equation}
where
\begin{equation}
    E_1:=-\frac{\hat u_y^2u_{yy}}{1+u_y^2},
\end{equation}
and
\begin{align}
  E_2:=-\frac{\eta'' u}{r^2}-\frac{2\eta' u_y}{r} +\frac{y\eta' u}{2r} + \frac{u_{yy}}{1+u_y^2}\left(\eta(\eta-1) u_y^2+\frac{2\eta \eta'uu_y}{r} +\frac{|\eta'|^2u^2}{r^2}  \right).
\end{align}
In the set $J:=\{r\leq |y| \leq 2r\}$, remembering $r\geq 100$ and $\varepsilon \leq 10^{-1000}$, Lemma \ref{lem:rough.weighted.C2} yields
 \begin{equation}
     |E_2|\leq r^{-1}\chi_J,
 \end{equation}
 where $\chi_J(y) =1$ for  $y\in J$ and $\chi_J(y) =0$ for $y \not \in J$. Hence,
 \begin{equation}\label{eq:Error2.estimates}
     \|E_2\|_{\mathcal{H}}^2\leq \int_{r}^{2r}  r^{-2}(4\pi)^{-\frac{1}{2}} e^{-\frac{s^2}{4}}ds\leq Cr^{-3}e^{-\frac{r^2}{4}},
 \end{equation}
 for some numeric constant $C$.   

\bigskip

 Next, using Lemma \ref{lem:rough.weighted.C2}, we have
 \begin{equation}
 |\langle E_1,y\rangle_\mathcal{H}| \leq  \int |yu_{yy}|\hat u_y^2 e^{-\frac{y^2}{4}}\leq C\varepsilon \|\hat u_y\|_{\mathcal{H}}^2.  
 \end{equation}
 Similarly, we can obtain $|\langle E_1,1\rangle_\mathcal{H}|\leq C\varepsilon \|\hat u_y\|_{\mathcal{H}}^2$. Thus, combining with \eqref{eq:Error2.estimates} yields \eqref{eq:error.proj.finite.space}. 

 \bigskip
To show \eqref{eq:error.proj.stable}, by using Lemma \ref{lem:rough.weighted.C2}, we directly compute
\begin{equation}
     \int E_1^2e^{-\frac{y^2}{4}}\leq \int  u_{yy}^2\hat u_y^4 e^{-\frac{y^2}{4}} \leq  \frac{C \varepsilon^2(\varepsilon+r^{-3})^2}{r^2}\int   \hat u_y^2 e^{-\frac{y^2}{4}}.
\end{equation}
Since $\|P_-E\|_{\mathcal{H}}^2\leq \|E\|_{\mathcal{H}}^2 \leq \|E_1\|_{\mathcal{H}}^2+\|E_2\|_{\mathcal{H}}^2$, combining with \eqref{eq:Error2.estimates} completes the proof.
\end{proof}

\bigskip
\begin{lemma}\label{lem:ODEs}
There is some numeric constant $C$ such that each $\hat u^i$ satisfies
\begin{align}
    \big|  \tfrac{1}{2}a^i-\tfrac{d}{d\tau}a^i \big| &\leq C \varepsilon\|  \hat u^i_y\|_{\mathcal{H}}^2+Cr^{-\frac{3}{2}}e^{-\frac{r^2}{8}},\\
    \big|\tfrac{d}{d\tau} b^i\big| &\leq C\varepsilon \|  \hat u^i_y\|_{\mathcal{H}}^2+Cr^{-\frac{3}{2}}e^{-\frac{r^2}{8}}, \label{eqn:2}\\
    \tfrac{d}{d\tau}\|P_-\hat u^i\|_{\mathcal{H}}^2 & \leq -\tfrac{1}{2} \|(P_-\hat u^i)_y\|_{\mathcal{H}}^2-\tfrac{1}{4}\|P_-\hat u^i\|_{\mathcal{H}}^2+C\varepsilon^2 r^{-2}\|\hat u^i_y\|_{\mathcal{H}}^2+Cr^{-3} e^{-\frac{r^2}{4}}, \label{eqn:1}
\end{align}
in $\tau \in [\tau'-4L,\tau']$.
\end{lemma}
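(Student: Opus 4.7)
The plan is to differentiate each of $a^i(\tau)$, $b^i(\tau)$, and $\|P_-\hat u^i\|_{\mathcal{H}}^2$ in $\tau$, substitute the identity $\hat u^i_\tau = \mathcal{L}\hat u^i + E^i$, and exploit self-adjointness of $\mathcal{L}$ on the weighted space $\mathcal{H}$ together with the error bounds \eqref{eq:error.proj.finite.space}--\eqref{eq:error.proj.stable} of Lemma~\ref{lem:error.proj}. Boundary terms in the weighted integration by parts vanish because $\hat u^i$ is supported in $\{|y|\leq 2r\}$ thanks to the cutoff $\eta(y/r)$.

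For $a^i$, I would compute
\[
\tfrac{d}{d\tau} a^i = \langle \hat u^i_\tau, \varphi_1\rangle_{\mathcal{H}} = \langle \mathcal{L}\hat u^i, \varphi_1\rangle_{\mathcal{H}} + \langle E^i, \varphi_1\rangle_{\mathcal{H}} = \tfrac{1}{2} a^i + \langle P_+ E^i, \varphi_1\rangle_{\mathcal{H}},
\]
using $\mathcal{L}\varphi_1 = \tfrac{1}{2}\varphi_1$ and orthogonality of $\varphi_1$ to the ranges of $P_0$ and $P_-$. Cauchy--Schwarz combined with \eqref{eq:error.proj.finite.space} then gives the first inequality. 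The argument for $b^i$ is verbatim with $\varphi_2$ in place of $\varphi_1$, except that $\mathcal{L}\varphi_2 = 0$ removes the drift term, leaving $\tfrac{d}{d\tau} b^i = \langle P_0 E^i, \varphi_2\rangle_{\mathcal{H}}$.

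For the stable-part estimate, the spectral projection $P_-$ commutes with $\mathcal{L}$, so $(P_-\hat u^i)_\tau = \mathcal{L}(P_-\hat u^i) + P_-E^i$ and
\[
\tfrac{d}{d\tau}\|P_-\hat u^i\|_{\mathcal{H}}^2 = 2\langle P_-\hat u^i, \mathcal{L}(P_-\hat u^i)\rangle_{\mathcal{H}} + 2\langle P_-\hat u^i, P_-E^i\rangle_{\mathcal{H}}.
\]
Weighted integration by parts gives the identity $\langle f, \mathcal{L}f\rangle_{\mathcal{H}} = -\|f_y\|_{\mathcal{H}}^2 + \tfrac{1}{2}\|f\|_{\mathcal{H}}^2$, and the spectral gap (since $\lambda_3 = \tfrac{1}{2}$ is the smallest stable eigenvalue magnitude) yields the Poincar\'e-type inequality $\|(P_-\hat u^i)_y\|_{\mathcal{H}}^2 \geq \|P_-\hat u^i\|_{\mathcal{H}}^2$. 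Setting $f = P_-\hat u^i$ and rewriting $-2\|f_y\|^2 + \|f\|^2 = -\tfrac{1}{2}\|f_y\|^2 - \tfrac{3}{2}\|f_y\|^2 + \|f\|^2 \leq -\tfrac{1}{2}\|f_y\|^2 - \tfrac{1}{2}\|f\|^2$ extracts the correct dissipation. I would treat the cross term by $2\langle f, P_-E^i\rangle \leq \tfrac{1}{4}\|f\|_{\mathcal{H}}^2 + 4\|P_-E^i\|_{\mathcal{H}}^2$; the $\tfrac{1}{4}\|f\|^2$ absorbs into half of $-\tfrac{1}{2}\|f\|^2$ leaving the claimed $-\tfrac{1}{4}\|P_-\hat u^i\|_{\mathcal{H}}^2$, and the bound on $\|P_-E^i\|_{\mathcal{H}}^2$ follows from squaring \eqref{eq:error.proj.stable} and using $(a+b)^2 \leq 2a^2+2b^2$.

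The computation is elementary once the spectral framework is set up; I do not anticipate a genuine analytic obstacle. The step requiring the most care is the bookkeeping of the Cauchy--Schwarz absorption in the stable inequality, so that the leftover coefficients match exactly the $-\tfrac{1}{2}$ and $-\tfrac{1}{4}$ claimed in \eqref{eqn:1}. This is forced by the available slack between the weighted Poincar\'e identity $\langle f,\mathcal{L}f\rangle_{\mathcal{H}} = -\|f_y\|^2 + \tfrac{1}{2}\|f\|^2$ and the spectral-gap inequality $\|f_y\|^2 \geq \|f\|^2$ on the range of $P_-$.
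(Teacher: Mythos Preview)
Your proposal is correct and follows essentially the same approach as the paper. The only cosmetic difference is in how the dissipation term $2\langle P_-\hat u^i,\mathcal{L}P_-\hat u^i\rangle_{\mathcal{H}}$ is organized: the paper splits it as $\tfrac{3}{2}\langle\cdot\rangle+\tfrac{1}{2}\langle\cdot\rangle$, applying the spectral bound $\langle\mathcal{L}f,f\rangle\leq -\tfrac{1}{2}\|f\|^2$ to the first piece and the integration-by-parts identity to the second, whereas you apply integration by parts in full and then invoke the Poincar\'e inequality $\|f_y\|_{\mathcal{H}}^2\geq\|f\|_{\mathcal{H}}^2$ on the stable space; both routes yield $-\tfrac{1}{2}\|f_y\|^2-\tfrac{1}{2}\|f\|^2$ and the remaining Cauchy--Schwarz absorption is identical.
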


\begin{proof}
We drop the index $i$ for the sake of brevity. We begin by observing
\begin{equation}
    a'= \langle \hat u_\tau,\varphi_1 \rangle_{\mathcal{H}}=\langle \hat u ,\mathcal{L} \varphi_1 \rangle_{\mathcal{H}}+\langle E,\varphi_1 \rangle_{\mathcal{H}}=\tfrac{1}{2}a+\langle E,\varphi_1 \rangle_{\mathcal{H}}.
\end{equation}
Thus, Lemma \ref{lem:error.proj} implies the first inequality. Similarly, by deriving $b'=\langle E,\varphi_2\rangle_{\mathcal{H}}$, we can obtain the second inequality. Lastly,
\begin{align}
       \tfrac{d}{d\tau}\|P_-\hat u\|_{\mathcal{H}}^2=2\langle \hat u_\tau ,P_-\hat u\rangle_{\mathcal{H}}\leq 2\langle \mathcal{L} P_-\hat u ,P_-\hat u\rangle_{\mathcal{H}} +2\|P_-E\|_{\mathcal{H}}\|P_-\hat u\|_{\mathcal{H}}.
\end{align}
Since $\lambda_k \geq\frac{1}{2}$ for $k\geq 3$, we have
\begin{equation}
    \tfrac{3}{2}\langle \mathcal{L} P_-\hat u ,P_-\hat u\rangle_{\mathcal{H}}\leq -\tfrac{3}{4}\|P_-\hat u\|_{\mathcal{H}}^2.
\end{equation}
Also, by integration by parts, we have
\begin{equation}
    \tfrac{1}{2}\langle \mathcal{L} P_-\hat u ,P_-\hat u\rangle_{\mathcal{H}}=\tfrac{1}{4}\|P_-\hat u\|_{\mathcal{H}}^2- \tfrac{1}{2}\|(P_-\hat u)_y\|_{\mathcal{H}}^2.
\end{equation}
Therefore, combining with $2\|P_-E\|_{\mathcal{H}}\|P_-\hat u\|_{\mathcal{H}} \leq 4\|P_-E\|_{\mathcal{H}}^2+\frac{1}{4}\|P_-\hat u\|_{\mathcal{H}}^2$ yields
\begin{equation}
    \tfrac{d}{d\tau}\|P_-\hat u\|_{\mathcal{H}}^2 \leq -\tfrac{1}{2} \|(P_-\hat u)_y\|^2-\tfrac{1}{4} \|P_-\hat u\|_{\mathcal{H}}^2+4\|P_-E\|_{\mathcal{H}}^2.
\end{equation}
Thus, applying Lemma \ref{lem:error.proj} completes the proof.
\end{proof}

\bigskip

\begin{lemma}\label{lem:suit.subtraction}
There are some numeric constants $\varepsilon \in (0,10^{-1000})$ and $C>0$ such that each $\hat u^i$ satisfies
\begin{equation}
    \|\hat u^i(y,\tau)-2^{-\frac{1}{2}}b^i(\tau'-L)y\|_{\mathcal{H}}^2 \leq Cr^{-10}+Ce^{-\frac{1}{4}L}r^{-8}
\end{equation}
in $\tau \in [\tau'-L-4,\tau'-L]$.
\end{lemma}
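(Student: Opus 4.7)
By the $\mathcal{H}$-orthogonality of $P_+, P_0, P_-$,
\begin{align*}
\|\hat u^i(\cdot,\tau) - 2^{-1/2} b^i(\tau'-L)\,y\|_\mathcal{H}^2 = |a^i(\tau)|^2 + |b^i(\tau) - b^i(\tau'-L)|^2 + \|P_-\hat u^i(\tau)\|_\mathcal{H}^2,
\end{align*}
so it suffices to bound each summand by $Cr^{-10} + Ce^{-L/4}r^{-8}$; throughout, $\varepsilon$ and $\sigma_1$ are chosen small enough that the absorption steps below go through.

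The \emph{stable part} is handled by Gr\"onwall applied to \eqref{eqn:1}. Using the spectral identity $\|\hat u^i_y\|_\mathcal{H}^2 = \tfrac{1}{2}|b^i|^2 + \|(P_-\hat u^i)_y\|_\mathcal{H}^2$ together with $|b^i|^2 \leq Cr^{-8}$ from Lemma~\ref{lem:rough.weighted.C2}, the term $C\varepsilon^2 r^{-2}\|(P_-\hat u^i)_y\|_\mathcal{H}^2$ is absorbed into the coercive $-\tfrac{1}{2}\|(P_-\hat u^i)_y\|_\mathcal{H}^2$, and the remaining forcing is $\leq Cr^{-10}$. Thus $Y := \|P_-\hat u^i\|_\mathcal{H}^2$ satisfies $Y' \leq -\tfrac{1}{4}Y + Cr^{-10}$; integrating forward from $\tau_0 = \tau' - 4L$ with $Y(\tau_0) \leq Cr^{-8}$ and using $\tau - \tau_0 \geq 3L-4 \geq L$ for $\tau \in [\tau'-L-4, \tau'-L]$ and $L \geq 2$, Gr\"onwall yields $Y(\tau) \leq Ce^{-L/4}r^{-8} + Cr^{-10}$.

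The key new input for the other two projections is a \emph{weighted integrated gradient estimate}. From the energy identity
\begin{align*}
\tfrac{1}{2}\tfrac{d}{d\tau}\|\hat u^i\|_\mathcal{H}^2 + \|\hat u^i_y\|_\mathcal{H}^2 = \tfrac{1}{2}\|\hat u^i\|_\mathcal{H}^2 + \langle \hat u^i, E^i\rangle_\mathcal{H},
\end{align*}
obtained by testing $\hat u^i_\tau = \mathcal{L}\hat u^i + E^i$ against $\hat u^i$ in $\mathcal{H}$, I multiply by $e^{-\tau/2}$ and integrate by parts in $\tau$ over $[\tau, \tau']$. Controlling the right-hand side via $\|\hat u^i\|_\mathcal{H} \leq Cr^{-4}$ and Lemma~\ref{lem:error.proj} (with Young's inequality on the linear-in-$\|\hat u^i_y\|$ cross term, and absorption of $C\varepsilon r^{-4}\int e^{-s/2}\|\hat u^i_y\|_\mathcal{H}^2\,ds$ into the LHS), this yields
\begin{align*}
J := \int_\tau^{\tau'} e^{-s/2}\|\hat u^i_y(s)\|_\mathcal{H}^2\,ds \leq Cr^{-8}\,e^{-\tau/2}.
\end{align*}
An unweighted analog on the short interval $[\tau, \tau'-L]$ of length at most $4$ gives $\int_\tau^{\tau'-L}\|\hat u^i_y\|_\mathcal{H}^2\,ds \leq Cr^{-8}$.

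The drift of $b^i$ then follows by integrating \eqref{eqn:2}: $|b^i(\tau)-b^i(\tau'-L)| \leq C\varepsilon \cdot Cr^{-8} + 4Cr^{-10} \leq Cr^{-8}$, whose square is $\leq Cr^{-10}$. For the \emph{unstable part}, setting $A(\tau):=e^{-\tau/2}a^i(\tau)$, the first ODE in Lemma~\ref{lem:ODEs} gives $|A'(\tau)| \leq e^{-\tau/2}(C\varepsilon\|\hat u^i_y\|_\mathcal{H}^2 + Cr^{-10})$; integrating backward from $\tau'$ and using $J \leq Cr^{-8}e^{-\tau/2}$, $|A(\tau)-A(\tau')| \leq Cr^{-8}e^{-\tau/2}$, which with $|a^i(\tau')| \leq Cr^{-4}$ yields $|a^i(\tau)| \leq Ce^{-L/2}r^{-4} + Cr^{-8}$ and hence $|a^i(\tau)|^2 \leq Ce^{-L/4}r^{-8} + Cr^{-10}$. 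The main obstacle is the weighted gradient estimate above: the exponential weight $e^{-\tau/2}$ is essential, because an unweighted bound would scale as $CLr^{-8}$ over intervals of length $L$ and propagate, through the backward integration of $A'$, into a term of order $CL\varepsilon r^{-8}$ that would overwhelm the desired $Ce^{-L/4}r^{-8}$ decay in the unstable mode when $L$ is large.
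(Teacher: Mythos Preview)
Your proof is correct, but it takes a genuinely different route from the paper's. Both start from the same orthogonal decomposition and both treat the stable mode $\|P_-\hat u^i\|_\mathcal{H}^2$ via Gr\"onwall on \eqref{eqn:1} in essentially the same way. The divergence is in how the unstable and neutral modes are controlled.

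The paper never introduces the full energy identity nor any integrated gradient bound. Instead it builds two carefully chosen comparison quantities: first $\alpha(\tau)=\|P_-\hat u\|_\mathcal{H}^2-r^{-2}b^2$, which satisfies $\alpha'\le -\tfrac14\alpha+Cr^{-10}$ and yields the stable estimate; then $\beta(\tau)=a^2-r^{-4}(b^2+\|P_-\hat u\|_\mathcal{H}^2)$, for which $\beta'\ge\beta-Cr^{-11/2}e^{-r^2/8}$ gives the unstable bound by integrating \emph{forward} and exploiting the endpoint control $\beta(\tau')\le Cr^{-8}$. For the neutral drift the paper adds $\tfrac{d}{d\tau}\|P_-\hat u\|_\mathcal{H}^2$ to $|b'|$ so that the dangerous $\|(P_-\hat u)_y\|_\mathcal{H}^2$ contribution to $|b'|$ is absorbed by the dissipative term in \eqref{eqn:1}, leaving a forcing of size $Cr^{-8}$; the resulting telescoping over the short interval $[\tau'-L-4,\tau'-L]$ gives $|b(\tau)-b(\tau'-L)|\le Cr^{-8}$.

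Your approach replaces both of these tricks with a single tool: the weighted integrated gradient estimate $\int_\tau^{\tau'}e^{-s/2}\|\hat u_y\|_\mathcal{H}^2\,ds\le Cr^{-8}e^{-\tau/2}$ coming from the full energy identity. This is a clean, modular device---once established, the unstable mode follows by straight integration of $A=e^{-\tau/2}a$, and the neutral drift by the unweighted analog on the short window. The paper's route is arguably more self-contained (it uses only the three ODE inequalities of Lemma~\ref{lem:ODEs}, never the full equation), while yours gives a more transparent reason for the $e^{-L/2}$ gain in the unstable mode and avoids the somewhat ad hoc comparison functions $\alpha,\beta$.
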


\begin{proof}
We drop the index $i$ for simplicity. Since $\hat u =a+2^{-\frac{1}{2}}yb+P_-\hat u$, we have
\begin{equation}\label{eq:proj.u_y}
    \|\hat u_y\|_{\mathcal{H}}^2=\tfrac{1}{2} b^2+    \|(P_-\hat u)_y\|_{\mathcal{H}}^2.
\end{equation}
We consider $\alpha(\tau):=\|P_-\hat u\|_{\mathcal{H}}^2-r^{-2}b^2$. Combining Lemma \ref{lem:ODEs}, \eqref{eq:rough.Gaussian.L2}, (\ref{eqn:1}), (\ref{eqn:2}),  and \eqref{eq:proj.u_y} yields
\begin{equation}
    \alpha'\leq -\tfrac{1}{4}\|P_-\hat u\|_{\mathcal{H}}^2+C\varepsilon r^{-2}b^2+Cr^{-\frac{11}{2}}e^{-\frac{r^2}{8}}\leq -\tfrac{1}{4}\alpha+Cr^{-10}, \label{eqn:3}
\end{equation}
for small enough $\varepsilon$. Since the above inequality implies $\frac{d}{d\tau}e^{\frac{1}{4}\tau}\alpha \leq Cr^{-10}e^{\frac{1}{4}\tau}$,
\begin{equation}
e^{\frac{1}{4}\tau}\alpha(\tau) - e^{\frac{1}{4}(\tau'-4L)}\alpha(\tau'-4L) \leq Cr^{-10}e^{\frac{1}{4}\tau} 
\end{equation}
holds for $\tau \in [\tau'-4L,\tau']$. Since \eqref{eq:rough.Gaussian.L2} guarantees $\alpha(\tau'-4L) \leq Cr^{-8}$, we have
\begin{equation}\label{eq:stable.esti}
    \|P_-\hat u\|_{\mathcal{H}}^2 \leq r^{-2}b^2+ C r^{-8}e^{-\frac{1}{4}L}  + Cr^{-10}
\end{equation}
for $\tau \in [\tau'-3L,\tau']$, where $C$ is some numeric constant.

\bigskip

Next, we define $\beta(\tau):=a^2-r^{-4}(b^2+\|P_-\hat u\|_{\mathcal{H}}^2)$. We again combine Lemma \ref{lem:ODEs}, \eqref{eq:rough.Gaussian.L2}, and \eqref{eq:proj.u_y}, and choose small enough $\varepsilon$ so that we obtain
\begin{equation}
    \beta' \geq a^2   -C\varepsilon r^{-4}b^2-Cr^{-\frac{11}{2}}e^{-\frac{r^2}{8}}\geq \beta -Cr^{-\frac{11}{2}}e^{-\frac{r^2}{8}}.
\end{equation}
Hence, considering $\frac{d}{d\tau}e^\tau \beta$, we have
\begin{equation}
    e^{\tau'}\beta(\tau')-e^{\tau}\beta(\tau) \geq -Ce^{\tau}r^{-\frac{11}{2}}e^{-\frac{r^2}{8}}.
\end{equation}
Thus, if $\tau \in [\tau'-4L,\tau'-L]$ then
\begin{equation}\label{eq:unstable.esti}
    a^2\leq r^{-4}(b^2+\|P_-\hat u\|_{\mathcal{H}}^2)+Cr^{-8}e^{-L}+Cr^{-\frac{11}{2}}e^{-\frac{r^2}{8}}.
\end{equation}
Hence, adding \eqref{eq:stable.esti} and \eqref{eq:unstable.esti} yields
\begin{equation}
    \|\hat u-P_0\hat u\|_{\mathcal{H}}^2 \leq 2r^{-2}\|\hat u\|_{\mathcal{H}}^2+Cr^{-8}e^{-\frac{1}{4}L}+Cr^{-10}.
\end{equation}
Therefore, by remembering \eqref{eq:rough.Gaussian.L2}, we have
\begin{equation}\label{eq:minor.mode.esti}
    \|\hat u-P_0\hat u\|_{\mathcal{H}}^2 \leq Cr^{-10}+Cr^{-8}e^{-\frac{1}{4}L}
\end{equation}
in $\tau \in [\tau'-3L,\tau'-L]$. 
\bigskip

Now, we consider $\gamma(\tau):=b(\tau)-b(\tau'-L)$. Then, for sufficiently small $\varepsilon$, we have
\begin{equation}
    |\gamma'| + \tfrac{d}{d\tau}\|P_-\hat u\|_{\mathcal{H}}^2\leq C\varepsilon b^2+Cr^{-\frac{3}{2}}e^{-\frac{r^2}{8}}\leq Cr^{-8}
\end{equation}
by using Lemma \ref{lem:ODEs}, \eqref{eq:rough.Gaussian.L2}, and \eqref{eq:proj.u_y}. Since $\gamma(\tau'-L)=0$, 
 \begin{equation}
     |\gamma(\tau)|\leq \int_{\tau}^{\tau'-L}|\gamma'(s)|ds \leq 4r^{-8}-\|P_-\hat u(\cdot,\tau'-L)\|_{\mathcal{H}}^2+\|P_-\hat u(\cdot,\tau)\|_{\mathcal{H}}^2
 \end{equation}
 holds for $\tau \in [\tau'-L-4,\tau'-L]$. Thus, \eqref{eq:minor.mode.esti} implies
 \begin{equation}
     \|P_0\hat u(y,\tau)-2^{-\frac{1}{2}}b(\tau'-L)y\|_{\mathcal{H}}=|\gamma(\tau)|\leq Cr^{-8}.
 \end{equation}
 Hence, combining \eqref{eq:rough.Gaussian.L2} and \eqref{eq:minor.mode.esti} completes the proof.
\end{proof}

 \bigskip

\begin{theorem}\label{thm:improve.flat.L2}
There exist some numeric constants $\varepsilon \in (0,10^{-1000})$, $C_1> 1$ and $\sigma_2 \in (0,1)$ with the following significance. Suppose that there is $\tau_1 \leq  \tau_\varepsilon$ such that for each $\tau'\leq \tau_1$, $S_{\tau'}\overline{M}_\tau$ is $\delta$-linear with multiplicity $m$ at each $\tau'$ for some rotation $S_{\tau'}$, where $\delta\leq \sigma_2/L$ for some $L\geq 2$. Then, there exists a small angle $\theta$ with $|\theta |\leq C_1L\delta$ such that the profile bundle $\mathbf{u}$ of $S_{\tau'}\overline{M}_\tau$ is well-defined in $P^{\tau'}_{8,4L}$ and
\begin{equation}
    \max_{i=1,\cdots,m}\|u^i(y,\tau)-(\tan \theta) y\|_{L^2([-8,8])}   \leq C_1 (L^{\frac{5}{4}}\delta ^{\frac{1}{4}}+ L^{-1})\delta 
\end{equation}
holds for $\tau \in [\tau'-L-4,\tau'-L]$, where  $\mathbf{u}:=(u^1,\cdots,u^m)$.
\end{theorem}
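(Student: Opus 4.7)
The plan is to assemble the weighted $L^{2}$-approximation of Lemma \ref{lem:suit.subtraction} into an $L^{2}([-8,8])$-estimate and then select a single rotation angle $\theta$ that makes every sheet simultaneously close to the tilted line $(\tan\theta)\,y$. First I apply Proposition \ref{prop:glued.domain} to upgrade the hypothesis to a unified profile bundle $\mathbf{u}$ of $S_{\tau'}\overline{M}_{\tau}$ on $\mathcal{P}^{\tau'}_{3,4L}$ with $\|\mathbf{u}\|_{C^{2}}\leq C_{0}L\delta$. Setting $r=\tfrac{1}{4}(C_{0}L\delta)^{-1/4}$, the smallness hypothesis $\delta\leq\sigma_{0}/L$ (with $\sigma_{0}$ small enough) ensures $r\geq 100$, which lets me apply Lemma \ref{lem:rough.weighted.C2} to extend the domain to $\mathcal{P}^{\tau'}_{8,4L}\subset\mathcal{P}^{\tau'}_{2r,4L}$ and deliver $|u^{i}_{y}|\leq\varepsilon$ together with the quadratic bound $|u^{i}|\leq(|y|+2)^{2}(4r)^{-4}$.

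Next I feed Lemma \ref{lem:suit.subtraction} into a conversion from the Gaussian $\mathcal{H}$-norm to $L^{2}([-8,8])$. Since the cutoff $\eta(\cdot/r)\equiv 1$ on $[-r,r]\supset[-8,8]$, we have $\hat u^{i}=u^{i}$ on $[-8,8]$; and the Gaussian weight is bounded below there by $e^{-16}/\sqrt{4\pi}$. Together with the substitution $r^{-4}=C_{0}L\delta$, this yields, for every $\tau\in[\tau'-L-4,\tau'-L]$,
\begin{equation*}
\|u^{i}(\cdot,\tau)-2^{-1/2}b^{i}(\tau'-L)\,y\|_{L^{2}([-8,8])}\leq C(L\delta)^{5/4}+CL\delta\,e^{-L/8}\leq C\bigl(L^{5/4}\delta^{5/4}+L^{-1}\delta\bigr),
\end{equation*}
using the elementary fact that $L^{2}e^{-L/8}$ is uniformly bounded for $L\geq 2$ to absorb the exponential term into $L^{-1}\delta$.

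The final ingredient is the choice of the common angle $\theta$. Since the hypothesis also supplies a rotation $S_{\tau'-L}$ making the flow $\delta$-linear at time $\tau'-L$, I take $\theta$ to be the rotation angle between $S_{\tau'}$ and $S_{\tau'-L}$; the telescoping estimate in the proof of Proposition \ref{prop:glued.domain} gives $|\theta|\leq C_{1}L\delta$. In $(S_{\tau'}+\theta)$-coordinates — which coincide with $S_{\tau'-L}$-coordinates — each profile $u^{i}(\cdot,\tau'-L)-\tan\theta\cdot y$ agrees, up to higher-order rotational corrections, with a $C^{1}$-bound-$\delta$ function on $[-4,4]$. Pairing against $\varphi_{2}$ controls $|b^{i}(\tau'-L)-\sqrt{2}\tan\theta|$, and the triangle inequality $\|u^{i}-\tan\theta\,y\|_{L^{2}([-8,8])}\leq\|u^{i}-2^{-1/2}b^{i}y\|_{L^{2}([-8,8])}+|b^{i}-\sqrt{2}\tan\theta|\,\|y\|_{L^{2}([-8,8])}/\sqrt{2}$ closes the argument.

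I expect the main obstacle to be sharpening the common-slope residual $\max_{i}|b^{i}(\tau'-L)-\sqrt{2}\tan\theta|$ enough to stay within the stated bound — especially at the optimal Allard–Almgren scale $L\sim\delta^{-1/9}$ where the right-hand side is $\sim\delta^{10/9}$. The crude bound from pointwise $\delta$-linearity at $\tau'-L$ alone gives only $O(\delta)$, so to close the gap I will have to exploit the dissipative structure of the linearized rescaled flow on the interval $[\tau'-L-4,\tau'-L]$ via Lemmas \ref{lem:error.proj} and \ref{lem:ODEs} — in the same spirit as the analysis used inside Lemma \ref{lem:suit.subtraction} — to extract a quantitatively sharper relation between the common angle and the projections $b^{i}$ than what $\delta$-linearity provides at a single instant.
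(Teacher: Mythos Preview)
Your first two steps are fine and match the paper: you correctly invoke Proposition~\ref{prop:glued.domain} and Lemma~\ref{lem:rough.weighted.C2} to extend the domain, and you correctly convert Lemma~\ref{lem:suit.subtraction} into an $L^{2}([-8,8])$ bound, arriving at
\[
\|u^{i}(\cdot,\tau)-\beta_{i}\,y\|_{L^{2}([-8,8])}\leq C\bigl(L^{5/4}\delta^{5/4}+L^{-1}\delta\bigr),\qquad \beta_{i}:=2^{-1/2}b^{i}(\tau'-L).
\]

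The genuine gap is in your third step, the choice of $\theta$ and the control of $|b^{i}(\tau'-L)-\sqrt{2}\tan\theta|$. You define $\theta$ as the angle between $S_{\tau'}$ and $S_{\tau'-L}$ and then try to show each $\beta_{i}$ is close to $\tan\theta$; you already note this only gives $O(\delta)$, which is too weak. The fix you propose --- ``exploit the dissipative structure'' through Lemmas~\ref{lem:error.proj} and~\ref{lem:ODEs} --- cannot work here: those ODE estimates treat each sheet index $i$ \emph{separately}, with no coupling between $b^{i}$ and $b^{j}$. No amount of spectral analysis on a single sheet will force the $m$ slopes $\beta_{1},\dots,\beta_{m}$ to coincide, because at the linearized level the sheets do not see one another.

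The paper's idea is to use the one piece of information that \emph{does} couple the sheets: the embeddedness ordering $u^{1}>u^{2}>\cdots>u^{m}$. One simply defines $\tan\theta:=\beta_{1}$ (so $|\theta|\leq\|\hat u^{1}\|_{\mathcal{H}}\leq CL\delta$ by~\eqref{eq:rough.Gaussian.L2}), and then shows $|\beta_{1}-\beta_{i}|$ is small directly. If $\beta_{1}\geq\beta_{2}$, then on $[-2,0]$ both $u^{1}-u^{2}>0$ and $-(\beta_{1}-\beta_{2})y\geq 0$, so
\[
|\beta_{1}-\beta_{2}|\leq\|(\beta_{1}-\beta_{2})y\|_{L^{2}([-2,0])}\leq\|(u^{1}-\beta_{1}y)-(u^{2}-\beta_{2}y)\|_{L^{2}([-2,0])},
\]
and the right-hand side is controlled by Lemma~\ref{lem:suit.subtraction}; if $\beta_{1}\leq\beta_{2}$ one argues on $[0,2]$. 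This immediately gives $|\beta_{1}-\beta_{i}|\leq Cr^{-10}+Ce^{-L/4}r^{-8}$, which is exactly the missing coupling. The geometric input --- that the sheets are ordered --- is essential and is absent from your outline.
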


\begin{proof}
By Lemma \ref{lem:error.proj}, $\mathbf{u}$ is well-defined in $P^{\tau'}_{8,4L}$, and we may assume $u^1 > u^i$ for every $i\geq 2$ without loss of generality. Let $\beta_i$ denote $2^{-\frac{1}{2}}b^i(\tau'-L)$ for each $i=1,\cdots,m$. If $\beta_1 \geq \beta_2$, then combining $u^1-u^2>0$ and $-(\beta_1-\beta_2)y\geq 0$ for $y\leq 0$ yields
\begin{equation}
    |\beta_1-\beta_2|\leq \|(\beta_1-\beta_2)y\|_{L^2([-2,0])} \leq  \| (u^1-u^2)-(\beta_1-\beta_2)y\|_{L^2([-2,0])}
\end{equation}
Thus, Lemma \ref{lem:suit.subtraction} implies
\begin{equation}\label{eq:angle.gap}
    |\beta_1-\beta_2| \leq Cr^{-10}+Ce^{-\frac{1}{4}L}r^{-8}.
\end{equation}
when $\beta_1 \geq \beta_2$. If $\beta_1\leq \beta_2$, then we can repeat the same process for $L^2([0,2])$ so that we obtain \eqref{eq:angle.gap}. Of course, we can obtain the same estimate for every $|\beta_1-\beta_i|$. Hence, recalling $L\geq 2$, Lemma \ref{lem:suit.subtraction} leads us to
\begin{equation}
    \|\hat u^i - \beta_1 y\|_{\mathcal{H}}^2 \leq Cr^{-10}+Ce^{-\frac{1}{4}L}r^{-8}\leq Cr^{-10}+CL^{-4}r^{-8}.
\end{equation}
Since $4r=(C_0L\delta)^{-\frac{1}{4}}$ as in Lemma \ref{lem:error.proj}, choosing $\theta$ by $\tan \theta:=\beta_1$ with $|\theta| \leq \|\hat u^1(\cdot,\tau'-L)\|_{\mathcal{H}}\ll 1$ completes the proof.
\end{proof}

\bigskip

 \subsection{Fast convergence}
In this final subsection, we show that the rescaled flow converges to a unique tangent flow at backward infinity at an exponential rate. 

\bigskip

We first recall the standard rotation matrix to state the following proposition.
\begin{equation}
    R_\theta:=\begin{pmatrix}
        \cos\theta & -\sin\theta \\
        \sin\theta & \cos\theta
    \end{pmatrix}.
\end{equation}

\begin{proposition}\label{prop:interior.esti}
There are some numeric constants $C_2>1$ and $\varepsilon_0>0$ with the following significance. Suppose that the graph $\overline{M}_\tau$ of a function $u \in C^{\infty}(P^0_{8,4})$ with $\|u\|_{C^2(P^0_{8,4})}\leq \varepsilon_0$ is a rescaled curve-shortening flow. Then, given $\theta \in (-\varepsilon_0,\varepsilon_0)$, the profile $v$ of the rotated flow $R_{-\theta}\overline{M}_\tau$ is well-defined in $P^0_{6,4}$ and the following holds
\begin{equation}
    \|v\|_{C^2(P^0_{4,2})}\leq C_2\sup_{ \tau \in [-4,0]}\|u(y,\tau)-(\tan \theta)y\|_{L^2([-8,8])}.
\end{equation}
\end{proposition}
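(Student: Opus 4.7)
The plan is to exploit the rotation-invariance of the rescaled CSF together with standard interior parabolic regularity for the graph equation. Throughout, $\varepsilon_{0}$ is a small numeric constant whose precise value is to be determined.

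First, I would verify that the rotated flow is indeed a smooth graph on $\mathcal{P}^{0}_{6,4}$. Parametrizing $\overline{M}_{\tau}$ by $y\mapsto (y,u(y,\tau))$ and applying $R_{-\theta}$ gives the curve
\begin{equation*}
Y(y,\tau)=y\cos\theta+u(y,\tau)\sin\theta,\qquad Z(y,\tau)=-y\sin\theta+u(y,\tau)\cos\theta.
\end{equation*}
Since $\|u\|_{C^{2}(\mathcal{P}^{0}_{8,4})}\leq\varepsilon_{0}$ and $|\theta|\leq\varepsilon_{0}$, we have $Y_{y}\geq \cos\theta-\varepsilon_{0}|\sin\theta|\geq 1/2$, so $Y(\cdot,\tau)$ maps $[-8,8]$ diffeomorphically onto an interval strictly containing $[-6,6]$. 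Hence the rotated curve is the graph of $v(Y,\tau)=Z(y(Y,\tau),\tau)=\cos\theta\cdot\big(u(y(Y),\tau)-(\tan\theta)\,y(Y)\big)$ on $\mathcal{P}^{0}_{6,4}$, and a direct estimate of the coordinate change yields $\|v\|_{C^{2}(\mathcal{P}^{0}_{6,4})}\leq C\varepsilon_{0}$.

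Second, since the rescaled CSF equation $(\partial_{\tau}\mathbf{x})^{\perp}=\boldsymbol{\kappa}+\tfrac{1}{2}\mathbf{x}^{\perp}$ is invariant under rotations fixing the origin, $R_{-\theta}\overline{M}_{\tau}$ is itself a rescaled CSF. Consequently $v$ satisfies the same quasilinear graph equation as $u$:
\begin{equation*}
v_{\tau}=\frac{v_{yy}}{1+v_{y}^{2}}-\frac{y}{2}v_{y}+\frac{v}{2}.
\end{equation*}
By the $C^{2}$ smallness established above, this equation is uniformly parabolic on $\mathcal{P}^{0}_{6,4}$ with smooth and uniformly bounded coefficients.

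Third, I would compare the $L^{2}$ norms of $v(\cdot,\tau)$ and $u(\cdot,\tau)-(\tan\theta)y$ via the change of variable $Y=Y(y,\tau)$. Since the Jacobian $Y_{y}$ is bounded above and below by positive constants and $v(Y,\tau)^{2}=\cos^{2}\theta\cdot(u(y)-(\tan\theta)y)^{2}$ pointwise, a direct computation yields
\begin{equation*}
\|v(\cdot,\tau)\|_{L^{2}([-6,6])}\leq C\|u(\cdot,\tau)-(\tan\theta)y\|_{L^{2}([-8,8])}
\end{equation*}
uniformly in $\tau\in[-4,0]$.

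Finally, viewing the coefficient $1/(1+v_{y}^{2})$ as a smooth bounded function of $(y,\tau)$, the equation for $v$ is linear uniformly parabolic with smooth coefficients. Standard interior parabolic estimates (Moser iteration to bootstrap $L^{2}$ control to $L^{\infty}$ on $\mathcal{P}^{0}_{5,3}$, followed by Krylov--Safonov and interior Schauder estimates to reach $C^{2,\alpha}$ on $\mathcal{P}^{0}_{4,2}$) give
\begin{equation*}
\|v\|_{C^{2}(\mathcal{P}^{0}_{4,2})}\leq C\|v\|_{L^{2}(\mathcal{P}^{0}_{6,4})}\leq C'\sup_{\tau\in[-4,0]}\|v(\cdot,\tau)\|_{L^{2}([-6,6])},
\end{equation*}
and combining with the previous step finishes the proof. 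The main technical point requiring care is the a priori $C^{1}$ smallness of $v$ needed to guarantee uniform parabolicity; this propagates automatically from the smallness of $\varepsilon_{0}$ and $\theta$ together with the near-identity nature of the rotation on a slightly smaller domain, but it should be tracked quantitatively to fix the constant $C_{2}$.
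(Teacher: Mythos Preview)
Your proposal is correct and follows essentially the same route as the paper: establish that $v$ is a well-defined graph satisfying the same quasilinear equation, transfer the $L^{2}$ control from $u-(\tan\theta)y$ to $v$, and then apply interior $L^{\infty}$ and Schauder estimates (the paper cites Lieberman Theorems 6.17 and 4.9). The only cosmetic difference is that the paper compares $v$ and $u-(\tan\theta)y$ via an $L^{1}$ area argument (the signed area between the rotated graph and the axis equals that between the original graph and the line $x_{2}=(\tan\theta)x_{1}$), whereas you do it by a direct $L^{2}$ change of variables; your version is if anything more streamlined.
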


 \begin{proof}
Choosing $\varepsilon_0$ small enough, $v$ is well-defined in $P^0_{6,4}$. Also, the area between $v$ and $\{x_2=0\}$ in a ball $B_r(0)$ is equivalent to the area between $u$ and $\{x_2=(\tan \theta) x_1\}$ in $B_r(0)$. Since the $L^1$-norm represents the area difference, we have
\begin{equation}
    \|v\|_{L^1([-6,6])} \leq \|u-(\tan \theta)y\|_{L^1([-8,8])} \leq 16\|u-(\tan \theta)y\|_{L^2([-8,8])}.
\end{equation}
Note that the second inequality comes from the standard Holder's inequality. On the other hand, the local $L^\infty$ estimate (cf. \cite[Theorem 6.17]{Lie:1996:SOP}) yields
 \begin{equation}
     \|v\|_{L^\infty(P^0_{5,3})}\leq C \sup_{ \tau \in [-4,0]}\|v\|_{L^2([-6,6])}
 \end{equation}
Also, the interior Schauder estimate (cf. \cite[Theorem 4.9]{Lie:1996:SOP}) implies
 \begin{equation}
     \|v\|_{C^{2,\alpha}(P^0_{4,2})}\leq C \|v\|_{L^\infty(P^0_{5,3})},
 \end{equation}
 for any $\alpha \in (0,1)$. Combining the above inequalities completes the proof.
 \end{proof}

 \bigskip

 \begin{lemma}\label{lem:iteration}
     There exist some numeric constants $\varepsilon \in (0,10^{-1000})$, $L\geq 2$, $C_3>1$, and $\delta_0 \in (0,1)$ with the following significance. Suppose that there is $\tau_1\leq \tau_\varepsilon$ such that for each $\tau'\leq \tau_1$, $S_{\tau'}\overline{M}_\tau$ is $\delta$-linear, where $\delta\leq \delta_0$, with multiplicity $m$ at $\tau'$ for some rotation $S_{\tau'}$. Then, for each $\tau'\leq \tau_1$, there exists a rotation $\bar S_{\tau'}$ such that $|\bar S_{\tau'}-S_{\tau'}| \leq C_3\delta$ holds and $\bar S_{\tau'}\overline{M}_\tau$ is $\frac{1}{2}\delta$-linear with multiplicity $m$ at $\tau'-L$. Moreover, the profile bundle $\mathbf{u}$ of $S_{\tau'}\overline{M}_\tau$ satisfies $\|\mathbf{u}\|_{C^2(P^{\tau'}_{3,4L})}\leq C_3\delta$.
 \end{lemma}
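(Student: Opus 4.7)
The plan is to chain Theorem~\ref{thm:improve.flat.L2}, which improves $\delta$-linearity into $L^2$-closeness to a single rotated line over the spatial interval $[-8,8]$, with Proposition~\ref{prop:interior.esti}, which after rotating by $-\theta$ upgrades that $L^2$-smallness back into a $C^2$-bound on a slightly smaller parabolic cylinder. Concretely, I will fix $\varepsilon \in (0,10^{-4})$ smaller than the constant $\varepsilon_0$ of Proposition~\ref{prop:interior.esti}, pick $L\geq 2$ to be determined later, and take $\delta_0\leq \sigma_0/L$ so that for every $\tau'\leq \tau_1$ Theorem~\ref{thm:improve.flat.L2} applies and produces an angle $\theta=\theta(\tau')$ with $|\theta|\leq C_1 L\delta$ satisfying
\[
\max_i\,\|u^i(y,\tau)-(\tan\theta)\,y\|_{L^2([-8,8])}\;\leq\;C_1\bigl(L^{5/4}\delta^{1/4}+L^{-1}\bigr)\delta
\]
on the time window $\tau\in[\tau'-L-4,\tau'-L]$.

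Next I will set $\bar S_{\tau'}:=R_{-\theta}S_{\tau'}$, so that $|\bar S_{\tau'}-S_{\tau'}|\lesssim |\theta|\leq C_1 L\delta$, and apply Proposition~\ref{prop:interior.esti} sheet-by-sheet after translating the time origin to $\tau'-L$. To verify the hypotheses of that proposition, I invoke Proposition~\ref{prop:glued.domain}, which already yields $\|u^i\|_{C^2(\mathcal{P}^{\tau'}_{3,4L})}\leq C_0 L\delta$; since $L\geq 2$ gives $\mathcal{P}^{\tau'-L}_{8,4}\subset \mathcal{P}^{\tau'}_{8,4L}$ and $\mathbf{u}$ is already defined on a radius-$8$ cylinder by Theorem~\ref{thm:improve.flat.L2}, the smallness conditions on $u^i$ and on $|\theta|$ are met for $\delta_0$ small. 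Proposition~\ref{prop:interior.esti} then produces profiles $v^i$ of $\bar S_{\tau'}\overline{M}_\tau$ on $\mathcal{P}^{\tau'-L}_{6,4}$ satisfying
\[
\|v^i\|_{C^2(\mathcal{P}^{\tau'-L}_{4,2})}\;\leq\;C_1 C_2\bigl(L^{5/4}\delta^{1/4}+L^{-1}\bigr)\delta.
\]
Embeddedness of $\overline{M}_\tau$ together with the smallness of $\theta$ keeps the $m$ sheets disjoint and correctly ordered after rotation, so the multiplicity-$m$ structure required by the definition of linearity is automatically preserved.

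The final step is the bookkeeping of the two small parameters. I will first fix $L$ large enough that $C_1 C_2 L^{-1}<1/4$, which promotes $L$ to a numeric constant; only then will I shrink $\delta_0$ further so that $C_1 C_2 L^{5/4}\delta_0^{1/4}<1/4$. This yields $\|v^i\|_{C^2(\mathcal{P}^{\tau'-L}_{4,2})}\leq \delta/2$, which is exactly the required $\tfrac{\delta}{2}$-linearity of $\bar S_{\tau'}\overline{M}_\tau$ at time $\tau'-L$. The trailing claim $\|\mathbf{u}\|_{C^2(\mathcal{P}^{\tau'}_{3,4L})}\leq C_3\delta$ is immediate from Proposition~\ref{prop:glued.domain} after absorbing $\max(C_0L,C_1L)$ into the numeric constant $C_3$. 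The only genuine obstacle here is the order of smallness: $L$ must be fixed \emph{before} $\delta$ is shrunk, since otherwise the $L^{5/4}\delta^{1/4}$ term could not be defeated while keeping $L$ a universal constant; the geometric content (disjointness of sheets, preservation of multiplicity, validity of the rotation) is essentially automatic from the smallness of $|\theta|$.
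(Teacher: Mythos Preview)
Your proposal is correct and follows essentially the same route as the paper: define $\bar S_{\tau'}=R_{-\theta}S_{\tau'}$ using the angle from Theorem~\ref{thm:improve.flat.L2}, feed the resulting $L^2$-bound into Proposition~\ref{prop:interior.esti} (after time-translating to $\tau'-L$) to recover a $C^2$-bound of size $C_1C_2(L^{5/4}\delta^{1/4}+L^{-1})\delta$, then fix $L$ large to kill the $L^{-1}$ term and finally shrink $\delta_0$ to kill the $L^{5/4}\delta^{1/4}$ term. The paper's proof is the same argument written more tersely (it even picks the explicit value $L=4C_1C_2+2$); your additional remarks on verifying the hypotheses of Proposition~\ref{prop:interior.esti} via Proposition~\ref{prop:glued.domain} and on preservation of the $m$-sheet structure are correct elaborations the paper leaves implicit.
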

 
 \begin{proof}
 We recall $\tau_0,\theta$ in Theorem \ref{thm:improve.flat.L2}, and define $\bar S_{\tau'}=R_{-\theta}S_{\tau'}$. Then, we have $|\bar S_{\tau'}-S_{\tau'}| \leq CL\delta$. Also, Proposition \ref{prop:glued.domain} implies $\|\mathbf{u}\|_{C^2(P^{\tau'}_{3,4L})}\leq CL\delta$.

 \medskip

 Next, we combine Theorem \ref{thm:improve.flat.L2} and Proposition \ref{prop:interior.esti} so that we have
 \begin{equation}
      \|v\|_{C^{2}(P^{\tau'-L}_{4,2}\,)}\leq C_1C_2(L^{\frac{5}{4}}\delta ^{\frac{1}{4}}+ L^{-1})\delta.
 \end{equation}
Now, we fix $L:=4C_1C_2+2$ so that we have
\begin{equation}
      \|v\|_{C^{2}(P^{\tau'-L}_{4,2}\,)}\leq (C_1C_2L^{\frac{5}{4}}\delta_0^{\frac{1}{4}}+\tfrac{1}{4})\delta.
 \end{equation}
 Then, we choose $\delta_0 \leq \sigma_2/L$ to satisfy $C_1C_2L^{\frac{5}{4}}\delta_0^{\frac{1}{4}}\leq  4^{-1}$. This completes the proof.
 \end{proof}

 \bigskip

Now, we prove our main theorems \ref{thm:main.unique} and \ref{thm:main.conv.rate}. Note that Theorem \ref{thm:main.conv.rate} already implies Theorem \ref{thm:main.unique}, so we just prove Theorem \ref{thm:main.conv.rate}.

 \begin{proof}[Proof of Theorem \ref{thm:main.unique} and \ref{thm:main.conv.rate}]
 We recall $\delta_0$ and $C_3$ from Lemma \ref{lem:iteration}. Then, by the rough convergence theorem \ref{thm: rough convergence}, there exists negative enough $\tau_1 \leq \tau_\varepsilon$ such that for each $\tau'\leq \tau_1$, $S^1_{\tau'}\overline{M}_\tau$ is $\delta_0$-linear with multiplicity $m$ at $\tau'$ for some rotation $S^1_{\tau'}$. Then, by Lemma \ref{lem:iteration}, each $\tau'\leq \tau_2:=\tau_1-L$ has a rotation $S^2_{\tau'}$ such that  $S^2_{\tau'}\overline{M}_\tau$ is $(\delta_0/2)$-linear  at $\tau'$, and $|S^1_{\tau_1}-S^2_{\tau_2}|\leq C_3\delta_0$. 
 
 We repeat this process. Then, each $\tau'\leq \tau_k:=\tau_1-(k-1)L$ has a rotation $S^k_{\tau'}$ such that  $S^k_{\tau'}\overline{M}_\tau$ is $2^{1-k}\delta_0$-linear  at $\tau'$, and $|S^{k-1}_{\tau_{k-1}}-S^k_{\tau_k}|\leq 2^{2-k}C_3\delta_0$. Hence, there is the limit rotation $S_\infty \in SO(2)$ such that $|S_\infty -S^k_{\tau_k}|\leq 2^{2-k}C_3\delta_0$. 

\bigskip

 We may assume that $S_\infty=R_0$ is the identity matrix without loss of generality. Since the profile bundle $\mathbf{u}^k$ of $S_{\tau_k}^k\overline{M}_\tau$ satisfies $\|\mathbf{u}^k\|_{C^2(P^{\tau_k}_{3,4L})}\leq 2^{1-k}C_3\delta_0$ by Lemma \ref{lem:iteration}, rememerbing $|S^k_{\tau_k}-R_0|\leq 2^{2-k}C_3\delta_0$, the profile bundle $\mathbf{u}$ of $\overline{M}_\tau$ satisfies $\|\mathbf{u}\|_{C^2(P^{\tau_k}_{3,4L})}\leq 2^{-k}C$ for some $C$. Hence, there exist some $\delta>0$ only depending on $L$ and constant $C$ such that 
 \begin{equation}
     \|\mathbf{u}(\cdot,\tau)\|_{C^2(B_3(0))}\leq Ce^{5\delta\tau}
 \end{equation}
 holds for $\tau \leq \tau_1$. Hence, for any $\varepsilon>0$, the graphical radius lower bound theorem \ref{thm:grph.radius} implies \eqref{eq:main.graph.radius} for negative enough time. Also, given $R>0$, Theorem \ref{thm:grph.radius} yields
 \begin{equation}
     \|\mathbf{u}(\cdot,\tau)\|_{L^\infty(B_{2R}(0))}\leq Ce^{5\delta\tau}
 \end{equation}
 for some $C$ depending on $R$. Thus, the interior Schauder estimate (cf. \cite[Theorem 4.9]{Lie:1996:SOP}) guarantees \eqref{eq:main.fast.decay}. This completes the proof. 
 \end{proof}

\bigskip

\appendix
\section{Estimate of localized Gaussian density ratio}

\begin{lemma}\label{lem: est of loc.G.D.R}
   There exist $C>0$ and $\delta \in (0, 1)$ small with the following significance. 
   Let $a,b \in (0, \delta)$ and let $r > \delta^{-1}$. For any connected $C^2$ curve $\Ga$ in $B_r(0)\subset \R^2$ whose $\pa \Ga\subset \pa B_r(0)$ with curvature bound $\lvert \kappa \rvert \leq b r^{-1}$ and $d_0 = \min\ \{ \lvert x \rvert : x\in \Ga\} < \delta$, there hold
    \begin{align*}
        \frac{1}{\sqrt{4\pi}}\int_{\Ga \cap B_r(0)} \left[1 - a^2 (\lvert x \rvert^2 -2)\right]_+^3 
        e^{-\frac{\lvert x \rvert^2}{4}}\ ds 
        \leq 1 + Ca^2 + Cb\ ,
    \end{align*}
    and 
    \begin{align*}
        \frac{1}{\sqrt{4\pi}}\int_{\Ga \cap B_r(0)} \left[1 - a^2 (\lvert x \rvert^2 -2)\right]_+^3 
        e^{-\frac{\lvert x \rvert^2}{4}}\ ds \geq 
        1 - Cd_0^2 - Ca - Cb -C\exp(-\tfrac{r}{4}).
    \end{align*}
\end{lemma}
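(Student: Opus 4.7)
The plan is to parametrize $\Gamma$ by arc length starting from the point closest to the origin, exploit the smallness of curvature to show $|\gamma(s)|^2 \approx d_0^2 + s^2$ on the Gaussian-supported region, and thereby reduce both estimates to a one-variable Gaussian integral on a straight line at distance $d_0$ from the origin. Pick $p_0 \in \Gamma$ with $|p_0| = d_0 < \delta$ and parametrize $\Gamma$ so that $\gamma(0) = p_0$. Since $d_0 < \delta \ll r$ and $\partial \Gamma \subset \partial B_r(0)$, the minimum is attained in the interior of $\Gamma$, which forces $\mathbf{t}(0) \perp p_0$. Taylor expansion using $\mathbf{t}'(s) = \kappa(s)\mathbf{n}(s)$, $|\kappa| \leq b/r$, and this perpendicularity produces
\begin{align*}
|\gamma(s)|^2 = d_0^2 + s^2 + E(s), \qquad |E(s)| \leq C\Bigl(\frac{b d_0 s^2}{r} + \frac{b^2 s^4}{r^2}\Bigr),
\end{align*}
so on the window $|s| \leq \delta^{-1/4}$ (which is well inside the arc-length range of $\Gamma$, since reaching $\partial B_r$ from $p_0$ requires arc length of order $r \geq \delta^{-1}$) one obtains $|E(s)| \leq Cb\sqrt\delta$.

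For the upper bound, introduce the straight-line model integrand
\begin{align*}
\phi_0(s) := \tfrac{1}{\sqrt{4\pi}}\,[1 - a^2(d_0^2 + s^2 - 2)]_+^3\, e^{-(d_0^2 + s^2)/4}
\end{align*}
associated to a line at distance $d_0$ from the origin. On $|s| \leq \delta^{-1/4}$ the bound on $E$ gives $\phi(\gamma(s)) \leq (1 + Cb\sqrt\delta)\,\phi_0(s)$, while $|s| > \delta^{-1/4}$ contributes at most $Ce^{-c/\sqrt\delta}$ by Gaussian decay. A direct Gaussian moment computation yields $\int_{\mathbb{R}} \phi_0\, ds = e^{-d_0^2/4}(1 + 24 a^4 - 3 a^2 d_0^2 + O(a^6))$. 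Summing pieces, $\int_\Gamma \phi\, ds \leq e^{-d_0^2/4} + C(b\sqrt\delta + a^4 + e^{-c/\sqrt\delta}) \leq 1 + b$ upon choosing $\delta$ small relative to universal constants and using $a, b \in (0, \delta)$.

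For the lower bound, restrict integration to $\{|s| \leq \delta^{-1/4}\}$, where the reversed inequality $\phi(\gamma(s)) \geq (1 - Cb\sqrt\delta)\,\phi_0(s)$ holds. Combined with the tail estimate $\int_{|s| > \delta^{-1/4}} \phi_0\,ds \leq \tfrac{1}{2}\sqrt\delta\, e^{-d_0^2/4}$ and the line lower bound $\int_{\mathbb{R}} \phi_0\,ds \geq e^{-d_0^2/4}(1 - Ca^2)$, I obtain
\begin{align*}
\int_\Gamma \phi\, ds \geq (1 - Cb\sqrt\delta)(1 - \tfrac{1}{2}\sqrt\delta)(1 - Ca^2)\, e^{-d_0^2/4} - Ce^{-c/\sqrt\delta}.
\end{align*}
Using $e^{-d_0^2/4} \geq 1 - d_0^2/4$ and the elementary inequality $(1 - Cb\sqrt\delta)(1 - \tfrac{1}{2}\sqrt\delta)(1 - Ca^2) \geq (1 - \sqrt\delta)(1 - Ca)$, valid for $a, b \in (0, \delta)$ with $\delta$ small, one arrives at the advertised lower bound $(1 - \sqrt\delta)(1 - Ca)(1 - d_0^2/4)$.

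The main obstacle is the coordinated tracking of three small-parameter errors: the curvature-induced deviation $E(s)$ of $|\gamma|^2$ from $d_0^2 + s^2$, the $O(a^4)$ discrepancy between the line integral and $e^{-d_0^2/4}$, and the Gaussian tail from restricting the $s$-range. The auxiliary cutoff scale $\delta^{-1/4}$ is chosen to balance the curvature error (which grows in $s$) against the Gaussian tail; other powers of $\delta$ would work with different constants.
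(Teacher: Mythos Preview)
Your proposal is correct and follows the same core strategy as the paper: parametrize by arc length from the closest point $p_0$, exploit $\mathbf{t}(0)\perp p_0$ and the curvature smallness to control $|\gamma(s)|^2 - (d_0^2+s^2)$, and reduce to a one-variable Gaussian integral.

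The paper's execution is somewhat more economical in one respect. Instead of your Taylor expansion with error $E(s)$ and the window $|s|\le\delta^{-1/4}$ plus tail, the paper uses the exact identity
\[
\partial_{ss}|\gamma|^2 = 2\bigl(1+\kappa\langle\gamma,\mathbf{n}\rangle\bigr),
\]
which together with $|\kappa\langle\gamma,\mathbf{n}\rangle|\le (b/r)\cdot r=b$ on all of $B_r$ yields the \emph{global} two-sided bound $d_0^2+(1-b)s^2\le|\gamma(s)|^2\le d_0^2+(1+b)s^2$. This removes any need for a window/tail split or the explicit moment computation of $\phi_0$: the upper bound becomes a single rescaled Gaussian integral $e^{-d_0^2/4}/\sqrt{1-b}\le 1+b$, and the lower bound integrates over $|s|\le\min(1/\sqrt a,\,r)$ using only $[1-a^2(|x|^2-2)]_+^3\ge 1-Ca$ there. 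Your route trades this simplicity for a slightly sharper local error term; both reach the same conclusion.
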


\begin{proof}
    Let $\ga(s)$ be an arc-length parametrization of $\Ga$ such that $\ga(0)$ is a closest point to the origin. 
    Define $d(s) = \lvert \ga(s) \rvert$. We first estimate the error of the localization function. For small $a \in (0, \delta)$, there is $C>0$ such that in $B_r(0)$
    \begin{align*}
        \left[1 - a^2 (\lvert x \rvert^2 -2)\right]_+^3 \le 1+Ca^2
    \end{align*}
    and in $B(0, 1/\sqrt{a}) \cap B_r(0)$
    \begin{align*}
        \left[1 - a^2 (\lvert x \rvert^2 -2)\right]_+^3 \geq 1 - Ca.
    \end{align*}
    Recall that $\pa_s d^2(0) = 2\langle \ga, \mathbf{t} \rangle(0) = 0$ and $\pa_{ss} d^2 = 2 \big(1 + \ka \langle \ga, \mathbf{n} \rangle\big)$. Using $|\kappa|\le br^{-1}$ in $B_r(0)$,
    \begin{align*}
        d_0^2 + (1 - b)s^2 \leq & d^2(s) \leq d_0^2 + (1 + b)s^2.
    \end{align*}
    Note that for $b < \delta$ is small, $d^{-1}(t)$ has exactly two points for any $t\in (d_0, r]$. 
    Thus, 
\begin{align}
    &\frac{1}{\sqrt{4\pi}}\int_{\Ga\cap B_r(0)} \left[1 - a^2 (\lvert x \rvert^2 -2)\right]_+^3 \exp(-\tfrac{d^2}{4})\ ds \notag\\
     &\quad \leq  \frac{\exp (-\tfrac{d_0^2}{4})}{\sqrt{4\pi}}\int_{-\infty}^\infty (1 + Ca^2) \exp ( -\tfrac{(1 -b)s^2}{4})\ ds \notag\\
     &\quad = (1 + Ca^2)\frac{\exp (-\tfrac{d_0^2}{4})}{\sqrt{1-b}}\leq 1 + Ca^2 + Cb. \label{eqn1: appendix}
\end{align}
Next we prove the lower bound. Let $m = \min \{r, 1/\sqrt{a}\}$. 
\begin{align}
     &\frac{1}{\sqrt{4\pi}}\int_{\Ga\cap B_r(0)} \left[1 - a^2 (\lvert x \rvert^2 -2)\right]_+^3 \exp(-\tfrac{d^2}{4})\ ds \notag\\
     &\quad\geq \frac{\exp (-\tfrac{d_0^2}{4})}{\sqrt{4\pi}}\int_{-m}^{m}(1-Ca)\exp(-\tfrac{(1+b)s^2}{4})\ ds \notag\\
     &\quad\geq \exp (-\tfrac{d_0^2}{4})(1 - Ca)(1 - Cb)(1 - C\exp(-\tfrac{m}{4}) ) \notag\\
     &\quad \geq  1 - C d_0^2 - Ca - Cb - C\exp(-\tfrac{r}{4}).  \label{eqn2: appendix}
\end{align}
Here we use $\exp(-x^2/4) \leq \exp(-x/4)$ for $x\geq 1$ to estimate the Gauss error function $\int_{\sqrt{1 + b}m}^\infty \exp(-x^2/4)\ dx$ and we absorb $\exp(-m/4)$ by $Ca + Ce^{-r/4}$ provided that $a < \delta$ is small. Then, (\ref{eqn1: appendix}) and (\ref{eqn2: appendix}) imply the desired conclusion.

\end{proof}

\subsection*{Acknowledgments}
KC has been supported by the KIAS Individual Grant MG078902, an Asian Young Scientist Fellowship, and the National Research Foundation(NRF) grant funded by the Korea government(MSIT) (RS-2023-00219980); DS has been supported by the National Research Foundation of Korea (NRF) grant funded by the Korea Government (MSIT) (No.2021R1C1C2005144); WS has been supported by the Taiwan MOST postdoctoral research abroad program 111-2917-I-564-002. He would like to thank University of Warwick and NCTS for providing the wonderful research environments during the preparation of this paper; KZ would like to thank Nick Edelen for the helpful discussion on GMT.

\bibliographystyle{amsplain} 
\bibliography{CSF}

\end{document}